\begin{document}

\theoremstyle{plain}	
\newtheorem{thm}{Theorem}[section]
\newtheorem{lemma}[thm]{Lemma}
\newtheorem{proposition}[thm]{Proposition}
\newtheorem{corollary}[thm]{Corollary} 

\theoremstyle{definition}
\newtheorem{defn}{Definition}[section]

\theoremstyle{remark}
\newtheorem{example}{Example}[section]
\newtheorem{remark}{Remark}[section]
\newtheorem{conjecture}{Conjecture}[section]
\newtheorem{remarks}{Remarks}[section]
\newtheorem*{problem}{Problem}
\newtheorem*{notation}{Main notation}

\newcommand{\arctg}{\operatorname{arctg}}
\newcommand{\sech}{\operatorname{sech}}
\newcommand{\ve}{\varepsilon}
\newcommand{\Ltau}{L^2_{\tau}(\mathbb{R})}
\newcommand{\Lltau}{L^1_{\tau}(\mathbb{R})}
\newcommand{\lu}{\tilde{u}}
\newcommand{\lv}{\tilde{v}}
\newcommand{\dt}{\lVert t \rVert}
\newcommand{\dtau}{\lVert \tau \rVert}
\newcommand{\supp}{\operatorname{supp}}

\numberwithin{equation}{section}

\title[Ergodic attractors of scalar semilinear PDE's]{Ergodic attractors and almost-everywhere asymptotics of scalar semilinear parabolic differential equations}

\author{Sini\v{s}a Slijep\v{c}evi\'{c}, Zagreb}

\date{November 17th, 2017}

\begin{abstract}We consider dynamics of scalar semilinear parabolic equations on bounded intervals with periodic boundary conditions, and on the entire real line, with a general nonlinearity $g(t,x,u,u_x)$ either not depending on $t$, or periodic in $t$. While the topological and geometric structure of their attractors has been investigated in depth, we focus here on ergodic-theoretical properties. The main result is that the union of supports of all the invariant measures projects one-to-one to $\mathbb{R}^2$. We rely on a novel application of the zero-number techniques with respect to evolution of measures on the phase space, and on properties of the flux of zeroes, and the dissipation of zeroes. As an example of an application, we prove uniqueness of an invariant measure for a large family of considered equations which conserve a certain quantity ("mass"), thus generalizing the results by Sinai for the scalar viscous Burgers equation.
\end{abstract}

\keywords{
	Scalar semilinear parabolic equations, Burgers equation, Reaction-diffusion equation, Attractor, Invariant measure, Physical measure, Asymptotics, Zero number}
\vspace{2ex}

\subjclass[2010]{Primary 35K15, 37L40; Secondary: 35B40, 35B41, 37L30}
 
\maketitle

%

\tableofcontents

\part{Preliminaries}

\section{Introduction}

We consider the following equation:
\begin{equation}
u_{t}=u_{xx}+g(t,x,u,u_{x}),  \label{r:main}
\end{equation}%
where $g$ satisfies the usual conditions guaranteeing local existence of solutions, given as (A1-3) below. In particular, we assume that $g$ is periodic in $t,x$, and consider solutions on the entire real line, without the assumption of decay to $0$ at infinity (the {\it extended, time-periodic case}). For the sake of completeness, we also cover less general cases of $g$ not depending on $t$ (the {\it autonomous case}), and for $x \in \mathbb{S}^1$ (the {\it bounded case}). A more precise setting is given below. For brevity, we frequently denote the considered cases with letters E or B (for extended vs. bounded), and P or A (time-periodic vs. autonomous).

We first briefly recall here results on geometric and topological dynamics of (\ref{r:main}). The asymptotics of (\ref{r:main}) on the bounded domain with separated boundary conditions has been studied in detail (see \cite{Joly10,Polacik:02} and references therein) and is reasonably well-understood. In particular, under assumptions (A1-3), for any global, uniformly bounded orbit, the $\omega$-limit set contains a single orbit (equilibria in the autonomous or a periodic orbit in the periodic case) (\cite{Polacik:02}, Theorem 4.2 and references therein). With periodic boundary conditions, i.e. in our setting in the B/A case and assuming (A1-3), Fiedler and Mallet-Paret \cite{Fiedler89} have shown that the $\omega$-limit set of any global, bounded solution projects to a plane, and then has the structure in accordance to the Poincar\'{e}-Bendixson theorem. That means that it consists of a single periodic orbit, or of equilibria and connecting (homoclinic and heteroclinic) orbits. Tere\v{s}\v{c}\'{a}k \cite{Terescak:94} has shown that in the B/P case, assuming (A1-3), the $\omega$-limit set of any global, bounded orbit also projects injectively and continuously into $\mathbb{R}^2$. The structure of the $\omega$-limit set can then be much more complex, as shown by Fiedler and Sandstede \cite{Fiedler:92,Sandstede:91}.

The structure of the attractor of (\ref{r:main}) on the bounded domain with separated or periodic boundary conditions in the autonomous case is as follows: the attractor is then generically Morse-Smale, and can in many cases be classified by the graph structure of the equilibria and their connections (\cite{Fiedler:12,Fiedler:14,Joly10,Polacik:02} and references therein). Similar questions in the B/P case, and the extended case seem to be currently beyond reach. When assuming decay to $0$ at infinity, the dynamics in some cases (for example for $g$ not depending on $x,u_x$ \cite{Feireisl:00}) is similar to the dynamics on the bounded domain with separated boundary conditions, i.e. uniformly bounded orbits then converge to a single periodic solution. If there is no decay to zero at infinity, the attractor seems to be typically infinite dimensional (assuming sufficiently weak topology so that uniformly locally bounded orbits are relatively compact, see Section \ref{s:prelim}), and the asymptotics can be very complex even in the "extended gradient case" (see \cite{Polacik:15} and references therein, also Subsection \ref{ss:AllenCahn}).

While the ergodic theory of PDE's has received much less attention than the topological and geometrical perspective, it is a current area of research which is both physically and mathematically relevant to the dynamics of (\ref{r:main}).

Relevant and related recent ergodic-theoretical results include for example an extension of the notion of SRB measures to PDE's by Blumenthal and Young \cite{Blumenthal:15}, and results on almost-everywhere global existence of solutions with respect to a 'natural' measure e.g. by Nahmod, Pavlovi\'{c} and Staffilani for the Navier-Stokes equation \cite{Nahmod:12}, building on an approach of Bourgain \cite{Bourgain:96}. Specifically with regards to the equation (\ref{r:main}), Gallay and the author have shown that if the equation has in addition a formally gradient structure, then the invariant measures are supported on the set of equilibria \cite{Gallay:01, Slijepcevic:99, Slijepcevic00}. Zelik \cite{Zelik:03} has in the same case deduced that the topological entropy is thus 0.

In this paper we show that contrary to potentially very complex topological and geometric structure of the attractor, from the point of view of ergodic theory the dynamics of (\ref{r:main}) is in all the considered cases relatively simple. Specifically, we investigate the structure of the set of invariant (Borel probability) measures of (\ref{r:main}) on the phase space. In particular, we analyse the union of supports of all the invariant measures, a set which is a subset of the global attractor which we propose to call {\it ergodic attractor}. In all the considered cases, we show that the ergodic attractor projects one-to-one to $\mathbb{R}^2$ (subject to a technical restriction of finite average density of zeroes in the extended case, which we believe to be generically true and likely redundant), and that in many cases it is one-dimensional.

The dynamical relevance (and physical interpretation) of this is as follows: in the bounded case, the ergodic attractor contains all {\it $\omega$-limit sets on average} of all relatively compact orbits (Subsection \ref{ss:ergbounded}). The $\omega$-limit set on average has been proposed in the context of partial differential equations in \cite{Gallay:01}, and contains accumulation points of a relatively compact orbit for non-zero density of times. We argue that physically only these orbits are "observable" (Lemma \ref{l:observability}), thus the description of the ergodic attractor reasonably completely describes "observable" dynamics. In particular, the ergodic attractor contains any "chaos" if present \cite{Slijepcevic13a}. In the extended case, the ergodic attractor consists of "space-time observable" orbits (Subsection \ref{ss:ergextended}); contains the space-time chaos as constructed in \cite{Mielke09,Turaev10} if present \cite{Slijepcevic13a}; and frequently describes asymptotics of $\mu$-a.e. $u$ with respect to any Borel probability measure on the phase space invariant with respect to the spatial shift (see results for Burgers like equation below; also Subsections \ref{ss:extended} and \ref{ss:further}).

An example of an application of our results is a generalization of the results by Sinai \cite{Sinai:91} for the viscous, periodically forced Burgers equation:
\begin{equation}
u_t =u_{xx} - u\: u_x + \hat{g}(x,t), \label{r:burgers}
\end{equation}
where $\hat{g}$ is sufficiently smooth, periodic in $x$ and $t$, and such that for all $ t \in \mathbb{R}$, $\int_0^1 \hat{g}(x,t)dx = 0$. Sinai showed the following (extended to quasi-periodic forcing in \cite{Sinai:98}, higher dimensions on bounded domain and stochastic forcing in \cite{Sinai:96}, and to inviscid limit on bounded domain and stochastic forcing in \cite{E:00}): 

\begin{itemize}

\item[(i)] Firstly, it was established that there is a unique solution of (\ref{r:burgers}) periodic in $x$ and $t$, denoted by $v^0(t)$, such that for any initial condition $u \in H^{2\alpha}(\mathbb{S}^1)$, $\int_0^1 u(x)dx=0$, we have that $\lim_{t \rightarrow \infty}|u(x,t)-v^0(x,t)|=0$ (a pointwise convergence) (a special case of \cite{Sinai:91}, Theorem 1). 

\item[(ii)] Secondly, such asymptotics is shown to hold also on the extended domain {\it for a.e. initial condition with respect to some probability measure} on the phase space, as long as the probability measure satisfies certain conditions (see Section \ref{burgers} for details).

\item[(iii)] Thirdly, each probability measure from (ii) converges in weak$^*$ topology with respect to the induced semiflow on the space of measures to the Dirac measure concentrated on $v^0$.

\end{itemize}

The main technique in \cite{E:00, Sinai:91,Sinai:96,Sinai:98} is the Cole-Hopf transformation, and the integral representation of the transformed solutions. As already noted in \cite[p347]{Sinai:96}, the key property of (\ref{r:burgers}) is that $\int_0^1 u(x)dx$ is the invariant. We show here that such invariance (the condition (B3) below) in essence suffices to establish (i) and versions of (ii) and (iii). We assume in addition only certain weak dissipativity conditions (B1-2) ensuring global existence and boundedness of solutions. We do not use here the Cole-Hopf transformation. Instead, our main technique is an extension of the zero-number techniques to measures (see the next subsection). 

Finally, we argue that the techniques developed here also extend to the equation (\ref{r:main}) with an additional random force term such as for example considered in \cite{E:00,Sinai:98}, and also to discrete-space continuous-time, or discrete-space discrete-time 1d monotone systems without and with random force, as further discussed in Section \ref{s:open}. In particular, we hope that the main technique of the paper: the zero-function as a Lyapunov function with respect to evolution of measures induced by the dynamical system, can be useful in characterizing uniqueness of invariant measures, thus questions related to existence of physical and SRB measures in the deterministic case, and phase transitions in the random case of these models.

\section{Setting and statements of results}

\subsection{Setting and assumptions}
\label{ss:general} 
We first specify the function spaces on which we consider (\ref{r:main}). In the bounded case, we consider $\mathcal{X}^{\alpha}:=H^{2\alpha}(\mathbb{S}^1)$, where $\mathcal{X}:=L^2(\mathbb{S}^1)$, and $3/4 < \alpha < 1$ is such that $\mathcal{X}^{\alpha}$ is continuously embedded in $C^1(\mathbb{S}^1)$. In the {\it extended} case, the domain is the entire $\mathbb{R}$ without assuming decay to zero at infinity. The phase space is then the fractional uniformly local space $\mathcal{X}^{\alpha}:=H^{2\alpha}_{\text{ul}}(\mathbb{R})$, where $\mathcal{X}^2_{\text{ul}}(\mathbb{R})$,
$\alpha$ is as above (see Appendix \ref{s:fractional} for key facts on uniformly local spaces), and then $H^{2\alpha}_{\text{ul}}(\mathbb{R})$ is continuously embedded in  $C^1(\mathbb{R})$. 
The bounded case may be considered as an invariant subset of the extended case, as $H^{2\alpha}(\mathbb{S}^1)$ embeds naturally in $H^{2\alpha}_{\text{ul}}(\mathbb{R})$ as the invariant set of spatially periodic solutions. We denote by $S : \mathcal{X}^{\alpha} \rightarrow \mathcal{X}^{\alpha}$ the spatial shift $Su(x)=u(x-1)$ (identity in the bounded case).

The standing assumptions on the nonlinearity $g:(t,x,u,\xi) \mapsto g(t,x,u,\xi)$ are as follows:
\begin{itemize}
	
	\item[(A1)] $g$ is continuous in all the variables.
	
	\item[(A2)] $g$ is locally H\"{o}lder continuous in $t$ and locally Lipschitz continuous in $(u,\xi)$.
	
	\item[(A3)] $g$ is $1$-periodic in $x$ and $t$.
	
\end{itemize}
It is well-known that (A1-3) suffice for local existence of solutions in bounded and extended case to hold (Section \ref{s:prelim}). In addition, in the first part of the paper, we also assume:
\begin{itemize}
	\item[(A4)] There exists a set $\mathcal{B}$, closed and bounded in $\mathcal{X}^{\alpha}$-norm, $S$-invariant in the extended case, such that if $u_0\in \mathcal{B}$, $t_0 \in \mathbb{R}$, and the solution of (\ref{r:main}), $u(t_0)=u_0$ exists on $(t_0,t_1)$, then for all $t \in (t_0,t_1)$ we have that $u(t)\in \mathcal{B}$. 
\end{itemize}

As recalled in Section \ref{s:prelim}, conditions (A1-4) suffice for global existence and uniqueness of solutions of (\ref{r:main}) to hold. Furthermore, in the autonomous case (\ref{r:main}) generates a continuous semiflow on $\mathcal{B}$ denoted by $T(t)$, $t \geq 0$. In the periodic case, the time-one map $T:\mathcal{B} \rightarrow \mathcal{B}$ is continuous. Because of (A3) we have that $S$ and $T(t)$, resp. $T$ commute; and $S$ is continuous.

\begin{remark}
Sufficient conditions in various contexts for (A4) to hold are given in \cite{Lunardi:95}, Section 7 (see also \cite{Polacik:02} and references therein). These results also apply in the extended case, in the view of the comments in the Appendix \ref{s:fractional}. 
\end{remark}

The notion of {\it invariance} throughout the paper will depend on the considered case: unless otherwise specified, an {\it invariant set} will be any set invariant with respect to all the actions in the Table \ref{t:table}:
\begin{equation}
\begin{array}{lcc}
\text{Actions:} & \text{Autonomous (A)} & \text{Time-periodic (P)} \\ 
\text{Bounded (B)} & T(t), t\geq 0; & T \\ 
\text{Extended (E)} & T(t), \: t\geq 0 ;\: S & T;S\text{.}%
\end{array}
\label{t:table}
\end{equation}%
We always consider $\omega$-limit sets with respect to the semiflow $T(t)$, $t\geq 0$ in the autonomous case, and for the sequence of maps $T^n$, $n \in \mathbb{N}$ in the time-periodic case.
In the extended case, we will equip $\mathcal{X}^{\alpha}$ with a coarser topology, to ensure that all the orbits bounded in $\mathcal{X}^{\alpha}$ are relatively compact, so that we can consider asymptotics and invariant measures (see Section \ref{s:prelim} and Appendix \ref{s:fractional} for the choice of topology and a discussion). We define an {\it invariant measure} to be a Borel probability measure on $\mathcal{B}$, invariant with respect to all the actions in Table \ref{t:table}.

\subsection{Statements of the results: ergodic Poincar\'{e}-Bendixson theorems}
Denote by $\mathcal{E}$ the ergodic attractor, i.e. the union of supports of all the invariant measures. As $\mathcal{E}$ depends on the choice of $\mathcal{B}$ in (A4), we may occasionally write $\mathcal{E}(\mathcal{B})$; the argument $\mathcal{B}$ will be omitted when the chosen $\mathcal{B}$ is clear from the context. The main result in the bounded case is that the set $\mathcal{E}$ is not too large, i.e. that it is at most two dimensional:

\begin{thm} \label{t:main1} {\bf Ergodic Poincar\'{e}-Bendixson Theorem.}
Assume (A1-4) holds in the bounded case. Then $\mathcal{E}$ projects continuously and one-to-one to $\mathbb{R}^2$, with the projection $\pi : \mathcal{E} \rightarrow \mathbb{R}^2$ given with 
\begin{equation}
\pi(u_0)=(u_0(0), (u_0)_x(0)). \label{d:pi}
\end{equation}
\end{thm}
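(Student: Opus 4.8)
The plan is to handle continuity and injectivity separately. Continuity is immediate: since $\mathcal{X}^{\alpha}=H^{2\alpha}(\mathbb{S}^1)$ embeds continuously into $C^1(\mathbb{S}^1)$, the evaluation maps $u_0\mapsto u_0(0)$ and $u_0\mapsto (u_0)_x(0)$ are continuous, hence so is $\pi$ on all of $\mathcal{X}^{\alpha}$. For injectivity, note that $\pi(u_0)=\pi(v_0)$ with $u_0\neq v_0$ means exactly that $w_0:=u_0-v_0\not\equiv 0$ satisfies $w_0(0)=0$ and $(w_0)_x(0)=0$, i.e. $x=0$ is a \emph{multiple} (degenerate) zero of $w_0$. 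It therefore suffices to prove the claim: \emph{if $u_0,v_0\in\mathcal{E}$ with $u_0\neq v_0$, then $w_0=u_0-v_0$ has only simple zeros on $\mathbb{S}^1$}. The engine for this is the Sturm--Angenent zero-number theory. If $u(t),v(t)$ solve (\ref{r:main}), then by (A2) the difference $w=u-v$ solves a linear parabolic equation $w_t=w_{xx}+b(t,x)w_x+a(t,x)w$ with bounded coefficients (from the mean-value theorem and the local Lipschitz dependence of $g$ on $(u,u_x)$). Consequently the zero number $z(w(t))$ is finite whenever $w(\cdot,t)\not\equiv 0$, is non-increasing in $t$, and drops strictly precisely at times where $w(\cdot,t)$ has a multiple zero (the dissipation of zeroes). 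Since every point of $\mathcal{E}$ lies on a complete bounded orbit, $z(w(t))$ is defined and non-increasing for all $t\in\mathbb{R}$.

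To decide whether $t=0$ is such a drop time I would pass to measures. Fix invariant measures $\mu,\nu$ with $u_0\in\supp\mu$ and $v_0\in\supp\nu$, and consider the product action from Table \ref{t:table} on $\mathcal{B}\times\mathcal{B}$ together with the invariant product measure $\mu\times\nu$, whose support contains $(u_0,v_0)$. The map $(u,v)\mapsto z(u-v)$ is a Lyapunov function for the product dynamics: it is non-increasing along orbits, so by invariance of $\mu\times\nu$ it must be invariant $\mu\times\nu$-almost everywhere. Equivalently, for $\mu\times\nu$-a.e. pair the zero number never drops, so $u-v$ has only simple zeros for all $t$. The integrability required to run the monotone-plus-invariant step is the one genuinely technical point: in the bounded case $z(u-v)$ is finite off the diagonal, and I would secure uniform integrability over the compact set $\mathcal{B}$ either from an a priori bound on the zero number furnished by (A4) or, more robustly, by first carrying out the argument for the bounded monotone transform $\arctan z$ and then reading off the conclusion.

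The final and principal obstacle is upgrading this almost-everywhere statement to \emph{every} pair of distinct points of $\mathcal{E}$, since a priori the exceptional null set could meet $\supp(\mu\times\nu)$. Here I would exploit two semicontinuity facts about the zero number on the $C^1$-space: it is lower semicontinuous (a transversal sign change persists under $C^1$-small perturbations) and it is locally constant at configurations whose zeros are all simple. Combined with Poincar\'e recurrence, this gives a dense set of pairs in $\supp(\mu\times\nu)$ that are simultaneously recurrent and ``good'' (only simple zeros for all $t$), and along each such pair the zero number is constant, say equal to $k$. Now suppose toward a contradiction that $w_0$ has a multiple zero at $t=0$; then the strict drop yields, for small $\ve>0$ chosen so that $w(\ve)$ has only simple zeros (possible since drop times are discrete), $z(w(-\ve))\ge z(w(\ve))+1=:k+1$. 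Approximating $(u(-\ve),v(-\ve))$ by good recurrent pairs and using lower semicontinuity forces these approximants to have constant zero number $\ge k+1$, while flowing them forward by $2\ve$ makes them converge to the simple-zero configuration $(u(\ve),v(\ve))$ of zero number $k$, at which $z$ is continuous; this forces their zero number to equal $k$, a contradiction. Hence $w_0$ has only simple zeros, which proves the claim and therefore the injectivity of $\pi$. I expect this last upgrade, together with pinning down the integrability in the Lyapunov step, to be where the real work lies, the rest being assembly of the standard zero-number and recurrence machinery.
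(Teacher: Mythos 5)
Your proposal is correct and follows the same overall strategy as the paper: reduce injectivity of $\pi$ to the absence of multiple zeros of $u_0-v_0$ for distinct $u_0,v_0\in\mathcal{E}$ (using that $\mathcal{E}$ consists of entire orbits, Lemma \ref{l:properties}), treat the zero number as a Lyapunov function for the product dynamics with respect to the invariant measure $\mu\times\nu$, conclude that almost every pair never dissipates zeros, and upgrade to every pair in the support by a persistence argument; your final step of comparing $z$ at $\pm\ve$ plays the role of the paper's pull-back by $T^{-1}$ in deducing the theorem from Proposition \ref{p:diszero1}. You differ genuinely in the two technical pillars. First, integrability: the paper handles possible non-integrability of $z$ via ergodic decomposition and a reweighting of ergodic components (Lemmas \ref{l:4_1} and \ref{l:4_2}), whereas your bounded monotone transform $\arctan z$ yields $z\circ\hat{T}=z$ a.e.\ in one line and is arguably cleaner; note, however, that your first alternative is not viable, since (A4) gives boundedness in $\mathcal{X}^{\alpha}$ but no uniform bound on zero numbers over $\mathcal{B}^2$ (they are finite pairwise by Angenent--Chen, not uniformly), so you must commit to the $\arctan$ route. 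Second, the a.e.-to-everywhere upgrade: the paper integrates the dissipation $d$ against the measure and contradicts $\int d\,d\tilde{\nu}_0=0$ using Lemma \ref{l:3_3}, which charges an open neighbourhood of a dissipating support point; you argue topologically with density of good pairs, which avoids integration but requires two details to be spelled out: choose $\ve$ so that $w(-\ve)$, not only $w(\ve)$, has simple zeros (lower semicontinuity of the zero count can fail at an even-order zero), and justify that $(u(-\ve),v(-\ve))$ lies in the support of the appropriately time-shifted measure so that good pairs accumulate on it, which uses backward uniqueness of parabolic equations and, in the time-periodic case, care with fractional-time phases. Both persistence mechanisms rest on the same continuity facts (the paper's Lemma \ref{l:3_3} is proved from Lemma \ref{l:continuity}), so neither gap is structural; your route trades the paper's measure-theoretic bookkeeping for pointwise approximation along the support.
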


In the B/A case, this already follows from Fiedler and Mallet-Paret Poincar\'{e}-Bendixson theorem \cite{Fiedler89} (see Subsection \ref{ss:BDC} for further comments). In the B/P case, it seems new, and is complementary to the results of Tere\v{s}\v{c}ak \cite{Terescak:94}.

To establish an analogous result in the extended case, we require a technical condition of non-degeneracy of $\mathcal{E}$, by which we mean that the average density of zeroes on $\mathcal{E}$ is bounded. It is rigorously given in Definition \ref{d:nondegenerate}; we note here that it suffices that for any two $u_0,v_0 \in \mathcal{E}$, 
\begin{equation}
\liminf_{n \rightarrow \infty} \frac{1}{2n}\sum_{k=-n}^{n-1} z(S^ku_0,S^kv_0)< \infty, \label{d:density}
\end{equation}
where $z(u_0,v_0)$ is the number of zeroes of $u_0(x)-v_0(x)$ for $x\in [0,1)$ (a precise definition is given by (\ref{d:ZFD}) and (\ref{d:zfd})).

\begin{thm} \label{t:main1b} {\bf Extended Ergodic Poincar\'{e}-Bendixson Theorem.}
	Assume (A1-4) holds in the extended case, and assume that $\mathcal{E}$ is non-degenerate. Then $\mathcal{E}$ projects continuously and one-to-one to $\mathbb{R}^2$, with the projection $\pi : \mathcal{E} \rightarrow \mathbb{R}^2$ given with (\ref{d:pi})
\end{thm}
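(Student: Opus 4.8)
The plan is to prove that $\pi$ is injective on $\mathcal{E}$, since continuity of $\pi$ is immediate from the continuous embedding $\mathcal{X}^{\alpha} \hookrightarrow C^1(\mathbb{R})$ (evaluation of $u_0$ and $(u_0)_x$ at $x=0$ is continuous). So suppose $u_0, v_0 \in \mathcal{E}$ with $\pi(u_0) = \pi(v_0)$, i.e. $u_0(0) = v_0(0)$ and $(u_0)_x(0) = (v_0)_x(0)$, and the goal is to show $u_0 = v_0$. The natural strategy, mirroring the bounded case (Theorem \ref{t:main1}), is to argue by contradiction: if $u_0 \neq v_0$, then $w_0 := u_0 - v_0$ is a nonzero function with a double zero at $x = 0$ (both $w_0(0) = 0$ and $(w_0)_x(0) = 0$). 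A multiple zero is the key trigger for the zero-number machinery, which is the announced main technique of the paper.

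\medskip

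The core of the argument should run through the evolution of the zero number $z(S^k u_0, S^k v_0)$ along the dynamics. The plan is to combine three ingredients. First, for two solutions $u(t)$ and $v(t)$ of \eqref{r:main} with the same nonlinearity, their difference $w = u - v$ solves a linear parabolic equation of the form $w_t = w_{xx} + a(t,x) w_x + b(t,x) w$, so the classical zero-number (Sturm) theory applies: the number of zeroes of $w$ on a period is nonincreasing in $t$, and it drops strictly whenever $w$ develops a multiple zero. Second, since $u_0, v_0 \in \mathcal{E}$ they lie in the support of invariant measures; invariance forces their orbits to be \emph{eternal} (defined for all $t \in \mathbb{R}$) and to remain in $\mathcal{E}$. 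An eternal solution whose zero number strictly decreases at some time cannot have had a finite zero number at $t = -\infty$ unless it was already constant — so the presence of a multiple zero at $t=0$ should force the zero number to have been infinite in the backward direction, which is exactly what the non-degeneracy hypothesis \eqref{d:density} is designed to forbid. Third, I would invoke the non-degeneracy assumption: the $\liminf$ in \eqref{d:density} is finite along the spatial shift $S$, so the spatial average density of zeroes of $u_0 - v_0$ is bounded on $\mathcal{E}$; this caps the zero number and rules out the "infinitely many zeroes flowing in" scenario.

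\medskip

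Assembling these, the contradiction should come from a monotonicity-plus-conservation tension. On one hand, a multiple zero at time $0$ means the zero number is strictly decreasing there; on the other hand, for points in the support of an invariant measure, quantities that are monotone along trajectories must be almost-everywhere constant (a standard Poincar\'e-recurrence / invariant-measure argument: a nonincreasing observable must be invariant $\mu$-a.e., so it cannot strictly decrease on a positive-measure set, and by continuity not on the support either). I expect the precise formalization to route through the paper's "evolution of measures" framework — treating $z$ as a Lyapunov function on the space of measures and using that its expectation is both monotone and conserved — rather than through pointwise trajectory arguments, because in the extended case $z$ over all of $\mathbb{R}$ is typically infinite and must be replaced by the averaged density \eqref{d:density}. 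The role of non-degeneracy is precisely to make this averaged density a finite, well-defined, and monotone quantity so that the conservation argument closes.

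\medskip

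The main obstacle, I expect, is the extended-domain bookkeeping. In the bounded case the zero number is a single finite integer and the Sturm drop-at-multiple-zeroes lemma applies verbatim. On $\mathbb{R}$ the total number of zeroes is generically infinite, so one cannot simply say "the integer strictly decreases"; one must work with the density \eqref{d:density} and show (i) that this density is itself monotone under the flow, (ii) that a \emph{localized} multiple zero (here at $x=0$) produces a \emph{strict} decrease in the \emph{density}, not merely in a local count that could be compensated by zeroes entering from spatial infinity, and (iii) that the flux of zeroes through the boundaries of large intervals is controlled — this is presumably where the paper's "flux of zeroes" and "dissipation of zeroes" notions enter. Reconciling the purely local event (a double zero at one point) with a global density statement, while preventing zeroes from streaming in from $\pm\infty$ to mask the dissipation, is the delicate point, and non-degeneracy is exactly the hypothesis that tames it.
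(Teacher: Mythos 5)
Your strategy is the paper's own: reduce to injectivity, note that a failure of injectivity produces a multiple zero of $w_0=u_0-v_0$ at $x=0$, lift the zero count to the space of measures, and play monotonicity of this Lyapunov function against its conservation under an invariant measure, with a persistence argument (the paper's Lemma \ref{l:3_3}) converting the single multiple zero into a positive-measure amount of dissipation. That skeleton matches Section \ref{s:extended} (Propositions \ref{p:avbal} and \ref{p:final}), and you correctly locate the difficulties. However, the two steps you defer with ``presumably'' and ``is controlled'' are precisely the substance of the paper's proof, and your proposal supplies no mechanism for either. (i) \emph{Flux cancellation:} the paper integrates the pointwise balance law $z\circ\hat{T}-z=f\circ\hat{S}-f-d$ of (\ref{r:bez!}) against an $\hat{S}$-invariant measure and needs $\int(f\circ\hat{S}-f)\,d\nu_0=0$. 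This is not automatic from invariance, because the flux $f$ need not be integrable; it is exactly the content of the ergodic Lemma \ref{l:ergodic} (Poincar\'e recurrence, Birkhoff, truncation, monotone convergence), and nothing in your sketch replaces it. Saying zeroes cannot ``stream in from $\pm\infty$'' is the conclusion, not an argument. (ii) \emph{Integrability of the zero function:} non-degeneracy gives $\hat{\zeta}<\infty$ almost everywhere, but the conservation argument is vacuous ($\infty=\infty$) unless one produces a measure $\tilde{\nu}_0$ with $\hat{Z}(\tilde{\nu}_0)<\infty$ whose support still contains the offending pair $(u_0,v_0)$. The paper obtains this from the ergodic decomposition with respect to the two commuting maps $\hat{S},\hat{T}$ (Lemma \ref{l:nondeg}) together with the re-weighting construction of Lemma \ref{l:4_2}, applied to $\tfrac14(\mu_1+\mu_2)^2$ — the product-type measure being needed because $u_0$ and $v_0$ may lie in supports of \emph{different} invariant measures, a case your sketch does not address.

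Separately, one intermediate assertion of yours is false as stated: ``an eternal solution whose zero number strictly decreases at some time cannot have had a finite zero number at $t=-\infty$ unless it was already constant.'' Eternal orbits with finite zero number can certainly undergo strict drops (e.g. the difference between a heteroclinic orbit and an equilibrium); eternality only caps the \emph{number} of drops. What forbids any drop is recurrence/invariance, not backward existence, which is why the argument must be run through invariant measures — as you yourself then propose. So the proposal is a correct identification of the paper's route, but with the two key lemmas missing it outlines the theorem rather than proves it.
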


\begin{remark} Non-degeneracy of $\mathcal{E}$ is expected to hold generically, and possibly always. This follows from the results of Angenent and Chen \cite{Angenent:88,Chen:98}: as $\mathcal{E}$ consists of the entire solutions (Lemma \ref{l:properties}), we have that for any two $u_0,v_0 \in \mathcal{E}$, $z(u_0,v_0)$ is finite. We characterize non-degeneracy in Subsection \ref{ss:characterize} and give further sufficient conditions for it to hold in Subsection \ref{ss:nondeg}. For example, we show in Example \ref{e:nondeg} that non-degeneracy of $\mathcal{E}$ holds for non-linearities $g=-\partial V(x,u)/\partial u$, with $V\in C^2(\mathbb{R}^2)$ and bounded from below.
\end{remark}

We now outline the concept of the proof of Theorems \ref{t:main1} and \ref{t:main1b}. The main tool is the zero number lifted to the space of measures. The zero-number has been established as a tool to study dynamics of (\ref{r:main}) mainly due to Matano's work \cite{Matano:82} (see \cite{Polacik:02} and references therein for an overview). We say that a zero $u_0(x)-v_0(x)=0$ is multiple, if $(u_0)_x(x)-(v_0)_x(x)=0$. (We also say that $u_0$ and $v_0$ intersect transversally at $x$ if it is a simple, and non-transversally if it is a multiple zero.) In the bounded case, if $\mu_0$ is a Borel probability measure on $\mathcal{X}^{\alpha}$, we define the zero function of $\mu_0$ as
\begin{equation}
Z(\mu_0)=\int_{\mathcal{X}^{\alpha}}z(u_0,v_0)d\mu_0(u_0)d\mu_0(v_0). \label{d:zero}
\end{equation}
We will show that $Z$ on the space of Borel probability measures on $\mathcal{X}^{\alpha}$ has analogous properties to the zero-function $z$ on $\mathcal{X}^{\alpha}$ (\cite{Fiedler89,Polacik:02} and references therein): for any $t > 0$, $Z(\mu(t))$ is essentially finite\footnote{We can always adjust the "weights" in the ergodic decomposition to make it finite, see Lemma \ref{l:4_2}.} (where $\mu(t)$ is the evolution of $\mu(0)=\mu_0$ induced by (\ref{r:main}) on the space of measures); it is non-increasing; and if there is a multiple zero of $u_0-v_0$ for some $u_0,v_0$ in the support of $\mu(t)$, then $Z(\mu(t))$ is strictly decreasing at $t$ in the following sense: for all $\delta >0$, $Z(\mu(t+\delta))< Z(\mu(t-\delta))$.

Importantly, the same technique applies also in the extended case, if we consider {\it $S$-invariant measures}. First, we note that there are many $S$-invariant measures on $\mathcal{X}^{\alpha}$ which are not supported only on periodic functions: e.g. consider the Bernoulli measure on the space of bi-infinite sequences of $0,1$, and associate to each sequence a function $u$ by combining two arbitrary smooth profiles $u^0,u^1:[0,1] \rightarrow \mathbb{R}$, $u^0(0)=u^0(1)=u^1(0)=u^1(1)$, as in Example \ref{e:nondeg}.

We again define the zero function as in (\ref{d:zero}), i.e. by considering only zeroes in $[0,1)$ (thus $Z(\mu_0)$ is typically finite). As the measure is $S$-invariant, it is the same as considering only zeroes in any $[y,y+1)$, $y \in \mathbb{R}$. The $Z(\mu_0)$ can be interpreted, and indeed for ergodic\footnote{This holds if $\mu_0 \times \mu_0$ is $S \times S$-ergodic; see Subsection \ref{ss:characterize}.} $\mu_0$ is the same for $\mu_0$-a.e. $u_0,v_0$ as the {\it average density of zeroes}
\begin{equation*}
\lim_{n \rightarrow \infty} \frac{1}{2n}\sum_{k=-n}^{n-1} z(S^ku_0,S^kv_0)< \infty, 
\end{equation*}
(this follows from the Birkhoff ergodic theorem and measurability of $z$ established in Lemma \ref{l:3_1}). Now, $Z(\mu(t))$ is non-increasing in $t$, as the flux of zeroes through $x=0$ and $x=1$ by the $S$-invariance of the measure cancels out. Finally, it may be somewhat counter-intuitive that a single multiple zero for some $x \in [0,1)$ causes the entire density of zeroes on the infinite line to decrease. The rationale for this is that by the local structure of zeroes (Lemma \ref{l:local}), a multiple zero of $u(t)-v(t)$ persists in an open neighbourhood $U \times V$ of $(u(t),v(t))$ for some $\tilde{t}$ close to $t$. By Poincar\'{e} recurrence, if $u_0,v_0$ are in the support of a $S$-invariant measure, one can find a positive measure subset of $W \subset U \times V$ for which a positive density of $S \times S$-translates visit $W$, thus a single multiple zero implies existence of a set of positive measure with a positive density of multiple zeroes along the real line for times close to $t$. We make this ad-hoc argument rigorous by using standard ergodic-theoretical tools, combined with the well-established local and global structure of zeroes \cite{Angenent:88,Chen:98}.

Considering $S$-invariant measures and the ergodic attractor in the extended case is related to analysing asymptotics for $\mu_0$-a.e. initial condition with respect to any $S$-invariant measure $\mu_0$. This approach was already taken by Sinai \cite{Sinai:91} in his study of the forced viscous Burgers equation, as we discuss in Section \ref{s:burgers}. We establish in Proposition \ref{p:transversal} an example of a general result in this direction used later: for $S$-invariant $\mu_0$ and $\mu_0$-a.e. $u_0$, $\omega(u_0)$ consists of orbits which do not intersect non-transversally a given $S,T$-invariant solution $v_0$ (i.e. a spatially and temporally periodic orbit).

\subsection{Statements of the results: Burgers-like equations and uniqueness of invariant measures}\label{ss:unique}

The second part of the paper focuses on establishing sufficient conditions for uniqueness of an invariant measure and implications, or equivalently on proving the generalized versions of results of Sinai \cite{Sinai:96}, (i)-(iii), mentioned in the introduction, and established in Corollaries \ref{c:bounded}, \ref{c:asymptotics} and \ref{c:weak*} below. The main tools in the proof are Theorem \ref{t:main1} and the zero function on the space of probability measures.

We say that an equation is Burgers-like, if the following holds: 

\begin{itemize}
	\item[(B1)] {\it Sub-quadratic growth of non-linearity in $u_x$:} There exists an $\varepsilon >0$ and a continuous function $c : \mathbb{R}^+ \rightarrow \mathbb{R}^+$ such that
	\begin{align*}
	&|f(t,x,u,\xi)| \leq c(\rho)\left(1+|\xi|^{2 - \varepsilon} \right) \\
	& \hspace{5ex} (\rho > 0, \: (t,x,u,\xi) \in [0,1]\times [0,1] \times [-\rho,\rho] \times \mathbb{R}  ).
	\end{align*}
	
	\item[(B2)] {\it Weak dissipation:} There exists an upper semi-continuous function $l : \mathbb{R} \rightarrow \mathbb{R}^+$ such that: if $u_0 \in H^{2 \alpha}(\mathbb{S}^1)$, $\int_0^1 u_0(x)dx = y$ and $||u_0-y||_{L^{\infty}(\mathbb{S}^1)} \leq l(y)$; and if the solution of (\ref{r:main}), $u(t_0)=u_0$ exists on $(t_0,t_1)$ for some $t_1 >t_0$, then for every $t \in [t_0,t_1)$ we have $||u(t)-y||_{L^{\infty}(\mathbb{S}^1)} \leq l(y)$. Furthermore, the function $l$ satisfies
	\begin{equation}
	\lim_{y \rightarrow \infty}(y-l(y))=+\infty, \quad \quad \lim_{y \rightarrow -\infty}(y+l(y))=-\infty. \label{r:B2}
	\end{equation}
	
	\item[(B3)] {\it Invariance:} For every $u_0 \in H^{2 \alpha}(\mathbb{S}^1)$, if the solution of (\ref{r:main}), $u(t_0)=u_0$ exists on $(t_0,t_1)$ for some $t_1 >t_0$, then for every $t \in [t_0,t_1)$, we have that $\int_0^1 u(x,t)dx = \int_0^1 u(x,t_0)dx$.
\end{itemize}

Recall the ordering on $\mathcal{X}^{\alpha}$: we write $u_0 \leq v_0$ if $u_0(x)\leq v_0(x)$ for all $x \in \mathbb{S}^1$, resp. $x \in \mathbb{R}$; $u_0 \ll v_0$ if $u_0(x) < v_0(x)$ for all $x \in \mathbb{S}^1$, resp. $x \in \mathbb{R}$; and $u_0 < v_0$ if $u_0 \leq v_0$ but $u_0 \neq v_0$. A family in $\mathcal{X}^{\alpha}$ is strongly totally ordered, if for all $u_0,v_0$ in the family, we have either $u_0 = v_0$ or $u_0 \ll v_0$.

We state results only for the more general time-periodic case; modifications for the autonomous case are straightforward and commented on throughout the paper.

\begin{thm} \label{t:main2} Assume (A1-3) and (B1-3) in the time-periodic case. 
	
	(i) There exists a set ${\mathcal{V}}=\lbrace v^y_0, \: y \in \mathbb{R} \rbrace$, $v^y_0 \in H^{2 \alpha}(\mathbb{S}^1)$, satisfying that $y \mapsto v^y_0$ is continuous as a map $\mathbb{R} \rightarrow H^{2 \alpha}(\mathbb{S}^1)$, strictly increasing, and such that for all $y \in \mathbb{R}$, $v^y_0$ is $T$-invariant and $\int_0^1 v^y_0(x)dx=y$. Furthermore, it is a unique family with these properties.
	
	(ii) In the bounded case, and in the extended case if $\mathcal{E}$ is non-degenerate, we have that $\mathcal{E}=\mathcal{V}$.
	
	(iii) In the bounded case, for all $y \in \mathbb{R}$ there is a unique invariant measure on $\mathcal{B}_y:=\lbrace u_0 \in \mathcal{X}^{\alpha}, \int_0^1 u_0(x)dx=y\rbrace$, concentrated on a single $v^y_0\in \mathcal{B}_y$.
\end{thm}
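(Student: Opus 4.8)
The plan is to prove (i) by a compactness/fixed-point argument, deduce the order structure and uniqueness of the family from the zero number, obtain (ii) by sandwiching $\mathcal{V}$ between two inclusions, and read off (iii) as an immediate corollary of (ii).

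\emph{Existence in (i).} Fix $y \in \mathbb{R}$ and set $K_y := \{ u_0 \in H^{2\alpha}(\mathbb{S}^1) : \int_0^1 u_0\,dx = y, \ \lVert u_0 - y\rVert_{L^\infty} \le l(y)\}$. By (B3) this set lies in $\mathcal{B}_y$, and by (B2) it is positively invariant under the time-one map $T$; it is closed, bounded and convex. Condition (B1) rules out gradient blow-up and, together with (A1--3) and parabolic smoothing, makes $T$ continuous and $T(K_y)$ relatively compact in $H^{2\alpha}(\mathbb{S}^1)$. Schauder's fixed point theorem then yields $v^y_0 \in K_y$ with $T v^y_0 = v^y_0$: a spatially $1$-periodic, temporally $1$-periodic solution of mass $y$, hence both $T$- and $S$-invariant.

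\emph{Ordering, uniqueness and continuity in (i).} Let $v, \tilde{v}$ be two $T$-invariant solutions. Their difference $w = v - \tilde{v}$ solves a linear parabolic equation $w_t = w_{xx} + a w + b w_x$ with coefficients $1$-periodic in $x$ and $t$, and $w$ is itself $1$-periodic in $t$. Hence the zero number $z(v(t),\tilde{v}(t))$ is simultaneously non-increasing and $t$-periodic, therefore constant, so all zeros of $w$ are simple for every $t$; moreover (B3) gives that $\int_0^1 w\,dx = \int_0^1 v\,dx - \int_0^1 \tilde{v}\,dx$ is constant in $t$. The key point is to promote this to a \emph{strong total order}: two distinct $T$-invariant solutions satisfy $v \ll \tilde{v}$ or $\tilde{v} \ll v$. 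I would argue this by combining the constancy of the simple zero count with the strong monotonicity of $T$: by the strong maximum principle the associated monodromy operator is strongly positive and compact, so Krein--Rutman and the Sturm nodal ordering of its Floquet eigenfunctions apply, while mass conservation (B3) forces the sign-definite principal eigendirection to be exactly the mass-changing direction. On the mass constraint no sign-changing neutral mode then survives, which excludes a persistent pair of transversal crossings and yields single-signed $w$. Strong order makes $y \mapsto \int_0^1 v^y_0\,dx$ strictly increasing along the family, so each mass is attained by at most one invariant solution (uniqueness); continuity of $y \mapsto v^y_0$ then follows from uniqueness together with the compactness in $K_y$ and continuous dependence on data (a closed-graph argument).

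\emph{(ii) and (iii).} For $\mathcal{V} \subseteq \mathcal{E}$, each $v^y_0$ is $T$- and $S$-invariant, so $\delta_{v^y_0}$ is an invariant measure and $\{v^y_0\} \subseteq \mathcal{E}$. For the reverse inclusion, take $u_0 \in \mathcal{E}$ of mass $y$; it lies in the support of some invariant measure, so by the constancy of the zero function $Z$ on invariant measures any two points of that support intersect transversally, and by Proposition \ref{p:transversal} applied to the $S,T$-invariant solution $v^y_0$ the orbit of $u_0$ never meets $v^y_0$ non-transversally. Thus $z(\cdot, v^y_0)$ is constant with only simple zeros along the recurrent orbit of $u_0$; since $u_0 - v^y_0$ has mean zero, the monotonicity argument above rules out a nonzero constant count, forcing $u_0 = v^y_0$. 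This uses Theorem \ref{t:main1} in the bounded case and Theorem \ref{t:main1b} in the extended non-degenerate case, and proves $\mathcal{E} = \mathcal{V}$. Finally (iii) is immediate: any invariant measure $\mu$ on $\mathcal{B}_y$ has support in $\mathcal{E} \cap \mathcal{B}_y = \mathcal{V} \cap \mathcal{B}_y = \{v^y_0\}$, whence $\mu = \delta_{v^y_0}$.

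\emph{Main obstacle.} The crux is the strong ordering of $T$-invariant solutions, equivalently excluding two distinct equal-mass invariant solutions and, more generally, any persistent transversal crossing of $v^y_0$ by an equal-mass recurrent orbit. The zero number delivers constancy and simplicity of the crossings essentially for free, but converting ``a constant even number of simple crossings'' into ``no crossings'' is precisely where the monotone structure, the Krein--Rutman nodal hierarchy of the monodromy operator, and the mass constraint (B3) must be brought together.
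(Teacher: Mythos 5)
Your fiber-wise Schauder step is essentially sound (modulo the remark that $K_y$ is not bounded in $H^{2\alpha}(\mathbb{S}^1)$ --- only positive invariance and relative compactness of $T(K_y)$ matter for Schauder), and your observation that the zero number of the difference of two $T$-invariant solutions is constant in time, hence all its zeros are simple, is correct. But the proposal has a genuine gap exactly where you flag it: nothing you write converts ``a constant positive number of simple crossings'' into ``no crossings.'' The Krein--Rutman/Floquet sketch cannot do this as stated: the difference $w = v - \tilde v$ of two $T$-invariant solutions is not an eigenfunction of either monodromy operator (it solves a linear parabolic equation whose coefficients depend on both solutions), so the nodal hierarchy of Floquet eigenfunctions says nothing directly about the sign structure of $w$; and the assertion that the mass constraint (B3) kills every sign-changing ``neutral mode'' is a heuristic, not an argument. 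Since uniqueness, strict monotonicity, continuity of $y \mapsto v^y_0$ (your closed-graph argument needs uniqueness first), and all of (ii)--(iii) rest on this unproven ordering claim, the proposal does not close.

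The paper avoids the issue by never attempting to order two arbitrary $T$-invariant solutions directly. In Proposition \ref{l:C1}, Schauder is applied not fiber-by-fiber but to a space of entire ordered families: continuous maps $y \mapsto w^y_0$ on $[-n,n]$ satisfying the mass constraint, the $L^\infty$ bound from (B2), the monotonicity condition (\ref{r:schC}), and uniform $\mathcal{X}^{\gamma}$ bounds, so that ordering and continuity in $y$ come for free from the construction rather than being deduced afterwards. With a continuous, ordered family whose envelopes are onto $\mathbb{R}$, transversal crossings are then reduced to tangencies by a sweeping argument: if $u_0 \in \mathcal{E}$ crossed some member of $\mathcal{V}$, one takes $y_2 = \min\lbrace y' : u_0 \leq v^{y'}_0 \rbrace$, and at $y_2$ the difference $u_0 - v^{y_2}_0$ has a multiple zero; since $v^{y_2}_0 \in \mathcal{E}$ and $\mathcal{E}$ consists of entire orbits, this contradicts Proposition \ref{p:diszero1} (resp.\ Proposition \ref{p:final} in the extended case), i.e.\ the ergodic Poincar\'e--Bendixson theorem. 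This is exactly Lemma \ref{l:ii}, and the same tangency trick yields uniqueness of the family in Lemma \ref{l:unique}. Note also that in the extended case your treatment of (ii) skips a step the paper cannot avoid: the crossing analysis only gives $u_0 \in \mathcal{V} \cup \mathcal{H}$, and the spatially heteroclinic set $\mathcal{H}$ is eliminated from supports of $S$-invariant measures by the Poincar\'e recurrence theorem. If you wish to keep your fiber-wise existence step, the cleanest repair is to import the paper's sweeping mechanism, which requires establishing ordering and continuity of the family before, not after, uniqueness.
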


\begin{remark} 
	Note that in the time-periodic case, if $u_0$ is $T$-invariant, the solution $u(t)$ of (\ref{r:main}), $u(0)=u_0$ is not necessarily constant. It is $1$-periodic, i.e. has the same temporal periodicity as the nonlinearity.  
\end{remark}

We can now recover the conclusion (i) by Sinai on asymptotics of the Burgers equation in the bounded case, by applying general techniques of the order-preserving dynamics (in particular the Nonorderedness principle valid in the bounded case due to Hirsch \cite{Hirsch:88}; see also \cite{Polacik:02}, Section 3):
\begin{corollary} \label{c:bounded}
	Assume (A1-3) and (B1-3) in the B/P case. Then for each $u_0 \in \mathcal{X}^{\alpha}$, $\omega(u_0)=\lbrace v^{y_0}_0\rbrace$, where $y_0=\int_0^1 u_0(x)dx$ and $v^y_0$ is as in Theorem \ref{t:main2}, (i).
\end{corollary}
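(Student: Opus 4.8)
The plan is to combine the identification of the ergodic attractor from Theorem \ref{t:main2} with the classical theory of order-preserving semiflows in the bounded case, using the Nonorderedness Principle of Hirsch as the decisive tool. First I would fix $u_0 \in \mathcal{X}^{\alpha}$ and set $y_0 = \int_0^1 u_0(x)\,dx$. By the invariance assumption (B3), the solution stays in the fibre $\mathcal{B}_{y_0} = \{u \in \mathcal{X}^{\alpha} : \int_0^1 u\,dx = y_0\}$ for all time, and by the dissipativity condition (B2) together with (A4) the orbit remains in a bounded, closed, invariant set, hence is relatively compact in the chosen topology. Therefore $\omega(u_0)$ is a nonempty, compact, connected, invariant subset of $\mathcal{B}_{y_0}$. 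The goal is to show it reduces to the single $T$-invariant point $v^{y_0}_0$ supplied by Theorem \ref{t:main2}(i).

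Next I would invoke the structure of $\omega$-limit sets. Any point $w_0 \in \omega(u_0)$ lies on a full bounded orbit contained in $\omega(u_0)$, so $\omega(u_0)$ supports at least one invariant probability measure (take any limit point of the Cesàro averages of $T^n$-pushforwards of the Dirac mass at $w_0$, which is invariant and supported inside $\omega(u_0)$). Consequently $\omega(u_0)$ meets the ergodic attractor $\mathcal{E}$, and by Theorem \ref{t:main2}(ii)–(iii) the only invariant measure living in the fibre $\mathcal{B}_{y_0}$ is the Dirac measure on $v^{y_0}_0$; thus $v^{y_0}_0 \in \omega(u_0)$. It remains to rule out that $\omega(u_0)$ contains anything else. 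Here is where the order structure enters: in the bounded case, two distinct solutions that are ordered at one time stay ordered (the semiflow is monotone, and by the strong maximum principle for the scalar parabolic equation the ordering becomes strict, $\ll$, instantly). Hirsch's Nonorderedness Principle states that a compact $\omega$-limit set of a strongly monotone semiflow cannot contain two points $w_0 < w_0'$ that are related by the strong order $\ll$; equivalently, $\omega(u_0)$ is unordered.

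The main obstacle, and the crux of the argument, is to deduce uniqueness of the limit point from unorderedness together with the conservation of mass. Suppose for contradiction that $\omega(u_0)$ contains a point $w_0 \neq v^{y_0}_0$. Both lie in $\mathcal{B}_{y_0}$, so $\int_0^1 (w_0 - v^{y_0}_0)\,dx = 0$; since $w_0 \neq v^{y_0}_0$, the difference $w_0 - v^{y_0}_0$ must change sign, i.e. it has at least one zero and is not of one sign. I would now use the zero-number machinery: because $v^{y_0}_0$ is a $T$-invariant (spatially nonconstant in general) solution and $w_0$ lies on a full orbit in $\omega(u_0)$, the difference of the two solutions has, for large negative time, a finite and eventually constant zero number, and any sign-changing relation produces a nontrivial, ordered pair of translates or time-shifts — the precise mechanism I would extract from Proposition \ref{p:transversal} and the strict-ordering consequence of the maximum principle — contradicting unorderedness of $\omega(u_0)$. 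More directly: the monotone structure forces that $w_0$ and $v^{y_0}_0$ be comparable in the $\leq$ order or that $\omega(u_0)$ contain a comparable pair, and comparability combined with equal mass forces equality, while incomparability with a sign change contradicts Hirsch's principle. Either way $w_0 = v^{y_0}_0$, so $\omega(u_0) = \{v^{y_0}_0\}$. The delicate point is to make the passage from ``sign-changing difference'' to ``ordered pair in the limit set'' rigorous; I expect this to be the step requiring the strong maximum principle and the finiteness/monotonicity of the zero number, and it is where the full force of the one-dimensional scalar parabolic structure is used.
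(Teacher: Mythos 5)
Your first half is sound and is essentially a legitimate variant of the paper's route: the paper gets $v^{y_0}_0 \in \omega(u_0)$ by showing $\bar{\omega}(u_0)$ is non-empty and contained in $\mathcal{E}=\mathcal{V}$ (Lemmas \ref{l:onaverage}, \ref{l:subset}, \ref{l:ii}), while you run a Krylov--Bogolyubov/Ces\`{a}ro argument on a full orbit inside $\omega(u_0)$ and then invoke Theorem \ref{t:main2}(ii)--(iii); both give the same conclusion, and your use of (B3) to confine everything to the fibre $\mathcal{B}_{y_0}$ is correct (two small blemishes: (A4) is not a hypothesis of the corollary and must itself be derived from (B1--2)/(C1) as in Lemma \ref{l:unbound}, and $\omega$-limit sets of the discrete-time map $T$ need not be connected, though you never really use connectedness).

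The genuine gap is in your final step, and it is not merely a matter of ``making it rigorous'': Hirsch's Nonorderedness Principle cannot produce the contradiction you want. If $w_0 \in \omega(u_0)$, $w_0 \neq v^{y_0}_0$, then since both have mean $y_0$, the difference $w_0 - v^{y_0}_0$ changes sign, so $w_0$ and $v^{y_0}_0$ are \emph{automatically} unordered; unorderedness of $\omega(u_0)$ is therefore vacuously consistent with $\omega(u_0)$ containing a whole continuum of equal-mass points, and your dichotomy (``comparability forces equality, incomparability contradicts Hirsch'') collapses, because incomparability \emph{is} nonorderedness, not a violation of it. The paper's proof (Lemma \ref{l:bbounded}) uses a different and genuinely necessary device: the ordered continuum $\mathcal{V}$ itself serves as a family of $T$-invariant barriers. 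Since the family is strictly increasing and each $v^y_0$ is $T$-invariant, strong monotonicity gives $v^{y_0-\delta}_0 \ll v^{y_0}_0 \ll v^{y_0+\delta}_0$, hence a uniform gap on the compact circle; because $T^{n_j}u_0 \to v^{y_0}_0$ in $\mathcal{X}^{\alpha} \hookrightarrow C^1(\mathbb{S}^1)$ along a subsequence, some iterate satisfies $v^{y_0-\delta}_0 \leq T^{k_0}u_0 \leq v^{y_0+\delta}_0$, and the comparison principle together with $T$-invariance of the barriers propagates this ordering to all $k \geq k_0$. Thus $\omega(u_0)$ lies in the order interval $[v^{y_0-\delta}_0, v^{y_0+\delta}_0]$ for every $\delta>0$, and continuity of $y \mapsto v^y_0$ lets $\delta \to 0$, giving $\omega(u_0)=\lbrace v^{y_0}_0\rbrace$. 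This sandwich argument uses neither the nonordering principle nor the zero-number machinery (nor Proposition \ref{p:transversal}, which concerns the extended case); what it uses, and what your proposal is missing, is the existence of strictly ordered invariant states arbitrarily close to $v^{y_0}_0$ on both sides.
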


Let $\mathcal{V}$ be as in Theorem \ref{t:main2}, (i). To establish conclusions in the extended case, we again require a technical condition of finite density of zeroes:

\begin{itemize}
	\item[(N1)] Assume in the extended case that $\mu_0$ is a $S$-invariant Borel probability measure on $\mathcal{X}^{\alpha}$, supported on a set bounded in $\mathcal{X}^{\alpha}$, such that for every $v_0 \in \mathcal{V}$, and $\mu_0$-a.e. $u_0$, (\ref{d:density}) holds.
\end{itemize}

We give examples of many non-trivial measures satisfying (N1) without any a-priori knowledge of $\mathcal{V}$ in Remark \ref{e:nondeg}.

We denote by $\mathcal{H}$ the family (possibly empty) of all {\it spatially heteroclinic solutions} associated to $\mathcal{V}$, i.e. such that for $h_0 \in \mathcal{H}$, the solution of (\ref{r:main}), $h(0)=h_0$ exists for all $t \in \mathbb{R}$, such that $h_0$ intersect each $v^y_0 \in \mathcal{V}$ at most once, transversally, and such that for some $y_1 \neq y_2$, and for all $t \in \mathbb{R}$, $\lim_{x \rightarrow -\infty}|h_0(x)-v^{y_1}(x)|=0$, $\lim_{x \rightarrow \infty}|h_0(x)-v^{y_2}(x)|=0$. Note that by continuity of solutions with respect to initial conditions, we then have for all $t \in \mathbb{R}$:
\begin{equation}
\lim_{x \rightarrow -\infty}|h(x,t)-v^{y_1}(x,t)|=0, \hspace{5ex} \lim_{x \rightarrow \infty}|h(x,t)-v^{y_2}(x,t)|=0.
\end{equation}

We will establish the following:

\begin{corollary} \label{c:asymptotics}
	Assume (A1-3), (B1-3) in the E/P case, and let $\mu_0$ satisfy (N1).
	
	(i) For $\mu_0$-a.e. $u_0$, we have that $\omega(u_0)\subset \mathcal{V} \cup \mathcal{H}$.
	
	(ii) $\omega$-limit set of evolution of $\mu_0$ in the weak$^*$-topology consists of measures supported on $\mathcal{V}$.
\end{corollary}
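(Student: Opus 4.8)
I'll treat the two parts separately; it is cleanest to begin with (i). Fix a countable dense set $\{y_j\}\subset\mathbb{R}$ and apply Proposition~\ref{p:transversal} to each $S,T$-invariant solution $v^{y_j}_0\in\mathcal V$; intersecting these countably many full-measure sets, for $\mu_0$-a.e.\ $u_0$ every orbit $w_0\in\omega(u_0)$ meets each $v^{y_j}_0$ only transversally. By Theorem~\ref{t:main2}(i) the map $y\mapsto v^y_0$ is continuous and strictly increasing, and by (B2) $v^y_0(x)\to\pm\infty$ as $y\to\pm\infty$, uniformly in $x$; so for each such $w_0$ I define the level function $Y=Y_{w_0}:\mathbb{R}\to\mathbb{R}$ by $w_0(x)=v^{Y(x)}_0(x)$. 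This $Y$ is continuous, and bounded because $w_0$ is bounded in $C^1$ while the $v^y_0$ escape to $\pm\infty$, and the zeroes of $w_0-v^y_0$ on $\mathbb{R}$ are exactly the points of $Y^{-1}(y)$.

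The classification reduces to showing $Y$ is monotone. If $Y$ is constant then $w_0=v^y_0\in\mathcal V$; if $Y$ is strictly monotone it is injective, so $w_0$ meets each $v^y_0$ at most once and (once tangencies are excluded, below) transversally, the finite limits $y_\mp=\lim_{x\to\mp\infty}Y(x)$ are distinct, and continuity of $y\mapsto v^y_0$ into $C^0(\mathbb{S}^1)$ gives $|w_0(x)-v^{y_\pm}_0(x)|\le\|v^{Y(x)}_0-v^{y_\pm}_0\|_{L^\infty(\mathbb{S}^1)}\to0$ as $x\to\pm\infty$, so $w_0\in\mathcal H$. To prove monotonicity I argue by contradiction: a non-monotone continuous $Y$ has an interior local extremum at some $x^*$ with value $y^*$. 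If $Y$ is constant near $x^*$ then $w_0=v^{y^*}_0$ on an interval, forcing $w_0\equiv v^{y^*}_0\in\mathcal V$ by the structure of zero sets (\cite{Angenent:88,Chen:98}); otherwise $w_0-v^{y^*}_0$ has an interior extremum equal to $0$ at $x^*$, so $(w_0)_x(x^*)=(v^{y^*}_0)_x(x^*)$ and $w_0$ is tangent to $v^{y^*}_0$.

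The main obstacle is exactly this tangency. The value $y^*$ it produces need not lie in $\{y_j\}$, and a single interior bump of $Y$ yields a tangency at only one value of $y$, a Lebesgue-null set; thus neither the countable dense family nor a Fubini upgrade to a.e.\ $y$ suffices — transversality is genuinely needed for the entire continuum $y\in\mathbb{R}$. I would close the gap by strengthening Proposition~\ref{p:transversal} to hold simultaneously for all $v^y_0$: re-running its proof with the whole family $\mathcal V$, using that the zero-function $Z$ on measures is non-increasing in time and, by (N1), finite for the relevant pairings, while Poincar\'e recurrence of the $S$-invariant dynamics forces a single $\mu_0$-null exceptional set outside which no tangency with any $v^y_0$ occurs along $\omega(u_0)$. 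Concretely, for the pair $(w_0,v^{y^*}_0)$ with $v^{y^*}_0$ $S$-fixed, the average density $\lim_n\frac1{2n}\sum_{k}z(S^kw_0,v^{y^*}_0)$ is finite by (N1), non-increasing along the entire orbit, and would drop strictly at a tangency, which recurrence forbids. Establishing this uniform-in-$y$ transversality is the crux of (i); granting it, $Y$ is monotone and $w_0\in\mathcal V\cup\mathcal H$.

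For (ii) I would use part (i). The evolved measures $\mu_n:=(T^n)_*\mu_0$ are $S$-invariant, since $S$ and $T$ commute, and are supported on the weak$^*$-compact set $\mathcal B$; hence the weak$^*$ $\omega$-limit set of $(\mu_n)$ is nonempty and each of its elements $\nu$ is $S$-invariant. By (i) and compactness of $\mathcal B$ in the coarser topology, $\operatorname{dist}(T^nu_0,\mathcal V\cup\mathcal H)\to0$ for $\mu_0$-a.e.\ $u_0$, so $\nu$ is supported on $\overline{\mathcal V\cup\mathcal H}$. Finally, any element of $\overline{\mathcal V\cup\mathcal H}$ outside $\mathcal V$ carries a transition layer and is not $S$-recurrent — its spatial shifts $S^k$ converge as $k\to\pm\infty$ to elements of $\mathcal V$ rather than returning — so, $\nu$ being $S$-invariant, Poincar\'e recurrence forces $\nu$-a.e.\ point into $\mathcal V$, i.e.\ $\nu(\mathcal V)=1$. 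This is (ii); I expect the precise description of the $S$-recurrent points of $\overline{\mathcal V\cup\mathcal H}$ to be the only delicate point here.
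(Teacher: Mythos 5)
Your part (i) contains a genuine gap, and it sits exactly where you placed the weight of the argument. You correctly reduce (i) to excluding tangencies of $w_0\in\omega(u_0)$ with \emph{every} $v^y_0$, $y\in\mathbb{R}$, but you then dismiss the countable-dense-family argument on the grounds that a tangency occurs at only a single value $y^*$, so that rational $y$'s ``cannot see it''. That dismissal is wrong, and it is precisely the point at which the paper closes the proof (Lemma \ref{l:transversal}). Your intuition is static: at a fixed time the tangency indeed picks out one $y^*$. Dynamically, however, a multiple zero of $z(t)-v^{y^*}(t)$ is the signature of zeroes annihilating, i.e.\ of strictly positive dissipation $d$, and by the persistence property of the dissipation (Lemma \ref{l:3_3}, which rests on the local structure of zeroes, Lemma \ref{l:local}) this positivity survives any sufficiently small perturbation of the pair in $\tilde{\mathcal{B}}^2$. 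Since $y\mapsto v^y_0$ is continuous into $H^{2\alpha}(\mathbb{S}^1)\hookrightarrow C^1$, it follows that for all $y$ in an open interval around $y^*$ the difference $z-v^{y}$ has a multiple zero at some nearby space-time point. In particular it has one for some \emph{rational} $y$, contradicting Proposition \ref{p:transversal} applied to the countably many $v^y_0$, $y\in\mathbb{Q}$, and intersecting the resulting full-measure sets. So the route you rejected is both sufficient and is the paper's actual proof; no strengthening of Proposition \ref{p:transversal} is needed.

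Your proposed replacement does not repair this, because it runs into the same uncountability problem it was meant to avoid: both (N1) and Proposition \ref{p:transversal} produce one full-measure set \emph{per fixed} $v_0\in\mathcal{V}$, while the pair $(w_0,v^{y^*}_0)$ in your sketch has $y^*$ depending on $w_0$, so no Borel--Cantelli or recurrence argument applies simultaneously to the whole continuum without first proving the openness-in-$y$ of tangencies described above --- and once that is available, the original countable argument already finishes the proof. Note also that (N1) gives finiteness of the density of zeroes for $\mu_0$-a.e.\ $u_0$ paired with each $v_0\in\mathcal{V}$, not for points $w_0\in\omega(u_0)$, so even the finiteness you invoke for $(w_0,v^{y^*}_0)$ is not literally granted; in the paper this issue never arises because the tangency is moved back, via the contradiction argument, to the orbit of $u_0$ itself. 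Your part (ii) is essentially the paper's proof (Lemma \ref{l:weak*}): $S$-invariance of weak$^*$ limit measures, support contained in the union of $\omega$-limit sets, and Poincar\'{e} recurrence killing $\mathcal{H}$ since no spatially heteroclinic orbit is $S$-recurrent; working with the closure of $\mathcal{V}\cup\mathcal{H}$ rather than with $\mathcal{V}\cup\mathcal{H}$ itself is an unnecessary complication but not an error, and your reduction of (i) to monotonicity of the level function $Y$ is a harmless reformulation of the paper's two-intersections argument in Lemma \ref{l:ii}.
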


To fully recover Sinai's conclusions (ii), (iii) in the extended case, we also require an additional control of the "oscillations" around the quantity conserved in the bounded case:

\begin{itemize}
	\item[(N2)] Assume in the extended case that $\mu_0$ is a $S$-invariant Borel-probability measure on $\mathcal{X}^{\alpha}$. Let $y_0 = \int \int_0^1u_0(x) \, dx \, d\mu_0(u_0)$, and assume that there exists $C>0$ and that for $\mu_0$-a.e. $u_0$, and all $t \geq 0$, $x \in \mathbb{R}$, 
	$$
	\left| \int_0^x u(z,t)dz - x\, y_0 \right| \leq C,
	$$
	where $u(t)$ is a solution of $(\ref{r:main})$, $u(0)=u_0$.
\end{itemize}

\begin{corollary} \label{c:weak*}
	Assume (A1-3), (B1-3) in the E/P case, and let $\mu_0$ satisfy (N1), (N2). Then:
	
	(ii) For $\mu_0$-a.e. $u_0$, we have $\omega(u_0)=\lbrace v^{y_0} \rbrace$.
	
	(ii) The $\omega$-limit set of $\mu_0$ is $\delta_{v^{y_0}}$, i.e. the Dirac measure concentrated on $v^{y_0} \in \mathcal{V}$.
\end{corollary}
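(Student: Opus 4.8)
The plan is to start from Corollary \ref{c:asymptotics}, which already gives, for $\mu_0$-a.e. $u_0$, that $\omega(u_0)\subset\mathcal{V}\cup\mathcal{H}$, and then to use the bounded-oscillation condition (N2) to eliminate the heteroclinic part $\mathcal{H}$ and to pin the level of the remaining limit in $\mathcal{V}$ at $y_0$. The bridge between (N2) and the structure of $\mathcal{V}\cup\mathcal{H}$ is the primitive $x\mapsto\int_0^x(\cdot)(z)\,dz$. For a spatially $1$-periodic $v_0^y\in\mathcal{V}$, writing $v_0^y=y+\tilde v$ with $\tilde v$ of period $1$ and mean zero, one has $\int_0^x v_0^y(z)\,dz=yx+O(1)$; for a heteroclinic $h_0\in\mathcal{H}$ connecting $v_0^{y_1}$ and $v_0^{y_2}$ one has $\int_0^x h_0(z)\,dz=y_2x+o(x)$ as $x\to+\infty$ and $\int_0^x h_0(z)\,dz=y_1x+o(|x|)$ as $x\to-\infty$, since $h_0-v_0^{y_i}\to 0$ at the respective ends and a Cesàro average of a null sequence is null.

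First I would establish the a.e. convergence of $\omega$-limits. Fix a $\mu_0$-generic $u_0$ for which both Corollary \ref{c:asymptotics}(i) and the estimate in (N2) hold, let $w_0\in\omega(u_0)$, and choose $n_k\to\infty$ with $T^{n_k}u_0\to w_0$ in the coarser topology. Since convergence in this topology entails local uniform convergence (Appendix \ref{s:fractional}), $\int_0^x(T^{n_k}u_0)(z)\,dz\to\int_0^x w_0(z)\,dz$ for each fixed $x$; passing to the limit in the (N2) bound $\bigl|\int_0^x(T^{n_k}u_0)(z)\,dz-x\,y_0\bigr|\leq C$ yields $\bigl|\int_0^x w_0(z)\,dz-x\,y_0\bigr|\leq C$ for all $x\in\mathbb{R}$. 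If $w_0=h_0\in\mathcal{H}$, dividing by $x$ and letting $x\to\pm\infty$, the asymptotics above force $y_1=y_0=y_2$, contradicting $y_1\neq y_2$; hence $\omega(u_0)\cap\mathcal{H}=\emptyset$ and $\omega(u_0)\subset\mathcal{V}$. If $w_0=v_0^y\in\mathcal{V}$, the same division and limit force $y=y_0$. Since the support of $\mu_0$ is bounded by (N1), the orbit is relatively compact and $\omega(u_0)\neq\emptyset$, so $\omega(u_0)=\{v_0^{y_0}\}$, proving the first claim.

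For the measure statement I would upgrade the single-point $\omega$-limit to genuine convergence: as $\{T^nu_0\}$ is relatively compact with unique limit point $v_0^{y_0}$, we get $T^nu_0\to v_0^{y_0}$ for $\mu_0$-a.e. $u_0$. Then for any bounded continuous $\phi$ on $\mathcal{X}^\alpha$ we have $\phi(T^nu_0)\to\phi(v_0^{y_0})$ $\mu_0$-a.e., and dominated convergence gives $\int\phi\,d\mu(n)=\int\phi(T^nu_0)\,d\mu_0\to\phi(v_0^{y_0})=\int\phi\,d\delta_{v_0^{y_0}}$, i.e. $\mu(n)\to\delta_{v_0^{y_0}}$ in the weak$^*$ topology. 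Since $v_0^{y_0}$ is $T$- and $S$-invariant, $\delta_{v_0^{y_0}}$ is an invariant measure, so the $\omega$-limit set of $\mu_0$ is exactly $\{\delta_{v_0^{y_0}}\}$.

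The main obstacle I anticipate is the careful handling of the primitive at the level of the coarser topology: one must make sure the (N2) bound genuinely survives the limit $T^{n_k}u_0\to w_0$ on the $\mu_0$-full set where (N2) holds, and that the $o(x)$ heteroclinic asymptotics are robust enough to beat the fixed constant $C$ after division by $x$. A secondary point is the passage from a unique $\omega$-limit point to full convergence, and the use of dominated convergence at the measure level; both rely on relative compactness and metrizability of bounded orbits in the coarser topology (as in the setting of Section \ref{ss:general}), which should be invoked explicitly rather than taken for granted.
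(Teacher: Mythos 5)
Your proposal is correct and follows essentially the same route as the paper: the paper introduces the Ces\`{a}ro-mean condition (C2) on $\omega$-limit points, notes that (N2) implies (C2) (your limit-passing argument with the primitive), and that (C2) eliminates $\mathcal{H}$ and pins the level at $y_0$ in Corollary \ref{c:asymptotics} — exactly your steps, which the paper leaves as "straightforward"/"easy to check". Your write-up simply supplies the details (including the upgrade from a singleton $\omega$-limit set to full convergence and the dominated-convergence step at the measure level) that the paper's terse proof of Lemma \ref{l:last} omits.
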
	

In Sections \ref{s:burgers} and \ref{s:short} we will give more general conditions (C1) and (C2) which can replace (B2-3) and (N2) respectively. For example, we will show that existence of a 1-dimensional, ordered family $\mathcal{V}$ as in Theorem \ref{t:main2},(i) suffices instead of the invariance property (B3). In Section \ref{burgers} we will see that the Burgers equation and a generalized form of it satisfy (A1-3), (B1-3); and that (N2), and in a certain sense (N1), were also originally assumed by Sinai \cite{Sinai:91}, thus our results are indeed a generalization of the aforementioned results for the Burgers equation.

\subsection{The structure of the paper} The paper is structured as follows: in Section \ref{s:prelim} we give the required background on existence of solutions of (\ref{r:main}), the choice of topologies (some of the technical definitions and remarks are moved to Appendix \ref{s:fractional}) and notation. We then in Sections \ref{s:zeros} and \ref{s:balance} recall the key properties of the zero number as the key tool, introduce the balance law of zeroes, and other key properties of the zero flux and the zero dissipation. We complete the second part of the paper by proving ergodic Poincar\'{e}-Bendixon theorems in Sections \ref{s:bounded} and \ref{s:extended}. In Sections \ref{s:notrans}-\ref{s:short} we prove results for Burgers-like equations in four logical steps divided into sections. In the fourth part of the paper, we show that the results apply to a family of generalized Burgers equation, then apply our theorems to other general and specific examples, and finally list some open problems. In Appendix \ref{s:fractional} we comment on function spaces in the extended case, and in Appendix \ref{s:ergodic} give interpretations of the ergodic attractor.

\begin{remark}
	All the results also hold for the equations $u_{t}=\varepsilon u_{xx}+g(t,x,u,u_{x})$, $\varepsilon >0$.
\end{remark}

\begin{remark}
	Theorems \ref{t:main1} and \ref{t:main1b} were already announced in \cite{Slijepcevic13b}, with derived further implications to the topological entropy of (\ref{r:main}) in all four cases considered here. All the results in \cite{Slijepcevic13b} in the extended case hold under an additional assumption of non-degeneracy of $\mathcal{E}$.
\end{remark}

\section{The function spaces and notation} \label{s:prelim}

In the autonomous case, (\ref{r:main}) with the assumptions (A1-4) generates a continuous semiflow on $\mathcal{X}^{\alpha}$, $3/4 < \alpha < 1$ (see e.g. \cite{Henry,Lunardi:95,Polacik:02} in the bounded case, and \cite{Gallay:01} in the extended case), denoted by $T(t)$, $t \geq 0$. In the extended case, we consider dynamics with respect to the continuous time-one map $T : \mathcal{X}^{\alpha} \rightarrow \mathcal{X}^{\alpha}$. We use the graph norm on $\mathcal{X}^{\alpha}$, $0 < \alpha < 1$:
$$
||u||_{\mathcal{X}^{\alpha}}:= ||A_1^{\alpha}u||_{\mathcal{X}},
$$
where $Au=-u_{xx}$ is the linear operator on $\mathcal{X}$ with the domain $D(A)=H^2(\mathbb{S}^1)$, resp. $D(A)=H^2_{\text{ul}}(\mathbb{R})$, with $A_1 = A + I$, and $A_1^{\alpha}$ is the standard fractional power (see \cite[Section 1.4]{Henry} and also Appendix \ref{s:fractional} for the extended case).

Let $\mathcal{B}$ be as in (A4). In the extended case, we need to equip $\mathcal{B}$ with a coarser topology to ensure compactness of invariant sets (see e.g. \cite{Gallay:01,Gallay:12} for a detailed discussion). We choose the topology of locally uniform convergence; however many choices induce equivalent topology on closed, bounded invariant sets (see Lemma \ref{l:topology} and the related discussion in Appendix \ref{s:fractional}). The semiflow $T(t)$, resp. the map $T$ are continuous also in the coarser topology.

Fix throughout the paper a (small) $\delta_0< 0$. Let $\tilde{ \mathcal{B}}$ be the closure of the set of all $u_0 \in \mathcal{B}$ for which the solution of (\ref{r:main}), $u(0)=u_0$ exists backwards in time on $\mathcal{B}$ for $t \in (\delta_0,0)$ (not necessarily uniquely). Then  $\tilde{ \mathcal{B}}$ is invariant and compact. In the bounded case, this follows from the compact embedding of $X^{\gamma}$ in $X^{\alpha}$ and the variation of constants formula. Similarly this can be established in the extended case (see Lemma \ref{l:topology}, also \cite{Gallay:01} for further discussion). As all the trajectories are eventually in $\tilde{\mathcal{B}}$, it suffices to consider the dynamics and invariant measures on $\tilde{\mathcal{B}}$.

Whenever required for clarity, we denote elements of $\mathcal{B}$ and $\tilde{\mathcal{B}}$ with indices as $u_0,v_0,$... and by $u(t),v(t),...$ the solutions of (\ref{r:main}) with the initial conditions $u_0,v_0,$... respectively. Let $\mathcal{M}(\mathcal{B})$ be the set of all the invariant Borel probability measures on $\mathcal{B}$ (invariant with respect to actions in Table \ref{t:table}). Analogously we denote the measures on $\mathcal{B}$ with indices as $\mu_0,\nu_0$,..., and by $\mu(t),\nu(t)$,... their evolution, i.e. pushed $\mu(0)=\mu_0$ with respect to the time-$t$ map generated by (\ref{r:main}).

The proofs require considering dynamics of two replicas of (\ref{r:main}), i.e. a dynamical system on $\tilde{\mathcal{X}}^2$. We use $\hat{.}$ to denote certain functions on $\tilde{\mathcal{X}}^2$, e.g. $\hat{S}=S \times S$, $\hat{T}=T \times T$.

We frequently use the fact that (\ref{r:main}) is strongly monotone, i.e. that if $u(t_0) < v(t_0)$, then for all $t \geq t_0$ for which both solutions exist, $u(t) \gg v(t)$.

Assuming (A1-4), the equation (\ref{r:main}) considered on $\mathcal{B}$ admits an attractor $\mathcal{A}$ (\cite{Raugel:02}, Section 2.3), which is unique, compact, and characterized as the set of all $u_0 \in \mathcal{B}$ such that the solution of (\ref{r:main}), $u(0)=u_0$ exists for all $t \in \mathbb{R}$. The attractor $\mathcal{A}$ depends on the choice of $\mathcal{B}$, thus we write $\mathcal{A}(\mathcal{B})$ when the choice of a set $\mathcal{B}$ satisfying (A4) is not clear from the context. 

Finally, we note the properties essential for considerations involving the zero number. 

\begin{lemma} \label{l:c1} Fix $t_0 \in \mathbb{R}$.
	
	(i) For any $t >t_0$, $x,y \in \mathbb{R}$, $x < y$, the mapping $\mathcal{\tilde{B}} \mapsto C^1([x,y])$ defined with $u(t_0) \mapsto u(.,t)|_{[x,y]}$ is continuous.
	
	(ii) For any $t > s > t_0$, $x \in \mathbb{R}$, the mapping $\mathcal{\tilde{B}} \mapsto C^1([s,t])$ defined with $u(t_0) \mapsto u(x,.)|_{[s,t]}$ is continuous.
\end{lemma}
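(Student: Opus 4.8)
The plan is to derive both parts from two standard facts about the semiflow generated by (\ref{r:main}): continuity of $T(t)$ (resp.\ $T$) in the topology carried by $\tilde{\mathcal{B}}$, and the parabolic smoothing property, which for times bounded away from $t_0$ upgrades boundedness in $\mathcal{X}^\alpha$ to boundedness in strictly higher-regularity spaces. The role of smoothing is to convert the (possibly weak) convergence coming from continuity of the semiflow into the $C^1$-trace convergence demanded here, by means of a compactness argument. Concretely, I would first record the uniform regularity input: since $\tilde{\mathcal{B}}$ is invariant and bounded in $\mathcal{X}^\alpha$ (the $\mathcal{X}^\alpha$-norm being lower semicontinuous for the coarse topology, so the closure stays in a norm-ball), the smoothing estimates for the (uniformly local) heat semigroup combined with the variation-of-constants formula (see \cite{Henry,Lunardi:95} in the bounded case and \cite{Gallay:01} in the extended case) show that, for every $\varepsilon>0$, the set $\{u(\cdot,\tau): u(t_0)\in\tilde{\mathcal{B}},\ \tau\ge t_0+\varepsilon\}$ is bounded in $\mathcal{X}^{\gamma}$ for any admissible $\gamma>\alpha$, and by interior parabolic estimates the corresponding solutions are uniformly bounded in a parabolic H\"older norm $C^{2+\beta,1+\beta/2}$ on $K\times[t_0+\varepsilon,t]$ for each bounded $K\subset\mathbb{R}$ and some $\beta>0$. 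In particular the solutions are classical on $(t_0,\infty)\times\mathbb{R}$, so both traces are well-defined and the restriction maps into $C^1([x,y])$ and $C^1([s,t])$ make sense.

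Now take a sequence $u^{(n)}(t_0)\to u(t_0)$ in the topology of $\tilde{\mathcal{B}}$ (the $\mathcal{X}^\alpha$-norm in the bounded case, the coarser topology of locally uniform convergence in the extended case). By continuity of the semiflow in this topology, $u^{(n)}\to u$ locally uniformly in space-time on $(t_0,\infty)\times\mathbb{R}$. For (i), fix $t>t_0$: the family $\{u^{(n)}(\cdot,t)\}$ is bounded in $\mathcal{X}^\gamma$, so its restrictions to $[x,y]$ are bounded in $H^{2\gamma}([x,y])$, which embeds compactly into $C^1([x,y])$ (as $2\gamma>3/2$). Hence any subsequence has a further subsequence converging in $C^1([x,y])$, whose limit must coincide with $u(\cdot,t)|_{[x,y]}$ by the locally uniform convergence already established; therefore the whole sequence converges in $C^1([x,y])$, which is the asserted continuity. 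For (ii), fix $x$ and $t>s>t_0$: choosing $\varepsilon<s-t_0$ and $K$ a neighbourhood of $x$, the uniform $C^{2+\beta,1+\beta/2}$ bound compactly embeds into $C^{2,1}$ on that region, so the same subsequence argument upgrades the locally uniform convergence to $C^{2,1}$ convergence there. Differentiating in time through the equation, $\partial_\tau u^{(n)}(x,\tau)=u^{(n)}_{xx}(x,\tau)+g(\tau,x,u^{(n)}(x,\tau),u^{(n)}_x(x,\tau))$, the right-hand side converges uniformly on $[s,t]$ (using continuity of $g$ and the $C^{2,1}$ convergence of $u^{(n)}$, $u^{(n)}_x$, $u^{(n)}_{xx}$), while $u^{(n)}(x,\cdot)\to u(x,\cdot)$ uniformly; hence $u^{(n)}(x,\cdot)\to u(x,\cdot)$ in $C^1([s,t])$.

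The main obstacle is the extended case, where $\tilde{\mathcal{B}}$ carries only the coarse topology of locally uniform convergence, far weaker than the $\mathcal{X}^\alpha$-norm. The crux is therefore to confirm that the smoothing estimates in the uniformly local spaces $H^{2\alpha}_{\mathrm{ul}}(\mathbb{R})$ hold with constants independent of the initial data and of the spatial position, so that the regularity bounds above are genuinely uniform over the $S$-invariant, $\mathcal{X}^\alpha$-bounded family $\tilde{\mathcal{B}}$; once this uniformity is in hand, the compact embeddings on bounded space-time regions close the subsequence argument exactly as in the bounded case. These uniformly-local smoothing bounds are exactly what is collected in Appendix \ref{s:fractional}, and in the bounded case the argument reduces to the continuous embedding $\mathcal{X}^\alpha\hookrightarrow C^1$ together with ordinary interior parabolic regularity.
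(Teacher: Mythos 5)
Your part (i) is correct and is essentially the paper's argument: continuity of the semiflow on $\tilde{\mathcal{B}}$ plus the embedding $H^{2\alpha}\hookrightarrow C^1$ (your compactness/subsequence detour in the extended case is exactly the mechanism behind the paper's Lemma \ref{l:topology}, by which the coarse topology on the compact set $\tilde{\mathcal{B}}$ agrees with the $C^1_{\mathrm{loc}}$ and $H^{2\delta}_{\mathrm{loc}}$ topologies).

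Part (ii), however, has a genuine gap at its key regularity input. You claim that ``by interior parabolic estimates the corresponding solutions are uniformly bounded in a parabolic H\"older norm $C^{2+\beta,1+\beta/2}$'' on $K\times[t_0+\varepsilon,t]$, and the whole compactness argument for (ii) rests on this. But (A1)--(A3) give no H\"older (indeed no quantitative) regularity of $g$ in the spatial variable $x$: $g$ is only continuous in $x$, locally H\"older in $t$, and locally Lipschitz in $(u,\xi)$. Schauder-type interior estimates require the composed source $(x,\tau)\mapsto g(\tau,x,u(x,\tau),u_x(x,\tau))$ to be H\"older in space-time, and with a source that is merely continuous in $x$ the conclusion $u\in C^{2+\beta}$ in $x$ is simply false in general (already $\Delta u=f$ with continuous $f$ need not have $u\in C^2$); here one can only conclude from $u_{xx}=u_t-g$ that $u_{xx}$ is continuous, not uniformly H\"older. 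The $L^p$/$W^{2,1}_p$ theory, which does apply to bounded measurable sources, yields compactness only in $C^{1+\beta',(1+\beta')/2}$, which controls $u^{(n)}$ and $u^{(n)}_x$ but not the uniform convergence of $u^{(n)}_{xx}$ or $u^{(n)}_t$ that (ii) requires. This is precisely why the paper takes a different route for (ii): it works abstractly, showing continuity of $u(t_0)\mapsto u_t(x,\cdot)$ as a map $\tilde{\mathcal{B}}\rightarrow C^0([s,t])$ via Henry's Theorem 3.5.3, i.e.\ via the semigroup smoothing statement that $u_t$ takes values in $\mathcal{X}^{\gamma}\hookrightarrow C^0$ and depends continuously on the data, which needs no spatial H\"older regularity of $g$ at all. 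Your compactness architecture could be salvaged by replacing the Schauder bounds with these abstract bounds on $u_t$ (H\"older in time with values in $\mathcal{X}^{\gamma}$, uniformly over $\tilde{\mathcal{B}}$), and then reading off a uniform modulus of continuity for $u_{xx}=u_t-g$ from the equation; but as written the step fails.
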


\begin{proof}
	The claim (i), as well as continuity of $u(t_0) \mapsto u(x,.)|_{[s,t]}$ as $\tilde{\mathcal{B}} \rightarrow C^0([s,t])$, follows from continuous embedding of $H^{2\alpha}(\mathbb{S}^1)$ in $C^1(\mathbb{S})$, respectively $H^{2\alpha}_{\text{loc}}(\mathbb{R})$ in $C^1_{\text{loc}}(\mathbb{R})$, continuous dependence on initial conditions in $\tilde{\mathcal{B}}$, and continuity of $t \mapsto u(t)$ in $\tilde{B}$ for the latter claim.  To complete (ii), it suffices to show continuity of $u(t_0) \mapsto u_t(x,.)$ as $\tilde{\mathcal{B}} \rightarrow C^0([s,t])$. This follows from e.g. \cite{Henry}, Theorem 3.5.3, with the choice of the spaces as in the proof of local existence of solutions (in the extended case, we in addition apply continuous dependence on initial conditions in $\tilde{\mathcal{B}}$). 
\end{proof}

\begin{remark}
	Note that we do not assume strong dissipativity conditions on $g$, such as e.g. (G1-3) in \cite{Polacik:02}, as they would not cover the Burgers-like equations considered in the second part of the paper. 
\end{remark}
	
\begin{remark}
	For an argument alternative to Lemma \ref{l:c1} enabling applying zero-number techniques for even less smooth $g$ than those satisfying (A1-3), refer to \cite{Polacik:15}, Section 2.
\end{remark}

\part{Ergodic Poincar\'e-Bendixson theorems}

\section{Preliminaries on the set of zeroes} \label{s:zeros}

In this section we consider properties of the set of zeroes of $u(t)-v(t)$, where $u,v$ are two solutions of (\ref{r:main}) on $\tilde{\mathcal{B}}$. In addition to the zero function $Z_w$ associated to the curve $w(t)=u(t)-v(t)$, we introduce the notions of the flux of zeroes $F_w$ and the dissipation of zeroes $D_w$, analogous to the notions of energy flux and energy dissipation considered e.g. in \cite{Gallay:01, Gallay:12}. The main results of the section are the balance law for the flux of zeroes (\ref{r:balance}), and sufficient conditions for continuity of $Z_w, F_w, D_w$. The proofs rely on the well-known local and global structure of the set of zeroes, which we recall first.

In this section we assume (A1-4) and fix $u_0,v_0 \in \tilde{\mathcal{B}}$ for which the solutions $u(t),v(t)$ of (\ref{r:main}), $u(0)=u_0$, resp. $v(0)=v_0$ exist on $(\delta_0,\infty)$, where $\delta_0 <0 $ is as in Section \ref{s:prelim}. Denote by $w_0=u_0-v_0$ and $w(t)=u(t)-v(t)$, $t \in (\delta_0,\infty)$. Let $N_w$ be the set of zeroes (or the nodal set), and $S_w$ the set of multiple (or singular) zeroes associated to $w \neq 0$, defined with
\begin{align*}
N_w & := \lbrace (x,t) \in \mathbb{R} \times (\delta_0,\infty)  \: : \: w(x,t) = 0 \rbrace,  \\
S_w & := \lbrace (x,t) \in \mathbb{R} \times (\delta_0,\infty) \: : \: w(x,t) = w_x(x,t) = 0 \rbrace. 
\end{align*}
For $u=v$, i.e. $w=0$, we set $S_w=N_w=\emptyset$.

The following local and global structure of zeroes is well-known, proved by Chen \cite{Chen:98} (for earlier, less complete description by Angenent and Chen and Pol\'{a}\v{c}ik see \cite{Angenent:88,Chen:96}): 
 
\begin{lemma} \label{l:local}
	{\bf Local structure of zeroes.} If $(x_0,t_0) \in N_w$, then there is a neighbourhood $Q = [x_0-\varepsilon, x_0+\varepsilon] \times [t_0 - \delta, t_0 + \delta]$, $\varepsilon, \delta >0$ of $(x_0,t_0)$ such that the following properties hold:
	
\begin{itemize}
	\item[(a)] If $(x_0,t_0) \notin S_w$, then $Q \cap N_w$ equals a single curve $\lbrace (\gamma(t),t): t \in [t_0 - \delta, t_0 + \delta] \rbrace$, where $\gamma: [t_0 - \delta, t_0 + \delta] \rightarrow \mathbb{R}$ is of class $C^1$ and $\gamma(t_0)=x_0$.
	
	\item[(b)] If $(x_0,t_0) \in S_w$, then there is an integer $m \geq 2$ (the degree of the zero) such that the following holds: 
	
	\begin{itemize} 
		\item[(b1)] For even $m$, there exist $m$ curves $\gamma_1,...,\gamma_m : [t_0 - \delta, t_0) \rightarrow \mathbb{R}$ of class $C^1$, such that
	\begin{equation}
	 \gamma_1(t) < \gamma_2(t) < ... < \gamma_m(t) \hspace{5ex} \text{for all } t \in [t_0 - \delta, t_0), \label{r:small}
	\end{equation}
	such that $\lim_{t \rightarrow t_0^-}\gamma_k(t) = x_0$, $k=1,...,m$ and such that $Q \cap N_w$ equals union of $\lbrace (\gamma_j(t),t): t \in [t_0 - \delta, t_0 ) \rbrace$, $j=1,...,m$, and $\lbrace (x_0,t_0) \rbrace$.
	
	\item[(b2)] For odd $m$, there exist $m$ curves $\gamma_1,...,\gamma_{(m-1)/2},\gamma_{(m+3)/2},..., \gamma_m : [t_0 - \delta, t_0) \rightarrow \mathbb{R}$, $\gamma_{(m+1)/2} : [t_0 - \delta, t_0+\delta ] \rightarrow \mathbb{R}$ of class $C^1$, satisfying (\ref{r:small}), such that $\lim_{t \rightarrow t_0^-}\gamma_j(t) = x_0$, $j=1,...,(m-1)/2,(m+3)/2,...,m$, such that $\gamma_{(m+1)/2}(t_0)=x_0$, and such that $Q  \cap N_w$ equals union of $\lbrace (\gamma_j(t),t): t \in [t_0 - \delta, t_0 ) \rbrace$, $j=1,...,(m-1)/2,(m+3)/2,...,m$ and  $\lbrace (\gamma_{(m+1)/2)}(t),t): t \in [t_0 - \delta, t_0 + \delta ] \rbrace$.
	\end{itemize}
	In both cases, $\lbrace (x_0,t_0) \rbrace$ is equal to $Q \cap S_w $.
\end{itemize}
\end{lemma}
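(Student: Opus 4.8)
The strategy is to reduce the statement about the two nonlinear solutions to the known local structure theory for a scalar \emph{linear} parabolic equation, and then to invoke the results of Angenent \cite{Angenent:88} and Chen \cite{Chen:98}. The first step is to observe that $w = u - v$ solves a linear parabolic equation. Subtracting the two copies of (\ref{r:main}) and exploiting the local Lipschitz continuity of $g$ in $(u,\xi)$ from (A2), I would split the nonlinear difference as
\begin{align*}
g(t,x,u,u_x) - g(t,x,v,v_x) &= \big[g(t,x,u,u_x)-g(t,x,v,u_x)\big] \\
&\quad + \big[g(t,x,v,u_x)-g(t,x,v,v_x)\big] = a(x,t)\,w + b(x,t)\,w_x,
\end{align*}
where $a$ and $b$ are the corresponding difference quotients (set to any bounded value on $\{w=0\}$, resp.\ $\{w_x=0\}$). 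By (A2) these are bounded, and since interior parabolic smoothing gives $u,v\in C^1$ in $(x,t)$ on $\mathbb{R}\times(\delta_0,\infty)$, the quotients are continuous off their singular sets and measurable everywhere. Hence $w$ satisfies
$$
w_t = w_{xx} + b(x,t)\,w_x + a(x,t)\,w
$$
on a neighbourhood of $(x_0,t_0)$, with leading coefficient $1$ and bounded measurable lower-order coefficients, and with $w \not\equiv 0$ (implicit in $(x_0,t_0)\in N_w$).

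With this reduction the local structure is precisely the content of the cited theorems. I would first invoke \emph{finite-order vanishing}: by strong unique continuation for the linear equation (\cite{Angenent:88,Chen:98}), since $w\not\equiv 0$ the spatial slice $x\mapsto w(x,t_0)$ vanishes at $x_0$ to a finite order $m\geq 1$, and $(x_0,t_0)\in S_w$ exactly when $m\geq 2$; this $m$ is the degree in part (b). The analytic heart of the cited results is the parabolic self-similar asymptotic expansion: in the rescaled variable $\xi=(x-x_0)/\sqrt{t_0-t}$ for $t<t_0$, $w$ is asymptotic, up to a scalar, to the Hermite/caloric polynomial of degree $m$, which has exactly $m$ simple real roots. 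These $m$ roots furnish the $m$ coalescing branches $\gamma_1<\cdots<\gamma_m$ of (\ref{r:small}) for $t<t_0$, all tending to $x_0$ as $t\to t_0^-$. For even $m$ the model profile is one-signed for $t>t_0$, so no branch survives; for odd $m$ exactly one root persists through $t_0$, yielding the central curve $\gamma_{(m+1)/2}$ defined on the full interval $[t_0-\delta,t_0+\delta]$.

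It remains to convert the model picture into the stated $C^1$ curves. I would shrink $Q$ so that $(x_0,t_0)$ is the unique multiple zero of $w$ in $Q$, which is possible since the expansion shows that near $(x_0,t_0)$ every zero other than the coalescing one is simple, and multiple zeros are isolated. At each simple zero $w_x\neq 0$, so the implicit function theorem produces a local $C^1$ graph $x=\gamma(t)$; this gives part (a) directly, and in part (b) the branches are obtained by continuing and ordering these graphs, reading off the limits $\gamma_k(t)\to x_0$ and the parity-dependent one- or two-sided domains from the expansion. Finally $Q\cap S_w=\{(x_0,t_0)\}$ by the choice of $Q$.

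\textbf{Main obstacle.} The genuinely hard input — and the reason the statement is attributed to \cite{Angenent:88,Chen:98} rather than proved from scratch — is the finite-order vanishing together with the validity of the parabolic asymptotic expansion for merely bounded measurable (non-smooth) lower-order coefficients, i.e.\ the strong unique continuation and Liouville-type classification in those papers. The only point requiring care on our side is checking that the coefficients arising from the \emph{only} locally Lipschitz nonlinearity $g$ fall within the admissible class of those theorems (leading coefficient $1$, lower-order coefficients bounded and measurable, with the continuity off the singular sets supplied by interior smoothing); once this is granted, the remainder is the implicit-function-theorem bookkeeping described above.
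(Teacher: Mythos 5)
Your proposal is correct and takes essentially the same route as the paper, which offers no proof of Lemma \ref{l:local} at all but states it as a known result of Chen \cite{Chen:98} (with earlier, less complete versions in \cite{Angenent:88,Chen:96}); your linearization of $w=u-v$ followed by the appeal to finite-order vanishing, the self-similar (Hermite) expansion, and implicit-function-theorem bookkeeping is precisely the standard derivation behind that citation. The coefficient-regularity issue you flag as the only delicate point is in fact moot: Chen's result is formulated for the differential inequality $|w_t - w_{xx}| \leq M\left(|w| + |w_x|\right)$, which $w$ satisfies directly by the local Lipschitz condition (A2), so there is no need to define or regularize the difference-quotient coefficients $a,b$ at all.
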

From this we can deduce the following global structure of the set of zeroes.

\begin{lemma} \label{l:global} {\bf Global structure of zeroes.} 
   There exist an at most countable family of curves $\gamma_i : (\delta_0,d_i) \rightarrow \mathbb{R}$ of class $C^1$ associated to $w$, $d_i \in (\delta_0,\infty]$, $i \in {\mathcal I}_w$, ${\mathcal I}_w$ a finite set or $\mathbb{N}$, satisfying the following:
   \begin{itemize}
   	\item[(i)]  The sets $\lbrace (\gamma_i(t),t), \: t \in (\delta_0,d_i) \rbrace$, $i \in \mathcal{I}$, are disjoint.
   	
   	\item[(ii)] $S_w = \cup_{i \in \mathcal {I}_w, d_i< \infty} \lbrace (\lim_{t \rightarrow d_i^-} \gamma_i(t),d_i) \rbrace$.
   	
   	\item[(iii)] $N_w = \cup_{i \in \mathcal {I}_w} \lbrace (\gamma_i(t),t), \: t \in (\delta_0,d_i) \rbrace \cup S_w$.
   	
   	\item[(iv)] For each compact $Q \subset \mathbb{R}^2$, there exists at most finitely many $i \in \mathcal{I}$ such that $\lbrace (\gamma_i(t),t), t \in (\delta_0,d_i) \rbrace $ intersects $Q$. Specifically, there are at most finitely many multiple zeroes in $Q$.
   \end{itemize}
\end{lemma}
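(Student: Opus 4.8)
The plan is to build the global curves by cutting $N_w$ along its singular set and reassembling the regular pieces, gluing them across the points of $S_w$ exactly as prescribed by Lemma~\ref{l:local}. I would first extract two consequences of the local structure. By part~(a) of Lemma~\ref{l:local}, every point of $N_w\setminus S_w$ has a neighbourhood in which $N_w$ is a single $C^1$ graph $\{(\gamma(t),t)\}$ over the $t$-axis; hence $N_w\setminus S_w$ is a $C^1$ embedded one-manifold on which $t$ is everywhere a local coordinate. Each connected component is therefore the graph $\{(\gamma(t),t):t\in(a,b)\}$ of a single $C^1$ function over an open $t$-interval, a component being unable to close up into a circle since the projection to the $t$-axis is a local diffeomorphism and no such map $S^1\to\mathbb{R}$ exists. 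As $\mathbb{R}^2$ is second countable there are at most countably many such components. Second, by the final assertion of Lemma~\ref{l:local} each singular point has a neighbourhood $Q'$ with $Q'\cap S_w=\{(x_0,t_0)\}$, so $S_w$ is discrete; being closed (as $w,w_x$ are continuous) it meets every compact set in finitely many points.

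Next I would glue. Fix a singular point $(x_0,t_0)$ of degree $m$. By part~(b), exactly $m$ regular components approach it from below with common limit $x_0$, while above $t_0$ there is no component if $m$ is even and exactly one if $m$ is odd. In the odd case the incoming ``through'' component and the outgoing one are the two halves of the single curve $\gamma_{(m+1)/2}$ of~(b2), which is of class $C^1$ across $t_0$; I glue these and let the remaining $m-1$ incoming components terminate at $(x_0,t_0)$. In the even case all $m$ incoming components terminate there. Carrying out this ``pass-through / terminate'' assignment at every point of $S_w$ yields maximal $C^1$ curves; I take $\gamma_i:(\delta_0,d_i)\to\mathbb{R}$, $i\in\mathcal{I}_w$, to be the resulting at-most-countable family. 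Tracing a curve downward it crosses each odd singular point as the through-curve and can never be born at a singular point — odd points always carry an incoming through-component and even points no outgoing one — so its lower end must reach $t=\delta_0$, which fixes the common left endpoint.

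It remains to check (i)--(iv). Disjointness~(i) holds because the regular components are disjoint, a regular point determines a unique component and hence a unique $\gamma_i$, and a singular point lies on the open-interval graph of at most the one through-curve; (iii) follows at once, each point of $N_w$ being either regular, so on some $\gamma_i$, or in $S_w$. For~(ii), every singular point is the terminal limit of at least one terminating curve (all $m$ incoming components if $m$ is even, the $m-1$ non-through incoming components if $m\ge 3$ is odd), which gives $S_w\subseteq\bigcup_{d_i<\infty}\{(\lim\gamma_i,d_i)\}$; conversely, if $d_i<\infty$ the terminal accumulation point lies in $N_w$, and by Lemma~\ref{l:local}(a) a \emph{regular} accumulation point would force a $C^1$ extension, contradicting maximality, so the limit lies in $S_w$. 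Finally~(iv): cover the compact set $N_w\cap Q$ by finitely many of the local neighbourhoods of Lemma~\ref{l:local} (together with neighbourhoods of the points of $Q\setminus N_w$ missing $N_w$); each such neighbourhood contains at most one point of $S_w$ and meets only finitely many of the $\gamma_i$, so only finitely many curves and finitely many multiple zeroes meet $Q$.

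I expect the main obstacle to be the reverse inclusion in~(ii): excluding that a maximal curve with $d_i<\infty$ escapes to spatial infinity, $|\gamma_i(t)|\to\infty$ as $t\to d_i^-$, in which case the terminal limit would not exist. I would settle this by writing $w$ as a solution of the linear parabolic equation $w_t=w_{xx}+b(x,t)w_x+c(x,t)w$, where $b,c$ are bounded on $[\delta_0+\varepsilon,\infty)\times\mathbb{R}$ by (A1-4) and the $C^1$-boundedness of $\tilde{\mathcal{B}}$ (expressing the difference $g(t,x,u,u_x)-g(t,x,v,v_x)$ through the local Lipschitz bounds on $g$); the classical Sturmian theory for such equations then forbids a nodal line from traversing an infinite spatial distance in finite time, after which the remaining verification is routine.
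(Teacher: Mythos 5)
Your overall strategy is sound, and it is essentially the strategy behind the proof the paper points to: the paper does not prove Lemma \ref{l:global} from scratch, but refers to the proof of Lemma 2.3 of \cite{Feireisl:00} (with the bookkeeping adjustments of Remark \ref{rm:fix}), which is itself a local-to-global gluing argument of exactly this type. Your treatment of the regular set (an embedded $C^1$ one-manifold whose components are graphs over open $t$-intervals), the discreteness of $S_w$, the pass-through/terminate rule at odd/even degree points (which correctly matches the convention of Remark \ref{rm:fix}), and your verifications of (i), (iii), (iv) and of the inclusion $S_w\subseteq\bigcup_{d_i<\infty}\{(\lim_{t\to d_i^-}\gamma_i(t),d_i)\}$ are correct.

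The genuine gap is at the ends of the maximal curves, and it is larger than you acknowledge. First, your downward argument (``its lower end must reach $t=\delta_0$'') only excludes termination at a point of $\mathbb{R}\times(\delta_0,\infty)$: it does not exclude that a maximal curve escapes to spatial infinity, $|\gamma_i(t)|\to\infty$, as $t$ decreases to some $a>\delta_0$ (zeros ``coming in from infinity''), which would contradict the assertion that every curve is defined on an interval with left endpoint $\delta_0$. Second, at the upper end you correctly isolate escape to infinity as the obstacle to (ii), but your proposed fix is not backed by an actual theorem: there is no ``classical Sturmian theory'' forbidding a nodal line from traversing an infinite spatial distance in finite time. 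Nodal curves of uniformly parabolic equations obey no finite speed bound: along a curve of simple zeros $\gamma'(t)=-w_t/w_x$ evaluated on the curve, and this blows up as zeros degenerate (already for the caloric polynomial $w=x^3+6xt$ the outer nodal speed diverges at the collision time $t=0$). Consequently, any non-escape statement must exploit the global boundedness of $w$ and of the coefficients $b,c$ --- e.g.\ via Gaussian heat-kernel or comparison estimates showing that the sign of $w$ in the far field is dictated by the nearest non-negligible data --- and cannot follow from the local Sturm theory you invoke. This is precisely the nontrivial content that the paper delegates to \cite{Feireisl:00}; as written, your proposal leaves both the existence of the limits $\lim_{t\to d_i^-}\gamma_i(t)$ in (ii) and the common left endpoint $\delta_0$ of all curves unproven.
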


For the proof, see the proof of Lemma 2.3 in \cite{Feireisl:00}, taking into account adjustments of the statement fitting our purposes (see Remark \ref{rm:fix} below).

For simplicity of notation, we drop the dependency on $w$ in the notation of curves of zeroes $\gamma$. For $i \in \mathcal{I}_w$ such that $d_i < \infty$, denote by $x_i = \lim_{t \rightarrow d_i^-}\gamma_i(t)$, and then $S_w = \lbrace (x_i,d_i), d_i < \infty, \: i \in \mathcal{I}_w \rbrace$. For $d_i < \infty$, let $\overline{\gamma}_i : (-\infty, d_i] \rightarrow \mathbb{R}$ be the unique continuous extension of $\gamma_i$ (i.e. such that $\overline{\gamma}_i(d_i)=x_i$), and for $d_i=\infty$ let $\overline{\gamma}_i=\gamma_i$.


We define the number of zeroes $Z_w$ in $[x,y) \times \lbrace t \rbrace$, the flux $F_w$ of zeroes through $\lbrace x \rbrace \times [s,t)$, and the dissipation $D_w$ of zeroes in $[x,y)\times (s,t]$, defined for $\delta_0< s < t$, $x< y$, $s,t,x,y \in \mathbb{R}$, associated to $w \neq 0$, as follows. Let $Q \subset \mathbb{R} \times (\delta_0,\infty)$ be any compact set containing $[x, y] \times [s,t]$, and let $\mathcal{I}_w(Q)$ be the set of all $i \in \mathcal{I}_w$ such that $\lbrace (t,\gamma_i(t)), t \in ((\delta_0,d_i))\rbrace$ intersects $Q$. Now we define
\begin{align} \label{d:ZFD}
\begin{split}
 Z_w(x,y,s)& = \left| i \in \mathcal{I}_w(Q), \, d_i>s, \, \gamma_i(s) \geq y \right| - \left| i \in \mathcal{I}_w(Q),  d_i>s,\, \gamma_i(s) \geq x \right|,
   \\
 F_w(x,s,t) & = \left| i \in \mathcal{I}_w(Q), \,  d_i>s, \, \overline{\gamma}_i(\min(t,d_i)) \geq x \right| - \left| i \in \mathcal{I}_w(Q), \, d_i>s,\, \gamma_i(s) \geq x \right|, \\
 D_w(x,y,s,t) & = \left| i \in \mathcal{I}_w(Q), \, (x_i,d_i) \in [x,y) \times (s,t] \right|,
\end{split}
\end{align}
where $|.|$ denotes the cardinal number of a set, always finite by Lemma \ref{l:global}, (iv). Also it is easy to verify that the definition above does not depend on the choice of $Q$. 
For $w=0$, we set $Z_w=F_w=D_w=0$ independently of the arguments. Note that the function $D_w$ counts multiple zeroes in $[x,y) \times [s,t)$ with their multiplicity ($m$ times for even, $m-1$ times for odd $m$).

\begin{remark} \label{rm:fix}
	For technical reasons, our definition of the curves of zeroes $\gamma_i$ slightly differs from e.g. \cite{Feireisl:00,Fiedler89}, as the even, multiple zeroes are not in the union of images $(t,\gamma_i(t))$. Also the zero function $Z_w$, does not "count" even, multiple zeroes. This simplifies definitions of the flux and dissipation of zeroes, as the images of $\gamma_i$ are disjoint. Note that all the multiple zeroes are properly "counted" by the dissipation function $D_w$.
\end{remark}

We now obtain the following balance law:
\begin{lemma}  \label{l:3_2} 
	{\bf The balance law for the flux of zeroes.} Let $x,y,s,t \in \mathbb{R}$ such that $0 \leq s < t$. Then
	\begin{align}
	   Z_w(x,y,t)-Z_w(x,y,s) = F_w(y,s,t)-F_w(x,s,t) - D_w(x,y,s,t). \label{r:balance}
	\end{align}

\end{lemma}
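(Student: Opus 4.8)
The plan is to prove (\ref{r:balance}) by a direct combinatorial bookkeeping of the curves of zeroes supplied by Lemma \ref{l:global}, reducing the identity to a telescoping of ``threshold counts''. The case $w=0$ is trivial (all three quantities vanish by convention), so assume $w \neq 0$. First I would fix a single compact window $Q \supseteq [x,y]\times[s,t]$ and, invoking the stated independence of $Z_w, F_w, D_w$ on the choice of $Q$, compute all three quantities in (\ref{d:ZFD}) relative to this \emph{common} $Q$; this is what makes the identity well-posed, since a priori the three terms are defined through sets $\mathcal{I}_w(\cdot)$ attached to different windows. For a time $\tau \in \{s,t\}$ and a threshold $c \in \{x,y\}$ I would abbreviate the basic count $P(\tau,c) = |\{i \in \mathcal{I}_w(Q) : d_i > \tau, \: \gamma_i(\tau) \geq c\}|$, so that directly from (\ref{d:ZFD}) one has $Z_w(x,y,\tau) = P(\tau,y) - P(\tau,x)$ and $F_w(c,s,t) = R(c) - P(s,c)$, where $R(c) = |\{i \in \mathcal{I}_w(Q) : d_i > s, \: \overline{\gamma}_i(\min(t,d_i)) \geq c\}|$. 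All these cardinalities are finite by Lemma \ref{l:global}(iv).

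The key step is a partition of the curves entering $R(c)$ according to their fate on $(s,t]$. A curve with $d_i > s$ either survives past $t$ (i.e.\ $d_i > t$), in which case $\min(t,d_i)=t$ and $\overline{\gamma}_i(t)=\gamma_i(t)$, so it contributes to $R(c)$ exactly when it contributes to $P(t,c)$; or it terminates at a multiple zero with $s < d_i \leq t$, in which case $\min(t,d_i)=d_i$ and $\overline{\gamma}_i(d_i)=x_i$, so it contributes to $R(c)$ exactly when $x_i \geq c$. Writing $E(c) = |\{i \in \mathcal{I}_w(Q): s < d_i \leq t, \: x_i \geq c\}|$ for the latter, this yields the decomposition $R(c) = P(t,c) + E(c)$. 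Here I would lean on the global structure of Lemma \ref{l:global}: that the images of the pairwise disjoint curves $\gamma_i$ exhaust $N_w$ away from $S_w$, that each $\gamma_i$ with $d_i<\infty$ extends continuously to its terminus $(x_i,d_i)$ via $\overline{\gamma}_i$, and that $S_w$ is precisely the set of such termini.

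Substituting $R(c)=P(t,c)+E(c)$ into $F_w(y,s,t)-F_w(x,s,t)$, the time-$s$ counts $P(s,y),P(s,x)$ cancel against those produced by $Z_w(x,y,s)$, the surviving-curve counts $P(t,\cdot)$ reassemble into $Z_w(x,y,t)$, and the whole identity collapses to the single elementary statement that $E(x)-E(y) = |\{i : s < d_i \leq t, \: x \leq x_i < y\}|$ agrees with $D_w(x,y,s,t)$. This last equality is immediate from the definition of $D_w$ in (\ref{d:ZFD}), once one checks via Lemma \ref{l:local} and the counting convention of Remark \ref{rm:fix} that a multiple zero of degree $m$ is the common terminus of exactly $m$ curves for even $m$ and $m-1$ curves for odd $m$ --- precisely the multiplicity with which $D_w$ counts it. I expect the main obstacle to be combinatorial rather than analytic: keeping the orientation conventions of $Z_w$ and $F_w$ in (\ref{d:ZFD}) perfectly consistent through the cancellation so that the residual term matches $-D_w$ in (\ref{r:balance}) with the correct sign, and handling the terminal bookkeeping at multiple zeroes (the even/odd dichotomy, and the Remark \ref{rm:fix} convention that even multiple zeroes are excluded from the curve images). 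All the analytic input --- local and global structure, and finiteness of the counts --- is already provided by Lemmas \ref{l:local} and \ref{l:global}, so once $Q$ is fixed the argument is a finite counting identity.
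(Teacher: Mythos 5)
Your proposal is correct and is essentially the paper's own proof: the paper fixes a common $Q$ and proves exactly your decomposition $R(c)=P(t,c)+E(c)$ (this is its single displayed identity), then leaves to the reader the substitution and cancellation that you carry out explicitly down to the residual identity $E(x)-E(y)=D_w(x,y,s,t)$. The sign delicacy you flag is real but is the paper's rather than yours: with (\ref{d:ZFD}) read literally the cancellation leaves $E(y)-E(x)=D_w$, which is consistent with (\ref{r:balance}) only once the two cardinalities in the definitions of $Z_w$ and $F_w$ are swapped to their evidently intended order (the order under which $z\geq 0$, as asserted in Section \ref{s:balance}).
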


\begin{proof} If $w=0$, the claim is trivial. For $w \neq 0$, let $Q$ be as in (\ref{d:ZFD}).
	First note that
	\begin{align*}
	\left| i \in \mathcal{I}_w(Q), \,  d_i>s, \, \overline{\gamma}_i(\min(t,d_i)) \geq x \right| & = 
	\left| i \in \mathcal{I}_w(Q), \, d_i>t, \, \gamma_i(t) \geq x \right| \\ & \hspace{3ex} +	\left| i \in \mathcal{I}_w(Q), \, (x_i,d_i) \in [x,\infty) \times (s,t]	 \right|.
	\end{align*}
	It suffices now to insert that and the definition (\ref{d:ZFD}) in the left, resp. right-hand side of (\ref{r:balance}) to verify it.
\end{proof}

We now establish certain continuity properties of $Z_w, F_w, S_w$. In the following lemma, $\tilde{u}$, $\tilde{v}$ denote solutions of (\ref{r:main}), $\tilde{u}(0)=\tilde{u}_0$, resp. $\tilde{v}(0)=\tilde{v}_0$.

\begin{lemma} \label{l:continuity} 
	{\bf Continuity of zero functions.} Let $x,y,s,t \in \mathbb{R}$ such that $0 \leq s < t$. Then
	\begin{itemize}
		\item[(i)] If all zeroes in $[x,y)\times \lbrace t \rbrace $ are regular, and there are no zeroes in $\lbrace (x,t),(y,t) \rbrace$, then there is an open neighbourhood $\mathcal{U}$ of $(u_0,v_0)$ in $\tilde{\mathcal{B}}^2$ such that for all $\tilde{u}_0,\tilde{v}_0 \in \mathcal{U}$, $\tilde{w}=\tilde{u}-\tilde{v}$, we have $Z_w(x,y,t)=Z_{\tilde{w}}(x,y,t)$.
		
		\item[(ii)] If all zeroes in $\lbrace x \rbrace \times [s,t)$ are regular, and there are no zeroes in $\lbrace (x,s),(x,t) \rbrace$, then there is an open neighbourhood $\mathcal{U}$ of $(u_0,v_0)$ in $\tilde{\mathcal{B}}^2$ such that for all $\tilde{u}_0,\tilde{v}_0 \in \mathcal{U}$, $\tilde{w}=\tilde{u}-\tilde{v}$, $F_w(x,s,t)=F_{\tilde{w}}(x,s,t)$.
		
		\item[(iii)] Assume all the zeroes in $\partial Q$, where $Q = [x,y] \times [s,t]$, are regular. Then there is an open neighbourhood $\mathcal{U}$ of $(u_0,v_0)$ in $\tilde{\mathcal{B}}^2$ such that for all $\tilde{u}_0,\tilde{v}_0 \in \mathcal{U}$, $\tilde{w}=\tilde{u}-\tilde{v}$, $D_w(x,y,s,t)=D_{\tilde{w}}(x,y,s,t)$.	
	\end{itemize}
\end{lemma}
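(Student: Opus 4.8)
The plan is to reduce all three statements to the single geometric fact that a \emph{regular} (simple) zero of $w=u-v$ persists, and varies continuously in position, under small perturbations of the initial data $(u_0,v_0)$, and to combine this with the local structure of zeroes (Lemma \ref{l:local}(a)) and the compactness of the relevant boundary sets. The key input is Lemma \ref{l:c1}: part (i) gives that $(u_0,v_0)\mapsto w(\cdot,t)|_{[x,y]}$ is continuous into $C^1([x,y])$, and part (ii) gives that $(u_0,v_0)\mapsto w(x,\cdot)|_{[s,t]}$ is continuous into $C^1([s,t])$. Thus a regular zero, at which $w=0$ but $w_x\neq 0$ (for (i), along the spatial slice) or $w=0$ but $w_t\neq 0$ (for (ii), along the temporal slice — note $w_t\neq 0$ at a simple spatial zero because a $C^1$ nodal curve $\gamma_i$ crosses $\{x\}\times\{\cdot\}$ transversally in time unless it is tangent, which cannot happen at a regular point of the slice), is locally the transverse zero of a $C^1$ function depending continuously on $(u_0,v_0)$, so by the implicit function theorem / a quantitative sign-change argument it persists and the count is locally constant.

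For part (i) I would argue as follows. By hypothesis all zeroes of $w(\cdot,t)$ in $[x,y)$ are simple and $w(x,t),w(y,t)\neq 0$; since $N_w$ has locally finite structure (Lemma \ref{l:local}, \ref{l:global}(iv)) there are finitely many such zeroes $x_1<\cdots<x_k$ in $[x,y)$, each with $w_x(x_j,t)\neq 0$. Choose disjoint closed intervals $I_j\subset[x,y)$ around each $x_j$ on whose endpoints $w(\cdot,t)$ is bounded away from $0$, and on which $w(\cdot,t)$ is strictly monotone. By the $C^1$-continuity from Lemma \ref{l:c1}(i), there is a neighbourhood $\mathcal U$ of $(u_0,v_0)$ such that for $(\tilde u_0,\tilde v_0)\in\mathcal U$ the function $\tilde w(\cdot,t)$ is close to $w(\cdot,t)$ in $C^1([x,y])$; hence $\tilde w(\cdot,t)$ keeps the same (nonzero) sign at the endpoints of each $I_j$ and of the gaps between them, stays nonzero outside $\bigcup I_j$, and remains strictly monotone on each $I_j$, so it has exactly one simple zero in each $I_j$ and none elsewhere in $[x,y)$. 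Therefore $Z_{\tilde w}(x,y,t)=k=Z_w(x,y,t)$, recalling that $Z_w$ counts each simple nodal curve crossing the slice exactly once. Part (ii) is the verbatim analogue with the spatial slice replaced by the temporal slice $\{x\}\times[s,t)$ and $C^1([x,y])$ replaced by $C^1([s,t])$ via Lemma \ref{l:c1}(ii); here the finitely many crossing times are simple zeroes of $\tilde w(x,\cdot)$ with $\tilde w_t\neq 0$, and the persistence of the count gives $F_{\tilde w}(x,s,t)=F_w(x,s,t)$.

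For part (iii) I would combine (i) and (ii) over the four sides of $\partial Q=[x,y]\times[s,t]$. Since $D_w$ counts multiple zeroes strictly inside $Q$ with multiplicity, I use the balance law (\ref{r:balance}) together with its analogues on the top side, so that
\begin{equation*}
D_w(x,y,s,t)=F_w(y,s,t)-F_w(x,s,t)-\bigl(Z_w(x,y,t)-Z_w(x,y,s)\bigr),
\end{equation*}
expressing the interior dissipation purely through fluxes on the two vertical sides and zero-counts on the two horizontal sides. The hypothesis that all zeroes on $\partial Q$ are regular lets me apply parts (i) and (ii) to each of these four boundary quantities simultaneously: intersecting the four neighbourhoods produces a single $\mathcal U$ on which every term on the right is locally constant, whence $D_{\tilde w}(x,y,s,t)=D_w(x,y,s,t)$. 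The main obstacle is purely bookkeeping rather than analytic: one must be careful that the convention of Remark \ref{rm:fix} (even multiple zeroes are \emph{not} on the curves $\gamma_i$ and are \emph{not} counted by $Z_w$) is respected when identifying ``the count of nodal curves crossing a slice'' with $Z_w$ and when invoking (\ref{r:balance}); and one must ensure the finitely many regular zeroes on the boundary do not accumulate at the corners, which is guaranteed by Lemma \ref{l:global}(iv) and the assumption that the corners themselves carry no zeroes (implicit in ``all zeroes in $\partial Q$ are regular,'' via the endpoint conditions inherited from (i) and (ii)).
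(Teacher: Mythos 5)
Part (i) of your proposal is correct and is essentially the paper's own argument. The genuine gap is in part (ii), and it propagates into (iii). Your parenthetical claim that at a regular zero on the temporal slice one automatically has $w_t\neq 0$ ("\ldots unless it is tangent, which cannot happen at a regular point of the slice") is false: regularity of a zero means $w_x\neq 0$ and says nothing about $w_t$, so a nodal curve can be tangent to the vertical line at a point where the zero is regular. Concretely, for $w_t=w_{xx}$ with initial datum $w(z,0)=z+z^4/24$ one has at the origin $w=0$, $w_x=1\neq 0$ (a regular zero) but $w_t=w_{xx}=0$, while $w_{tt}=w_{xxxx}=1$, so the nodal curve is locally $z\approx -\tau^2/2$, quadratically tangent to the line $\lbrace z=0\rbrace$. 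The hypotheses of (ii) do not exclude such tangencies, and at a tangency the raw number of crossings of the vertical line is \emph{not} locally constant: it jumps by $2$ under arbitrarily small perturbations, so "persistence of the count" via the implicit function theorem breaks down. Only the net flux $F_w$ is stable, and that is exactly what must be proved, not assumed. The paper's proof handles precisely this point: it first isolates a thin strip $[x-\delta_1,x]\times[s,t]$ with no zeroes near the endpoints and no multiple zeroes (its condition (\ref{r:allzero})), then applies the Morse--Sard lemma to the countably many $C^1$ curves $\gamma_i$ to choose a shifted line $\lbrace x-\delta_2\rbrace$ that meets no tangencies, on which the flux equals the signed sum (\ref{f:alternative}) and is locally constant by your IFT argument, and finally transfers this constancy back to the original line $\lbrace x\rbrace$ via the balance law (\ref{r:balance}) applied on the strip. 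Your proposal contains no substitute for the Morse--Sard shift or for the balance-law transfer, so (ii) is unproven.

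For (iii), expressing $D_w$ through the four boundary quantities by the balance law and invoking (i)--(ii) is indeed the paper's strategy, but your dismissal of the corner issue is incorrect: "all zeroes in $\partial Q$ are regular" does \emph{not} imply that the corners carry no zeroes --- a corner may be a regular zero, in which case the endpoint hypotheses of (i) and (ii) fail on the adjacent sides. The paper resolves this by passing to a slightly shrunk box and a slightly enlarged box whose corners avoid $N_w$ (possible by Lemma \ref{l:local} and the finiteness in Lemma \ref{l:global}(iv)), applying (i), (ii) and the balance law to each of these, and then sandwiching $D_{\tilde w}(x,y,s,t)$ between $D_{\tilde w}$ of the inner and outer boxes. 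This two-sided sandwich is also necessary for a reason your argument misses entirely: even with good corners, a perturbed $\tilde w$ could have multiple zeroes in the thin region between $Q$ and a perturbed box, so equality of $D_{\tilde w}$ on a perturbed box with $D_w(x,y,s,t)$ does not by itself give equality on $Q$. Thus (iii) requires both the corner-avoiding perturbation of $Q$ and the monotonicity sandwich, neither of which appears in your proposal, in addition to a correct proof of (ii).
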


\begin{proof} Throughout the proof, we implicitly use Lemma \ref{l:c1} several times.	
By embedding of $\mathcal{X}^{\alpha}$ in $C^1(\mathbb{R})$ and continuous dependence on initial conditions of (\ref{r:main}), we can find an open neighbourhood of $\mathcal{U}$ of $(u_0,v_0)$ such that for each $(\tilde{u}_0,\tilde{v}_0) \in \mathcal{U}$, for $\tilde{w}=\tilde{u}-\tilde{v}$ the assumptions on zeroes in (i) hold for $\tilde{w}$. Now $Z_{\tilde{w}}$ can be expressed with $Z_{\tilde{w}}(x,y,t)=|\tilde{w}(.,t)^{-1}(0)\cap (x,y)|$, and all the zeroes of $\tilde{w}$ in $(x,y) \times \lbrace t \rbrace$ are regular, i.e. whenever $\tilde{w}(z,t)=0$, $z \in (x,y)$, we have $w_x(z,t)\neq 0$. It suffices now to cover $(x,y)$ with finitely many sufficiently small open intervals and apply the implicit function theorem to $z \rightarrow w(z,t)$ on the open intervals containing a zero of $w$ to deduce (i).

To prove (ii), consider an open neighbourhood $\mathcal{U}_1$ of $(u_0,v_0)$, such that for some $\delta_1>0$ small enough, $(\tilde{u}_0,\tilde{v}_0) \in \mathcal{U}_1$, for $\tilde{w}=\tilde{u}-\tilde{v}$ we have 
\begin{equation} \label{r:allzero}
Z_{\tilde{w}}(x-\delta_1,x,s)=0, \hspace{3ex} Z_{\tilde{w}}(x-\delta_1,x,t)=0, \hspace{3ex} D_{\tilde{w}}(x-\delta_1,x,s,t)=0
\end{equation}
 (such an $\mathcal{U}_1$ exists by the assumptions on zeroes in (ii)). Now for all $0<\delta\leq \delta_1$, by the definition of $Z_w,D_w$, the assumptions on zeroes in (ii) hold also for $(x-\delta)$ instead of $x$. We claim that we can find $0 < \delta_2 \leq \delta_1$ such that, in addition, for all the zeroes in $\lbrace x-\delta_2 \rbrace \times [s,t)$ expressed as $(\tau,\gamma_i(\tau))$, $\gamma_i(\tau)=x-\delta_2$, $s \leq \tau < t$, we have that $(\gamma_i)_x(\tau)\neq 0$. We deduce that by applying the Morse-Sard Lemma to every $C^1$ function $\gamma_i$ to establish that the set of critical values $x \in \mathbb{R}$, i.e. $x$ for which $\gamma_i(\tau)=x$ and $(\gamma_i)_x(\tau)= 0$ for some $\tau \in [s,t]$, has the Lebesgue measure 0. As there are at most countably many curves of zeroes $\gamma_i$, $i \in \mathcal{I}_w$, this completes the proof of existence of such $\delta_2$. 

It is easy to verify that now
	\begin{align}
	F_w(x-\delta_2,s,t) & = \sum_{\tau \in (s,t), w(x-\delta_2,\tau) = 0} - \operatorname{sgn} (w_t(x-\delta_2,\tau)), \label{f:alternative}
	\end{align}
	as $\operatorname{sgn} (\gamma_i)_x(\tau)=-\operatorname{sgn} w_t(\gamma_i(\tau),\tau)$ whenever $(\gamma_i)_x(\tau)\neq 0$. We now find an open neighbourhood $\mathcal{U} \subset \mathcal{U}_1$ of $(u_0,v_0)$ such that for each $(\tilde{u}_0,\tilde{v}_0)\in \mathcal{U}$, $\tilde{w}=\tilde{u}-\tilde{v}$, for all the zeroes of $\tilde{w}$ in $\lbrace x-\delta_2 \rbrace \times (s,t)$, we have that $\tilde{w}_t(x-\delta_2,\tau) \neq 0$. Applying the implicit function theorem analogously as when proving (i), but now for the function $\tau \mapsto w(x-\delta_2,\tau)$, $\tau \in [s,t]$, we deduce that $F_{\tilde{w}}(x-\delta_2,s,t)$ is constant on $\mathcal{U}$. By (\ref{r:allzero}), the balance law (\ref{r:balance}) and the construction of $\mathcal{U}_1$ we see that $F_{\tilde{w}}(x,s,t)=F_{\tilde{w}}(x-\delta_2,s,t)$ on $\mathcal{U}$, which completes (ii).
	
	To show (iii), note first that for any $\delta_1,\delta_2,\varepsilon_1,\varepsilon_2 >0$ small enough, we have
	$$
	D_w(x,y,s,t)=D_w(x+\delta_1,y-\delta_1,s+\varepsilon_1,t-\varepsilon_1)=D_w(x-\delta_2,y+\delta_2,s-\varepsilon_2,t+\varepsilon_2)
	$$
	(this follows from the finiteness of the number of multiple zeroes in any compact $Q$ and the assumptions on zeroes in (iii)). In addition, by the local structure of zeroes we can choose $\delta_1,\delta_2,\varepsilon_1,\varepsilon_2 >0$ such that there are no zeroes in the "corners" $\lbrace x+\delta_1,y-\delta_1 \rbrace \times \lbrace s+\varepsilon_1,t-\varepsilon_1 \rbrace$ and $\lbrace x-\delta_2,y+\delta_2 \rbrace \times \lbrace s-\varepsilon_2,t+\varepsilon_2 \rbrace$. Now applying twice the balance law (\ref{r:balance}) and (i), (ii) (i.e. on $[ x+\delta_1,y-\delta_1] \times [s+\varepsilon_1,t-\varepsilon_1 ]$ and $[ x-\delta_2,y+\delta_2] \times [s-\varepsilon_2,t+\varepsilon_2]$), we can find a neighbourhood $\mathcal{U}$ of $(u_0,v_0)$ such that 
	$$
	D_w(x,y,s,t)=D_{\tilde{w}}(x+\delta_1,y-\delta_1,s+\varepsilon_1,t-\varepsilon_1)=D_{\tilde{w}}(x-\delta_2,y+\delta_2,s-\varepsilon_2,t+\varepsilon_2)
	$$
	for $(\tilde{u}_0,\tilde{v}_0) \in \mathcal{U}$, $\tilde{w}=\tilde{u}-\tilde{v}$. To establish $D_{\tilde{w}}(x,y,s,t)=D_w(x,y,s,t)$ on $\mathcal{U}$, it suffices to note that by the definition of $D_w$,
	$$
	D_{\tilde{w}}(x+\delta_1,y-\delta_1,s+\varepsilon_1,t-\varepsilon_1) \leq D_{\tilde{w}}(x,y,s,t) \leq D_{\tilde{w}}(x-\delta_2,y+\delta_2,s-\varepsilon_2,t+\varepsilon_2).
	$$
\end{proof}

\section{Properties of the zero, zero flux and zero dissipation functions} \label{s:balance}

We consider here the zero, zero flux and zero dissipation functions $z,f,d : \tilde{\mathcal{B}} \times \tilde{\mathcal{B}} \rightarrow \mathbb{R}$, defined as
\begin{equation}
z(u_0,v_0)=Z_w(0,1,0), \hspace{5ex} f(u_0,v_0)=F_w(0,0,1),  \hspace{5ex} d(u_0,v_0)=D_w(0,1,0,1), \label{d:zfd}
\end{equation}
where $w(t)=u(t)-v(t)$ and $u(t),v(t)$ are solutions of (\ref{r:main}) with the initial conditions $u(0)=u_0$, $v(0)=v_0$. In this section we first reformulate the balance law (\ref{r:balance}) in terms of $z,f,d$, then show that the property $d(u_0,v_0)>0$ persists in a certain sense for small perturbations in $\tilde{\mathcal{B}}^2$, and finally that $z,f,d$ are Borel-measurable.

\begin{remark} The zero function $z$ in the literature depends on one argument $w_0=u_0-v_0$. The flux and dissipation functions $f,d$, however, depend on both $u_0$, $v_0$ (and their evolution), so we adopt the same convention to $z$.	
\end{remark}

Note that the values of $z,f,d$ are always integers, and that $z,d \geq 0$.
Let $\hat{S},\hat{T}:\tilde{\mathcal{B}}^2 \rightarrow \tilde{\mathcal{B}}^2$, $\hat{S}(u_0,v_0)=(Su_0,Sv_0)$, $\hat{T}(u_0,v_0)=(Tu_0,Tv_0)$. By inserting $x=0$, $y=1$, $s=0$, $t=1$, the balance law of zeroes (\ref{r:balance}) can now be written as
\begin{equation}
z \circ \hat{T} - z = f \circ \hat{S}  - f - d. \label{r:bez!}
\end{equation}

\begin{lemma} \label{l:3_3}
	If $u_0,v_0 \in \tilde{\mathcal{B}}$ are such that $d(u_0,v_0)>0$, then there exists an open neighbourhood $\mathcal{U}$ of $(u_0,v_0)$ in $\tilde{\mathcal{B}}^2$ such that for each $(\tilde{u}_0,\tilde{v}_0) \in \mathcal{U}$, we have
	\begin{equation}
	(\tilde{u}_0,\tilde{v}_0)+d(\hat{S}^{-1}(\tilde{u}_0,\tilde{v}_0))+d(\hat{T}(\tilde{u}_0,\tilde{v}_0))+d(\hat{S}^{-1}\hat{T}(\tilde{u}_0,\tilde{v}_0)) \geq 1.
	\label{r:disseq}
	\end{equation}
\end{lemma}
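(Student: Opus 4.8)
The plan is to read the first summand as $d(\tilde{u}_0,\tilde{v}_0)$ (the printed $(\tilde u_0,\tilde v_0)$ is a typo) and to recognize the whole left-hand side as one dissipation count over a $2\times 2$ forward block. Using that the flow commutes with $S$ and $T$, together with the definitions (\ref{d:ZFD}), (\ref{d:zfd}), the four summands count the singular zeroes of $w=u-v$ in the four unit cells tiling $[0,2)\times(0,2]$: $d(\tilde u_0,\tilde v_0)$ those in $[0,1)\times(0,1]$, $d(\hat S^{-1}(\cdot))$ those in $[1,2)\times(0,1]$, $d(\hat T(\cdot))$ those in $[0,1)\times(1,2]$, and $d(\hat S^{-1}\hat T(\cdot))$ those in $[1,2)\times(1,2]$. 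Hence the assertion is exactly $D_{\tilde w}(0,2,0,2)\ge 1$, and it suffices to produce a uniform lower bound of $1$ on a neighbourhood $\mathcal U$ of $(u_0,v_0)$.

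Since $d(u_0,v_0)=D_w(0,1,0,1)>0$, there is a singular zero $(x_*,t_*)$ of $w$ with $x_*\in[0,1)$ and $t_*\in(0,1]$; in particular $t_*>0$ and $x_*<1<2$. By Lemma \ref{l:global}(iv) there are only finitely many singular zeroes in any compact rectangle, and by Lemma \ref{l:local} each is isolated, so after shrinking $\varepsilon,\delta$ I can choose a closed box $Q=[x_*-\varepsilon,x_*+\varepsilon]\times[t_*-\delta,t_*+\delta]$ that contains $(x_*,t_*)$ as its only singular zero and whose boundary $\partial Q$ meets $N_w$ only in regular zeroes. Lemma \ref{l:continuity}(iii) then gives a neighbourhood $\mathcal U$ of $(u_0,v_0)$ on which $D_{\tilde w}(Q)=D_w(Q)\ge 1$; this is where local constancy of the dissipation does the real work.

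It then remains to place $Q$ so that its counting region $[x_*-\varepsilon,x_*+\varepsilon)\times(t_*-\delta,t_*+\delta]$ lies inside $[0,2)\times(0,2]$, for then $D_{\tilde w}(0,2,0,2)\ge D_{\tilde w}(Q)\ge 1$ on $\mathcal U$. The temporal buffer is free: since $t_*>0$ I take $\delta<t_*$ so the box stays in $t>0$, while $t_*\le 1<2$ leaves room above, any overflow past $t=1$ being absorbed by the cells carrying the $\hat T$ shift. The rightward spatial overflow (when $x_*$ is near $1$) is absorbed by the cells carrying the $\hat S^{-1}$ shift, since $x_*+\varepsilon<2$. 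Thus for a singular zero with $x_*\in(0,1)$ the estimate is complete: choose $\varepsilon<x_*$ so the box also does not protrude to the left.

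The main obstacle is precisely the left spatial edge: if the only available singular zero sits at $x_*=0$, then $\varepsilon<x_*$ is impossible, the box protrudes into $x<0$, and a perturbation can slide the singular zero to $x=-\eta<0$, into the cell $[-1,0)$ which corresponds to $d(\hat S(\cdot))$ and is \emph{not} among the four summands; so the naive box estimate only yields the six-cell sum. To recover the four-cell statement I would abandon the isolation and run the balance law (\ref{r:balance}) on the block, writing $D_{\tilde w}(0,2,0,2)=\big(Z_{\tilde w}(0,2,0)-Z_{\tilde w}(0,2,2)\big)+\big(F_{\tilde w}(2,0,2)-F_{\tilde w}(0,0,2)\big)$, the point being that a zero crossing $x=0$ is recorded by the flux $F_{\tilde w}(0,0,2)$. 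Using Morse--Sard to cut the left boundary at a level where $w$ has only regular, transversally crossed zeroes (exactly as in the proof of Lemma \ref{l:continuity}(ii)) keeps the two $Z$-terms and the right-hand flux locally constant, so the block count cannot drop below its value for $w$. Reconciling this left-edge flux bookkeeping with the asymmetric choice of $\hat S^{-1}$ (rather than $\hat S$) in the cell decomposition is, I expect, the delicate step of the argument; the interior case $x_*\in(0,1)$ needs none of it.
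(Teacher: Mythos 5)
Your reading of the misprint (the first summand is $d(\tilde{u}_0,\tilde{v}_0)$), your identification of the four summands as dissipation counts over unit cells, and your interior-case argument — isolate the singular zero using Lemma \ref{l:local} and Lemma \ref{l:global},(iv), then invoke local constancy of the dissipation from Lemma \ref{l:continuity},(iii) — are all correct, and these are exactly the tools the paper uses. The genuine gap is the left-edge case $x_*=0$, which you leave unresolved, and the repair you sketch cannot close it: with the paper's definition $Su(x)=u(x-1)$, hence $(S^{-1}w)(x)=w(x+1)$, your literal reading of the cells as tiling $[0,2)\times(0,2]$ is forced, and under that reading the inequality (\ref{r:disseq}) is simply false when the only singular zero sits at $x_*=0$. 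Concretely, take $g\equiv 0$, $v_0\equiv 0$ and $u_0(x)=e^{-1/2}-\cos x$; then $w(x,t)=e^{-1/2}-e^{-t}\cos x$ solves the heat equation and its only multiple zeroes are double zeroes at $(2k\pi,\tfrac12)$, $k\in\mathbb{Z}$, so $d(u_0,v_0)=2$. For small $\eta>0$ set $\tilde{u}_0(x)=u_0(x+\eta)$, $\tilde{v}_0=v_0$: since $g$ is $x$-independent, $\tilde{w}(x,t)=w(x+\eta,t)$, whose multiple zeroes lie at $x=2k\pi-\eta$, none of which is in $[0,2)$; hence all four summands vanish, although $\|\tilde{u}_0-u_0\|_{\mathcal{X}^{\alpha}}=O(\eta)$ and all functions involved lie in a common closed, bounded, shift- and flow-invariant $\mathcal{B}$ (the closure of all spatial translates of the orbit of $u_0$, together with $0$). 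The same example shows why your balance-law/Morse--Sard bookkeeping must fail: the flux through $x=0$ jumps from $+1$ to $-1$ under this perturbation precisely because a curve endpoint (singular zero) sits on the cut, and you are not free to move the cut to the left, since the union of your four cells begins exactly at $x=0$; moving it to the right loses the singular zero altogether.

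What resolves this is an orientation inconsistency in the paper, of which your ``delicate step'' is the symptom. The paper's own proof equates the four-term sum with $D_{\tilde{w}}(-1,1,0,2)$, i.e.\ it places the extra spatial cell on the \emph{left}, $[-1,0)\times(0,1]$; this is consistent only with the opposite convention for the shift (equivalently, given $Su(x)=u(x-1)$, the lemma should be stated with $\hat{S}$ in place of $\hat{S}^{-1}$). With the cells tiling $[-1,1)\times(0,2]$ the difficulty disappears: the closed edge $x=0$ of the counting region has a whole cell to its left, and the right edge needs no buffer since $x_*<1$ strictly, so your small-box argument closes in every case with no flux bookkeeping at all. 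The paper does the same thing in one stroke: by finiteness of multiple zeroes it chooses a single box $Q=[-\delta,1-\delta]\times[\varepsilon,1+\varepsilon]$, shifted left in space and up in time, so that $D_w(Q)=d(u_0,v_0)$ and $\partial Q$ carries only regular zeroes, applies Lemma \ref{l:continuity},(iii) once, and then bounds $D_{\tilde{w}}(-1,1,0,2)\geq D_{\tilde{w}}(Q)\geq 1$ on $\mathcal{U}$. Note finally that both applications of the lemma (Propositions \ref{p:diszero1} and \ref{p:transversal}) integrate (\ref{r:disseq}) against $S$-invariant measures, for which $d\circ\hat{S}$ and $d\circ\hat{S}^{-1}$ have the same integral, so the orientation slip is harmless downstream — but it is exactly what blocked your proof, and your instinct that the left edge is where the statement lives or dies was sound.
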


\begin{proof} We denote below by $w=u-v, \tilde{w}=\tilde{u}-\tilde{v}$ the solution of (\ref{r:main}) with the initial conditions $u_0,v_0,\tilde{u}_0,\tilde{v}_0$ at $t=0$ respectively.
	By finiteness of the number zeroes in a compact set, we can find $0 < \delta, \varepsilon <1$ small enough such that for $Q=[-\delta,1-\delta] \times [\varepsilon,1+\varepsilon]$, there are no multiple zeroes in $\partial Q$, and such that $D_w(-\delta,1-\delta,\varepsilon,1+\varepsilon)=D_w(0,1,0,1)=d(u_0,v_0)$. Now we apply 
	Lemma \ref{l:continuity}, (iii), and find an open neighbourhood $\mathcal{U}$ of $(u_0,v_0)$ such that for each $(\tilde{u}_0,\tilde{v}_0) \in \mathcal{U}$, $\tilde{w}=\tilde{u}_0-\tilde{v}_0$, we have
	$D_w(-\delta,1-\delta,\varepsilon,1+\varepsilon)=D_{\tilde{w}}(-\delta,1-\delta,\varepsilon,1+\varepsilon)$.
	Finally it suffices to note that
\begin{align*}
d(\tilde{u}_0,\tilde{v}_0)+d(\hat{S}^{-1}(\tilde{u}_0,\tilde{v}_0))+d(\hat{T}(\tilde{u}_0,\tilde{v}_0))+d(\hat{S}^{-1}\hat{T}(\tilde{u}_0,\tilde{v}_0)) & =D_{\tilde{w}}(-1,1,0,2) \\ & \geq  D_{\tilde{w}}(-\delta,1-\delta,\varepsilon,1+\varepsilon) \\ & =d(u_0,v_0) \geq 1.
\end{align*}
\end{proof}

\begin{lemma} \label{l:3_1} 
	The functions $z,d,f : \tilde{\mathcal{B}}^2 \rightarrow \mathbb{R}$ are Borel-measurable. 
\end{lemma}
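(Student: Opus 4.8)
The plan is to reduce the measurability of all three functions to that of $z$ and $f$, and then to exhibit $z$ and $f$ as differences of lower semicontinuous functions, which are automatically Borel. The reduction is furnished by the balance law: rewriting $(\ref{r:bez!})$ gives $d = z - z\circ\hat T + f\circ\hat S - f$, and since $\hat S,\hat T:\tilde{\mathcal B}^2\to\tilde{\mathcal B}^2$ are continuous, once $z$ and $f$ are shown Borel-measurable, so are $z\circ\hat T$ and $f\circ\hat S$, and hence $d$. Thus it suffices to treat $z$ and $f$.

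For $z$, I would first identify $z(u_0,v_0)$ with the number of sign changes of the $C^1$ function $x\mapsto w_0(x)=u_0(x)-v_0(x)$ on the half-open interval $[0,1)$ (with the boundary conventions of $(\ref{d:ZFD})$). This identification is exactly the content of the conventions in Lemma~\ref{l:global} and Remark~\ref{rm:fix}: a simple zero and an odd multiple zero each produce one sign change and are counted once, while an even multiple zero produces no sign change and is not counted. The number of sign changes is lower semicontinuous in $w_0$, because any configuration of $k$ strict sign changes is witnessed by finitely many points at which $w_0$ takes strictly alternating signs, a condition stable under $C^0$-perturbation. Concretely, I would write $z$ as the countable supremum, over finite increasing tuples of rational points in $(0,1)$, of the integer-valued number of sign alternations of $w_0$ along the tuple; each such term is lower semicontinuous because the evaluation maps $u_0\mapsto u_0(q)$ are continuous (the embedding $\mathcal X^\alpha\hookrightarrow C^1$ together with Lemma~\ref{l:c1}(i)), and the set where a product of two coordinates is strictly negative is open. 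A countable supremum of lower semicontinuous functions is lower semicontinuous, hence Borel; finiteness of the values follows from Lemma~\ref{l:global}(iv).

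For $f$, I would use Lemma~\ref{l:c1}(ii), by which the temporal trace $\tau\mapsto w(0,\tau)$ depends continuously, in $C^1([0,1])$, on $(u_0,v_0)$, together with the oriented count of crossings of the line $\{x=0\}$ by the curves $\gamma_i$ underlying the definition $(\ref{d:ZFD})$ and the representation $(\ref{f:alternative})$. I would split $f=f^+-f^-$, where $f^+$ and $f^-$ count, according to their orientation, the transversal crossings of $\{x=0\}$ during $(0,1)$. Each of $f^+,f^-$ is lower semicontinuous: a transversal crossing of a fixed orientation persists under small perturbation (implicit function theorem plus continuous dependence, as in the proof of Lemma~\ref{l:continuity}(ii)), so its count cannot drop; a tangential crossing, upon perturbation, either disappears or splits into one crossing of each orientation, which preserves lower semicontinuity of $f^+$ and $f^-$ separately. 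Being a difference of two lower semicontinuous functions, $f$ is Borel, and then $d$ is Borel by the balance law.

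The main obstacle is precisely the matching between the combinatorial definitions in $(\ref{d:ZFD})$ and the semicontinuous representations above in the presence of degenerate data: zeroes sitting on the boundary of the counting window (at $x\in\{0,1\}$ for $z$, or $t\in\{0,1\}$ for $f$) and non-transversal (multiple, or turning-point) zeroes. These are exactly the configurations at which the naive counts are discontinuous. I deliberately route through semicontinuity to avoid a stratification of such ``bad'' sets, which would not terminate since $\tilde{\mathcal B}^2$ is infinite-dimensional: lower semicontinuity already yields Borel measurability globally, so one only needs that the degenerate contributions can at worst \emph{increase} the count under perturbation, which is guaranteed by the local and global structure of the nodal set (Lemmas~\ref{l:local} and \ref{l:global}). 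An alternative, equivalent route would be to invoke Lemma~\ref{l:continuity} to obtain continuity of $z,f,d$ on explicit open sets and then check that their complements are closed (as projections along the compact windows $[0,1]$, using continuity of the evaluation maps in Lemma~\ref{l:c1}); this still demands a separate argument on the closed bad sets, which the semicontinuity argument subsumes.
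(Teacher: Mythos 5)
Your reduction of $d$ to $z$ and $f$ through the balance law (\ref{r:bez!}) is exactly the paper's step, and for $z$ and $f$ you take a genuinely different route: the paper does not use semicontinuous representations at all, but evaluates the counts on $w$-dependent shifted windows ($x_n=-1/\sqrt{n}$, $y_n=1-1/\sqrt{n}$, $t_n=1/n$), shows in Lemma \ref{l:53little} that for $n$ large these counts are locally constant and equal to $z$, and writes each level set $\lbrace z=k\rbrace=\bigcup_{m=1}^{\infty}\bigcap_{n=m}^{\infty}\mathcal{U}_{n,k}$ with $\mathcal{U}_{n,k}$ open. For $z$ your route is close to viable, but with two defects. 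First, the equivalence ``counted by $Z_w(0,1,0)$ $\Leftrightarrow$ sign change'' is not literally the content of Lemma \ref{l:global} and Remark \ref{rm:fix}; it needs the extra observation that an even-degree (resp.\ odd-degree) zero in the sense of Lemma \ref{l:local} is not (resp.\ is) a sign change of $x\mapsto w_0(x)$, which one can extract from Lemma \ref{l:local} via the constancy of the lateral signs of $w$ on the sides of $Q$, but which deserves proof. Second, and more seriously, your supremum over rational tuples in $(0,1)$ does not equal $z$: a sign-changing zero at exactly $x=0$ lies on a through-going curve with $d_i>0$ and $\gamma_i(0)=0\in[0,1)$, hence is counted by $z$, but no tuple inside $(0,1)$ can detect it, since on $(0,\varepsilon)$ the function $w_0$ has only one sign. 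So the identity you assert is false on this set; it is repairable by adding the indicator of ``$w_0$ takes both signs in every neighbourhood of $0$'' (which is Borel), but as written this is a gap.

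The genuine failure is in the $f$ part. The flux $f=F_w(0,0,1)$ defined by (\ref{d:ZFD}) is a net displacement count: each curve alive at time $0$ contributes according to whether its positions at time $0$ and at time $\min(1,d_i)$ lie in $\lbrace x\geq 0\rbrace$. This includes contributions invisible to transversal-crossing counts: a curve crossing $x=0$ with a cubic-type tangency contributes $\pm 1$ to $f$ but $0$ to $f^{+}-f^{-}$; a curve terminating at a multiple zero lying exactly on the line, or a curve with $\gamma_i(0)=0$, likewise produces a nonzero contribution to $f$ with no transversal crossing in $(0,1)$. Hence the identity $f=f^{+}-f^{-}$ fails precisely on the degenerate configurations, and lower semicontinuity of $f^{\pm}$ only makes $f^{\pm}$ Borel --- it says nothing about $f$ on the set where the representation breaks down. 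Since there is no reference measure here, measurability is an everywhere statement and this bad set cannot be discarded; your claim that semicontinuity ``subsumes'' it conflates measurability of $f^{\pm}$ with validity of the representation of $f$. Note that the paper is explicitly aware that a fixed vertical line can meet the curves $\gamma_i$ tangentially: that is exactly why the proof of Lemma \ref{l:continuity},(ii) invokes the Morse--Sard lemma to replace $x$ by a nearby $x-\delta_2$, depending on $w$, on which all crossings are transversal, so that (\ref{f:alternative}) becomes available. Repairing your argument would require either carrying out such a $w$-dependent shift (which essentially reproduces the paper's proof of measurability of $f$, ``analogous'' to its proof for $z$) or a direct Borel analysis of the degenerate set together with the correction terms on it.
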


In order to prove it, we need the following:

\begin{lemma} \label{l:53little}
	Assume $u_0,v_0 \in \tilde{\mathcal{B}}$, let $u,v$ be the solutions of (\ref{r:main}) with the initial conditions $u_0,v_0$ at $t=0$, and let $w=u-v$. Then there exists $n_0(w)$ so that for all $n \geq n_0$, $x_n:=-1/\sqrt{n}$, $y_n:=1-1/\sqrt{n}$ and $t_n:=1/n$, 
	
(i) all the zeroes of $w$ in $[x_n,y_n) \times \lbrace t_n \rbrace$ are regular, there are no zeroes in $\lbrace (x_n,t_n),(y_n,t_n) \rbrace$, and

(ii) $z(w)=Z_w(x_n,y_n,t_n)$.
	
\end{lemma}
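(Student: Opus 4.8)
The plan is to track, via the global and local structure of the zero set (Lemmas \ref{l:global} and \ref{l:local}), the finitely many zero curves of $w$ that pass near the window $[0,1]$ as time increases slightly from $0$, and to exploit that the horizontal shift $1/\sqrt{n}$ of the window dominates the $O(1/n)$ horizontal motion of those curves over the time $t_n=1/n$. If $w=0$ the statement is trivial, since $N_w=\emptyset$ and all the quantities vanish; so assume $w\neq 0$. Recall from (\ref{d:ZFD}) that $z(w)=Z_w(0,1,0)$ counts the zero curves $\gamma_i$ with $d_i>0$ and $\gamma_i(0)\in[0,1)$. Since $0>\delta_0$, the zeroes of $w_0$ in the fixed window $W=[-3,4]$ form a finite set $P$ of isolated points; put $\eta_0=\min\{|p|:p\in P,\ p\neq 0\}$ and $\eta_1=\min\{|p-1|:p\in P,\ p\neq 1\}$, both positive, so that $P$ meets neither $(-\eta_0,0)$ nor $(1-\eta_1,1)$. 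By Lemma \ref{l:global}, (iv), there are only finitely many multiple zeroes of $w$ in $W\times[0,1]$; choosing $t^*\in(0,1]$ strictly below the smallest positive time among them (or $t^*=1$ if there are none) guarantees that $w$ has no multiple zero in $W\times(0,t^*]$. Hence every zero of $w$ in $[-2,3]\times\{t\}$ is regular for each $t\in(0,t^*]$, which already yields the regularity assertion in (i).

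Next I would establish a uniform speed bound for the relevant curves. Applying the local structure (Lemma \ref{l:local}) at each of the finitely many points $(p,0)$, $p\in P$, the zero curves emanating from $(p,0)$ stay confined to a small box around $p$ for $t$ in a small positive interval and do not terminate there, since the only possible singular point of the box is $(p,0)$ itself; away from $P$, continuity of $u(0)\mapsto u(\cdot,t)$ in $C^1_{\mathrm{loc}}$ (Lemma \ref{l:c1}) keeps $w$ nonzero for small $t$ on compact sets. Shrinking $t^*$ if necessary, this shows that every curve $\gamma_i$ with $\gamma_i(0)\in[-2,3]$ satisfies $d_i>t^*$ and remains in $[-2.5,3.5]$ on $[0,t^*]$; being finitely many $C^1$ functions on the compact interval $[0,t^*]$, they admit a common bound $|\gamma_i(t)-\gamma_i(0)|\leq Mt$ for $t\in[0,t^*]$.

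I expect this passage to be the \emph{main obstacle}: it requires ruling out that a curve starting near the window either dies or escapes during the short time interval $(0,t_n]$, which is exactly the point at which the local/global structure of zeroes and the finiteness of zero curves and multiple zeroes in compact sets (Lemma \ref{l:global}, (iv)) are indispensable. Once the speed bound $M$ is in hand, the remaining steps are quantitative bookkeeping driven by the fact that $1/\sqrt{n}\gg M/n$.

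Finally, choose $n_0$ so that for all $n\geq n_0$ one has $t_n=1/n\leq t^*$, $M/n<1/\sqrt{n}$ (i.e.\ $n>M^2$), and $1/\sqrt{n}+M/n<\min(\eta_0,\eta_1)$. Any zero of $w$ at time $t_n$ lies on a curve $\gamma_i$ with $d_i>t_n>0$, hence has a well-defined origin $\gamma_i(0)\in P$ with $|\gamma_i(t_n)-\gamma_i(0)|\leq M/n$. A short case analysis then shows $\gamma_i(0)\in[0,1)\iff\gamma_i(t_n)\in[x_n,y_n)$ for every such curve: if $\gamma_i(0)\in[0,1)$ then $\gamma_i(0)\leq 1-\eta_1$ (as $P$ avoids $(1-\eta_1,1)$), so $x_n<\gamma_i(t_n)<y_n$; conversely $\gamma_i(t_n)\in[x_n,y_n)$ forces $\gamma_i(0)\geq x_n-M/n>-\eta_0$ and $\gamma_i(0)<y_n+M/n<1$, whence $\gamma_i(0)\in[0,1)$ because $P$ avoids $(-\eta_0,0)$. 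This bijection gives $Z_w(x_n,y_n,t_n)=Z_w(0,1,0)=z(w)$, which is (ii). The same estimate rules out $\gamma_i(t_n)\in\{x_n,y_n\}$, since then $\gamma_i(0)\in P$ would lie in $(-\eta_0,0)$ respectively $(1-\eta_1,1)$, both free of $P$; this shows there are no zeroes at $(x_n,t_n)$ or $(y_n,t_n)$ and completes (i).
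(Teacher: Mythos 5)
Your proof is correct and follows essentially the same route as the paper's: both rest on the local/global structure of the zero set (Lemmas \ref{l:local} and \ref{l:global}, including finiteness of multiple zeroes in compact sets) together with the observation that a $C^1$ zero curve moves only $O(1/n)$ in time $t_n=1/n$, hence cannot reach the window boundary shifted by $1/\sqrt{n}$. The paper phrases this scaling argument as the derivative $|(\gamma_i)_t(0)|$ being forced to be $+\infty$ at the endpoints $(0,0)$, $(1,0)$, while you package it as a uniform Lipschitz bound $M$ on the finitely many relevant curves plus an explicit bijection of curve indices for (ii) — the same mechanism, carried out in more quantitative detail.
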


\begin{proof} Firstly, by finiteness of the number of multiple zeroes in a compact set, there are no multiple zeroes in $[x_n,y_n)\times \lbrace t_n \rbrace$ for $n$ large enough. Now if $(0,0)$ and $(1,0)$ are not zeroes, (i) holds for $n$ large enough by continuity of $w$. If $(0,0)$ and $(1,0)$ are even, multiple zeroes, then (i) holds for $n$ large enough by the local structure of zeroes Lemma \ref{l:local}.
	
Assume $(0,0)$ is a regular or odd multiple zero, by the local structure of zeroes it lies on a $C^1$ curve of zeroes $\gamma_i$, $\gamma_i(0)=0$, with a domain containing an open neighbourhood of $0$.
Now by the local structure of zeroes $(x_n,t_n)$ can be a zero for $n$ large enough only if $x_n=\gamma_i(t_n)$. However, by the choice of $t_n,x_n$, this is impossible for $n$ large enough, as $|(\gamma_i)_t(0)|$ would have to be $+\infty$ which contradicts it being $C^1$. Analogously $(y_n,t_n)$ can not be a zero for $n$ large enough, thus (i) holds.

Consier $Q, \mathcal{I}_w(Q)$ as in \ref{d:ZFD}. Analogously as above we deduce that for $n$ large enough, 
\begin{align*}
|i \in \mathcal{I}_w(Q), \, d_i > 0, \gamma_i(0) \geq 0| & = |i \in \mathcal{I}_w(Q), \, d_i > t_n, \gamma_i(t_n) \geq x_n|, \\
|i \in \mathcal{I}_w(Q), \, d_i > 0, \gamma_i(0) \geq 1| & = |i \in \mathcal{I}_w(Q), \, d_i > t_n, \gamma_i(t_n) \geq y_n|,
\end{align*}
which by definition gives (ii).
\end{proof}

\begin{proof}[Proof of Lemma \ref{l:3_1}]
	Denote by $\mathcal{U}_{n,k} \subset \tilde{\mathcal{B}}^2$ the set of all $(u_0,v_0)$ for which the following holds: let $w=u-v$, $u,v$ solutions of (\ref{r:main}) with the initial conditions $u_0,v_0$  at $t=0$ respectively, and let $t_n,x_n,y_n$ be as in Lemma \ref{l:53little}. Let $\mathcal{U}_{n,k}$ be the set of all $(u_0,v_0) \in \tilde{ \mathcal{B}}^2$ for which the statement in Lemma \ref{l:53little}, (i) holds, and for which $Z_w(x_n,y_n,t_n)=k$. We claim that $\mathcal{U}_{n,k}$ is open in $\tilde{ \mathcal{B}}^2$. Indeed, this follows from the continuous dependence of solutions of (\ref{r:main}) on initial conditions, from the local structure of zeroes and from Lemma \ref{l:continuity}, (i). Now, by Lemma \ref{l:53little},
	$$
	\lbrace (u_0,v_0)\in \tilde{ \mathcal{B}}^2, \: z(u_0,v_0)=k \rbrace = \bigcup_{m=1}^{\infty} \bigcap_{n=m}^{\infty}\mathcal{U}_{n,k},
	$$
	thus $z$ is measurable. The proof of measurability of $f$ is analogous. Measurability
	of $d$ follows from (\ref{r:bez!}).
\end{proof}

\section{The proof of Theorem \ref{t:main1} (the bounded case)} \label{s:bounded}

In this section we consider only the bounded case, and show the following:
\begin{proposition} \label{p:diszero1}
	For any $(u_0,v_0) \in \mathcal{E}^2$, we have that $d(u_0,v_0)=0$.
\end{proposition}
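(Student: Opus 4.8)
The plan is to use the pointwise balance law \eqref{r:bez!} to exhibit the zero count $z$ as a nonnegative, integer-valued Lyapunov function for $\hat{T}=T\times T$, and then to combine the resulting almost-everywhere dissipation statement with the robustness Lemma \ref{l:3_3} to rule out a tangency at a single prescribed point of $\mathcal{E}^2$. The starting observation is that in the bounded case $S$ is the identity, so $\hat{S}$ is the identity and $f\circ\hat{S}=f$. Hence \eqref{r:bez!} collapses to the pointwise identity
\begin{equation*}
z\circ\hat{T}-z=-d\qquad\text{on }\tilde{\mathcal{B}}^2,
\end{equation*}
and since $d\geq 0$ this gives $z\circ\hat{T}\leq z$ everywhere, with the drop $z-z\circ\hat{T}$ equal to the dissipation $d$.

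Next I would fix $(u_0,v_0)\in\mathcal{E}^2$ and argue by contradiction, assuming $d(u_0,v_0)\geq 1$. Since $\mathcal{E}$ is the union of supports of all invariant measures, there are invariant measures $\mu,\nu$ with $u_0\in\supp\mu$ and $v_0\in\supp\nu$; the product $\hat{\mu}=\mu\times\nu$ is then $\hat{T}$-invariant and $(u_0,v_0)\in\supp\hat{\mu}$. I claim $d=0$ holds $\hat{\mu}$-almost everywhere. Indeed, set $A=\{d\geq 1\}$ and suppose $\hat{\mu}(A)>0$. By the Poincar\'e recurrence theorem, $\hat{\mu}$-a.e.\ point of $A$ returns to $A$ under $\hat{T}$ infinitely often; but at each visit to $A$ the nonnegative, everywhere-finite integer $z$ drops by at least one (by the displayed identity), while $z$ is nonincreasing in between, so $z$ would tend to $-\infty$ along such an orbit, which is impossible. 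Hence $\hat{\mu}(A)=0$, i.e.\ $\int d\,d\hat{\mu}=0$, and by invariance also $\int d\circ\hat{T}\,d\hat{\mu}=0$.

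Finally I would invoke Lemma \ref{l:3_3} to localize. Since $d(u_0,v_0)\geq 1$, there is an open neighbourhood $\mathcal{U}$ of $(u_0,v_0)$ on which \eqref{r:disseq} holds; specializing to $\hat{S}=\mathrm{id}$ this reads $2\bigl(d+d\circ\hat{T}\bigr)\geq 1$, hence $d+d\circ\hat{T}\geq 1$ by integrality, throughout $\mathcal{U}$. Because $(u_0,v_0)\in\supp\hat{\mu}$ and $\mathcal{U}$ is open, $\hat{\mu}(\mathcal{U})>0$, so
\begin{equation*}
\int\bigl(d+d\circ\hat{T}\bigr)\,d\hat{\mu}\geq\hat{\mu}(\mathcal{U})>0,
\end{equation*}
contradicting the two vanishing integrals from the previous step. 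Therefore $d(u_0,v_0)=0$.

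I expect the genuine subtlety to be precisely the passage from an almost-everywhere statement (which the Lyapunov/recurrence argument yields) to the pointwise conclusion on \emph{all} of $\mathcal{E}^2$; this is exactly what the robustness of positive dissipation in Lemma \ref{l:3_3} is designed to supply, as it shows that an isolated tangency cannot be invisible to an invariant measure whose support reaches $(u_0,v_0)$. A secondary technical point is that $Z(\hat{\mu})=\int z\,d\hat{\mu}$ may be infinite; the recurrence argument above sidesteps this entirely, but one could equally truncate $z$ at level $N$ — noting $(z\wedge N)\circ\hat{T}\leq z\wedge N$, so that $\int\bigl((z\wedge N)-(z\wedge N)\circ\hat{T}\bigr)\,d\hat{\mu}=0$ — and let $N\to\infty$, which is the content of the finiteness remark attached to the definition of $Z$.
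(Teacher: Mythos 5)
Your proof is correct, and it differs from the paper's in the one step where the paper does real technical work. Both arguments share the same skeleton: the bounded-case balance law (\ref{r:bezun}), $z\circ\hat{T}-z=-d$, the $\hat{T}$-invariant product measure with $(u_0,v_0)$ in its support, and the final contradiction obtained from Lemma \ref{l:3_3} (with $\hat{S}=\mathrm{id}$ this gives $d+d\circ\hat{T}\geq 1$ on an open set $\mathcal{U}$ of positive measure). The difference is in how the almost-everywhere vanishing of $d$ is established. The paper cannot integrate the balance law against $\mu_1\times\mu_2$ directly, since $\hat{Z}(\mu_1\times\mu_2)$ may be infinite; it therefore constructs (Lemmas \ref{l:4_1} and \ref{l:4_2}, via the ergodic decomposition and a re-weighting of its components) an auxiliary $\hat{T}$-invariant measure $\tilde{\nu}_0$ with $\hat{Z}(\tilde{\nu}_0)<\infty$ and $(u_0,v_0)\in\supp\tilde{\nu}_0$, and only then integrates the telescoped balance law to conclude $\int (d+d\circ\hat{T})\,d\tilde{\nu}_0=0$. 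You sidestep the integrability issue entirely: by Poincar\'e recurrence, if $\{d\geq 1\}$ had positive measure, almost every of its points would return to it infinitely often, and each return forces the nonnegative, everywhere-finite, integer-valued $z$ to drop by at least one along an orbit on which it is nonincreasing --- impossible. This renders Lemmas \ref{l:4_1} and \ref{l:4_2} unnecessary in the bounded case and is more elementary. What the paper's heavier route buys is reusability: the mechanism of integrating the balance law against an invariant measure of finite $\hat{Z}$ is precisely what carries over to the extended case (Lemma \ref{l:ergodic} and Proposition \ref{p:avbal}), where the flux term in (\ref{r:bez!}) no longer cancels pointwise, $z$ is not monotone along individual $\hat{T}$-orbits, and your recurrence argument would not apply as stated. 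One small inaccuracy in your closing remark: the paper's footnote about making $Z$ finite refers to the re-weighting of the ergodic decomposition (Lemma \ref{l:4_2}), not to truncation; your truncation variant $(z\wedge N)\circ\hat{T}\leq z\wedge N$ is nonetheless a valid third way to reach $d=0$ a.e., since invariance forces equality of the truncations a.e., hence $d=0$ a.e. on $\{z\circ\hat{T}<N\}$, and these sets exhaust $\tilde{\mathcal{B}}^2$ as $N\to\infty$.
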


From this we will directly deduce Theorem \ref{t:main1} at the end of the section. We prove Proposition \ref{p:diszero1} in the time-periodic case only; the autonomous case is analogous (by taking the semiflow $T(t)$ instead of the map $T$). As discussed in the introduction, we define the zero function $\hat{Z}$ of two Borel probability measures on $\tilde{\mathcal{B}}$ as
$$
\hat{Z}(\mu_1,\mu_2):=\int_{\tilde{B}^2}z(u_0,v_0)d\mu_1(u_0)\mu_2(v_0),
$$
which is well-defined by non-negativity and Borel measurability of $z$. 

\begin{remark} \label{rr:61}
Analogously we can define $\hat{Z}$ for any Borel probability measure $\nu$ on $\tilde{\mathcal{B}}^2$ with $\hat{Z}(\nu_0)=\int_{\tilde{B}^2}z\, d\nu_0$. We slightly abuse the notation and write interchangeably $\hat{Z}(\mu_1 \times \mu_2)$ and $\hat{Z}(\mu_1,\mu_2)$.
\end{remark}
We can now write (\ref{d:zero}) as $Z(\mu_0)=\hat{Z}(\mu_0,\mu_0)$. The proposition \ref{p:diszero1} will follow from an analogous consideration as in the proof of Fiedler and Mallet-Paret Poincar\'{e}-Bendixson theorem \cite{Fiedler89}: we will see that the function $Z$ is non-increasing, and strictly decreasing if $d(u_0,v_0)>0$ for some $u_0,v_0$ in the support of a measure.

The technicality we need to resolve first is the possibility that $Z(\mu_0)=\infty$. It is not difficult to do it in the bounded case by using the ergodic decomposition of measures in $\tilde{\mathcal{B}}^2$. We first show in Lemma \ref{l:4_1} that if $\mu_0 \times \mu_0$ is $\hat{T}$-ergodic, then $\hat{Z}$ is finite. In general, in Lemma \ref{l:4_2} that we show that can always modify "weights" in the ergodic decomposition to make $\hat{Z}$ finite.
Proposition \ref{p:diszero1} will then follow from integrating the balance law of zeroes (\ref{r:bez!}), which by $S$-periodicity in the bounded case reduces to
 \begin{equation}
 z\circ\hat{T}-z=-d. \label{r:bezun}
 \end{equation}
 
\begin{lemma} \label{l:4_1}
Let $\nu_0$ be a Borel-probability measure on $\tilde{\mathcal{B}}^2$. If $\nu_0$ is $\hat{T}$-ergodic, then $\hat{Z}(\nu_0) < \infty$.
\end{lemma}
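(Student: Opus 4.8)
The plan is to exploit the Lyapunov (monotonicity) structure of $z$ under $\hat T$ furnished by the balance law, and then combine it with Poincar\'e recurrence and ergodicity. In the bounded case $\hat S$ is the identity, so the balance law \eqref{r:bez!} reduces to \eqref{r:bezun}, namely $z\circ\hat T - z = -d$ with $d\ge 0$, where $z,d$ are the finite, non-negative, integer-valued Borel-measurable functions of Lemma \ref{l:3_1} (finiteness of $z$ being guaranteed by the global structure of zeroes, Lemma \ref{l:global}). Telescoping this identity along a forward orbit gives, for every $(u_0,v_0)\in\tilde{\mathcal B}^2$ and every $n\ge 1$,
\[
\sum_{k=0}^{n-1} d\bigl(\hat T^k(u_0,v_0)\bigr) = z(u_0,v_0) - z\bigl(\hat T^n(u_0,v_0)\bigr) \le z(u_0,v_0) < \infty .
\]
Hence the full series $\sum_{k\ge 0} d(\hat T^k(u_0,v_0))$ converges, bounded by $z(u_0,v_0)$.

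First I would deduce that $d=0$ $\nu_0$-a.e. Put $A=\{\,d\ge 1\,\}$. The convergence just obtained shows that every point enters $A$ at most $z(u_0,v_0)<\infty$ times under forward iteration of $\hat T$; in particular no point returns to $A$ infinitely often. On the other hand $\nu_0$ is $\hat T$-invariant (being ergodic), so if $\nu_0(A)>0$ the Poincar\'e recurrence theorem would force $\nu_0$-a.e.\ point of $A$ to return to $A$ infinitely often, a contradiction. Therefore $\nu_0(A)=0$, i.e.\ $d=0$ $\nu_0$-a.e.

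Then I would conclude using ergodicity. With $d=0$ a.e., \eqref{r:bezun} gives $z\circ\hat T = z$ $\nu_0$-a.e., so $z$ is a $\hat T$-invariant measurable function; by ergodicity of $\nu_0$ it equals $\nu_0$-a.e.\ a constant $c$, and since $z$ takes values in the non-negative integers, $c<\infty$. Consequently $\hat Z(\nu_0)=\int_{\tilde{\mathcal B}^2} z\, d\nu_0 = c<\infty$, as claimed.

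The step I expect to be the most delicate is ensuring that the pointwise identity \eqref{r:bezun} and the finiteness of $z$ are legitimately available at $\nu_0$-a.e.\ point, so that both the telescoping bound and the recurrence argument are valid; this rests on $\nu_0$ being supported on $\tilde{\mathcal B}^2$, where Lemma \ref{l:global}(iv) yields $z<\infty$. As an alternative to Poincar\'e recurrence, one may apply the non-negative version of the Birkhoff ergodic theorem to $d$: since $\tfrac1n\sum_{k=0}^{n-1} d\circ\hat T^k = (z - z\circ\hat T^n)/n \to 0$ $\nu_0$-a.e., the ergodic average of $d$ vanishes, whence $\int d\, d\nu_0 = 0$ and again $d=0$ a.e.
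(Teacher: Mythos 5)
Your proof is correct, but it follows a genuinely different route from the paper's. The paper's own argument is shorter and more direct: from the bounded-case balance law (\ref{r:bezun}) it only extracts the monotonicity $z\circ\hat T\le z$, observes that the sublevel sets $\mathcal{B}_n=\{z\le n\}$ are therefore forward invariant (hence invariant mod $\nu_0$-null sets, hence of measure $0$ or $1$ by ergodicity), notes that these sets exhaust $\tilde{\mathcal{B}}^2$ by pointwise finiteness of $z$, and concludes by continuity of probability that some $\mathcal{B}_{n_0}$ has full measure, giving $\hat Z(\nu_0)\le n_0$. You instead telescope the balance law and invoke Poincar\'e recurrence (or the non-negative Birkhoff theorem) to first establish $d=0$ $\nu_0$-a.e., and only then deduce that $z$ is a.e.\ $\hat T$-invariant and hence a.e.\ constant. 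What your detour buys is strictly more information at this stage: you obtain the a.e.\ vanishing of the dissipation $d$ without any prior integrability of $z$, which is essentially the conclusion the paper reaches only later (in Proposition \ref{p:diszero1}, via (\ref{r:contra})) after integrability has been secured by this lemma and Lemma \ref{l:4_2}; your Birkhoff variant also parallels the truncation strategy the paper deploys in the extended case (Lemma \ref{l:ergodic}). What the paper's route buys is economy: it needs nothing beyond the $0$--$1$ law for monotone sublevel sets, avoiding recurrence arguments entirely. Both proofs rest on the same two pillars — the pointwise balance law (\ref{r:bezun}) with $d\ge 0$, and pointwise finiteness of the integer-valued $z$ on $\tilde{\mathcal{B}}^2$ — so your "delicate step" is indeed exactly where the paper's proof also puts its weight.
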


\begin{proof}
By ergodicity, any  $\hat{T}$-invariant set has $\nu_0$ measure $0$ or $1$. The balance law (\ref{r:bezun}) implies that $z \circ \hat{T} \leq z$  thus the sets $\mathcal{B}_n=\lbrace (u_0,v_0) \in \tilde{\mathcal{B}}^2, \: z(u_0,v_0) \leq n  \rbrace$, $n \geq 0$ an integer, are $\hat{T}$-invariant. By finiteness of $z$, $\tilde{\mathcal{B}}^2 = \cup_{n=0}^{\infty} \mathcal{B}_n$. As $\nu(\tilde{\mathcal{B}}^2)=1$, by continuity of probability we have that there exists $n_0 \geq 0$ such that $\mu(\mathcal{B}_{n_0})=1$, thus $\hat{Z}(\nu_0) \leq n_0$.
\end{proof}

\begin{lemma} \label{l:4_2}
	Assume $\nu_0$ is $\hat{T}$-invariant and $(u_0,v_0) \in \supp\nu_0$. Then there exists a $\hat{T}$-invariant $\tilde{\nu}_0$ such that $\hat{Z}(\tilde{\nu}_0) < \infty$ and $(u_0,v_0) \in \supp\tilde{\nu}_0$.
\end{lemma}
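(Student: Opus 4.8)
The plan is to decompose $\nu_0$ into its ergodic components and then to \emph{re-weight} those components: components carrying a large zero number are suppressed enough to render the average finite, while every component is still given strictly positive weight so that the support does not shrink and $(u_0,v_0)$ is retained. First I would invoke the ergodic decomposition theorem: since $\tilde{\mathcal{B}}^2$ is a compact metric space and $\hat{T}$ is continuous, the $\hat{T}$-invariant measure $\nu_0$ admits a representation $\nu_0 = \int \rho \, dP(\rho)$, where $P$ is a Borel probability measure concentrated on the set of $\hat{T}$-ergodic measures. By Lemma \ref{l:4_1}, the quantity $h(\rho) := \hat{Z}(\rho) = \int_{\tilde{\mathcal{B}}^2} z \, d\rho$ is finite for every ergodic $\rho$; moreover $\rho \mapsto h(\rho)$ is Borel measurable, since $\rho \mapsto \int \min(z,k)\,d\rho$ is measurable for each $k$ (bounded Borel integrands give measurable barycentre maps by a monotone-class argument) and $h = \sup_k \int \min(z,k)\,d\rho$. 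Thus $h$ is a $P$-a.e. finite measurable function on the space of ergodic measures.

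Next I would construct the re-weighting density. Set $E_n = \lbrace \rho : n \le h(\rho) < n+1 \rbrace$ for integers $n \ge 0$; these partition the ergodic measures up to a $P$-null set, since $h < \infty$ $P$-a.e., so $\sum_n P(E_n)=1$. Writing $p_n = P(E_n)$ and $N = \lbrace n : p_n > 0 \rbrace$, I would choose strictly positive weights $c_n$, $n \in N$, with $\sum_{n \in N} c_n = 1$ and $\sum_{n \in N} (n+1)c_n < \infty$ (for instance $c_n$ proportional to $2^{-n}$, indexed by the actual value $n$, which is summable against $(n+1)$ regardless of how sparse $N$ is). Define $\phi(\rho) = c_n/p_n$ for $\rho \in E_n$, $n \in N$, and $\phi \equiv 1$ on the $P$-null remainder; then $\phi > 0$ $P$-a.e. and $\int \phi \, dP = 1$. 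Finally set $\tilde{P} = \phi \, P$ and $\tilde{\nu}_0 = \int \rho \, d\tilde{P}(\rho)$.

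It remains to verify the three required properties. As an integral of $\hat{T}$-ergodic, hence $\hat{T}$-invariant, measures, $\tilde{\nu}_0$ is a $\hat{T}$-invariant Borel probability measure. By Tonelli, $\hat{Z}(\tilde{\nu}_0) = \int h \, d\tilde{P} = \int h \phi \, dP \le \sum_{n \in N} (n+1)c_n < \infty$. For the support, note that for any open $U$ one has $\tilde{\nu}_0(U) = \int \rho(U)\phi(\rho)\,dP(\rho)$; since $\phi > 0$ $P$-a.e., this is positive if and only if $\rho(U) > 0$ on a set of positive $P$-measure, which is exactly the condition $\nu_0(U) = \int \rho(U)\,dP(\rho) > 0$. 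Hence $\supp\tilde{\nu}_0 = \supp\nu_0$, so in particular $(u_0,v_0) \in \supp\tilde{\nu}_0$, as desired.

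I expect the main obstacle to be precisely this last equivalence: preserving the support forces $\phi$ to be strictly positive $P$-a.e., and this requirement must be reconciled with the summability $\sum_{n \in N} (n+1)c_n < \infty$ that makes $\hat{Z}(\tilde{\nu}_0)$ finite. The explicit choice of $c_n$ above does exactly this, so the genuine content of the argument is the interplay between positivity (for the support) and decay (for finiteness), rather than the routine verifications of invariance and measurability; a secondary technical point is justifying the measurability of $h$ and the ergodic decomposition in the coarser-topology setting of the extended case, which reduces to standard facts once $z$ is known to be Borel measurable (Lemma \ref{l:3_1}).
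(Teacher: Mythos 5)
Your proof is correct, but it takes a genuinely different route from the paper's. The paper does not re-weight the whole ergodic decomposition: it picks, for each $k$, a single $\hat{T}$-ergodic measure $\nu_k$ with $\nu_k(\mathcal{B}_k)>0$, where $\mathcal{B}_k$ is the $1/k$-ball around $(u_0,v_0)$ (such $\nu_k$ exist by the ergodic decomposition theorem), and sets $\tilde{\nu}_0 = \kappa \sum_k a_k \nu_k$ with summable weights $a_k$ tied to $\hat{Z}(\nu_k)$; the point $(u_0,v_0)$ then lies in $\supp\tilde{\nu}_0$ because one can choose $w_k \in \mathcal{B}_k \cap \supp\nu_k \subset \supp\tilde{\nu}_0$ and use closedness of the support. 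Your construction is heavier -- it needs the full disintegration $\nu_0 = \int \rho\, dP$, the Borel measurability of $\rho \mapsto \hat{Z}(\rho)$, and the barycentre formula for nonnegative integrands -- but it buys a strictly stronger conclusion, $\supp\tilde{\nu}_0 = \supp\nu_0$ rather than mere membership of $(u_0,v_0)$, which would also serve verbatim where the lemma is reused (e.g.\ in Proposition \ref{p:final}). A further point in favour of your version: binning the ergodic components by the value $n \le \hat{Z}(\rho) < n+1$ and giving bin $n$ total mass $c_n \propto 2^{-n}$ makes the estimate $\hat{Z}(\tilde{\nu}_0) \le \sum_n (n+1)c_n < \infty$ airtight, whereas weights that decay only in the enumeration index, and not relative to the size of $\hat{Z}$ on the component, would not suffice; indeed, with the paper's literal weights $1/\max\lbrace \hat{Z}(\nu_k), 2^k\rbrace$ one only gets $a_k \hat{Z}(\nu_k) \le 1$, which is not summable in general (take $\hat{Z}(\nu_k)=4^k$), so the intended choice there must be something like $2^{-k}/\max\lbrace \hat{Z}(\nu_k),1\rbrace$ -- a subtlety your binning sidesteps automatically. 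Both arguments rest on Lemma \ref{l:4_1} for finiteness of $\hat{Z}$ on ergodic components; note also that this lemma lives in the bounded case, where $\tilde{\mathcal{B}}$ is compact in the norm topology, so your closing worry about the coarser topology of the extended case does not arise here (that case is treated separately via Lemma \ref{l:nondeg}).
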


\begin{proof}
	First we find a sequence of $\hat{T}$-ergodic measures $\nu_k$ such that $(u_0,v_0)$ is in the closure of $\cup_{k=1}^{\infty}\supp\nu_k$. We do it e.g. by choosing any $\hat{T}$-ergodic measure $\nu_k$ such that the $\nu_k(\mathcal{B}_k)>0$, where $\mathcal{B}_k$ is the $1/k$-ball around $(u_0,v_0)$. Such a measure $\nu_k$ must exist by the ergodic decomposition theorem \cite{Walters}. Let 
	$$
	\tilde{\nu}_0 = \kappa \sum_{k=1}^{\infty} \frac{1}{\max \lbrace \hat{Z}(\nu_k), 2^k\rbrace } \nu_k,
	$$
	where $\kappa$ is uniquely chosen so that $\tilde{\nu}_0$ is a probability measure. Indeed, $1 \leq \kappa < \infty$, as by Lemma \ref{l:4_1}, the sum of the factors is 
	$$0 < \sum_{k=1}^{\infty} 1/\max \lbrace \hat{Z}(\nu_k), 2^k\rbrace \leq 1.$$ 
	Also by construction, $\hat{Z}(\tilde{\nu}_0) \leq \kappa < \infty$. We see that $(u_0,v_0) \in \supp\tilde{\nu}_0$ by choosing any $w_k \in \mathcal{B}_k \cap \supp \nu_k \subset \supp \nu_k \subset \supp\tilde{\nu}_0$. Then $w_k$ converges to $(u_0,v_0)$, so $(u_0,v_0)$ must be in $\supp\tilde{\nu}_0$ as the support of a measure is always closed.
\end{proof}


\begin{proof}[Proof of Proposition \ref{p:diszero1}] Assume $(u_0,v_0) \in \mathcal{E}^2$, i.e. that $u_0 \in \supp\mu_1$, $v_0 \in \supp\mu_2$ for some $T$-invariant $\mu_1$, $\mu_2$. Let $\nu_0 = \mu_1 \times \mu_2$, and let $\tilde{\nu}_0$ be a $\hat{T}$-invariant measure constructed in Lemma \ref{l:4_2}. We can iterate (\ref{r:bezun}) with respect to $\hat{T}$ and sum with (\ref{r:bezun}) to obtain
$$
z \circ \hat{T}^2 - z = - d - d \circ \hat{T}.
$$
Integrating it with respect to $\tilde{\nu}_0$ and using $\hat{T}$-invariance of $\tilde{\nu}_0$ and integrability of $\hat{Z}$ (and thus integrability of $\hat{Z}\circ \hat{T}$, $\hat{Z}\circ \hat{T}^2$), we see that
\begin{equation}
\int_{\tilde{\mathcal{B}}^2} d \: d\tilde{\nu}_0 + \int_{\tilde{\mathcal{B}}^2} d \circ \hat{T} \: d\tilde{\nu}_0 = 0.  \label{r:contra}
\end{equation}
	
Now, assume that $d(u_0,v_0) >0$. We now find an open neighbourhood $\mathcal{U}$ of $(u_0,v_0)$ such that (\ref{r:disseq}) holds. Then by $S$-invariance of all $u_0 \in \mathcal{B}$ in the bounded case,  we have that for all $(\tilde{u}_0,\tilde{v}_0) \in \mathcal{U}$, $d(\tilde{u}_0,\tilde{v}_0)+	d \circ \hat{T} (\tilde{u}_0,\tilde{v}_0) \geq 1$. As $(u_0,v_0)$ is in the support of $\tilde{\nu}_0$, we have that $\tilde{\nu}_0(\mathcal{U})\geq \varepsilon $ for some $\varepsilon > 0$. Now as always $d \geq 0$, we have
$$
\int_{\tilde{\mathcal{B}}^2} d \: d\tilde{\nu}_0 + \int_{\tilde{\mathcal{B}}^2} d \circ \hat{T}\: d\tilde{\nu}_0 \geq \varepsilon,  
$$		
which contradicts (\ref{r:contra}).
\end{proof}

\begin{proof}[Proof of Theorem \ref{t:main1}]
Let $(u_0(0),(u_0)_x(0))=(v_0(0),(v_0)_x(0))$ for some $u_0,v_0 \in\mathcal{E}$. By $T$-invariance of $\mathcal{E}$ and as $\mathcal{E}$ consists of entire solutions (see Lemma \ref{l:properties},(iii)), we have $(T^{-1}u_0,T^{-1}v_0)\in \mathcal{E}$, and by definition of $d$, we have $d(T^{-1}u_0,T^{-1}v_0)\geq 1$.  This is by Proposition \ref{p:diszero1} impossible. 
\end{proof}

\section{The proof of Theorem \ref{t:main1b} (the extended case)} \label{s:extended}

In this section, we consider only the E/P case (the E/A case is analogous). Intuitively, the proof is expected to be analogous to the proof of Theorem \ref{t:main1}: we assume that $d(u_0,v_0)> 0$ for some $u_0,v_0$ in the support of a $S,T$-invariant measure $\mu_0$. We then integrate the balance law (\ref{r:bez!}) with respect to $\mu_0 \times \mu_0$ and intuitively deduce that $d=0$, $\mu_0 \times \mu_0$-a.e.. This, however, would contradict $d(u_0,v_0)> 0$ and Lemma \ref{l:3_3}.

The main technical difficulty in this approach, however, is possible non-integrability of the zero and flux functions $z,f$. To address it in the extended case, we apply abstract ergodic-theoretical tools for two commuting transformations, in this case $\hat{S},\hat{T}$ on $\mathcal{B}^2$.

In the first sub-section we deal with possible non-integrability of $f$ in the case of integrable $z$, and prove a balance law of zeroes on average, i.e. that the flux in (\ref{r:bez!}) cancels out when (\ref{r:bez!}) is integrated with respect to a $\hat{S}$-invariant measure. In the second subsection we deal with possible non-integrability of $z$. Analogously, we consider properties of the average density of zeroes defined as
$$
\hat{\zeta}(u_0,v_0) = \liminf_{n \rightarrow \infty} \frac{1}{2n} \sum_{k=-n}^{n-1}z(S^ku_0,S^kv_0)
$$
By the Birkhoff ergodic theorem, for any $\hat{S}$-invariant measure $\nu_0$ on $\tilde{\mathcal{B}}^2$, for $\nu_0$-a.e. $(u_0,v_0)$, the $\liminf$ in the definition of $\hat{\zeta}$ can be replaced with $\lim$, though we can not exclude the possibility that the value of $\hat{\zeta}$ is $+\infty$. We then characterize the case of $\hat{\zeta}$ being $\nu_0$-a.e. finite. We use these tools to complete the proof of Theorem \ref{t:main1b} analogously as in the bounded case, by assuming in addition the non-degeneracy condition, i.e. that the average density of zeroes is finite.

\subsection{The balance law of zeroes on average} \label{ss:balaverage}

The main tool in this section is evolution of probability measures on $\tilde{ \mathcal{B}}$ with respect to (\ref{r:main}), and more generally evolution of measures on   $\tilde{ \mathcal{B}}^2$ with respect to two replicas of (\ref{r:main}) in the following sense. Assume $\nu_0$ is a Borel probability measure on  $\tilde{ \mathcal{B}}^2$. Then we denote by $\nu(t)$ the Borel probability measure on $\tilde{ \mathcal{B}}^2$, defined as $\nu_0$ pushed by product of two time-$t$ maps. If $\nu_0$ is $\hat{S}$-invariant, so is $\nu(t)$. We can define $\hat{Z}(\nu(t))$ as in Remark \ref{rr:61}.

We prove the following, using the aforementioned notation:
	
\begin{proposition} {\bf The balance law of zeroes on average.} \label{p:avbal} 
	Assume $\nu_0$ is a $\hat{S}$-invariant measure on $\tilde{\mathcal{B}}^2$, such that $\hat{Z}(\nu_0)< \infty$. Then
	\begin{equation}
	 \hat{Z}(\nu(0))=\hat{Z}(\nu(1)) + \int_{\tilde{\mathcal{B}}^2} d \: d\nu(0). \label{r:avbal}
	\end{equation}
\end{proposition}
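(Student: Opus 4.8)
The plan is to integrate the balance law of zeroes (\ref{r:bez!}) against $\nu_0$ and to exploit the $\hat{S}$-invariance of $\nu_0$ to make the flux term disappear. First I would rewrite the two sides of (\ref{r:avbal}) in terms of $\nu_0$ itself: since $\nu(1)$ is the push-forward of $\nu_0$ by $\hat{T}$ and $\nu(0)=\nu_0$, the change-of-variables formula gives $\hat{Z}(\nu(1))=\int_{\tilde{\mathcal{B}}^2} z\circ\hat{T}\,d\nu_0$, $\hat{Z}(\nu(0))=\int_{\tilde{\mathcal{B}}^2} z\,d\nu_0$, and $\int d\,d\nu(0)=\int d\,d\nu_0$. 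Were $f$ integrable, one could integrate $z\circ\hat{T}-z=f\circ\hat{S}-f-d$ directly and use $\int f\circ\hat{S}\,d\nu_0=\int f\,d\nu_0$ to cancel the flux, obtaining (\ref{r:avbal}) at once. The whole difficulty, as already flagged, is that $f$ need not be integrable, so this naive cancellation is an illegitimate $\infty-\infty$.

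The device I would use is to treat the coboundary $\psi:=f\circ\hat{S}-f$ as a single object, without ever integrating $f$ on its own. By the balance law (\ref{r:bez!}) we have $\psi=(z\circ\hat{T}-z)+d$, and since $z\circ\hat{T}\ge 0$ and $d\ge 0$ this yields the one-sided bound $\psi\ge -z$. Hence $\psi^-\le z$, and because $\int z\,d\nu_0=\hat{Z}(\nu_0)<\infty$ by hypothesis, the negative part $\psi^-$ is integrable, even though $\psi^+$ (and thus $f$) may not be.

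The key step is then an Atkinson-type coboundary lemma: if $\hat{S}$ is an invertible $\nu_0$-preserving transformation of $\tilde{\mathcal{B}}^2$ and $f$ is measurable and finite $\nu_0$-a.e., then $\psi=f\circ\hat{S}-f$ with $\psi^-\in L^1(\nu_0)$ forces $\psi\in L^1(\nu_0)$ and $\int\psi\,d\nu_0=0$. I would prove this via Poincar\'e recurrence: by the ergodic decomposition it suffices to treat $\hat{S}$ ergodic, and then Birkhoff's theorem (applicable to the quasi-integrable $\psi$, since $\psi^-\in L^1$) gives $\tfrac1n(f\circ\hat{S}^n-f)=\tfrac1n\sum_{k=0}^{n-1}\psi\circ\hat{S}^k\to\int\psi\,d\nu_0$ a.e.; if this limit were positive then $f\circ\hat{S}^n\to+\infty$ a.e., contradicting that $\nu_0$-a.e. orbit returns infinitely often to a sublevel set $\{|f|\le M\}$ of positive measure. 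This yields $\int\psi\,d\nu_0\le 0$, whence $\int\psi^+\,d\nu_0<\infty$; applying the same argument to $-f$ gives the reverse inequality, so $\int\psi\,d\nu_0=0$. The hypotheses hold here: $\hat{S}=S\times S$ is invertible and preserves $\nu_0$, and $f$ is finite everywhere because by the global structure of zeroes (Lemma \ref{l:global}) only finitely many curves of zeroes meet any compact set.

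Finally I would assemble the pieces. From $\psi\in L^1(\nu_0)$ and $z\in L^1(\nu_0)$ we obtain $z\circ\hat{T}+d=\psi+z\in L^1(\nu_0)$; as $z\circ\hat{T}\ge 0$ and $d\ge 0$, each summand is separately integrable, so in particular $\hat{Z}(\nu(1))=\int z\circ\hat{T}\,d\nu_0<\infty$. Splitting the vanishing integral, $0=\int\psi\,d\nu_0=\int z\circ\hat{T}\,d\nu_0-\int z\,d\nu_0+\int d\,d\nu_0$, which reads $\hat{Z}(\nu(1))-\hat{Z}(\nu(0))+\int d\,d\nu(0)=0$, i.e.\ exactly (\ref{r:avbal}). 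The main obstacle throughout is the possible non-integrability of the flux $f$; everything hinges on replacing the forbidden termwise cancellation of $\int f\circ\hat{S}\,d\nu_0$ against $\int f\,d\nu_0$ by the recurrence argument applied to the single coboundary $\psi$, whose negative part is tamed by the finiteness of $\hat{Z}(\nu_0)$.
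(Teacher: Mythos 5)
Your proof is correct, and its overall architecture coincides with the paper's: both isolate the flux coboundary $\psi=f\circ\hat{S}-f$, use the balance law (\ref{r:bez!}) to dominate its negative part by the integrable function $z$, prove the key ergodic fact that such a quasi-integrable coboundary is in fact integrable with vanishing integral, and then obtain (\ref{r:avbal}) by splitting the integrals exactly as you do. Where you genuinely diverge is in the proof of that key fact. The paper's Lemma \ref{l:ergodic} proves it by truncation: setting $h=\psi+z\geq 0$, truncating to $h_m=\mathbf{1}_{\mathcal{U}_m}(h\wedge m)$ on the set $\mathcal{U}_m$ of points whose orbit visits $\lbrace f\leq m\rbrace$ infinitely often, applying Birkhoff to the bounded $h_m$, telescoping along the return times, and passing to conditional expectations with respect to the invariant $\sigma$-algebra before concluding by monotone convergence; notably it never invokes the ergodic decomposition and never needs $\hat{S}$ to be invertible, and it extracts both inequalities from a second application of Birkhoff to $h$ itself. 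Your Atkinson-type route instead reduces to the ergodic case by decomposition, invokes the Birkhoff theorem for functions with integrable negative part, and derives the contradiction $f\circ\hat{S}^n\rightarrow+\infty$ against Poincar\'{e} recurrence to a sublevel set of $f$ of positive measure; you then need the symmetrization step (running the argument for $-f$, which is legitimate only because the first pass already gives $\psi^+\in L^1$), and, when reassembling the ergodic components, the observation that $\int \psi^+\,d\nu_e\leq\int\psi^-\,d\nu_e$ componentwise integrates to $\psi^+\in L^1(\nu_0)$. Your version is shorter and leans on standard ergodic-theoretic machinery; the paper's is more self-contained and marginally more general (no invertibility, no decomposition). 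The hypotheses you need do hold here: measurability of $f$ is Lemma \ref{l:3_1}, finiteness of $f$ everywhere follows from Lemma \ref{l:global}, (iv), as you note, and $\hat{S}$ is indeed an invertible measure-preserving homeomorphism.
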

The proposition will follow from a general ergodic theoretical argument below, which is required to deduce that the flux $f$ in (\ref{r:bez!}) cancels out when integrated with respect to a $\hat{S}$-invariant measure, even in the case when $f$ is not integrable.

\begin{lemma} \label{l:ergodic}
	Assume $(\Omega,\mathcal{F},\nu)$ is a probability space, that $\hat{\sigma}: \Omega \rightarrow \Omega$ is  measurable, and that $\nu$ is $\hat{\sigma}$-invariant. Assume that $\varphi, \zeta : \Omega \rightarrow \mathbb{R}$ are measurable, and that $\zeta$ is $\nu$-integrable. Furthermore, assume that $\nu$-a.e.,
	\begin{equation}
	\varphi \circ \hat{\sigma} -\varphi  \geq - \zeta . \label{r:assume}
	\end{equation}
	Then $(\varphi \circ \hat{\sigma} -\varphi )$ is $\nu$-integrable and $\int_{\Omega}(\varphi \circ \hat{\sigma} -\varphi )d\nu= 0$.
\end{lemma}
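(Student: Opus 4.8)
The statement to prove is that the coboundary $\psi := \varphi\circ\hat\sigma - \varphi$ is $\nu$-integrable with $\int_\Omega \psi\, d\nu = 0$. If $\varphi$ were itself $\nu$-integrable this would be immediate, since $\nu$-invariance gives $\int \varphi\circ\hat\sigma\, d\nu = \int \varphi\, d\nu$. The whole point of the lemma, and the source of the difficulty, is that we are \emph{not} given integrability of $\varphi$ (this is precisely the non-integrability of the flux $f$ that motivates the lemma), so a priori neither $\varphi$ nor even $\psi$ is known to lie in $L^1$; the one-sided bound $\psi \geq -\zeta$ with $\zeta\in L^1$ is all the control we have. The plan is to approximate $\varphi$ by bounded truncations, for which the coboundary integrates to zero trivially, and then transfer this to the limit using the one-sided bound.

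First I would set up the truncation. Let $\tau_M(s) = \max(-M,\min(M,s))$, which is monotone nondecreasing and $1$-Lipschitz, and put $g_M = \tau_M\circ\varphi$. Each $g_M$ is bounded, hence $\nu$-integrable, so by $\nu$-invariance of $\hat\sigma$ we get $\int_\Omega (g_M\circ\hat\sigma - g_M)\, d\nu = 0$ for every $M$. Since $\tau_M(s)\to s$ for every finite $s$, and since $\varphi$ and $\varphi\circ\hat\sigma$ are finite $\nu$-a.e., we have $g_M\circ\hat\sigma - g_M \to \psi$ pointwise $\nu$-a.e. as $M\to\infty$.

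The key step, and the one that circumvents the lack of integrability of $\varphi$, is a uniform pointwise domination of the truncated coboundary by the parts of $\psi$ itself. Writing $a=\varphi\circ\hat\sigma$ and $b=\varphi$, so that $\psi = a-b$ and $g_M\circ\hat\sigma - g_M = \tau_M(a)-\tau_M(b)$, I would check, using only that $\tau_M$ is monotone and $1$-Lipschitz, that on $\{a\geq b\}$ one has $0\le \tau_M(a)-\tau_M(b)\le a-b$, while on $\{a<b\}$ one has $0\le \tau_M(b)-\tau_M(a)\le b-a$. This yields the pointwise bounds $\bigl(g_M\circ\hat\sigma - g_M\bigr)^{+}\le \psi^{+}$ and $\bigl(g_M\circ\hat\sigma - g_M\bigr)^{-}\le \psi^{-}$ for all $M$. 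The hypothesis $\psi\ge-\zeta$ with $\zeta$ integrable gives exactly that $\psi^{-}\le \zeta^{+}\in L^1$. Hence $\int (g_M\circ\hat\sigma - g_M)^{-}\,d\nu \le \int \psi^{-}\,d\nu<\infty$, and because $\int (g_M\circ\hat\sigma - g_M)\,d\nu=0$ the positive and negative parts have equal integral, so $\int (g_M\circ\hat\sigma - g_M)^{+}\,d\nu \le \int\psi^{-}\,d\nu$ as well. Applying Fatou's lemma to the nonnegative functions $(g_M\circ\hat\sigma - g_M)^{+}\to\psi^{+}$ then gives $\int\psi^{+}\,d\nu \le \int\psi^{-}\,d\nu<\infty$, so $\psi\in L^1$.

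Once integrability of $\psi$ is in hand, I would finish by dominated convergence: the bounds above give $|g_M\circ\hat\sigma - g_M|\le |\psi|\in L^1$ uniformly in $M$, and $g_M\circ\hat\sigma - g_M\to\psi$ a.e., so $\int(g_M\circ\hat\sigma - g_M)\,d\nu\to\int\psi\,d\nu$; since the left-hand integral is $0$ for every $M$, we conclude $\int_\Omega\psi\,d\nu=0$. I expect the main obstacle to be exactly the third paragraph, namely producing the right uniform control on the truncated coboundary — the naive attempt only bounds the negative part and, via Fatou, yields $\int\psi\le 0$ rather than equality. The resolution is the observation that the monotone Lipschitz truncation dominates \emph{both} the positive and negative parts of the coboundary by those of $\psi$, so that after Fatou establishes $\psi\in L^1$ one can legitimately invoke dominated convergence to recover the exact equality $\int\psi\,d\nu=0$.
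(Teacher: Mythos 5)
Your proof is correct, and it takes a genuinely different route from the paper's. The paper proves the lemma with ergodic-theoretic machinery: it introduces the recurrence sets $\mathcal{U}_m$ of points whose orbits satisfy $\varphi(\hat{\sigma}^n(u)) \leq m$ infinitely often (nonnegligible by Poincar\'{e} recurrence), truncates the nonnegative function $h = \varphi\circ\hat{\sigma} - \varphi + \zeta$ on these sets, and then applies the Birkhoff ergodic theorem twice—once to the truncations $h_m$, telescoping the coboundary along return times to get $\mathbb{E}[h_m \mid \mathcal{S}] \leq \mathbb{E}[\zeta \mid \mathcal{S}]$ fiberwise over the invariant $\sigma$-algebra, and once more to $h$ itself after monotone convergence yields its integrability. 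Your argument replaces all of this with elementary measure theory: truncating $\varphi$ itself by $\tau_M$, using that bounded coboundaries integrate to zero under invariance, and—this is the key observation—exploiting monotonicity plus the $1$-Lipschitz property of $\tau_M$ to dominate \emph{separately} the positive and negative parts of the truncated coboundary by $\psi^{+}$ and $\psi^{-}$. The case analysis on $\{a \geq b\}$ and $\{a < b\}$ checks out, the one-sided hypothesis gives $\psi^{-} \leq \zeta^{+} \in L^1$, and then Fatou yields $\psi^{+} \in L^1$, after which dominated convergence closes the argument. What your approach buys is self-containedness and generality: no Birkhoff theorem, no Poincar\'{e} recurrence, no conditional expectations, and the argument works verbatim for any measure-preserving map. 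What the paper's approach buys is that its intermediate objects (conditional expectations over the invariant $\sigma$-algebra, orbit averages) mesh naturally with the ergodic-decomposition arguments used elsewhere in the paper, e.g.\ in Lemmas \ref{l:nondecreasing} and \ref{l:nondeg}; but for the lemma as stated, your proof delivers the identical conclusion more economically.
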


\begin{proof} Let $\mathcal{U}_m$ be the set of all $u \in \Omega$ such that $\varphi(\hat{\sigma}^n(u)) \leq m$ for infinitely many $n \in \mathbb{N}$. Then it is easy to see that $\mathcal{U}_m$ is $\hat{\sigma}$-invariant, and by the Poincar\'{e} recurrence theorem applied to sets $\lbrace u: \: \varphi(u) \leq m \rbrace$, that 
	\begin{equation}
	\nu \lbrace \bigcup_{m=1}^{\infty}  \mathcal{U}_m \rbrace = 1. \label{r:cup}
	\end{equation}
Consider functions 
\begin{align*} 
u \mapsto & h(u): = \varphi(\hat{\sigma}(u)) - \varphi(u) + \zeta(u), \\
u \mapsto & h_m(u) : = \mathbf{1}_{\mathcal{U}_m}(u)\lbrace  \varphi(\hat{\sigma}(u)) - \varphi(u) + \zeta(u) \rbrace \wedge m,
\end{align*}
where $\bf{1}_{\mathcal{U}_m}$ is the characteristic function and $\wedge$ the minimum. By the assumptions, $h \geq 0$, thus $h_m \geq 0$, and by construction and (\ref{r:cup}), $h_m$ is an increasing sequence of functions converging $\nu$-a.e. to $h$. 

We will first show that $\mathbb{E}[h_m] \leq \mathbb{E}[\zeta]$, where $\mathbb{E}[.]$ denotes the expectation, i.e. the Lebesgue integral with respect to $\nu$. Let $\mathcal{S}$ be the $\sigma$-algebra of $\hat{\sigma}$-invariant sets. It suffices to show that for all $m \geq 0$, 
\begin{equation}
\mathbb{E}[h_m|\mathcal{S}] \leq \mathbb{E}[\zeta|\mathcal{S}], \quad \nu-a.e., \label{r:ae}
\end{equation}
 where $\mathbb{E}[.|\mathcal{S}]$ denotes the conditional expectation \cite{Durrett:05}. As $0 \leq h_m \leq m$, $h_m$ is integrable, thus by the Birkhoff ergodic theorem, we have that $\nu$-a.e.,
\begin{equation}
\lim_{n \rightarrow \infty} \frac{1}{n}\sum_{k=0}^{n-1}h_m \circ \hat{\sigma}^k = \mathbb{E}[h_m|\mathcal{S}]. \label{r:birkhoff}
\end{equation} 
Without loss of generality $\nu(\mathcal{U}_m) > 0$ (otherwise $h_m=0$ $\nu$-a.e.).	Choose $u \in \mathcal{U}_m$, and one of infinitely $n_j$ such that $\varphi(\hat{\sigma}^{n_j}(u))\leq m$. Then it is easy to see that
\begin{align}
 \frac{1}{n_j}\sum_{k=0}^{n_j-1}h_m(\hat{\sigma}^k(u)) & \leq \frac{1}{n_j}\sum_{k=0}^{n_j-1}h(\hat{\sigma}(u)) = \frac{1}{n_j}(\varphi(\hat{\sigma}^{n_j}(u))-\varphi(u)) + \frac{1}{n_j}\sum_{k=0}^{n_j-1}\zeta (\hat{\sigma}^k(u)) \notag \\
 & \leq \frac{1}{n_j}(m-\varphi(u)) + \frac{1}{n_j}\sum_{k=0}^{n_j-1}\zeta (\hat{\sigma}^k(u)). \label{r:birkhoff2}
\end{align}
Now by the Birkhoff ergodic theorem applied to $\zeta$, we see that the right-hand side of (\ref{r:birkhoff2}) converges to $\mathbb{E}[\zeta|\mathcal{S}]$ as $n_j \rightarrow \infty$. Combined with (\ref{r:birkhoff}), we deduce that for $\nu$-a.e. $u \in \mathcal{U}_m$, we have that $\mathbb{E}[h_m|\mathcal{S}] \leq \mathbb{E}[\zeta|\mathcal{S}]$. As for $u \in \mathcal{U}^c_m$, $h_m(u) = 0$ and $\mathcal{U}^c_m$ is $\hat{\sigma}$-invariant, we conclude that (\ref{r:ae}) holds also for $\nu$-a.e. $u \in \mathcal{U}^c_m$.

Now, by the definition of the conditional expectation, (\ref{r:ae}) implies that for all $m \in \mathbb{N}$, $\mathbb{E}[h_m] \leq \mathbb{E}[\zeta]$, thus by the Lebesgue monotone convergence theorem, $h$ is integrable and $\mathbb{E}[h] \leq \mathbb{E}[\zeta]$. As we can now apply the Birkhoff ergodic theorem also to $h$, we repeat the argument as in (\ref{r:birkhoff}) and (\ref{r:birkhoff2}) applied to $h$ instead of $h_m$ to conclude that $\mathbb{E}[h] = \mathbb{E}[\zeta]$. As now $h-\zeta$ is integrable and $\mathbb{E}[h-\zeta]=0$, the proof is complete.
\end{proof}

\begin{proof}[Proof of Proposition \ref{p:avbal}] 
	We insert in Lemma \ref{l:ergodic} the following: $\Omega=\tilde{\mathcal{B}}^2$ with the Borel $\sigma$-algebra, $\hat{\sigma} = \hat{S}$, $\varphi=f$, $\zeta = z$ and $\nu=\nu_0$.  By (\ref{r:bez!}), we have 
	$$ f \circ \hat{S}- f = z \circ \hat{T} - z + d \geq -z,$$ 
	thus the assumptions of Lemma \ref{l:ergodic} hold. We now have that $( f \circ \hat{S} - f)$ is $\nu_0$-integrable and
	$$
	\int_{\tilde{\mathcal{B}}^2} \left( f\circ \hat{S} - f \right) d\nu_0 = 0.
	$$
	Inserting it into (\ref{r:bez!}) integrated with respect to $\nu_0$, we obtain (\ref{r:avbal}).  
\end{proof}

\subsection{Density of zeroes and non-degeneracy of invariant measures} \label{ss:characterize} We now prove several properties of the density of zeroes. We first establish that the density of zeroes is a.e. non-decreasing, and then define and characterize non-degeneracy of invariant measures.
\begin{lemma} \label{l:nondecreasing}
	Assume $\nu_0$ is a $\hat{S}$-invariant measure on $\tilde{\mathcal{B}}^2$. Then for $\nu_0$-a.e. $(u_0,v_0)$, 
	\begin{equation}
	\hat{\zeta}(Tu_0,Tv_0)\leq \hat{\zeta}(u_0,v_0). \label{r:nondecreasing}
	\end{equation} 
\end{lemma}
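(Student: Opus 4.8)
The plan is to integrate the balance law (\ref{r:bez!}) along two‑sided $\hat{S}$‑orbits, where the flux term telescopes into a boundary term, and then to control that boundary term by a recurrence argument rather than by integrability of $f$ (which may fail). Write $\omega=(u_0,v_0)$ and recall that $S$ and $T$ commute, so $z(S^kTu_0,S^kTv_0)=(z\circ\hat{T})(\hat{S}^k\omega)$. Evaluating (\ref{r:bez!}) at $\hat{S}^k\omega$, summing over $k=-n,\dots,n-1$, dividing by $2n$, and using $d\ge 0$ gives, for every $n$,
\begin{equation*}
A_n \le B_n + C_n, \qquad A_n:=\frac{1}{2n}\sum_{k=-n}^{n-1}(z\circ\hat{T})(\hat{S}^k\omega), \qquad B_n:=\frac{1}{2n}\sum_{k=-n}^{n-1}z(\hat{S}^k\omega),
\end{equation*}
where the telescoping of $f\circ\hat{S}-f$ leaves only $C_n:=\frac{1}{2n}\bigl(f(\hat{S}^n\omega)-f(\hat{S}^{-n}\omega)\bigr)$. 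By the definition of $\hat{\zeta}$ and the Birkhoff ergodic theorem (applied two‑sidedly to the non‑negative functions $z$ and $z\circ\hat{T}$, which is legitimate since $\hat{S}$ is an invertible $\nu_0$‑preserving map), for $\nu_0$‑a.e.\ $\omega$ we have $B_n\to\hat{\zeta}(u_0,v_0)$ and $\liminf_n A_n=\hat{\zeta}(Tu_0,Tv_0)$.

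The main point is then to show $\liminf_n C_n\le 0$ for $\nu_0$‑a.e.\ $\omega$. Since the flux through a single cell over unit time is finite (Lemma \ref{l:global}, (iv)), $f$ is everywhere finite, so the sets $E_M:=\{|f|\le M\}$ increase to $\tilde{\mathcal{B}}^2$ and $\nu_0(E_M)\to 1$. Applying the Birkhoff theorem to $\mathbf{1}_{E_M}$ for both $\hat{S}$ and $\hat{S}^{-1}$, for a.e.\ $\omega$ the forward times $\{n:\hat{S}^n\omega\in E_M\}$ and the backward times $\{n:\hat{S}^{-n}\omega\in E_M\}$ each have density $\mathbb{E}[\mathbf{1}_{E_M}\mid\mathcal{S}](\omega)$, and this density tends to $1$ as $M\to\infty$. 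Choosing $M$ so that it exceeds $1-\varepsilon$ with $\varepsilon<1/2$, the two sets of times intersect in a set of lower density at least $1-2\varepsilon>0$, hence in infinitely many $n$. For each such $n$ we have $|f(\hat{S}^{\pm n}\omega)|\le M$, so $C_n\le M/n\to 0$; this yields $\liminf_n C_n\le 0$.

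Combining the pieces, for $\nu_0$‑a.e.\ $\omega$ select a subsequence $n_j$ along which $C_{n_j}\le M/n_j$. Since $\liminf_n A_n\le\liminf_j A_{n_j}$ and $A_{n_j}\le B_{n_j}+C_{n_j}\le B_{n_j}+M/n_j$ with $B_{n_j}\to\hat{\zeta}(u_0,v_0)$, we obtain $\hat{\zeta}(Tu_0,Tv_0)=\liminf_n A_n\le\hat{\zeta}(u_0,v_0)$, which is (\ref{r:nondecreasing}); the inequality is trivial on the $\hat{S}$‑invariant set where $\hat{\zeta}(u_0,v_0)=+\infty$. The step I expect to be the main obstacle is exactly the control of the boundary flux term $C_n$: because $f$ need not be $\nu_0$‑integrable, the naive limits $\tfrac1n f(\hat{S}^{\pm n}\omega)\to 0$ may fail, and the telescoping identity on its own is circular, merely re‑expressing $A_n-B_n$ (together with an average of $d$). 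The recurrence/positive‑density argument above is what breaks this circularity, using only $\hat{S}$‑invariance and the finiteness of $f$.
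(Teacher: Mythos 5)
Your proof is correct, but it takes a genuinely different route from the paper's. The paper disposes of the lemma in two lines: it reduces to $\hat{S}$-ergodic components by the ergodic decomposition theorem, and on each component either $\hat{Z}(\nu_0)<\infty$, in which case the claim follows from the Birkhoff ergodic theorem combined with the balance law of zeroes on average (\ref{r:avbal}) of Proposition \ref{p:avbal} (whose proof, via Lemma \ref{l:ergodic}, is where the possible non-integrability of the flux $f$ is absorbed), or $\hat{Z}(\nu_0)=\infty$, in which case $\hat{\zeta}=\infty$ a.e.\ and the inequality is trivial. You instead argue pointwise: you telescope the balance law (\ref{r:bez!}) along two-sided $\hat{S}$-orbits and kill the boundary flux term $C_n$ along a subsequence by a recurrence/density argument, using that $f$ is everywhere finite (Lemma \ref{l:global}, (iv)) and Borel-measurable (Lemma \ref{l:3_1}), so that simultaneous forward and backward visits to $\lbrace |f|\le M\rbrace$ occur for a set of $n$ of positive lower density. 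This is conceptually close to the paper's Lemma \ref{l:ergodic}, which also tames the flux by Poincar\'{e} recurrence to sublevel sets, but you apply the idea directly at the pointwise level rather than at the level of integrals. What your route buys: it is self-contained, needs no ergodic decomposition, no integrability of $z$, and no case distinction, giving the a.e.\ inequality in one sweep. What the paper's route buys: Lemma \ref{l:ergodic} and Proposition \ref{p:avbal} yield the stronger conclusion that the flux increment is integrable with zero mean, i.e.\ the exact identity (\ref{r:avbal}) rather than just an inequality, and that machinery is reused later (e.g.\ in Propositions \ref{p:transversal} and \ref{p:final}); given it, the present lemma is immediate. Your telescoping with $d\ge 0$ dropped only produces the inequality, which suffices here but would not substitute for (\ref{r:avbal}) elsewhere.
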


\begin{proof}
	It suffices to prove that (\ref{r:nondecreasing}) holds a.e. with respect to every $\hat{S}$-ergodic measure $\nu_0$, as the claim then follows by the ergodic decomposition theorem. This follows from the Birkhoff ergodic theorem and (\ref{r:avbal}) if $\hat{Z}(\nu_0) < \infty$, and trivially if $\hat{Z}(\nu_0) = \infty$, as then $\hat{\zeta}(u_0,v_0) = \infty$ $\nu_0$-a.e.
\end{proof}

\begin{defn} \label{d:nondegenerate} We say that a $\hat{S}$-invariant measure $\nu_0$ on $\tilde{\mathcal{B}}^2$ is non-degenerate, if for $\nu$-a.e. $(u_0,v_0)$, $\hat{\zeta}(u_0,v_0)<\infty$.
We say that a pair $(\mu_1,\mu_2)$ of $S$-invariant measures on $\tilde{\mathcal{B}}$ is non-degenerate, if $\mu_1 \times \mu_2$ is non-degenerate. A $S$-invariant measure $ \mu_0$ on  $\tilde{\mathcal{B}}$ is non-degenerate, if the pair $(\mu_0,\mu_0)$ is non-degenerate. A family of $S$-invariant measures $\mathcal{N}$ on $\tilde{\mathcal{B}}$ is non-degenerate, if every $\mu_0 \in \mathcal{N}$ is non-degenerate. The ergodic attractor $\mathcal{E}$ is non-degenerate, if $\mathcal{M}(\mathcal{B})$ is non-degenerate.
\end{defn}

We note that we do not know of any examples of degenerate measures on $\tilde{\mathcal{B}}^2$. We discuss it further in Section \ref{s:open}. 

In the following lemma, we use the ergodic decomposition of a measure with respect to two commuting transformations. We say that a measure is ergodic with respect to two commuting transformations $\hat{S},\hat{T}$, if any $\hat{S},\hat{T}$-invariant measurable set has measure $0$ or $1$. We can decompose a $\hat{S},\hat{T}$-invariant measure on $\tilde{\mathcal{B}}^2$ into $\hat{S},\hat{T}$-ergodic measures, with the standard decomposition formula \cite{Walters}, Section 6.2, as the Choquet theorem applies. We will require the following generalization of Lemma \ref{l:4_1} to the extended case.

\begin{lemma} \label{l:nondeg} Let $\nu_0$ be a $\hat{S}$-invariant measure on $\tilde{\mathcal{B}}^2$.
	
	(i) $\nu_0$ is non-degenerate if and only if for a.e. measure $\nu_1$ in its ergodic decomposition into $\hat{S}$-ergodic measures, $\hat{Z}(\nu_1)< \infty$.

	(ii) Assume $\nu_0$ is $\hat{S},\hat{T}$-invariant. Then $\nu_0$ is non-degenerate if and only if for a.e. measure $\nu_1$ in its ergodic decomposition into $\hat{S},\hat{T}$-ergodic measures, $\hat{Z}(\nu_1)< \infty$.  
\end{lemma}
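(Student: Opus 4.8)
The plan is to build everything on the Birkhoff ergodic theorem applied to the nonnegative (but possibly non-integrable) function $z$ under the transformation $\hat{S}$. The basic observation is that on any $\hat{S}$-ergodic measure $\nu_2$ the two-sided averages defining $\hat{\zeta}$ converge: applying the ergodic theorem to the truncations $z \wedge M$ and letting $M \to \infty$ by monotone convergence, one sees that for $\nu_2$-a.e. $(u_0,v_0)$ the $\liminf$ in the definition of $\hat{\zeta}$ is in fact a limit, equal to the constant $\int z \, d\nu_2 = \hat{Z}(\nu_2) \in [0,+\infty]$. Hence an $\hat{S}$-ergodic $\nu_2$ is non-degenerate if and only if $\hat{Z}(\nu_2) < \infty$.

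For part (i) I would then integrate over the $\hat{S}$-ergodic decomposition $\nu_0 = \int \nu_2 \, d\rho(\nu_2)$. Since $\nu_0(\lbrace \hat{\zeta} = \infty \rbrace) = \int \nu_2(\lbrace \hat{\zeta} = \infty \rbrace) \, d\rho(\nu_2)$, the left-hand side vanishes if and only if $\nu_2(\lbrace \hat{\zeta}=\infty \rbrace) = 0$ for $\rho$-a.e.\ $\nu_2$, and by the previous paragraph this happens exactly when $\hat{Z}(\nu_2) < \infty$ for $\rho$-a.e.\ $\nu_2$. This is precisely the asserted equivalence.

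For part (ii) the same integration, now over the $\hat{S},\hat{T}$-ergodic decomposition, reduces the claim to a single $\hat{S},\hat{T}$-ergodic measure $\nu_1$: I must show that $\nu_1$ is non-degenerate if and only if $\hat{Z}(\nu_1) < \infty$. The implication $\hat{Z}(\nu_1) < \infty \Rightarrow$ non-degenerate is immediate, since then $z$ is $\nu_1$-integrable and the Birkhoff theorem for $\hat{S}$ gives $\hat{\zeta} = \mathbb{E}[z \mid \mathcal{S}]$ $\nu_1$-a.e., where $\mathcal{S}$ is the $\sigma$-algebra of $\hat{S}$-invariant sets; a conditional expectation of an integrable function is a.e.\ finite.

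The main obstacle is the converse for part (ii): a priori all $\hat{S}$-ergodic components of $\nu_1$ could individually be non-degenerate (each with finite $\hat{Z}$) while $\hat{Z}(\nu_1) = \int \hat{Z}(\nu_2)\,d\rho'(\nu_2) = \infty$, so finiteness of $\hat{\zeta}$ $\nu_1$-a.e.\ need not yield finiteness of its integral. Here I would exploit both commuting symmetries. The function $\hat{\zeta}$ is $\hat{S}$-invariant by construction (the two boundary terms in the shifted average are $O(1/n)$), and by Lemma \ref{l:nondecreasing} it satisfies $\hat{\zeta} \circ \hat{T} \leq \hat{\zeta}$ $\nu_1$-a.e. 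Consequently each sublevel set $A_c = \lbrace \hat{\zeta} \leq c \rbrace$ is $\hat{S}$-invariant and satisfies $\hat{T}(A_c) \subseteq A_c$; as $\nu_1$ is $\hat{T}$-invariant this forces $\nu_1(\hat{T}^{-1}A_c \setminus A_c) = 0$, so $A_c$ is $\hat{S},\hat{T}$-invariant modulo $\nu_1$-null sets. By $\hat{S},\hat{T}$-ergodicity, $\nu_1(A_c) \in \lbrace 0,1 \rbrace$ for every $c$. Non-degeneracy gives $\nu_1(A_c) \uparrow 1$ as $c \to \infty$, hence $\nu_1(A_{c_0}) = 1$ for some finite $c_0$, i.e.\ $\hat{\zeta} \leq c_0$ $\nu_1$-a.e. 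Finally, the identity $\hat{Z}(\nu_1) = \int \mathbb{E}[z \mid \mathcal{S}]\,d\nu_1 = \int \hat{\zeta}\,d\nu_1$, valid in $[0,+\infty]$ by the nonnegative Birkhoff theorem, yields $\hat{Z}(\nu_1) \leq c_0 < \infty$ and completes the argument. The only delicate points are the non-integrable and conditional-expectation forms of the ergodic theorem and the last displayed identity, all of which follow by truncation and monotone convergence.
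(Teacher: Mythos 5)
Your proposal is correct and follows essentially the same route as the paper's proof: identify $\hat\zeta$ with $\hat{Z}$ on ergodic components via the nonnegative (truncated) Birkhoff theorem, integrate over the ergodic decomposition for (i), and for (ii) use Lemma \ref{l:nondecreasing} to make the sublevel sets $\lbrace \hat\zeta \leq c \rbrace$ invariant mod null sets, so that the zero-one law under joint $\hat{S},\hat{T}$-ergodicity applies. The only soft spot is the parenthetical justification of $\hat{S}$-invariance of $\hat\zeta$ (the boundary terms $\tfrac{1}{2n}\bigl(z\circ\hat{S}^{n}-z\circ\hat{S}^{-n}\bigr)$ need not be $O(1/n)$ pointwise, since $z$ is unbounded), but the invariance you actually need holds mod null sets by your own first paragraph (a.e.\ constancy of $\hat\zeta$ on $\hat{S}$-ergodic components), which is all the zero-one law requires.
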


\begin{proof}
	Assume $\nu_0$ is non-degenerate, and take any measure $\nu_1$ from its $\hat{S}$-ergodic decomposition such that $\hat{\zeta}< \infty$ $\nu_1$-a.e. (this holds for a.e. measure in the ergodic decomposition.) As for each $n$, the set $\lbrace (u_0,v_0), \: \hat{\zeta}(u_0,v_0) \leq n \rbrace$ is $\hat{S}$-invariant, it has $\nu_0$-measure $0$ or $1$, thus we can find $n_1$ large enough such that $\nu_1(\hat{\zeta} \leq n_0 )=1$, so $\hat{Z}(\nu_1)\leq n_0$. The other implication in (i) follows from the ergodic decomposition theorem.
	
	To show (ii), it suffices to note that by Lemma \ref{l:nondecreasing}, for every $\hat{S},\hat{T}$-ergodic measure $\nu_0$, the sets $\lbrace (u,v), \: \hat{\zeta}(u_0,v_0) \leq n \rbrace$ are $\nu_0$-a.e. $\hat{S},\hat{T}$-invariant. The rest of the proof is analogous to the case (i).
\end{proof}

\subsection{Proof of Theorem \ref{t:main1b}} We prove the following slightly generalized version of Theorem \ref{t:main1b}:

\begin{proposition} \label{p:final} Assume $\mathcal{M}_0(\mathcal{B})$ is a non-degenerate family of $S,T$-invariant measures, closed for finite or countable convex combinations, and let $\mathcal{E}_0 = \cup_{\mu \in \mathcal{M}_0(\mathcal{B})}\supp \mu$. Then for any $(u_0,v_0) \in \mathcal{E}_0 $, we have that $d(u_0,v_0)=0$. 
\end{proposition}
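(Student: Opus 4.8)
The plan is to mirror the proof of Proposition~\ref{p:diszero1} in the bounded case, replacing the exact balance law by the balance law on average (Proposition~\ref{p:avbal}) and using non-degeneracy to absorb the possible non-integrability of $z$ and $f$. I argue by contradiction: suppose $d(u_0,v_0)>0$ for some $u_0,v_0\in\mathcal{E}_0$.

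First I would assemble a single convenient measure, playing the role of Lemma~\ref{l:4_2}. Choose $\mu_1,\mu_2\in\mathcal{M}_0(\mathcal{B})$ with $u_0\in\supp\mu_1$ and $v_0\in\supp\mu_2$, and set $\mu=\tfrac12(\mu_1+\mu_2)$. Since $\mathcal{M}_0(\mathcal{B})$ is closed under finite convex combinations, $\mu\in\mathcal{M}_0(\mathcal{B})$, so $\mu$ is non-degenerate; moreover $u_0,v_0\in\supp\mu$, whence $(u_0,v_0)\in\supp(\mu\times\mu)$. Put $\nu_0=\mu\times\mu$, which is $\hat{S},\hat{T}$-invariant and non-degenerate in the sense of Definition~\ref{d:nondegenerate}.

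Next I would pass to the $\hat{S},\hat{T}$-ergodic decomposition of $\nu_0$. By Lemma~\ref{l:nondeg}(ii), for almost every component $\nu_1$ we have $\hat{Z}(\nu_1)<\infty$. Each such $\nu_1$ is in particular $\hat{S}$-invariant, so Proposition~\ref{p:avbal} applies and yields
\begin{equation*}
\hat{Z}(\nu_1(0))=\hat{Z}(\nu_1(1))+\int_{\tilde{\mathcal{B}}^2} d\;d\nu_1(0).
\end{equation*}
Because $\nu_1$ is also $\hat{T}$-invariant, its time-one pushforward satisfies $\nu_1(1)=\nu_1(0)$, so $\hat{Z}(\nu_1(1))=\hat{Z}(\nu_1(0))$ and therefore $\int d\,d\nu_1=0$. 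As $d\ge 0$, this forces $d=0$ $\nu_1$-a.e.; integrating over the decomposition gives $d=0$ $\nu_0$-a.e.

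Finally I would derive the contradiction as in the bounded case. Since $d(u_0,v_0)>0$, Lemma~\ref{l:3_3} furnishes an open neighbourhood $\mathcal{U}$ of $(u_0,v_0)$ on which $d+d\circ\hat{S}^{-1}+d\circ\hat{T}+d\circ\hat{S}^{-1}\hat{T}\ge 1$, and $(u_0,v_0)\in\supp\nu_0$ gives $\nu_0(\mathcal{U})>0$. Integrating the inequality over $\mathcal{U}$ produces the lower bound $\nu_0(\mathcal{U})>0$; on the other hand, by $\hat{S},\hat{T}$-invariance of $\nu_0$ and $d=0$ $\nu_0$-a.e. each of the four terms has vanishing integral over all of $\tilde{\mathcal{B}}^2$, and since the integrand is non-negative the integral over $\mathcal{U}$ is at most $0$, a contradiction. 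Hence $d(u_0,v_0)=0$. The only genuine obstacle is the non-integrability of $z$ and $f$ on the infinite line, and this is exactly what Proposition~\ref{p:avbal} and Lemma~\ref{l:nondeg} are built to handle; the remaining care is to check that the hypotheses of Proposition~\ref{p:avbal} (namely $\hat{S}$-invariance together with $\hat{Z}<\infty$) hold on almost every ergodic component, and that $\mu$ inherits non-degeneracy from the convex-combination closure of $\mathcal{M}_0(\mathcal{B})$.
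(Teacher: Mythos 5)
Your argument is correct and follows the same skeleton as the paper's proof: the same product measure $\nu_0=\tfrac14(\mu_1+\mu_2)^2$ (your $\mu\times\mu$), the same use of the closure under convex combinations to transfer non-degeneracy to $\nu_0$, the same key inputs (Lemma \ref{l:nondeg}, Proposition \ref{p:avbal}, Lemma \ref{l:3_3}), and the same final contradiction on a neighbourhood of $(u_0,v_0)$ in the support. The one genuine difference is how the possible non-integrability of $z$ is absorbed. The paper, mimicking Lemma \ref{l:4_2}, assembles an auxiliary $\hat{S},\hat{T}$-invariant measure $\tilde{\nu}_0$ with $\hat{Z}(\tilde{\nu}_0)<\infty$ and $(u_0,v_0)\in\supp\tilde{\nu}_0$ by reweighting ergodic components whose supports accumulate at $(u_0,v_0)$, and then applies (\ref{r:avbal}) once to $\tilde{\nu}_0$. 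You instead apply (\ref{r:avbal}) to almost every $\hat{S},\hat{T}$-ergodic component $\nu_1$ of $\nu_0$ itself; this is legitimate because Lemma \ref{l:nondeg}(ii) gives $\hat{Z}(\nu_1)<\infty$ for a.e.\ component, $\hat{T}$-invariance gives $\nu_1(1)=\nu_1(0)$, and the finiteness of $\hat{Z}(\nu_1)$ is exactly what permits cancelling $\hat{Z}(\nu_1(1))$ against $\hat{Z}(\nu_1(0))$ to conclude $\int d\,d\nu_1=0$. Integrating $\mathbf{1}_{\{d>0\}}$ over the decomposition (using Borel measurability of $d$ from Lemma \ref{l:3_1}) then yields $d=0$ $\nu_0$-a.e. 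Your version is marginally cleaner: it avoids the auxiliary construction entirely and proves the slightly stronger statement that $d$ vanishes $\nu_0$-a.e.\ for the original measure, so the contradiction runs directly on $\supp\nu_0\ni(u_0,v_0)$; what the paper's reweighting buys is only structural parallelism with the bounded-case argument of Proposition \ref{p:diszero1}, not anything needed for the conclusion.
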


\begin{proof} Let $u_0 \in \supp \mu_1$, $v_0 \in \supp \mu_2$, and let $\nu_0 = \frac{1}{4}(\mu_1+\mu_2)^2$. Then $\nu_0$ is a $\hat{S},\hat{T}$-invariant measure on $\tilde{\mathcal{B}}^2$, by assumptions non-degenerate. Analogously as in Lemma \ref{l:4_2}, by applying Lemma \ref{l:nondeg}, we can construct a $\hat{S},\hat{T}$-invariant $\tilde{\nu}_0$ such that $\hat{Z}(\tilde{\nu}_0) < \infty$, and such that $(u_0,v_0) \in \supp \tilde{\nu}_0$. As $\tilde{\nu}$ is $\hat{S},\hat{T}$-invariant, (\ref{r:avbal}) implies that $d=0$, $\tilde{\nu}$-a.e. The rest of the argument is analogous to the proof of Proposition \ref{p:diszero1}.
\end{proof}

\begin{proof}[Proof of Theorem \ref{t:main1b}] It is analogous to the proof of Theorem \ref{t:main1}, 
by applying Proposition \ref{p:final} instead of Proposition \ref{p:diszero1} to $\mathcal{M}_0(\mathcal{B})$ instead of $\mathcal{M}(\mathcal{B})$.
\end{proof}

\part{Uniqueness of invariant measures}

\section{Non-transversal intersections of an equilibrium in the extended case} \label{s:notrans}

Prior to discussing uniqueness of an invariant measure, we demonstrate here a universal property of non-transversality of intersections of $\omega$-limit sets almost-everywhere and a $S,T$-equilibrium (i.e. a spatially and temporally periodic solution). We consider only the E/P case in this section, assume (A1-4), and let $\mathcal{B}$, $\tilde{\mathcal{B}}$ and $\delta_0$ be as in Section \ref{s:prelim}. We fix throughout the section a $v_0 \in \mathcal{B}$ such that $v_0 = S(v_0)=T(v_0)$ (thus $v_0 \in \tilde{\mathcal{B}}$).
Recall that the pair $(\mu_0,\delta_{v_0})$ is non-degenerate, if for $\mu_0$-a.e. $u_0$, (\ref{d:nondegenerate}) holds, i.e. the density of zeroes of $u_0-v_0$ is finite. We do not know any example of an $S$-invariant $\mu_0$ supported on $\tilde{\mathcal{B}}$ and a $S,T$-invariant $v_0$ such that $(\mu_0,\delta_{v_0})$ is degenerate.

Throughout the section, $u(t),v(t),z(t)$ denote solutions of (\ref{r:main}) with initial conditions $u_0,v_0,z_0$ at $t=0$.

\begin{proposition} \label{p:transversal}
	Assume that $\mu_0$ is an $S$-invariant measure on $\tilde{\mathcal{B}}$ such that $(\mu_0,\delta_{v_0})$ is non-degenerate. Then for $\mu_0$-a.e. $u_0$, $\omega(u_0)$ consists of $z_0$ such that $z(t)-v(t)$ can not have a multiple zero for any $(x,t) \in \mathbb{R}^2$.
\end{proposition}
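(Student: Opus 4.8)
The plan is to control the total expected number of \emph{multiple} intersections of the evolving measure with the fixed periodic solution $v$, show this total over all future times is finite, and conclude that multiple zeroes cannot recur infinitely often near any $\omega$-limit point. First I would set $\nu_0 = \mu_0 \times \delta_{v_0}$. Since $Sv_0=v_0$, this is $\hat{S}$-invariant, and non-degeneracy of $(\mu_0,\delta_{v_0})$ says exactly that $\hat{\zeta}<\infty$ $\nu_0$-a.e. By Lemma \ref{l:nondeg}(i), a.e.\ measure in the $\hat{S}$-ergodic decomposition of $\nu_0$ has finite $\hat{Z}$; as the desired conclusion is a $\mu_0$-a.e.\ statement, it suffices to prove it for each ergodic component, so I may assume $\hat{Z}(\nu_0)<\infty$ (and that $\mu_0$ is the corresponding $S$-invariant ergodic factor).

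Because $T$ commutes with $S$ and $Tv_0=v_0$, the integer-time evolution is $\nu(n)=\mu(n)\times\delta_{v_0}$, which is again $\hat{S}$-invariant with $\hat{Z}(\nu(n))\leq\hat{Z}(\nu_0)<\infty$. Applying Proposition \ref{p:avbal} on each unit interval and telescoping gives
\begin{equation*}
\sum_{n=0}^{\infty}\int_{\tilde{\mathcal{B}}^2} d\,d\nu(n) = \hat{Z}(\nu(0)) - \lim_{n\to\infty}\hat{Z}(\nu(n)) \leq \hat{Z}(\nu_0) < \infty.
\end{equation*}
Since $\nu(n)=(T^n)_*\mu_0\times\delta_{v_0}$ and $Tv_0=v_0$, one has $\int d\,d\nu(n)=\int d(T^nu_0,v_0)\,d\mu_0(u_0)$, the expected number of multiple zeroes of $u-v$ in $[0,1)\times(n,n+1]$. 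By Tonelli, for $\mu_0$-a.e.\ $u_0$, $\sum_n d(T^nu_0,v_0)<\infty$. Using $S$-invariance of $\mu_0$ together with $Sv_0=v_0$, the same integral is finite with $S^\ell u_0$ in place of $u_0$ for each $\ell$; intersecting over the countably many $\ell\in\mathbb{Z}$, for $\mu_0$-a.e.\ $u_0$ we get $\sum_n d(S^\ell T^n u_0,v_0)<\infty$ for every $\ell$, and in particular $d(S^\ell T^n u_0,v_0)\to 0$ as $n\to\infty$ for each fixed $\ell$.

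Fix such a $u_0$ and suppose, for contradiction, that some $z_0\in\omega(u_0)$ has $z-v$ with a multiple zero at $(x^*,t^*)$. Since $\omega(u_0)$ is $T$-invariant and $v$ is $1$-periodic in $t$, replacing $z_0$ by $T^m z_0\in\omega(u_0)$ I may assume $t^*\in(0,1]$; since $v$ is $1$-periodic in $x$, replacing $z_0$ by $S^{-k}z_0$ with $k=\lfloor x^*\rfloor$ yields a multiple zero of $S^{-k}z-v$ in $[0,1)\times(0,1]$, i.e.\ $d(S^{-k}z_0,v_0)\geq 1$. Lemma \ref{l:3_3} then furnishes a neighbourhood $\mathcal{U}$ of $(S^{-k}z_0,v_0)$ on which the four-term dissipation sum is $\geq 1$; as $\hat{S}^{-1}$ and $\hat{T}$ fix the second coordinate $v_0$, this reads $d(\cdot,v_0)+d(S^{-1}\cdot,v_0)+d(T\cdot,v_0)+d(S^{-1}T\cdot,v_0)\geq 1$. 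As $z_0\in\omega(u_0)$ there are $n_j\to\infty$ with $T^{n_j}u_0\to z_0$, hence $S^{-k}T^{n_j}u_0\to S^{-k}z_0$ and $(S^{-k}T^{n_j}u_0,v_0)\in\mathcal{U}$ for large $j$. The four terms are then of the form $d(S^\ell T^m u_0,v_0)$ with $\ell\in\{-k,-k-1\}$ and $m\in\{n_j,n_j+1\}$; each tends to $0$ as $j\to\infty$ by the summability just established, contradicting that their sum is $\geq 1$. Hence no such multiple zero exists.

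The main obstacle is the bookkeeping in the last paragraph: arranging, via the $T$- and $S$-translations (and exploiting that both actions fix $v_0$), that the single persistence statement of Lemma \ref{l:3_3} feeds precisely into the summable family $\{d(S^\ell T^m u_0,v_0)\}$ obtained from the averaged balance law. The finiteness reduction in the first paragraph, through the ergodic decomposition of $\mu_0\times\delta_{v_0}$ and Lemma \ref{l:nondeg}, is the other point requiring care; the topological details (that $T^{n_j}u_0\to z_0$ in the coarser topology on $\tilde{\mathcal{B}}$ making $\mathcal{U}$ open and the dissipation persistence valid) are routine given Lemmas \ref{l:c1} and \ref{l:continuity}.
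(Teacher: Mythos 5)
Your proposal is correct and follows essentially the same route as the paper: reduction to the $\hat{S}$-ergodic case via Lemma \ref{l:nondeg}, summability of the dissipation $\sum_n \int d\, d\nu(n) \leq \hat{Z}(\nu_0) < \infty$ from the averaged balance law (Proposition \ref{p:avbal}), persistence of dissipation via Lemma \ref{l:3_3}, and $S$-invariance of $\mu_0$ to cover all spatial translates. The only difference is cosmetic: the paper packages the final step as a first Borel--Cantelli argument applied to a globally constructed open set $\mathcal{U} \supset \lbrace u_0 : d(u_0,v_0)\geq 1 \rbrace$, whereas you apply Tonelli and the integer-valuedness of $d$ pointwise and run the contradiction at the $\omega$-limit point --- the same estimate in different clothing.
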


\begin{proof} First note that it suffices to prove the claim for $S$-ergodic $\mu_0$, as by the assumption and Lemma \ref{l:nondeg}, every measure $\nu_0$ in the $\hat{S}$-ergodic decomposition of $\mu_0 \times \delta_{v_0}$ is non-degenerate, and a.e. measures in the $\hat{S}$-ergodic decomposition of $\mu_0 \times \delta_{v_0}$ are of the form $\mu_1 \times \delta_{v_0}$, $\mu_1$ $S$-ergodic. Thus assume $\mu_0$ is $S$-ergodic, so by the non-degeneracy assumption and Lemma \ref{l:nondeg} we have $\hat{Z}(\mu_0 \times \delta_{v_0}) < \infty$.

We will first show that there exists an open set $\mathcal{U} \subset \tilde{\mathcal{B}}$ satisfying
\begin{subequations} \label{r:U2}
\begin{gather}
 \left\lbrace u_0 \in \tilde{\mathcal{B}}, \: d(u_0,v_0)\geq 1 \right\rbrace  \subset \mathcal{U}, \label{r:U2a} \\  \mathcal{U} \subset \left\lbrace u_0\in \tilde{\mathcal{B}}, \: d(u_0,v_0) + d(S^{-1}u_0,v_0)+d(Tu_0,v_0)+d(S^{-1}Tu_0,v_0)\geq 1 \right\rbrace. 
   \label{r:U2b}
\end{gather}
\end{subequations} 
Then we show that 
\begin{equation}
\sum_{k=1}^{\infty}\mu_0 \left( T^{-k} \left( \mathcal{U} \right) \right) < \infty, \label{r:borelcantelli}
\end{equation}
and finally we complete the proof by an application of the first Borel-Cantelli lemma \cite{Durrett:05}.

To prove the first claim, for any $z_0$ such that $d(z_0,v_0) \geq 1$ we can by an application of Lemma \ref{l:3_3} find an open neighbourhood $\tilde{\mathcal{U}}(z_0) \subset \tilde{\mathcal{B}}$ such that for each $\tilde{u}_0 \in \tilde{\mathcal{U}}(z_0)$, and for $\tilde{v}_0=v_0$, (\ref{r:disseq}) holds, thus as $v_0=Tv_0=S^{-1}v_0$, 
\begin{equation}
d(\tilde{u}_0,v_0)+d(S^{-1}\tilde{u}_0,v_0)+d(T\tilde{u}_0,v_0)+d(S^{-1}T\tilde{u}_0,v_0)\geq 1. \label{r:geq1}
\end{equation}
The set $\mathcal{U}=\cup_{z_0 \in \tilde{\mathcal{B}},d(z_0,v_0)\geq 1} \tilde{\mathcal{U}}(z_0)$ now satisfies (\ref{r:U2}). 

Let $\nu(t)$ be the evolution of the measure $\mu_0 \times \delta_{v_0}$ as defined at the beginning of the subsection \ref{ss:balaverage}. Then for integers $k \geq 0$, $\nu(k)=\mu(k) \times  \delta_{v_0}$ by $S,T$-invariance of $v_0$, where $\mu(t)$ is the evolution of $\mu_0$. Now applying the balance law on average \ref{r:avbal} to $\nu(k)$, we see that
\begin{equation}
 \sum_{k=0}^{\infty} \int_{\tilde{\mathcal{B}}} d(T^k(u_0),v_0) d\mu_0(u_0)
 = \sum_{k=0}^{\infty} \int_{\tilde{\mathcal{B}}} d \: d\nu(k) \leq \hat{Z}(\nu(0)) = 
 \hat{Z}(\mu_0 \times \delta_{v_0}) < \infty. \label{r:borel1}
\end{equation}
By (\ref{r:U2b}) we obtain
\begin{align} \mu_0(T^{-k}(\mathcal{U})) & = \mu_0( u_0 \in \mathcal{B}, \, T^k(u_0)\in \mathcal{U}) \notag\\
&  \leq \int_{\tilde{\mathcal{B}}} \left( d\left( T^ku_0,v_0 \right) + d\left( S^{-1}T^ku_0,v_0 \right)+d\left( T^{k+1}u_0,v_0 \right)+d\left(S^{-1}T^{k+1}u_0,v_0 \right) \right) d\mu_0(u_0) \notag \\
 & =2 \int_{\tilde{\mathcal{B}}} \left( d\left(T^ku_0,v_0 \right) + d\left(T^{k+1}u_0,v_0\right) \right)  d\mu_0(u_0), \label{r:borel2}
\end{align}
where in the last row we applied the $S$-invariance of $\mu_0$. Inserting (\ref{r:borel2}) into (\ref{r:borel1}) we obtain (\ref{r:borelcantelli}). By the first Borel-Cantelli lemma \cite[(6.1)]{Durrett:05}, the set of $u_0 \in \tilde{\mathcal{B}}$ such that $T^k(u_0)\in \mathcal{U}$ for infinitely many $k \in \mathbb{N}$ has $\mu_0$-measure $0$, thus by openness of $\mathcal{U}$, $\mu_0 \left( \left\lbrace u_0\in \tilde{\mathcal{B}}, \: \omega(u_0)\cap \mathcal{U}=\emptyset \right\rbrace \right) = 1$. Because of (\ref{r:U2a}), we obtain
$$
\mu_0 \left( \left\lbrace u_0\in \tilde{\mathcal{B}}, \: \forall z_0 \in \omega(u_0), \: \forall k \in \mathbb{Z}, \, d(z(k),v(k))=0 \right\rbrace \right) = 1
$$
(where $v(k)=v_0$ by $T$-invariance of $v_0$). Finally, as $\mu_0$ is $S$-invariant, we get
$$
\mu_0 \left( \left\lbrace u_0\in \tilde{\mathcal{B}}, \: \forall z_0 \in \omega(u_0), \: \forall k,m \in \mathbb{Z}, \, d(S^mz(k),S^mv(k))=0 \right\rbrace \right) = 1
$$
(where $S^mv(k)=v_0$ by $S,T$-invariance of $v_0$), which we needed to prove by the definition of $d$.
\end{proof}

\begin{remark} \label{e:nondeg} One can construct many non-trivial measures $\mu_0$ such that $(\mu_0,\delta_{v_0})$ is non-degenerate for any $S,T$-invariant $v_0$, without any a-priori information on $v_0$. For example, this can be done by embedding measures by combining profiles of a finite family of $S$-invariant (i.e. spatially periodic) functions in $\tilde{ \mathcal{B}}$, as $z(u_0,v_0) < \infty$ for any $u_0,v_0 \in \tilde{\mathcal{B}}$. Such $\mu_0$ then also satisfies (N1) without any a-priori knowledge of $\mathcal{V}$.
	
\end{remark}

\section{Existence of a 1d family of equilibria} \label{s:burgers}

We now prove the second step in the proof of the results listed in Subsection \ref{ss:unique}: existence of a 1d family of equilibria as specified in Theorem \ref{t:main2}, (i). More specifically, let $1 > \alpha > 1 - \varepsilon / 2$, where $\varepsilon$ is as in (B1). We will construct a 1d family satisfying the following:

\begin{itemize}
	\item[(C1)] (i)  There exists a set ${\mathcal{V}}=\lbrace v^y_0, \: y \in \mathbb{R}$, $v^y_0 \in H^{2 \alpha}(\mathbb{S}^1)$ satisfying that $y \mapsto v^y_0$ is continuous as a map $\mathbb{R} \rightarrow H^{2 \alpha}(\mathbb{S}^1)$, strictly increasing, and such that for all $y \in \mathbb{R}$, $v^y_0$ is $T$-invariant and $\int_0^1 v^y_0(x)dx=y$.
	\vspace{1ex}
	
	\noindent (ii) The functions 
	\begin{align*}
	y \mapsto \underline{v}(y) & := \min \left\lbrace v^y(x,t), \: (x,t) \in \mathbb{S}^1 \times [0,1] \right\rbrace, \\
	y \mapsto \overline{v}(y) & := \max \left\lbrace v^y(x,t), \: (x,t) \in \mathbb{S}^1 \times [0,1] \right\rbrace
	\end{align*} 
	are onto $\mathbb{R}$.
	
	\vspace{1ex}
	\noindent (iii) Such a set $\mathcal{V}$ is unique.
\end{itemize}

We prove the following, by an application of the Schauder fixed point theorem:

\begin{proposition} \label{l:C1}
	If (A1-3) and (B1-3) hold, then (C1) holds.
\end{proposition}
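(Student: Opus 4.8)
The plan is to realise each $v^y_0$ as a fixed point of the time-one map $T$ via Schauder's theorem on a mass-and-amplitude-constrained convex set, and then to extract monotonicity, continuity, surjectivity and uniqueness from the order-preserving structure together with the zero-number calculus of Sections \ref{s:zeros}--\ref{s:balance}.

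\emph{Existence at a fixed mass.} Fix $y\in\mathbb{R}$ and put
\begin{equation*}
\mathcal{B}_y:=\Big\{u_0\in\mathcal{X}^{\alpha}:\ \textstyle\int_0^1 u_0\,dx=y,\ \|u_0-y\|_{L^\infty(\mathbb{S}^1)}\le l(y)\Big\},
\end{equation*}
a convex set (the mass constraint is affine and the sup-norm constraint convex). For $u_0\in\mathcal{B}_y$, condition (B3) keeps the mass equal to $y$ and (B2) keeps the solution inside the tube $\|u(t)-y\|_\infty\le l(y)$ for as long as it exists; the subquadratic bound (B1) then rules out gradient blow-up, so the solution is global and $Tu_0$ is defined with $Tu_0\in\mathcal{B}_y$. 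Parabolic smoothing (gain of regularity from $\mathcal{X}^\alpha$ into some $\mathcal{X}^\gamma$, $\gamma>\alpha$, followed by the compact embedding $\mathcal{X}^\gamma\hookrightarrow\mathcal{X}^\alpha$) shows that $T|_{\mathcal{B}_y}$ is continuous with relatively compact image, so $K_y:=\overline{\mathrm{conv}}\,T(\mathcal{B}_y)$ is a compact convex subset of $\mathcal{B}_y$ with $T(K_y)\subset K_y$. Schauder's fixed point theorem then yields $v^y_0=Tv^y_0$, a $T$-invariant solution with $\int_0^1 v^y_0=y$ and $\|v^y_0-y\|_\infty\le l(y)$.

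\emph{Comparability, and hence monotonicity and uniqueness.} Every such $v^y_0$ generates a spatially and temporally periodic orbit, which supports an invariant measure, so $v^y_0\in\mathcal{E}$; by Proposition \ref{p:diszero1}, $d(u_0,w_0)=0$ for all $u_0,w_0\in\mathcal{E}$, i.e. two $T$-invariant solutions never meet non-transversally. Equivalently, for invariant $p,q$ the map $t\mapsto z(p(t),q(t))$ is non-increasing and $1$-periodic, hence constant. The crux of the argument — and the step I expect to be the main obstacle — is to upgrade ``no tangency'' to \emph{comparability}: any two $T$-invariant solutions are either equal or strictly ordered, i.e. their profiles cannot cross transversally. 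I would prove this by combining strong monotonicity with the planar Poincar\'e--Bendixson structure of Theorem \ref{t:main1}: distinct periodic orbits project under $\pi$ to disjoint closed curves, and a persistent transversal crossing, compatible with the constant (necessarily even) zero number, would force a configuration of globally defined, mutually non-crossing, $1$-periodic zero-curves that is incompatible with the nested planar picture. Granting comparability, uniqueness at fixed mass is immediate: comparable solutions of equal mass satisfy $\int_0^1(p_0-q_0)=0$ with $p_0\le q_0$, forcing $p_0\equiv q_0$; and $y_1<y_2$ forces $v^{y_1}_0\le v^{y_2}_0$, which strong monotonicity sharpens to $v^{y_1}_0\ll v^{y_2}_0$. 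This gives the strictly increasing family, and since comparability applies to \emph{all} $T$-invariant solutions (all lie in $\mathcal{E}$), $v^y_0$ is the unique $T$-invariant solution of mass $y$.

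\emph{Continuity, surjectivity, and uniqueness of the family.} On any compact interval of masses the fixed points lie in a common compact subset of $\mathcal{X}^\alpha$ (the $\mathcal{X}^\gamma$-smoothing bounds are locally uniform in $y$), so if $y_n\to y$ every subsequential limit of $v^{y_n}_0$ is a $T$-invariant solution of mass $y$, necessarily $v^y_0$ by fixed-mass uniqueness; hence $y\mapsto v^y_0$ is continuous, and so are $\underline v,\overline v$. The bound $\|v^y_0-y\|_\infty\le l(y)$ gives $y-l(y)\le\underline v(y)\le y\le\overline v(y)\le y+l(y)$, and the limits in (B2), namely $y-l(y)\to+\infty$ and $y+l(y)\to-\infty$, force $\underline v(y),\overline v(y)\to\pm\infty$ as $y\to\pm\infty$; continuity and the intermediate value theorem then make both $\underline v$ and $\overline v$ onto $\mathbb{R}$, proving (ii). Finally, uniqueness of the family (iii) follows from the uniqueness of the $T$-invariant solution at each mass: any rival family satisfying (i) consists of $T$-invariant solutions of the prescribed masses, which must therefore coincide with the $v^y_0$.
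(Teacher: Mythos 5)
Your existence step at fixed mass (Schauder on the mass-and-tube-constrained convex set, with compactness from parabolic smoothing and (B1)) is sound and uses the same estimates as the paper. The genuine gap is exactly the step you flag as the main obstacle: upgrading ``no tangency'' to \emph{comparability} of $T$-invariant solutions. Your proposed argument for it invokes only Proposition \ref{p:diszero1} (no multiple zeroes within $\mathcal{E}$), the planar injectivity of Theorem \ref{t:main1}, strong monotonicity, and the constancy of the zero number --- none of which involve (B2)--(B3). But these facts are mutually consistent with persistent transversal crossings, so no contradiction with any ``nested planar picture'' can arise. Concretely, in the B/A case with $g=u-u^3$ (Allen--Cahn on $\mathbb{S}^1$ of suitable length), the zero equilibrium and a sign-changing equilibrium are both $T$-invariant, both lie in $\mathcal{E}$ (Dirac measures), never meet non-transversally (by ODE uniqueness distinct equilibria cannot be tangent), project one-to-one to $\mathbb{R}^2$, have constant even zero number, and nevertheless cross transversally. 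Hence comparability is simply false at this level of generality; it must come from the conserved-mass structure. The natural way to exploit (B3) --- the paper's Lemma \ref{l:ii}: given an invariant $u_0$, squeeze it between $v^{y_1}_0$ and $v^{y_2}_0$ and produce tangencies unless $y_1=y_2$ --- already presupposes the full ordered family $\mathcal{V}$. This makes your architecture circular: you need comparability to assemble and order your per-$y$ fixed points into the family, but you need the family to prove comparability. The same gap propagates to your continuity argument (which selects limits ``by fixed-mass uniqueness'') and to (iii).

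The paper circumvents this by changing where Schauder is applied: not to individual profiles at fixed mass, but to the space $\mathcal{V}_{n,c}$ of \emph{monotone continuous families} $y \mapsto w^y_0$ on $[-n,n]$, with the order constraint (\ref{r:schC}) and uniform $\mathcal{X}^{\gamma}$ bounds (\ref{r:schD}) built into the compact convex set, and with the induced map $(\tau(w_0))^y=T(w^y_0)$. A fixed point of $\tau$ is then automatically an ordered, continuous family of $T$-invariant solutions --- monotonicity is enforced by the construction, never derived from comparability. Uniqueness of the family (C1)(iii) is then obtained in Lemma \ref{l:unique} by a tangency argument between two rival families: using the surjectivity (C1)(ii), one picks $y_2$ minimal with $w^{y_2}_0 \geq v^{y_1}_0$, which forces a multiple zero and contradicts Theorem \ref{t:main1}. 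If you want to keep your per-$y$ scheme, you would have to import this device (or strong monotonicity plus $T$-invariance applied to an ordered touching pair) rather than appeal to the planar structure; as written, the comparability step fails.
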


We first prove uniqueness in a separate lemma.

\begin{lemma} \label{l:unique} If (A1-3) holds, then a set $\mathcal{V}$ satisfying (C1) (i),(ii) is unique.
\end{lemma}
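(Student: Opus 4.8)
The plan is to take two families $\mathcal{V}=\{v_0^y\}$ and $\mathcal{W}=\{w_0^y\}$ both satisfying (C1)(i),(ii), to fix an arbitrary $y\in\mathbb{R}$, and to prove $v_0^y=w_0^y$; since $y$ is arbitrary this yields $\mathcal{V}=\mathcal{W}$. The whole argument is organized around comparing the single fixed element $w_0^y\in\mathcal{W}$ against the \emph{entire} family $\{v_0^{y'}\}_{y'\in\mathbb{R}}=\mathcal{V}$ through the zero number $n(y'):=z(v_0^{y'},w_0^y)$, and showing that $n\equiv 0$.

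First I would exploit the $T$-invariance. For each $y'$ both $v^{y'}$ and $w^y$ are $T$-fixed, hence entire solutions that are $1$-periodic in $t$; writing $r=v^{y'}-w^y$, the balance law in the bounded case (\ref{r:bezun}), $z\circ\hat{T}-z=-d$, gives $0=z(v_0^{y'},w_0^y)-z(v_0^{y'},w_0^y)=-d(v_0^{y'},w_0^y)$, so that $d(v_0^{y'},w_0^y)=0$. By temporal periodicity this propagates to all integer time–shifts, so $r$ has no multiple zero at any $(x,t)$; in particular, using $r(\cdot,1)=r(\cdot,0)$, the profile $r_0=v_0^{y'}-w_0^y$ has only simple (transversal) zeroes, and finitely many of them. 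Consequently, by continuity of $y'\mapsto v_0^{y'}$ into $C^1(\mathbb{S}^1)$ together with Lemma~\ref{l:continuity}, item (i) (equivalently, the implicit-function-theorem robustness of simple zeroes, counted over a window $[a,a+1)$ whose endpoints avoid the finitely many zeroes), the integer-valued function $n$ is locally constant, hence constant on the connected line $\mathbb{R}$.

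Next I would pin this constant to $0$ using condition (C1)(ii). Since $\underline{v}$ is onto $\mathbb{R}$, I can choose $y'$ with $\underline{v}(y')>\max_{\mathbb{S}^1}w_0^y$, which forces $v_0^{y'}\gg w_0^y$ and hence $n(y')=0$. Therefore $n\equiv 0$; in particular $n(y)=z(v_0^y,w_0^y)=0$, so $v_0^y-w_0^y$ has no zero and is of one sign on $\mathbb{S}^1$. But $\int_0^1\bigl(v_0^y-w_0^y\bigr)\,dx=y-y=0$, and a one-signed function of zero mean vanishes identically, so $v_0^y=w_0^y$.

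The main obstacle is the core step just described: converting "equal means" into "equal functions" requires ruling out two distinct $T$-periodic solutions that share the same mean yet cross transversally, and the decisive mechanism is that temporal periodicity turns the (classically nonincreasing) zero number into a quantity that is \emph{constant} along each such pair. This constancy is what kills all multiple zeroes and thereby makes the count rigid not only in $t$ but also under variation of the parameter $y'$, so that the boundary value $n(y')=0$ supplied by the onto property of $\underline v$ propagates to every $y'$. The remaining technical points --- the choice of a counting window avoiding the zeroes so that Lemma~\ref{l:continuity} applies on the circle, and the verification that $d(v_0^{y'},w_0^y)=0$ genuinely excludes a multiple zero of the profile $r_0$ (via the half-open interval $(0,1]$ and periodicity) --- are routine once this mechanism is in place.
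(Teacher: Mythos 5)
Your proof is correct, but it follows a genuinely different route from the paper's. The paper argues by contradiction using the machinery it has just built: it picks $y_1$ with $v^{y_1}_0\neq w^{y_1}_0$, uses the ordering and (C1)(ii) to take the \emph{minimal} $y_2$ with $w^{y_2}_0\geq v^{y_1}_0$, observes that at this extremal parameter the two profiles are tangent (the nonnegative, not identically zero difference has a zero, necessarily multiple), so $d(v^{y_1}_0,w^{y_2}_0)\geq 1$; since the two periodic orbits form a set $\mathcal{B}$ satisfying (A4) and carry invariant Dirac measures, both points lie in $\mathcal{E}(\mathcal{B})$, contradicting Theorem \ref{t:main1} (i.e.\ Proposition \ref{p:diszero1}, which forces $d\equiv 0$ on $\mathcal{E}^2$). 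You avoid the invariant-measure apparatus entirely: for a pair of $T$-fixed profiles the balance law (\ref{r:bezun}) gives $d=0$ \emph{trivially} (since $z\circ\hat T=z$ and $z$ is finite), so every intersection of every $v^{y'}_0$ with $w^y_0$ is transversal; robustness of simple zeroes plus continuity of $y'\mapsto v^{y'}_0$ into $C^1(\mathbb{S}^1)$ makes the count $n(y')$ locally constant, connectedness of $\mathbb{R}$ makes it constant, (C1)(ii) pins the constant at $0$, and the mean normalization $\int_0^1 v^y_0\,dx=y=\int_0^1 w^y_0\,dx$ finishes. What each approach buys: the paper's proof is a three-line corollary of its main theorem and fits the paper's narrative, but imports the ergodic-decomposition machinery behind Proposition \ref{p:diszero1}; yours is more elementary and self-contained, exposing that uniqueness needs only the combinatorial balance law for fixed points together with a connectedness argument, at the price of a somewhat longer continuity/perturbation discussion. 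Both proofs ultimately rest on the same mechanism --- a tangency between the families would create dissipation that time-periodicity forbids --- and both use (C1)(ii) to produce a non-intersecting comparison profile.

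Two cosmetic points you should patch. First, local constancy of $n$ at a parameter $y'$ requires $v^{y'}_0-w^y_0\not\equiv 0$; by the mean constraint this degenerate case can occur only at $y'=y$, where the desired conclusion already holds, so either argue by contradiction from $v^y_0\neq w^y_0$ or note this case separately. Second, to invoke the balance law you should fix the ambient framework, e.g.\ take $\mathcal{B}$ to be the union of the two periodic orbits in question (it satisfies (A4), exactly as in the paper), so that $z,f,d$ and Lemma \ref{l:3_2} are available for the pair.
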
	

\begin{proof}
	Assume the contrary, and find two such families $v^y_0$,$w^y_0$. Then it is easy to see that there must exist two $y_1,y_2$ such that $d(v^{y_1}_0,w^{y_2}_0) \geq 1$, for example by choosing any $y_1$ such that $v^{y_1}_0 \neq w^{y_1}_0$ and setting $y_2$ to be the minimal $y$ such that $w^y_0 \geq v^{y_1}_0$. Clearly $\mathcal{B}$ consisting of orbits $v^{y_1}(t),w^{y_2}(t)$ satisfies (A4), and the measures $\delta_1$, $\delta_2$ concentrated on $v^{y_1}_0,w^{y_2}_0$ are $S,T$-invariant. This and $d(v^{y_1}_0,w^{y_2}_0) \geq 1$ contradicts Theorem \ref{t:main1}.
\end{proof}

\begin{proof}[Proof of Proposition \ref{l:C1}] Throughout the proof, we consider the dynamics of (\ref{r:main}) in the bounded case $\mathcal{X}=L^2(\mathbb{S}^1)$ only, and assume (A1-3), (B1-3). Fix $n \in \mathbb{N}$ and a function $c : [\alpha,1) \rightarrow (n,\infty)$ to be chosen later, and consider the family $\mathcal{V}_{n,c}$ of continuous functions $w_0 : [-n,n] \rightarrow \mathcal{X}^{\alpha}$, $ y \mapsto w^y_0$, satisfying the following properties for all $y,z \in  [-n,n]$:
	\begin{subequations}
		\begin{align}
		& \int_0^1 w^y_0(x)dx   =y, \label{r:schA} \\
		& \left|\left|w^y_0-y \right|\right|_{L^{\infty}(\mathbb{S}^1)} \leq l(y), \label{r:schB} \\
		&	 y \leq z  \Rightarrow w^y_0 \leq w^z_0, \label{r:schC} \\
		& \left|\left|w^y_0 \right|\right|_{\mathcal{X}^{\gamma}} \leq c(\gamma)\quad \text{for all } \gamma \in [\alpha,1), \label{r:schD}
		\end{align}
	\end{subequations}
	where $l(y)$ is as in (B2). Clearly $\mathcal{V}_{n,c}$ is convex. We need to also show that it is compact in $C([-n,n],\mathcal{X}^{\alpha})$ and non-empty.
	
	First note that by (\ref{r:schA}) and (\ref{r:schC}), for $y < z$ we have that $||w^z_0-w^y_0||_{L^1(\mathbb{S}^1)} = z - y$, thus
	\begin{equation}
	||w^z_0-w^y_0||_{L^2(\mathbb{S}^1)} \leq c_{\infty}^{1/2} (z - y)^{1/2}. \label{r:l2}
	\end{equation}
	Fix a $\gamma$, $\alpha < \gamma < 1$. By the interpolation formula \cite{Henry}, p27, we have $||u||_{\mathcal{X}^{\alpha}} \leq c_1 ||u||_{\mathcal{X}^{\gamma}}^{\alpha/\gamma} ||u||_{\mathcal{X}}^{1-\alpha/\gamma}$ for some fixed constant $c_1 > 0$, thus by (\ref{r:schD}) and (\ref{r:l2}),
	\begin{align*}
	||w^z_0-w^y_0||_{\mathcal{X}^{\alpha}} \leq 2^{\alpha/\gamma} c_1 c_{\infty}^{1/2-\alpha/(2\gamma)} c(\gamma)^{\alpha/\gamma} |z-y|^{1/2-\alpha/(2\gamma)}.
	\end{align*}
	We see that $\mathcal{V}_{n,c}$ is equicontinuous, thus by the Arzel\`{a}-Ascoli theorem, its closure is compact.
	
	To show that $\mathcal{V}_{n,c}$ is compact, it remains to show that it is closed in $C([-n,n],\mathcal{X}^{\alpha})$. The only remaining non-trivial claim is that it is closed with respect to (\ref{r:schD}) for $\alpha < \gamma < 1$. It suffices to show that if for some $y \in [-n,n]$, a sequence $w_n^y$ satisfying (\ref{r:schD}) converges in $\mathcal{X}^{\alpha}$ to $z^y_0 \in \mathcal{X}^{\alpha}$, that then $z^y_0 \in \mathcal{X}^{\gamma}$ and $||z^y_0||_{\mathcal{X}^{\gamma}} \leq c(\gamma)$. By taking some $\gamma' > \gamma$, by compact embedding of $\mathcal{X}^{\gamma'}$ in $\mathcal{X}^{\gamma}$ we deduce that the family $w_n^y$, $n \in \mathbb{N}$ is relatively compact in $\mathcal{X}^{\gamma}$. As its every convergent subsequence in $\mathcal{X}^{\gamma}$ converges also in $\mathcal{X}^{\alpha}$, it must converge to $z^y_0$, thus $z^y_0 \in \mathcal{X}^{\gamma}$, $w_n^y$ converges to $z^y_0$ in $\mathcal{X}^{\gamma}$ and (\ref{r:schD}) holds in the limit.
	
	We now show that for each $n \in \mathbb{N}$, there exists a function $c$ as in (\ref{r:schD}) such that $\mathcal{V}_{n,c}$ is non-empty, and such that the function $\tau : \mathcal{V}_{n,c} \rightarrow \mathcal{V}_{n,c}$ given with $(\tau(w_0))^y = T(w^y_0)$ is well defined, i.e. that $\tau(w_0) \in \mathcal{V}_{n,c}$. The properties (\ref{r:schA}) and (\ref{r:schB}) are preserved by (B2), (B3) respectively; and (\ref{r:schC}) by the order-preserving property of (\ref{r:main}). To show $\tau$-invariance of (\ref{r:schD}), consider $c_\infty : = \max_{y \in [-n,n]}(|y|+l(y))$ (which exists by the upper semi-continuity of $l$). Then by (B1), (B2) and \cite[Proposition 7.2.2]{Lunardi:95}, the solution $w^y(t)$ of (\ref{r:main}), $w^y(0)=w^y_0$ exists for all $t \geq 0$ as long as $w^y_0$ satisfies (\ref{r:schB}), and for all $t \geq 0$, we have that $||w^y(t)||_{L^{\infty}(\mathbb{R})} \leq c_{\infty}$. Furthermore, by \cite[Lemma 7.0.3 and Proposition 7.2.2]{Lunardi:95}, we can find $c(\alpha)> n$ large enough, such that if $||w^y_0||_{\mathcal{X}^\alpha} \leq c(\alpha)$, then $||T(w^y_0)||_{\mathcal{X}^\alpha} \leq c(\alpha)$. Finally, we obtain the required $c(\gamma) > n$ for each $\alpha < \gamma < 1$ by integrating the variation of constants formula over $t \in [0,1]$ while applying (B1) and a-priori bounds on the solution in $\mathcal{X}^\alpha$ for $t \in [0,1]$ obtained in \cite[Proposition 7.2.2]{Lunardi:95},  Clearly now for $w^y_0 \equiv y$ we have that $w_0 \in \mathcal{V}_{n,c}$, thus $\mathcal{V}_{n,c}$ is non-empty. Finally, by the continuous dependence on initial conditions, $\tau(w)$ is continuous $\tau : \mathcal{V}_{n,c} \rightarrow \mathcal{V}_{n,c}$ is continuous. 
	
	Now we can apply the Schauder fixed point theorem to find a fixed point of $\tau$, which was required. We can extend $w^y$ to the entire $y \in \mathbb{R}$ by choosing an increasing sequence of $n_k \in \mathbb{N}$ such that $n_k > \max_{y \in [-n_{k-1},n_{k-1}]}(|y|+l(y))$, and proving that $\lbrace w^y, \: y \in [-n_j,n_j] \rbrace$ is then independent of $n_k$, $k > j$, analogously as in the proof of Lemma \ref{l:unique}. This completes (C1),(i). We obtain (C2),(ii) from (\ref{r:B2}) and the construction.	 
\end{proof}

\section{A 1d family of equilibria as an ergodic attractor and asymptotics} \label{s:unique}

We now complete the proofs of the main results of Subsection \ref{ss:unique}, which follow from Proposition \ref{l:C1} and Proposition \ref{p:C1} below. We actually show that the condition (C1) from the previous section suffices instead of (B2-3). Let $\alpha > 1 - \varepsilon / 2$, where $\varepsilon$ is as in (B1).

\begin{proposition} \label{p:C1}
If we assume (A1-3), (B1) and (C1), then the claims in Theorem \ref{c:bounded} and Corollaries \ref{c:bounded} and \ref{c:asymptotics} hold.
\end{proposition}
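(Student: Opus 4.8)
The plan is to read off every assertion from the structure already established, with one geometric device—a \emph{sliding (sweeping) argument along the totally ordered family $\mathcal V$}—doing almost all of the work. Part (i) of Theorem \ref{t:main2} is literally (C1)(i) together with the uniqueness (C1)(iii), so it is immediate. The inclusion $\mathcal V\subset\mathcal E$ is also cheap: each $v^y_0$ is spatially periodic and $T$-invariant, hence $S,T$-invariant, so $\delta_{v^y_0}$ is an invariant measure and $v^y_0\in\supp\delta_{v^y_0}\subset\mathcal E$.

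The core is the reverse inclusion $\mathcal E\subset\mathcal V$. Fix $u_0\in\mathcal E$. Since $\mathcal E$ consists of entire orbits and $d\equiv0$ on $\mathcal E^2$ (Proposition \ref{p:diszero1} in the bounded case, Proposition \ref{p:final} in the non-degenerate extended case), for every $y'\in\mathbb R$ the difference $u_0-v^{y'}_0$ has \emph{no multiple zero} anywhere. By (C1)(ii) the maps $\underline v,\overline v$ are onto, and $u_0\in\mathcal B$ is bounded in $C^1$, so $v^{y'}_0\gg u_0$ for $y'$ large and $v^{y'}_0\ll u_0$ for $y'$ small. Put $y_\ast=\inf\{y':\ v^{y'}_0\ge u_0\}$; uniform (by periodicity) continuity of $y'\mapsto v^{y'}_0$ gives $v^{y_\ast}_0\ge u_0$ and, by minimality, $\inf_x(v^{y_\ast}_0-u_0)=0$. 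If $u_0\ne v^{y_\ast}_0$, a point realizing this infimum is a tangency, i.e.\ a multiple zero of $u_0-v^{y_\ast}_0$, contradicting $d=0$; hence $u_0=v^{y_\ast}_0\in\mathcal V$. In the bounded case compactness of $\mathbb S^1$ makes the infimum attained, so the contradiction is direct. \emph{The extended case is the main obstacle}: the infimum need not be attained, and I would resolve it by taking $x_n$ with $(v^{y_\ast}_0-u_0)(x_n)\to0$, translating by $S^{-\lfloor x_n\rfloor}$, and invoking $S$-invariance and compactness of $\mathcal E$ to extract $\hat u_0\in\mathcal E$ with $v^{y_\ast}_0-\hat u_0\ge0$ vanishing at a point of $[0,1]$; this is a multiple zero of $\hat u_0-v^{y_\ast}_0$ with both factors in $\mathcal E$, again contradicting $d=0$. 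This proves (ii). Part (iii) follows at once: any invariant measure $\mu$ on $\mathcal B_y$ has $\supp\mu\subset\mathcal E=\mathcal V$, and $\mathcal V\cap\mathcal B_y=\{v^y_0\}$ because $\int_0^1 v^{y'}_0\,dx=y'$, so $\mu=\delta_{v^y_0}$.

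The same sweeping mechanism classifies any \emph{entire orbit $q$ that meets every member of $\mathcal V$ only transversally}: letting $y'$ increase, the first value $\beta$ at which $\sup_x(v^{y'}_0-q)$ reaches $0$ either is attained (forcing $q=v^\beta_0\in\mathcal V$, since a tangency is excluded) or is approached only at spatial infinity (forcing $q\in\mathcal H$). For Corollary \ref{c:bounded} (B/P, all data) I would note that $n\mapsto z(T^nu_0,v^{y'}_0)$ is non-increasing, hence eventually constant, so tangencies between $\omega(u_0)$ and any $v^{y'}_0$ are excluded; thus every $q\in\omega(u_0)$ is transversal to all of $\mathcal V$, and since the circle has no spatial infinity ($\mathcal H=\emptyset$) the classification yields $\omega(u_0)\subset\mathcal V$. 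As $\mathcal V$ is strongly totally ordered while $\omega$-limit sets are unordered (Hirsch's nonordering principle), $\omega(u_0)$ reduces to a single $v^y_0$, and the conserved mass (B3) identifies $y=\int_0^1u_0\,dx$. For Corollary \ref{c:asymptotics}(i) I would instead apply Proposition \ref{p:transversal} with $v_0=v^{y'}_0$ for all $y'$ in a countable dense set (then all $y'$ by continuity), obtaining for $\mu_0$-a.e.\ $u_0$ that every $z_0\in\omega(u_0)$ meets each $v^{y'}_0$ transversally; the classification now gives $\omega(u_0)\subset\mathcal V\cup\mathcal H$, this classification being the second genuinely delicate point.

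Finally, for Corollary \ref{c:asymptotics}(ii) I would use weak$^\ast$ compactness of $\{T^n_\ast\mu_0\}$ on the compact $\tilde{\mathcal B}$: any limit $\mu_\infty$ is $S,T$-invariant, and by monotonicity of the density of zeroes under the balance law on average (\ref{r:avbal}) one has $\hat Z(\mu_\infty\times\delta_{v^{y'}_0})\le\hat Z(\mu_0\times\delta_{v^{y'}_0})<\infty$, so $\mu_\infty$ remains non-degenerate against each $v^{y'}_0$. Applying Proposition \ref{p:final} to $\mu_\infty\times\delta_{v^{y'}_0}$ gives $d=0$ $\mu_\infty$-a.e., hence no tangencies, so the sweeping classification places $\mu_\infty$-a.e.\ point in $\mathcal V\cup\mathcal H$; Poincar\'e recurrence for the $S$-action then excludes $\mathcal H$ (spatially heteroclinic points are $S$-wandering and carry no invariant mass), leaving $\supp\mu_\infty\subset\mathcal V$. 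Overall, the two places where real effort is needed are the extended-case touching argument (non-attained infimum, handled by shift-compactness) and the classification of transversal entire orbits into $\mathcal V\cup\mathcal H$; everything else is bookkeeping on top of Theorems \ref{t:main1}, \ref{t:main1b} and Proposition \ref{p:transversal}.
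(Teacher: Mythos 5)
Your handling of Theorem \ref{t:main2}(i),(iii), of the bounded case of (ii), and of Corollaries \ref{c:bounded} and \ref{c:asymptotics}(i) is essentially sound and close to the paper's proof (the paper proves Corollary \ref{c:bounded} slightly differently, via $\bar{\omega}(u_0)\subset\mathcal{E}=\mathcal{V}$ and an order-squeeze between $v^{y_0\pm\delta}_0$, rather than your zero-number stabilization plus nonordering, but both routes work). However, there is a genuine gap exactly at the point you yourself flag as ``the main obstacle'': the extended case of $\mathcal{E}\subset\mathcal{V}$. Your shift-compactness patch extracts $\hat u_0\in\mathcal{E}$ with $v^{y_\ast}_0-\hat u_0\ge 0$ vanishing at some point, and you declare this point a multiple zero of $\hat u_0-v^{y_\ast}_0$. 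That fails when $\hat u_0=v^{y_\ast}_0$ identically, which is precisely what happens if the infimum is approached only at spatial infinity because $u_0$ is asymptotic to $v^{y_\ast}_0$ there --- the spatially heteroclinic configuration. This case cannot be waved away: an $h_0\in\mathcal{H}$ crossing each $v^y_0$ at most once transversally satisfies $d(h_0,v^y_0)=0$ for every $y$, so the pairwise vanishing of $d$ on $\mathcal{E}^2$ (Proposition \ref{p:final}), which is all your argument invokes, is logically consistent with heteroclinic-type points; no pointwise zero-number argument can exclude them.

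The paper's Lemma \ref{l:ii} closes this differently: it assumes $u_0\in\supp\mu_0$ intersects some $v^{y_0}_0$ \emph{twice} and sweeps only over the compact interval $[x_1,x_2]$ between the intersections, where the extremal values $y_1,y_2$ are attained, producing an honest interior tangency and a contradiction; this yields that every point of the support crosses each $v^y_0$ at most once, transversally, hence lies in $\mathcal{V}\cup\mathcal{H}$; and then $\mathcal{H}$ is killed \emph{measure-theoretically}: no $h_0\in\mathcal{H}$ is $S$-recurrent, so the Poincar\'e recurrence theorem gives $\mu_0(\mathcal{H})=0$, and closedness of $\mathcal{V}$ gives $\supp\mu_0\subset\mathcal{V}$. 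You deploy exactly this recurrence step for Corollary \ref{c:asymptotics}(ii), so the repair is within your reach, but as written your proof of Theorem \ref{t:main2}(ii) in the extended case is incomplete. Two further glosses: your route to Corollary \ref{c:asymptotics}(ii) via $\hat{Z}(\mu_\infty\times\delta_{v^{y'}_0})\le\hat{Z}(\mu_0\times\delta_{v^{y'}_0})$ requires lower semicontinuity of $\hat{Z}$ under weak$^*$ limits, which you do not establish (the paper avoids this by using $\supp\nu_0\subset\cup_{u_0\in\supp\mu_0}\omega(u_0)$ together with the same recurrence step); and since only (A1-3), (B1), (C1) are assumed, one must first construct the compact invariant sets on which invariant measures live at all --- the paper's Lemma \ref{l:unbound} and the decomposition $\tilde{\mathcal{B}}=\cup_k\tilde{\mathcal{B}}_k$, obtained from (C1)(ii) and the maximum principle --- which your proposal takes for granted.
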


We prove it in a series of Lemmas, with standing assumptions (A1-3), (B1) and (C1). We first establish that $\mathcal{X}^{\alpha}$ can be decomposed into an increasing union of sets on which (A4) holds, and then deduce the required claims.

\begin{lemma} \label{l:unbound} 
 Assume $u_0 \in \mathcal{X}^{\alpha}$ in either bounded or extended case, such that $||u_0||_{\mathcal{X}^{\alpha}} \leq c_0$. Then there exists a constant $c_1 > 0$ (depending on $c_0$, non-linearity $g$ and family $\mathcal{V}$), such that for all $t_0 \in \mathbb{R}$, the solution $u(t)$ of (\ref{r:main}), $u(t_0)=u_0$, exists for all $t \geq t_0$ and $||u(t)||_{\mathcal{X}^{\alpha}} \leq c_1$.
\end{lemma}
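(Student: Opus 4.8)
The plan is to trap the solution between two spatially and temporally periodic solutions drawn from the family $\mathcal{V}$, extract a uniform $L^{\infty}$ bound from the comparison principle, and then upgrade this to an $\mathcal{X}^{\alpha}$ bound (and global existence) using the sub-quadratic growth condition (B1), exactly as in the proof of Proposition \ref{l:C1}.

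First I would use the continuous embedding of $\mathcal{X}^{\alpha}$ in $C^1$ to obtain $\|u_0\|_{L^{\infty}} \leq K c_0 =: M$ for a fixed constant $K$. Since by (C1)(ii) the functions $\underline{v}$ and $\overline{v}$ are onto $\mathbb{R}$, I would then choose $y_+$ with $\underline{v}(y_+) > M$ and $y_-$ with $\overline{v}(y_-) < -M$. At time $t_0$ this gives $v^{y_-}(\cdot,t_0) \ll u_0 \ll v^{y_+}(\cdot,t_0)$, because $u_0(x)\leq M < \underline{v}(y_+)\leq v^{y_+}(x,t_0)$ and symmetrically for $y_-$. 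As $v^{y_-},v^{y_+}$ are entire solutions of (\ref{r:main}) (being $T$-invariant), strong monotonicity (Section \ref{s:prelim}) preserves this ordering for all $t \geq t_0$ on the maximal interval of existence of $u$, yielding $v^{y_-}(t) \ll u(t) \ll v^{y_+}(t)$ and hence $\|u(t)\|_{L^{\infty}} \leq \max(\overline{v}(y_+), -\underline{v}(y_-)) =: M'$. Note that $M'$ depends only on $c_0$, $g$ and $\mathcal{V}$, and is uniform in $t_0$ since the members of $\mathcal{V}$ are defined for all time.

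With this a priori $L^{\infty}$ bound in hand, I would close the argument with the parabolic estimates already used in Proposition \ref{l:C1}. By (B1) together with $\mathcal{X}^{\alpha}\hookrightarrow C^1$ one has $\|g(\cdot,u,u_x)\|_{\mathcal{X}} \leq c(M')\bigl(1+\|u\|_{\mathcal{X}^{\alpha}}^{2-\varepsilon}\bigr)$, and since $\alpha > 1-\varepsilon/2$ the exponent $2-\varepsilon$ is subcritical for the Duhamel estimate in $\mathcal{X}^{\alpha}$. Invoking \cite[Lemma 7.0.3 and Proposition 7.2.2]{Lunardi:95} with the variation of constants formula and the interpolation inequality (\cite{Henry}, p27), the $L^{\infty}$ bound rules out finite-time blow-up, giving global existence for $t\geq t_0$, and produces a bound $\|u(t)\|_{\mathcal{X}^{\alpha}}\leq C(M')$ for $t\geq t_0+1$ depending only on $M'$ and $g$; for $t\in[t_0,t_0+1]$ the short-time estimate gives a bound depending additionally on $\|u_0\|_{\mathcal{X}^{\alpha}}\leq c_0$. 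Taking $c_1$ to be the maximum of these two constants completes the proof. In the extended case the same estimates hold verbatim in the uniformly local spaces (Appendix \ref{s:fractional}), and the trapping is legitimate because each $v^y_0$ is $S$-invariant and hence lies in $\tilde{\mathcal{B}}$.

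The main obstacle is this last step: propagating the $L^{\infty}$ bound to an $\mathcal{X}^{\alpha}$ bound despite the superlinear gradient dependence of $g$. This would fail at quadratic growth, and it is precisely the strict sub-quadratic exponent $2-\varepsilon$ in (B1), together with the choice $\alpha>1-\varepsilon/2$, that makes the Duhamel/bootstrap argument close without blow-up; the a priori $L^{\infty}$ control is exactly what pins down the constant $c(M')$ appearing in (B1).
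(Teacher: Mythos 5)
Your proposal is correct and follows essentially the same route as the paper's proof: trap the solution between members of $\mathcal{V}$ using (C1)(ii) and surjectivity of $\underline{v},\overline{v}$, apply the comparison/maximum principle to obtain a uniform $L^{\infty}$ bound valid on the maximal existence interval, and then invoke the standard parabolic estimates (\cite[Proposition 7.2.2]{Lunardi:95}, relying on (B1) and the choice $\alpha>1-\varepsilon/2$) to rule out blow-up and produce the $\mathcal{X}^{\alpha}$ bound, with the extended case handled by the uniformly local space framework of Appendix \ref{s:fractional}. Your write-up is simply more explicit than the paper's about the final bootstrap step, which the paper compresses into a citation.
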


\begin{proof}
By (C1),(ii), we can find $y_1 < y_2$ such that $\overline{v}(y_1) \leq u \leq \underline{v}(y_2)$. By the maximum principle, if the solution of (\ref{r:main}) exists on the interval $[t_0,t_1)$, then for each $t \in [t_0,t_1)$, we have that $\underline{v}(y_1) \leq u(t) \leq \overline{v}(y_2)$, thus $u(t)$ is uniformly bounded in the $L^{\infty}(\mathbb{S}^1)$, resp. $L^{\infty}(\mathbb{R})$ norm in the B, resp. E case. This and (C1) imply the claim by the standard argument, e.g. \cite[Proposition 7.2.2]{Lunardi:95} (alternatively, see \cite{Polacik:02}, Section 2). This in the view of the comments in the Appendix \ref{s:fractional} also holds in the extended case.
\end{proof}

Let $\tilde{\mathcal{B}}_k$ be the set of all $u_0 \in \tilde{ \mathcal{B}}$ such that for all $t \geq 0$, $||u(t)||_{\mathcal{X}^{\alpha}} \leq k$, where $u(t)$ is the solution of (\ref{r:main}), $u(0)=u_0$. Then by Lemma \ref{l:unbound}, $\tilde{ \mathcal{B}}=\cup_{k=1}^{\infty} \tilde{\mathcal{B}}_k$, and by the discussion in Section \ref{s:prelim}, $\tilde{\mathcal{B}}_k$ is compact and invariant. In this section we write $\mathcal{E}=\cup_{k=1}^{\infty} \mathcal{E}(\tilde{\mathcal{B}}_k)$.

\begin{lemma} \label{l:ii} In the bounded case, and in the extended case if $\mathcal{E}$ is non-degenerate, we have that $\mathcal{E}=\mathcal{V}$.
\end{lemma}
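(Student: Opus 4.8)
The plan is to prove the two inclusions $\mathcal{V}\subseteq\mathcal{E}$ and $\mathcal{E}\subseteq\mathcal{V}$ separately. The inclusion $\mathcal{V}\subseteq\mathcal{E}$ is immediate: by (C1),(i) each $v^y_0$ is $S,T$-invariant, so the Dirac measure $\delta_{v^y_0}$ is an invariant measure supported on $\tilde{\mathcal{B}}_k$ for any $k\geq\|v^y_0\|_{\mathcal{X}^{\alpha}}$, whence $v^y_0\in\supp\delta_{v^y_0}\subseteq\mathcal{E}(\tilde{\mathcal{B}}_k)\subseteq\mathcal{E}$. The content is therefore the reverse inclusion.

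For $\mathcal{E}\subseteq\mathcal{V}$, fix $u_0\in\mathcal{E}$, say $u_0\in\supp\mu_0$ for some $S,T$-invariant $\mu_0\in\mathcal{M}(\tilde{\mathcal{B}}_k)$. By Proposition \ref{p:diszero1} in the bounded case, and by Proposition \ref{p:final} together with the assumed non-degeneracy in the extended case, the dissipation $d$ vanishes on $\mathcal{E}\times\mathcal{E}$; since $\mathcal{V}\subseteq\mathcal{E}$, in particular $d(u_0,v^y_0)=0$ for every $y$. Using ontoness of $\underline{v},\overline{v}$ from (C1),(ii) and boundedness of $u_0$ in $\mathcal{X}^{\alpha}\hookrightarrow C^1$ (Lemma \ref{l:unbound}), I would choose $y_1<y_2$ with $v^{y_1}_0\leq u_0\leq v^{y_2}_0$ and set $y^{\ast}=\sup\{y: v^y_0\leq u_0\}\in[y_1,y_2]$. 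Continuity and strict monotonicity of $y\mapsto v^y_0$ give $v^{y^{\ast}}_0\leq u_0$, and the goal reduces to showing $v^{y^{\ast}}_0=u_0$.

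I would argue via a sliding/touching dichotomy. Suppose $u_0\neq v^{y^{\ast}}_0$. If $u_0-v^{y^{\ast}}_0$ has a zero $x_0$, then $x_0$ is a minimum of the nonnegative function $u_0-v^{y^{\ast}}_0$, so $(u_0)_x(x_0)=(v^{y^{\ast}}_0)_x(x_0)$, i.e. there is a multiple zero at $(x_0,0)$; translating by a power of $S$ so that $x_0\in[0,1)$ (harmless since $v^{y^{\ast}}_0$ is $1$-periodic and $\mathcal{E}$ is $S$-invariant) and shifting time by $T^{-1}$, and using that $\mathcal{E}$ consists of entire solutions, this forces $d(T^{-1}u_0,T^{-1}v^{y^{\ast}}_0)\geq 1$ with both arguments in $\mathcal{E}$, exactly the contradiction exploited in the proof of Theorem \ref{t:main1}. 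If instead $\inf_x(u_0-v^{y^{\ast}}_0)>0$, then continuity of $y\mapsto v^y_0$ gives $v^y_0\leq u_0$ for $y$ slightly above $y^{\ast}$, contradicting the definition of $y^{\ast}$. In the bounded case these two cases are exhaustive by compactness of $\mathbb{S}^1$, so $u_0=v^{y^{\ast}}_0\in\mathcal{V}$.

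The main obstacle is the remaining extended-case possibility, $v^{y^{\ast}}_0\ll u_0$ pointwise with $\inf_x(u_0-v^{y^{\ast}}_0)=0$ unattained, which cannot occur on the noncompact line by compactness alone. I would resolve it by translation compactness. Choosing $x_n\to\pm\infty$ with $(u_0-v^{y^{\ast}}_0)(x_n)\to 0$ and $m_n=\lfloor x_n\rfloor$, compactness of $\tilde{\mathcal{B}}_k$ in the locally uniform topology yields a subsequential limit $u_0^{\ast}$ of $S^{m_n}u_0$, and $u_0^{\ast}\in\supp\mu_0\subseteq\mathcal{E}$ because $\supp\mu_0$ is closed and $S$-invariant. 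Since $S^{m_n}v^{y^{\ast}}_0=v^{y^{\ast}}_0$ by $1$-periodicity, the limit satisfies $u_0^{\ast}-v^{y^{\ast}}_0\geq 0$ with a zero in $[0,1)$: a genuine touching inside $\mathcal{E}$. This returns the situation to the multiple-zero case above, now for $(u_0^{\ast},v^{y^{\ast}}_0)\in\mathcal{E}\times\mathcal{E}$, contradicting $d=0$. Thus every sub-case of $u_0\neq v^{y^{\ast}}_0$ is impossible, so $u_0=v^{y^{\ast}}_0\in\mathcal{V}$, completing $\mathcal{E}=\mathcal{V}$.
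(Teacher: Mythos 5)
Your bounded-case argument is correct and is essentially the paper's own proof (the paper uses the pair $y_1=\max\{y:v^y_0\leq u_0\}$, $y_2=\min\{y:u_0\leq v^y_0\}$ and contradicts Proposition \ref{p:diszero1}; your one-sided sliding with $y^{\ast}$ is the same idea), modulo a small bookkeeping point: Proposition \ref{p:diszero1} is stated for the ergodic attractor of a single set $\mathcal{B}$ satisfying (A4), so to assert $d(u_0,v^{y^{\ast}}_0)=0$ you must first place $u_0$ and $v^{y^{\ast}}_0$ in a common invariant set (the paper does this by working with $\tilde{\mathcal{B}}_k\cup\{v^y_0,\,y\in[y^-,y^+]\}$); this is easily repaired using the nestedness of the sets $\tilde{\mathcal{B}}_k$.

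The extended case, however, has a genuine gap in your third sub-case. When $\inf_x(u_0-v^{y^{\ast}}_0)=0$ is not attained, your translation-compactness limit $u_0^{\ast}$ of $S^{m_n}u_0$ need not differ from $v^{y^{\ast}}_0$: if $u_0$ is a spatially heteroclinic profile approaching $v^{y^{\ast}}$ as $x\to-\infty$ (from above) and some $v^{y'}$, $y'>y^{\ast}$, as $x\to+\infty$, then every subsequential limit along the minimizing sequence is exactly $v^{y^{\ast}}_0$, and the "touching" you produce is the trivial coincidence $u_0^{\ast}=v^{y^{\ast}}_0$, for which $d=0$ by convention (the dissipation is only defined through multiple zeroes of $w\neq 0$). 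So no contradiction arises, and indeed none can arise from zero-number considerations alone: such a heteroclinic $h_0$ satisfies $d(S^mT^kh_0,v^y_0)=0$ for all $m,k,y$. What rules it out of $\supp\mu_0$ is not a tangency but a recurrence argument, and this is the step missing from your proof: the paper shows that every $u_0\in\supp\mu_0$ lies in $\mathcal{V}\cup\mathcal{H}$, observes that no element of $\mathcal{H}$ is $S$-recurrent (its $S$-translates converge locally uniformly to an element of $\mathcal{V}$, so $S$-recurrence would force it into $\mathcal{V}$), and then invokes the Poincar\'{e} recurrence theorem for the $S$-invariant measure $\mu_0$ to get $\mu_0(\mathcal{H})=0$, hence $\mu_0(\mathcal{V})=1$ and, by closedness of $\mathcal{V}$, $\supp\mu_0\subseteq\mathcal{V}$. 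Your argument only uses that $\supp\mu_0$ is closed and $S$-invariant, and that is strictly too weak: the closure of the $S$-orbit of a heteroclinic together with its two limiting elements of $\mathcal{V}$ is a closed $S$-invariant set containing a non-element of $\mathcal{V}$, so some genuinely measure-theoretic input (Poincar\'{e} recurrence, or an equivalent) is unavoidable here.
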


\begin{proof} Consider first the bounded case. Fix $k \in \mathbb{N}$, and consider
	$$
		\tilde{\mathcal{B}} := \tilde{\mathcal{B}}_k \cup \lbrace v^y_0, \: y \in [y^-,y^+] \rbrace,
	$$
	$\tilde{\mathcal{B}} \subset H^{2\alpha}(\mathbb{S}^1)$, where $y^- <y^+$ were chosen so that for all $u_0 \in \tilde{\mathcal{B}}_k$, $v^{y^-}_0 < u_0 < v^{y^+}_0$. This is possible, as by definition, $\tilde{\mathcal{B}}_k$ is uniformly bounded in $L^{\infty}(\mathbb{R})$, and because of (C1),(ii). 
	
	Clearly $\left\lbrace v^y_0, y\in [y^-,y^+] \right\rbrace \subseteq \mathcal{E}(\tilde{\mathcal{B}}_k)$, as the Dirac measure $\delta_{v^y_0}$ is $T$-invariant. Assume $\mu_0$ is any $T$-invariant measure on $\tilde{\mathcal{B}}$, and let $u_0 \in \supp \mu$. Let $y_1 < y_2$ be chosen so that $y_1 = \max \lbrace y \in \mathbb{R}, v^y_0 \leq u_0 \rbrace$, and $y_2 = \min \lbrace y \in \mathbb{R}, u_0 \leq v^y_0 \rbrace$ (such minimum and maximum exist by the compactness of the domain $\mathbb{S}^1$). If $y_1 \neq y_2$, we easily see that both $u_0-v^{y_1}_0$ and $u_0-v^{y_2}_0$ have a multiple zero, which is impossible by Proposition \ref{p:diszero1}.The only possibility is $u_0=v^{y_1}_0=v^{y_2}_0$, thus $u_0 \in \mathcal{V}$.
	
	Consider now the extended case with the non-degeneracy assumption, with $\tilde{\mathcal{B}}$ as above, thus now $\tilde{\mathcal{B}} \subset H^{2\alpha}_{\text{ul}}(\mathbb{R})$. Again we see that $\lbrace v^y_0, y\in [y^-,y^+] \rbrace \subseteq \mathcal{E}(\tilde{\mathcal{B}}_k)$, as the Dirac measure $\delta_{v^y_0}$ is $S,T$-invariant. Let $\mu_0$ be any $S,T$-invariant measure on $\tilde{\mathcal{B}}$, and let $u_0 \in \supp \mu_0$. Now suppose that $u_0$ intersects some $v^{y_0}_0$ twice at $x_1 < x_2$. Find $y_1 < y_2$ so that 
\begin{align*}
y_1 & = \max \left\lbrace y \in \mathbb{R}, \, v^y_0(x) \leq u_0(x), \, x\in [x_1,x_2] \right\rbrace, \\ y_2 & = \min \left\lbrace y \in \mathbb{R}, \, u_0(x) \leq v^y_0(x), \, x\in [x_1,x_2] \right\rbrace
\end{align*}
(such minimum and maximum exist by compactness of $[x_1,x_2]$). Thus by Proposition \ref{p:final}, we deduce analogously as in the bounded case that the only possibility is $u_0|_{[x_1,x_2]}=v^{y_1}_0|_{[x_1,x_2]}=v^{y_2}_0|_{[x_1,x_2]}$, thus by the local structure of zeroes, $u_0=v^{y_1}_0=u^{y_2}_0$, i.e. $u_0\in \mathcal{E}$. We conclude that $u_0$ can intersect every $v_0 \in \mathcal{V}$ at most once, transversally, so it is easy to see that the only alternative to $u_0 \in \mathcal{V}$ is $u_0 \in \mathcal{H}$, $\mathcal{H}$ the set of spatially heteroclinic solutions defined in the Introduction. By definition, no $h_0 \in \mathcal{H}$ is $S$-recurrent, thus by the Poincar\'{e} recurrence theorem, $\mu_0(\mathcal{H})=0$, thus $\mu_0(\mathcal{V})=1$. As $\mathcal{V}$ is a closed set, $\mu_0$ must be supported on $\mathcal{V}$, which eliminates the possibility $u_0 \in \mathcal{H}$ and concludes the proof also in the extended case.		
\end{proof}

%

\begin{lemma} \label{l:bbounded}
For each $u_0 \in \mathcal{X}^{\alpha}$ in the bounded case, there exists $y_0 \in \mathbb{R}$ such that $\omega(u_0)=\lbrace v^{y_0}_0 \rbrace$. 
\end{lemma}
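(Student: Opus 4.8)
The plan is to combine the identification of the ergodic attractor $\mathcal{E}=\mathcal{V}$ (Lemma \ref{l:ii}) with the zero-number balance law and the order-preserving structure, in two stages: first show $\omega(u_0)\subseteq\mathcal{V}$, and then collapse $\omega(u_0)$ to a single equilibrium using the Nonordering principle.

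First I would fix $u_0\in\mathcal{X}^{\alpha}$ and use (C1)(ii) to choose $y_1<y_2$ with $v^{y_1}_0\le u_0\le v^{y_2}_0$; by strong monotonicity and the $T$-invariance of the $v^{y_i}_0$, the whole orbit satisfies $v^{y_1}_0\le T^n u_0\le v^{y_2}_0$, and by Lemma \ref{l:unbound} it remains in some compact invariant $\tilde{\mathcal{B}}_k$. Hence $\omega(u_0)$ is nonempty, compact, $T$-invariant, and contained in the order interval $[v^{y_1}_0,v^{y_2}_0]$; in particular, by compactness every $z_0\in\omega(u_0)$ carries a full backward orbit inside $\omega(u_0)$, so its orbit $z(t)$ is defined for all $t\in\mathbb{R}$.

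The heart of the argument is the claim that for every $z_0\in\omega(u_0)$ and every $v^y_0\in\mathcal{V}$, the difference $z(t)-v^y(t)$ has no multiple zero at any $(x,t)$. I would deduce this directly from the balance law. Since the number of zeroes is finite at positive times, $n\mapsto z(T^n u_0,v^y_0)$ is, by (\ref{r:bezun}), a non-increasing sequence of non-negative integers for $n\ge 1$, hence eventually constant; as $d=z-z\circ\hat{T}$ along this orbit, we get $d(T^n u_0,v^y_0)=0$ for all large $n$. Suppose now some orbit in $\omega(u_0)$ had a multiple zero of $z(t)-v^y(t)$ at $(x_*,t_*)$; taking the base point $w_0=T^{m}z_0\in\omega(u_0)$ for a suitable $m\in\mathbb{Z}$ places this multiple zero in the box $[0,1)\times(0,1]$, so that $d(w_0,v^y_0)\ge 1$. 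Lemma \ref{l:3_3} (with $\hat{S}=\mathrm{id}$, since $v^y_0$ is $S,T$-invariant) then yields a neighbourhood $\mathcal{U}$ of $(w_0,v^y_0)$ on which $d(\cdot,v^y_0)+d(T\,\cdot,v^y_0)\ge 1$; but $w_0\in\omega(u_0)$ gives $T^{n_k}u_0\to w_0$ with $n_k\to\infty$, so $(T^{n_k}u_0,v^y_0)\in\mathcal{U}$ eventually, forcing $d(T^{n_k}u_0,v^y_0)+d(T^{n_k+1}u_0,v^y_0)\ge 1$ and contradicting the vanishing above. This is the step I expect to be the main obstacle: a point of $\omega(u_0)$ need not lie in $\mathcal{E}$, so Proposition \ref{p:diszero1} does not apply to it directly; instead one must localize a hypothetical tangency and transport it back to the approaching orbit via the persistence Lemma \ref{l:3_3}.

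With the claim in hand, $\omega(u_0)\subseteq\mathcal{V}$ follows by a touching argument: for $z_0\in\omega(u_0)$ set $y_-=\sup\{y:\ v^y_0\le z_0\}$, which is finite and attained since the family is strongly increasing with $\underline{v},\overline{v}$ onto $\mathbb{R}$ by (C1)(ii). Then $v^{y_-}_0\le z_0$, and if $z_0\ne v^{y_-}_0$ the nonnegative nontrivial $C^1$ function $z_0-v^{y_-}_0$ must vanish somewhere (otherwise $v^{y_-+\varepsilon}_0\le z_0$ for small $\varepsilon$, contradicting the supremum), and at such a point it attains a minimum, producing a multiple zero and contradicting the claim; hence $z_0=v^{y_-}_0\in\mathcal{V}$. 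Finally, $\mathcal{V}$ is strongly totally ordered, so by the Nonordering principle for the strongly monotone map $T$ (\cite{Hirsch:88}; \cite{Polacik:02}, Section 3) the limit set $\omega(u_0)$, being unordered, meets $\mathcal{V}$ in at most one point; since it is nonempty, $\omega(u_0)=\{v^{y_0}_0\}$ for a single $y_0$, completing the proof.
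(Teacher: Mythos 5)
Your proof is correct, but it follows a genuinely different route from the paper's. The paper's own proof is only a few lines, because it leans on machinery already in place: by Lemmas \ref{l:onaverage} and \ref{l:subset} the $\omega$-limit set on average $\bar{\omega}(u_0)$ is non-empty and contained in $\mathcal{E}$, and $\mathcal{E}=\mathcal{V}$ by Lemma \ref{l:ii}, so some $v^{y_0}_0$ lies in $\omega(u_0)$; it then concludes with a squeeze argument --- for any $\delta>0$, once $T^{k_0}u_0$ is close enough to $v^{y_0}_0$ one has $v^{y_0-\delta}_0\leq T^{k_0}u_0\leq v^{y_0+\delta}_0$, and by order preservation and $T$-invariance of the family this persists for all $k\geq k_0$ --- so it needs neither a nonordering principle nor any analysis of tangencies of limit points. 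You instead avoid the invariant-measure machinery entirely: your eventual-vanishing-of-dissipation argument along the single orbit, combined with the persistence Lemma \ref{l:3_3}, is a deterministic analogue of Proposition \ref{p:transversal} (and of the classical Fiedler--Mallet-Paret argument), and you correctly identify and repair the real obstacle, namely that Proposition \ref{p:diszero1} does not apply to points of $\omega(u_0)$, which need not lie in $\mathcal{E}$; the paper sidesteps this because points of $\bar{\omega}(u_0)$ do lie in $\mathcal{E}$. Your touching argument then reproves $\omega(u_0)\subset\mathcal{V}$ by the same device used inside Lemma \ref{l:ii}. The one external ingredient you import is the nonordering principle for the strongly monotone period map; note that for discrete-time systems, unlike Hirsch's theorem for semiflows, only the weaker statement that an $\omega$-limit set cannot contain two points $a\ll b$ is available (via the monotone convergence criterion: two comparable points on an orbit force the limit set to be a cycle, and cycles of strongly monotone maps are unordered), but this suffices here because distinct elements of $\mathcal{V}$ are strongly ordered (apply $T$ and strong monotonicity to $v^{y_1}_0<v^{y_2}_0$). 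Alternatively, once $\omega(u_0)\subset\mathcal{V}$ is known, the paper's squeeze argument would finish without any nonordering principle. What each approach buys: the paper's proof is immediate given Lemma \ref{l:ii} and the Appendix and additionally locates $\bar{\omega}(u_0)$; yours is self-contained at the level of one orbit and yields as a by-product the tangency-free property of all limit points against all of $\mathcal{V}$, i.e.\ a bounded-case, measure-free version of Lemma \ref{l:transversal}.
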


\begin{proof}
	As $\bar{\omega}(u_0)$ is by Lemma \ref{l:onaverage} in the Appendix A non-empty, by Lemma \ref{l:subset} and Lemma \ref{l:ii}, there exists some $y_0 \in \mathbb{R}$ such that $v^{y_0}_0 \in \mathcal{E} \cap \bar{\omega}(u_0)$, thus $v^{y_0}_0 \in \omega(u_0)$. Now by (C1), for each $\delta > 0$ there exists a sufficiently large $k_0 \in \mathbb{N}$ such that $v^{y_0-\delta}_0 \leq T^{k_0}(u_0) \leq v^{y_0+\delta}_0$. As all $v^y_0$ are $T$-invariant, by the maximum principle we have that for all $k \geq k_0$, $v^{y_0-\delta}_0 \leq T^k(u_0) \leq v^{y_0+\delta}_0$, thus $\omega(u_0)$ contains only $v^{y_0}_0$.
\end{proof}

Denote below by $z(t)$ the solution of (\ref{r:main}), $z(0)=z_0$.

\begin{lemma} \label{l:transversal}
	Assume in the extended case that $\mu_0$ satisfies (N1). Then there exists a set $\mathcal{U}$ of full measure such that for $u_0 \in \mathcal{U}$ and for any $z_0 \in \omega(u_0)$, $z(t)-v^y(t)$ can not have a multiple zero for any $y,x,t \in \mathbb{R}$.
\end{lemma}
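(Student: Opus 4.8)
The plan is to deduce the statement from the single-equilibrium result already available in Proposition \ref{p:transversal}, applied to a countable dense subfamily of $\mathcal{V}$, and then to upgrade from countably many equilibria to the whole (uncountable) family by a local persistence argument for the dissipation functional.

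First I would fix an enumeration $\{y_n\}_{n\in\mathbb{N}}$ of $\mathbb{Q}$ and work in a suitable $\tilde{\mathcal{B}}=\tilde{\mathcal{B}}_k$ containing $\supp\mu_0$ (bounded by (N1)) together with the relevant members of $\mathcal{V}$. Each $v^{y_n}_0\in\mathcal{V}$ is spatially $1$-periodic, hence $S$-invariant under the embedding $H^{2\alpha}(\mathbb{S}^1)\hookrightarrow H^{2\alpha}_{\text{ul}}(\mathbb{R})$, and $T$-invariant by (C1)(i) (equivalently Theorem \ref{t:main2}(i)); so each is an $S,T$-equilibrium of the kind required by Proposition \ref{p:transversal}. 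By (N1), for every $v_0\in\mathcal{V}$ and $\mu_0$-a.e. $u_0$ the density (\ref{d:density}) is finite, so each pair $(\mu_0,\delta_{v^{y_n}_0})$ is non-degenerate in the sense of Definition \ref{d:nondegenerate}. Proposition \ref{p:transversal} then yields, for each $n$, a full $\mu_0$-measure set $\mathcal{U}_n\subset\tilde{\mathcal{B}}$ on which, for any $z_0\in\omega(u_0)$, the function $z(t)-v^{y_n}(t)$ has no multiple zero at any $(x,t)\in\mathbb{R}^2$. Setting $\mathcal{U}:=\bigcap_n\mathcal{U}_n$, a countable intersection, gives a full-measure set on which $z(t)-v^{y_n}(t)$ never has a multiple zero, simultaneously for all rational $y_n$.

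The remaining, and main, task is the upgrade to arbitrary $y^*\in\mathbb{R}$. Fix $u_0\in\mathcal{U}$, $z_0\in\omega(u_0)$, and suppose for contradiction that $w^{y^*}:=z-v^{y^*}$ has a multiple zero at some $(x^*,t^*)$. Applying the local structure of zeroes (Lemma \ref{l:local}) to $w^{y^*}$ at $(x^*,t^*)$, I would choose a closed rectangle $Q=[x^*-\rho,x^*+\rho]\times[t^*-\delta,t^*+\delta]$ so small that $(x^*,t^*)$ is the only multiple zero of $w^{y^*}$ in $Q$, the two vertical sides carry no zeroes of $w^{y^*}$, and every zero of $w^{y^*}$ on the two horizontal sides is regular. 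This forces $D_{w^{y^*}}(x^*-\rho,x^*+\rho,t^*-\delta,t^*+\delta)\geq 1$ while all zeroes of $w^{y^*}$ on $\partial Q$ are regular.

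The key point is then the stability of the dissipation count under a perturbation of the equilibrium. Since $y\mapsto v^y_0$ is continuous from $\mathbb{R}$ into $\mathcal{X}^{\alpha}$ and $\mathcal{X}^{\alpha}\hookrightarrow C^1$, we have $(z_0,v^y_0)\to(z_0,v^{y^*}_0)$ in $\tilde{\mathcal{B}}^2$ as $y\to y^*$. Because $\mathbb{Q}$ is dense I may pick a rational $y_n$ arbitrarily close to $y^*$, and applying Lemma \ref{l:continuity}(iii) on $Q$ (all boundary zeroes of $w^{y^*}$ being regular) gives $D_{w^{y_n}}(x^*-\rho,x^*+\rho,t^*-\delta,t^*+\delta)=D_{w^{y^*}}(\cdots)\geq 1$ once $y_n$ is close enough. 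Hence $z(t)-v^{y_n}(t)$ has a multiple zero inside $Q$, contradicting the defining property of $\mathcal{U}\subset\mathcal{U}_n$. Thus no such $y^*,x^*,t^*$ exist, and $\mathcal{U}$ is the required set. I expect the crux to be exactly this last step: transferring a tangency with $v^{y^*}$ into a genuine multiple zero (positive dissipation) for a nearby rational equilibrium, which relies on choosing $Q$ with only regular boundary zeroes so that the open, persistent condition $D\geq 1$ survives the perturbation $v^{y^*}\rightsquigarrow v^{y_n}$ via Lemma \ref{l:continuity}(iii).
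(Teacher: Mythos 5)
Your proposal is correct and follows essentially the same route as the paper: apply Proposition \ref{p:transversal} to each rational $y$, take the countable intersection, and then upgrade to arbitrary $y^*$ by a persistence argument for the dissipation. Your explicit rectangle construction with regular boundary zeroes and Lemma \ref{l:continuity}(iii) is exactly what the paper compresses into the phrase ``by an analogous argument as in Lemma \ref{l:3_3}'', so you have simply written out the step the paper leaves implicit.
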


\begin{proof}
	By Proposition \ref{p:transversal}, for a given $y \in \mathbb{R}$, there exists a set of full measure $\mathcal{U}_y$ such that if $u_0 \in \mathcal{U}_y$ and $z_0 \in \omega(u_0)$, $z(t)-v^y(t)$ can not have a multiple zero for any $x,t \in \mathbb{R}$. Now the set $\mathcal{U}=\cap_{y \in \mathbb{Q}} \mathcal{U}_y$ also satisfies $\mu(\mathcal{U})=1$. Assume there is $u_0 \in \mathcal{U}$ such that for some $z_0 \in \omega(u)$ and some $y_0 \in \mathbb{R}$, $z(t)-v^{y_0}(t)$ have a multiple zero for some $t,x \in \mathbb{R}$. However, by an analogous argument as in Lemma \ref{l:3_3}, we can find $\delta > 0$ such that for each $y \in (y_0-\delta,y_0+\delta)$, there exists $\tilde{t}$ in a neighbourhood of $t$ such that $z_0(\tilde{t})-v^y(\tilde{t})$ has a multiple zero, which is impossible for rational $y$, thus a contradiction.
\end{proof}

\begin{lemma} \label{l:asymptotics}
	Assume in the extended case that $\mu_0$ satisfies (N1). For $\mu_0$-a.e. $u_0$, we have that $\omega(u_0)\subset \mathcal{V} \cup \mathcal{H}$.
\end{lemma}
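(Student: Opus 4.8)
The plan is to take the full-measure set $\mathcal{U} \subset \tilde{\mathcal{B}}$ produced by Lemma \ref{l:transversal} and to show directly that every $z_0 \in \omega(u_0)$, with $u_0 \in \mathcal{U}$, lies in $\mathcal{V} \cup \mathcal{H}$. The crucial input is that for such $u_0$ and $z_0$, Lemma \ref{l:transversal} guarantees that $z(t) - v^y(t)$ has no multiple zero for any $y,x,t$; specialising to $t=0$ this says that $z_0$ meets every leaf $v^y_0$ of $\mathcal{V}$ only transversally. Since $z_0 \in \omega(u_0)$, it is moreover an entire solution, and it is bounded in $\mathcal{X}^\alpha$ because $\omega(u_0) \subset \tilde{\mathcal{B}}_k$ for some $k$.

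First I would upgrade transversality to the statement that $z_0$ meets each $v^y_0$ \emph{at most once}. This is the sliding argument already used in the extended case of Lemma \ref{l:ii}: if $z_0$ met some $v^{y_0}_0$ at two consecutive zeros $x_1 < x_2$, say with $z_0 > v^{y_0}_0$ on $(x_1,x_2)$, then by (C1)(ii) and the boundedness of $z_0$ on the compact interval the level $y^* = \inf\{y : z_0 \le v^y_0 \text{ on } [x_1,x_2]\}$ is finite and strictly above $y_0$. At $y^*$ the leaf $v^{y^*}_0$ touches $z_0$ from above at a point of $(x_1,x_2)$; since $z_0 < v^{y^*}_0$ at the endpoints, the touching point is interior, so it is a multiple zero of $z_0 - v^{y^*}_0$, contradicting Lemma \ref{l:transversal}. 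Hence $z_0$ crosses each leaf exactly zero or one time, always transversally.

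Next I would encode this via a \emph{leaf coordinate}. Using strong monotonicity together with the $T$-invariance of the $v^y_0$ one gets $v^y_0 \ll v^{y'}_0$ whenever $y < y'$, and with (C1)(ii) the map $y \mapsto v^y_0(x)$ is, for each fixed $x$, a continuous strictly increasing bijection of $\mathbb{R}$. Define $y(x)$ by $v^{y(x)}_0(x) = z_0(x)$; it is continuous, and the ``at most once'' property makes it injective, hence strictly monotone. Boundedness of $z_0$ forces the limits $y_{\pm} = \lim_{x \to \pm\infty} y(x)$ to be finite, since $y(x) \to +\infty$ would give $z_0(x) \ge \underline{v}(y(x)) \to \infty$. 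If $y_- = y_+$ then $y(x)$ is constant and $z_0 = v^{y_-}_0 \in \mathcal{V}$; if $y_- \ne y_+$, then using continuity of $y \mapsto v^y_0$ into $C^1$ one obtains $\lim_{x \to \mp\infty} |z_0(x) - v^{y_\mp}_0(x)| = 0$, so that $z_0$ is an entire solution meeting each leaf at most once transversally and asymptotic to two distinct leaves, i.e. $z_0 \in \mathcal{H}$.

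The routine parts are the continuous-dependence and measure-zero bookkeeping, which are already folded into Lemma \ref{l:transversal}. The main obstacle is making the sliding/tangency step clean — in particular guaranteeing that the touching point is interior, so that it is genuinely a multiple zero — and establishing that the one-parameter family $\mathcal{V}$ is a genuine foliation of $\mathbb{R}^2$ (the strict pointwise ordering $\ll$ and the surjectivity of each $y \mapsto v^y_0(x)$), since the entire classification rests on the coordinate $y(x)$ being a well-defined, continuous, strictly monotone function with finite limits at $\pm\infty$.
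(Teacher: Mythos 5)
Your proposal is correct and follows essentially the same route as the paper: the full-measure set comes from Lemma \ref{l:transversal}, the ``at most once, transversally'' property comes from the same sliding/tangency argument used in the extended case of Lemma \ref{l:ii}, and the dichotomy $z_0 \in \mathcal{V}$ or $z_0 \in \mathcal{H}$ concludes. Your leaf-coordinate construction simply makes explicit the final step that the paper dismisses as ``easy to see'' (note only that injectivity of $y(x)$ presupposes $z_0$ is not itself a leaf, so the constant case $y_-=y_+$ should be phrased as the trivial alternative $z_0 \in \mathcal{V}$ rather than as an instance of the monotone case).
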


\begin{proof}
	To show (i), we show analogously as in the proof of Lemma \ref{l:ii} in the extended case, by applying Lemma \ref{l:transversal}, that there exists a set of full measure $\mathcal{U}$ so that for any $u_0 \in \mathcal{U}$ and any $z_0 \in \omega(u)$, $z-v^y_0$ can not have a multiple zero for any $y \in \mathbb{R}$. Analogously as in the same proof, we obtain $\omega(u_0) \subset \mathcal{V} \cup \mathcal{H}$ (the possibility that $z_0 \in \mathcal{H}$ can not be a-priori eliminated).
\end{proof}

\begin{lemma} \label{l:weak*}
	Assume in the extended case that $\mu_0$ satisfies (N1). Then $\omega$-limit set of $\mu_0$ in the weak$^*$-topology consists of measures supported on $\mathcal{V}$.
\end{lemma}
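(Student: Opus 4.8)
Since $\mu_0$ satisfies (N1), it is supported on a bounded set, hence (by Lemma \ref{l:unbound}) on some compact invariant $\tilde{\mathcal{B}}_k$. The space of Borel probability measures on $\tilde{\mathcal{B}}_k$ is weak$^*$-compact, so the $\omega$-limit set of the orbit $\lbrace \mu(n) \rbrace_{n \in \mathbb{N}}$ under the time-one map is nonempty; fix any $\mu_\infty$ in it, say $\mu(n_j) \to \mu_\infty$ weak$^*$. Each $\mu(n)$ is $S$-invariant (as $S$ and $T$ commute) and $S$ is continuous, so $\mu_\infty$ is $S$-invariant as well. The plan is to show $\supp \mu_\infty \subset \mathcal{V}$ using \emph{only} the $S$-invariance of $\mu_\infty$ together with the pointwise asymptotics of Lemma \ref{l:asymptotics}. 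I deliberately avoid asserting that $\mu_\infty$ is $T$-invariant, since an individual point of a weak$^*$ $\omega$-limit set of a measure orbit need not be fixed by $T_*$, so one cannot simply invoke $\mathcal{E}=\mathcal{V}$ (Lemma \ref{l:ii}) for $\mu_\infty$ directly.

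\textbf{Localising the support.} First I would transfer Lemma \ref{l:asymptotics} to a statement about the $\mu(n)$. Let $F \subset \tilde{\mathcal{B}}_k$ be closed with $F \cap (\mathcal{V} \cup \mathcal{H}) = \emptyset$. For $\mu_0$-a.e. $u_0$ we have $\omega(u_0) \subset \mathcal{V} \cup \mathcal{H}$, hence $\omega(u_0) \cap F = \emptyset$; since the forward orbit $\lbrace T^n u_0 \rbrace$ lies in the compact set $\tilde{\mathcal{B}}_k$ and $F$ is closed, this forces $T^n u_0 \in F$ for only finitely many $n$, i.e.\ $\mathbf{1}_F(T^n u_0) \to 0$ for $\mu_0$-a.e.\ $u_0$. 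By dominated convergence, $\mu(n)(F) = \int \mathbf{1}_F(T^n u_0)\, d\mu_0(u_0) \to 0$. Applying this with $F = \overline{U}$ for open $U$ satisfying $\overline{U} \cap (\mathcal{V} \cup \mathcal{H}) = \emptyset$, the portmanteau inequality gives $\mu_\infty(U) \leq \liminf_j \mu(n_j)(U) \leq \liminf_j \mu(n_j)(\overline{U}) = 0$. Thus $\mu_\infty$ charges no such $U$, so $\supp \mu_\infty \subset \overline{\mathcal{V} \cup \mathcal{H}} = \mathcal{V} \cup \overline{\mathcal{H}}$ (using that $\mathcal{V}$ is closed).

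\textbf{Removing $\mathcal{H}$.} It remains to upgrade support in $\mathcal{V} \cup \overline{\mathcal{H}}$ to support in $\mathcal{V}$. Here I would invoke the Poincar\'{e} recurrence theorem for the $S$-invariant measure $\mu_\infty$: $\mu_\infty$-a.e.\ point is $S$-recurrent. I then claim every $S$-recurrent point of $\mathcal{V} \cup \overline{\mathcal{H}}$ lies in $\mathcal{V}$. Indeed, any $w_0 \in \overline{\mathcal{H}}$ inherits, by $C^1_{\mathrm{loc}}$-compactness of the orbits and comparison against the totally ordered family $\mathcal{V}$, spatial limits $w_0 \to v^a_0$ as $x \to -\infty$ and $w_0 \to v^b_0$ as $x \to +\infty$ for some $a \leq b$; when $a < b$ the shifts $S^k w_0$ converge to the two distinct points $v^b_0$ and $v^a_0$ as $k \to \mp\infty$, so $w_0 \neq v^a_0, v^b_0$ is not $S$-recurrent, while $a = b$ forces $w_0 = v^a_0 \in \mathcal{V}$. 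Consequently $\mu_\infty(\mathcal{V}) = 1$, and since $\mathcal{V}$ is closed, $\supp \mu_\infty \subset \mathcal{V}$, as asserted.

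\textbf{Main obstacle.} The compactness and the dominated-convergence passage are routine. The genuine difficulty is the last step: controlling $\overline{\mathcal{H}}$, i.e.\ showing that limits of spatially heteroclinic solutions either retain well-defined ordered asymptotic states $v^a_0, v^b_0$ (so that non-$\mathcal{V}$ limits stay non-$S$-recurrent) or collapse into $\mathcal{V}$. An alternative route that avoids analysing $\overline{\mathcal{H}}$ would be to prove that $z(\cdot, v^y_0)$ is lower semicontinuous, whence $\hat{Z}(\mu_\infty \times \delta_{v^y_0}) \leq \liminf_j \hat{Z}(\mu(n_j) \times \delta_{v^y_0}) \leq \hat{Z}(\mu_0 \times \delta_{v^y_0}) < \infty$, the last bound coming from (N1) together with monotonicity of $\hat{Z}$ along the orbit via the balance law on average (Proposition \ref{p:avbal}); this would yield non-degeneracy of $(\mu_\infty, \delta_{v^y_0})$ and let one rerun the intersection argument of Lemma \ref{l:ii} directly for $\mu_\infty$. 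Establishing the semicontinuity of the zero count at the possibly singular time $t = 0$ is itself the delicate point of that variant.
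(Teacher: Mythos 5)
Your first two steps are correct, and they amount to a careful proof of the step that the paper disposes of by citation: the paper's own proof invokes the ``standard ergodic theoretical fact'' that any $\nu_0$ in the weak$^*$ $\omega$-limit set of $\mu_0$ satisfies $\supp \nu_0 \subset \cup_{u_0 \in \supp\mu_0}\omega(u_0)$, combines this with Lemma \ref{l:asymptotics} to place the support in $\mathcal{V}\cup\mathcal{H}$, and then finishes exactly as you do: $\nu_0$ is $S$-invariant, no $h\in\mathcal{H}$ is $S$-recurrent, so the Poincar\'{e} recurrence theorem gives $\nu_0(\mathcal{H})=0$, and closedness of $\mathcal{V}$ concludes. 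Your dominated-convergence/portmanteau argument is the measure-theoretically honest version of that citation (it also correctly handles the fact that Lemma \ref{l:asymptotics} is only an almost-everywhere statement), but, as you noticed, it only yields $\supp\mu_\infty \subset \mathcal{V}\cup\overline{\mathcal{H}}$, and that closure is exactly where your proof diverges from the paper's and where the genuine gap sits.

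The gap is your claim that every $S$-recurrent point of $\overline{\mathcal{H}}$ lies in $\mathcal{V}$. What a $C^1_{\mathrm{loc}}$ limit $w_0$ of heteroclinics $h_n$ actually inherits is only the stability of transversal crossings: $w_0$ cannot cross any $v^y_0$ twice transversally. It does not inherit spatial asymptotics: the transition layers of the $h_n$ may escape to $\pm\infty$ or spread out, and the limit can, for instance, touch an extremal element of $\mathcal{V}$ tangentially, or be a spatially periodic entire solution outside $\mathcal{V}$ (say, temporally homoclinic to some $v^y_0$); such a $w_0$ is $S$-recurrent, so your dichotomy ``$a<b$ gives non-recurrence, $a=b$ gives membership in $\mathcal{V}$'' does not cover it. Excluding tangential (multiple-zero) contact with $\mathcal{V}$ for points in the support of a measure is precisely what the paper's dissipation machinery (Propositions \ref{p:diszero1} and \ref{p:final}) accomplishes, and that machinery requires $T$-invariance of the measure, which, as you yourself correctly stress, $\mu_\infty$ need not have. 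So your third step needs either a structural analysis of $\overline{\mathcal{H}}$ that appears nowhere in the paper, or your alternative route via lower semicontinuity of $z(\cdot,v^y_0)$ -- and note that even that variant would still have to replace Proposition \ref{p:final}, since the intersection argument of Lemma \ref{l:ii} is likewise run only for $S,T$-invariant measures. The paper avoids all of this through the closure-free form of the cited fact, so $\overline{\mathcal{H}}$ never enters; your more careful localisation re-opens the difficulty that this citation sweeps away, and your proposal does not close it.
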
	

\begin{proof}
	It is a standard ergodic theoretical fact that if $\nu_0 \in \omega(\mu_0)$ ($\omega$-limit set with respect to the weak$^*$ topology of iterations of $\mu_0$ induced by $T$), then $\supp \nu_0 \subset \cup_{u_0 \in \supp \mu_0} \omega(u_0)$ \cite{Walters}, thus $\nu_0$ is by Lemma \ref{l:asymptotics} supported on $\mathcal{V} \cup \mathcal{H}$. It is easy to see that $\nu_0$ must be $S$-invariant, thus as no $h \in \mathcal{H}$ is $S$-recurrent, by the Poincar\'{e} recurrence theorem, $\nu_0(\mathcal{H})=0$. Now $\nu_0(\mathcal{V})=1$, and as $\mathcal{V}$ is closed, $\supp \nu_0 \subset \mathcal{V}$.
\end{proof}

\begin{proof}[Proof of Proposition \ref{p:C1}]
	Theorem \ref{t:main2}, (i) is a restated Lemma \ref{l:C1}; (ii) is Lemma \ref{l:ii}, and (iii) can easily be deduced from (ii), as then $\mathcal{B}_y \cap \mathcal{E}=\lbrace v^y_0 \rbrace$. Corollary \ref{c:bounded} follows directly from Lemma \ref{l:bbounded}, where we obtain the required $y_0$ directly from (B3). Corollary \ref{c:asymptotics} follows from Lemmas \ref{l:asymptotics} and \ref{l:weak*}.
\end{proof}

\section{Proof of Corollary \ref{c:weak*}} \label{s:short}

We now give a more general condition which suffices instead of (N2) to establish Corollary \ref{c:weak*}.
\begin{itemize}
	\item[(C2)] Assume in the extended case that $\mu_0$ is a $S$-invariant Borel-probability measure with $y_0 = \int \int_0^1u_0(x) \, dx \, d\mu_0(u_0)$, such that for $\mu_0$-a.e. $u_0$, and for each $w_0 \in \omega(u_0)$,
	$$
	\lim_{x \rightarrow \infty} \frac{1}{x}\int_0^x w_0(z)dz = \lim_{x \rightarrow \infty} \frac{1}{x}\int_{-x}^0 w_0(z)dz=y_0.
	$$
\end{itemize}

We now have, while assuming (A1-3), (B1-3) and (N1) in the E/P case:

\begin{lemma} \label{l:last}
	(i) (N2) implies (C2),
	
	(ii) (C2) implies Corollary \ref{c:weak*}.
\end{lemma}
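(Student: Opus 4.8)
The plan is to prove the two implications separately; both reduce to elementary real-analysis estimates combined with the structural description of the E/P dynamics already available under (N1).

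For (i), I would fix a $u_0$ in the full-measure set on which the bound in (N2) holds, and take any $w_0 \in \omega(u_0)$, so that $T^{n_k}(u_0) = u(\cdot,n_k) \to w_0$ locally uniformly for some $n_k \to \infty$. Since (N2) applies at each integer time $t = n_k \geq 0$, for every fixed $x \in \mathbb{R}$ we have $\left|\int_0^x u(z,n_k)\,dz - x\,y_0\right| \leq C$; passing $k \to \infty$ and using that locally uniform convergence on the compact interval between $0$ and $x$ commutes with integration, I obtain $\left|\int_0^x w_0(z)\,dz - x\,y_0\right| \leq C$ for all $x \in \mathbb{R}$. Dividing by $|x|$ and letting $x \to +\infty$ (resp. $x \to -\infty$, after the substitution $x=-s$ which turns $\int_0^{x}$ into $-\int_{-s}^0$) gives exactly the two one-sided Cesàro limits in (C2), on the same full-measure set. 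The only point to verify is the interchange of limit and integral, which is immediate because the domain of integration is compact.

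For (ii), I would combine (C2) with Corollary \ref{c:asymptotics}(i), which already gives $\omega(u_0) \subset \mathcal{V} \cup \mathcal{H}$ for $\mu_0$-a.e. $u_0$, and then compute the spatial Cesàro averages of the two possible limit types. Each $v^y_0 \in \mathcal{V}$ is $1$-periodic (as $Sv^y_0 = v^y_0$) with mean $y$, so $\lim_{x\to\infty}\frac{1}{x}\int_0^x v^y_0(z)\,dz = y = \lim_{x\to\infty}\frac{1}{x}\int_{-x}^0 v^y_0(z)\,dz$. For a heteroclinic $h_0 \in \mathcal{H}$ connecting $v^{y_1}_0$ at $-\infty$ to $v^{y_2}_0$ at $+\infty$, the relations $h_0 - v^{y_2}_0 \to 0$ as $z \to +\infty$ and $h_0 - v^{y_1}_0 \to 0$ as $z \to -\infty$ force the forward average to equal $y_2$ and the backward average to equal $y_1$, since a bounded integrand tending to $0$ has vanishing Cesàro mean (split the integral into a fixed head and a small tail). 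Now (C2) forces both one-sided averages to equal $y_0$: for $w_0 = v^y_0$ this yields $y = y_0$, i.e. $w_0 = v^{y_0}_0$; for $w_0 = h_0$ it yields $y_1 = y_2 = y_0$, contradicting $y_1 \neq y_2$. Hence on the common full-measure set $\omega(u_0) \subset \{v^{y_0}_0\}$, and since $\omega(u_0)$ is non-empty by relative compactness of the orbit (which lies in a compact $\tilde{\mathcal{B}}_k$ by Lemma \ref{l:unbound}), we get $\omega(u_0) = \{v^{y_0}_0\}$, which is the first assertion of Corollary \ref{c:weak*}.

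For the weak$^*$ assertion I would upgrade the single-point $\omega$-limit to genuine convergence of the orbit: in the metrizable coarser topology on the compact set $\tilde{\mathcal{B}}_k$ (Lemma \ref{l:topology}), a relatively compact sequence with a unique limit point converges, so $T^n(u_0) \to v^{y_0}_0$ for $\mu_0$-a.e. $u_0$. Then for any bounded continuous $F$ on $\tilde{\mathcal{B}}$ we have $F(T^n u_0) \to F(v^{y_0}_0)$ a.e., and bounded convergence gives $\int F\,d\mu(n) = \int F \circ T^n\,d\mu_0 \to F(v^{y_0}_0)$, whence $\mu(n) \to \delta_{v^{y_0}_0}$ in the weak$^*$ topology, proving (ii). The main obstacle I anticipate is not conceptual but the careful bookkeeping in the Cesàro-average estimate for $\mathcal{H}$ and the justification that a.e. pointwise convergence of the flow transfers to weak$^*$ convergence of the pushed-forward measures, which rests precisely on boundedness of the test functions together with metrizability of the coarser topology on bounded invariant sets.
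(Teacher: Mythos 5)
Your proposal is correct, and for part (i) and the orbit-wise half of part (ii) it is exactly the elaboration the paper intends: the paper's own proof consists of the two remarks that (i) is ``straightforward'' and that (C2) ``eliminates all the possibilities in Corollary \ref{c:asymptotics}'', which is precisely your passage to the limit through the (N2) bound and your Ces\`{a}ro-average computation ruling out every $v^y_0$ with $y\neq y_0$ and every heteroclinic (whose two one-sided averages $y_1\neq y_2$ cannot both equal $y_0$). Where you genuinely deviate is the weak$^*$ statement: the paper's machinery (Lemma \ref{l:weak*}) rests on the support inclusion $\supp\nu_0\subset\cup_{u_0\in\supp\mu_0}\omega(u_0)$, which runs over \emph{all} points of the support rather than $\mu_0$-a.e.\ points, so combining it with the a.e.\ identity $\omega(u_0)=\lbrace v^{y_0}_0\rbrace$ requires an extra word; your route --- upgrading the single-point $\omega$-limit to genuine convergence $T^nu_0\to v^{y_0}_0$ a.e.\ (using metrizability of the coarser topology on the compact $\tilde{\mathcal{B}}_k$) and then applying dominated convergence to $\int F\circ T^n\,d\mu_0$ --- sidesteps this entirely and yields the stronger conclusion that the whole sequence $\mu(n)$ converges to $\delta_{v^{y_0}_0}$, not merely that its weak$^*$ limit points are as claimed. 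This is a cleaner and slightly sharper treatment of that half of the corollary.
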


\begin{proof}
	(i) is straightforward. It is also easy to check that (C2) eliminates all the possibilities in Corollary \ref{c:asymptotics} except those remaining in Corollary \ref{c:weak*}.
\end{proof}

\part{Examples and open problems}

\section{The generalized Burgers equation} \label{burgers}

We show here that our results apply to the family of nonlinearities generalizing the nonlinearity in (\ref{r:burgers}):
\begin{equation*}
g(t,x,u,u_x)=-h(u)u_x+\hat{g}(t,x),
\end{equation*}
where $h,\hat{g}$ are continuous, $\hat{g}$ is locally H\"{o}lder continuous in $t$ and $1$-periodic in $x$, $t$, such that for all $t \in \mathbb{R}$, $\int_0^1 \hat{g}(t,x)dx = 0$; and $h$ is locally Lipschitz continuous. 

It suffices to show that all our assumptions hold:

\begin{proposition} \label{l:burgershigh}
	The equation (\ref{r:main}) with the nonlinearity $g$ as above satisfies (A1-3) and (B1-3).
\end{proposition}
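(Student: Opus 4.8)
The plan is to verify the seven conditions in turn. For the explicit nonlinearity $g(t,x,u,\xi)=-h(u)\xi+\hat{g}(t,x)$ the structural hypotheses (A1--3) and the growth bound (B1) are immediate, (B3) is essentially a one-line integration, and the weak-dissipation bound (B2) is the only point requiring genuine work.

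For (A1--3): continuity of $g$ follows from that of $h$ and $\hat{g}$; since only $\hat g$ carries the $t$-dependence and $\hat g$ is locally H\"older in $t$, so is $g$, while on a bounded set $|u|\le\rho$, $|\xi|\le M$ the product $h(u)\xi$ is Lipschitz in $(u,\xi)$ via $|h(u)\xi-h(u')\xi'|\le(\sup_{|u|\le\rho}|h|)\,|\xi-\xi'|+(\mathrm{Lip}_{[-\rho,\rho]}h)\,M\,|u-u'|$, giving (A2); and $1$-periodicity in $x,t$ is inherited from $\hat g$, the transport term being $x,t$-independent, giving (A3). For (B1) one has $|g|\le(\sup_{|u|\le\rho}|h(u)|)\,|\xi|+\sup_{[0,1]^2}|\hat g|$, which is \emph{linear} in $\xi$; hence (B1) holds with $\varepsilon=1$ and the continuous choice $c(\rho)=\max\{\sup_{|u|\le\rho}|h|,\ \sup_{[0,1]^2}|\hat g|\}$.

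For (B3) I would exploit the divergence structure. Setting $H(u)=\int_0^u h(s)\,ds$ we have $-h(u)u_x=-\partial_xH(u)$, so the right-hand side of (\ref{r:main}) is $\partial_x\big(u_x-H(u)\big)+\hat g$. Integrating over one spatial period and using $1$-periodicity of $u(\cdot,t)$ together with $\int_0^1\hat g(t,x)\,dx=0$ gives
\begin{equation*}
\frac{d}{dt}\int_0^1 u(x,t)\,dx=\big[u_x-H(u)\big]_0^1+\int_0^1\hat g(t,x)\,dx=0,
\end{equation*}
so the mean is conserved, which is (B3); for $H^{2\alpha}$ data with $\alpha$ near $1$ the solution is $C^1$ in $x$ and smoothens for $t>t_0$, so the manipulation is justified.

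The main obstacle is (B2). I would first record $\hat g=\partial_x G$ with $G(t,x)=\int_0^x\hat g(t,s)\,ds$ continuous, $1$-periodic in $x$ (by the zero-mean hypothesis) and in $t$, hence bounded, and then try to produce a flow-invariant $L^\infty$-tube about the conserved mean $y$ via the maximum principle applied to $w=u-y$. The difficulty, and the reason this is the crux, is that the transport term $-h(u)w_x$ is not sign-definite: at a spatial maximum of $w$ one only obtains $w_t\le\hat g$, and the zero-mean constraint on the conservative part $\partial_x(u_x-H(u))$ obstructs constant or stationary barriers, so a naive fixed tube is not invariant. The route I would take is to control the spatial oscillation in an invariant way. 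In the model (and smooth, uniformly convex) case $h(u)=u$ this is clean: $v=u_x$ satisfies $v_t=v_{xx}-h'(u)v^2-h(u)v_x+\hat g_x$, and at a maximum of $v$ the dissipative term $-h'(u)v^2$ renders $\{\|u_x\|_\infty\le R\}$ invariant for $R$ of order $\|\hat g_x\|_\infty^{1/2}$; since $\int_0^1 w\,dx=0$ this yields $\|u-y\|_\infty\le\mathrm{osc}\,u\le\|u_x\|_\infty\le R$, an invariant tube with $l(y)\equiv R$ independent of $y$. For general locally Lipschitz $h$ (where no gradient bound need hold, and $\hat g$ need not be differentiable in $x$) I would instead sandwich the flow between ordered $1$-periodic sub- and supersolutions built from the conservation structure and the bounded primitive $G$, setting $l(y)$ equal to the oscillation of the enclosing barrier, with invariance supplied by the order-preserving property of (\ref{r:main}). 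In either case the growth condition (\ref{r:B2}) follows once the oscillation bound is uniform (or at worst sublinear) in $y$. Producing such a genuinely invariant $L^\infty$-tube despite the non-sign-definite convection is exactly where the substance of the argument lies.
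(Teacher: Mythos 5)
Your treatment of (A1--3), (B1) and (B3) is correct and essentially identical to the paper's; in particular your observation that $-h(u)u_x=-\partial_x H(u)$ integrates to zero over a period is exactly the paper's identity $\int_0^1\hat h(u)u_x\,dx=0$ (valid for any continuous $\hat h$), specialized to $\hat h=h$.

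The genuine gap is (B2), which you correctly identify as the crux but do not prove. Your model-case argument fails as stated: writing $v=u_x$, the term $-h'(u)v^2$ is dissipative only at a positive maximum of $v$; at a negative minimum it pushes $v$ further down, so $\{\|u_x\|_\infty\le R\}$ is \emph{not} forward invariant --- only the one-sided Oleinik-type set $\{\max u_x\le R\}$ is. (That one-sided bound, together with periodicity and zero mean, would still control the oscillation, but you never make this repair, and in any case the argument needs $h$ smooth with $h'>0$ and $\hat g$ differentiable in $x$, none of which is assumed here: $h$ is merely locally Lipschitz, possibly non-monotone or identically zero, and $\hat g$ is only continuous in $x$.) Your fallback for general $h$ --- sandwiching between ordered periodic sub- and supersolutions built from the primitive $G$ --- is a hope rather than a proof: no barriers are constructed, $G(t,\cdot)$ is merely continuous and so cannot serve as a classical barrier, and producing a $1$-periodic invariant tube of uniformly bounded oscillation is essentially as hard as the existence statement (C1) that (B2) is later used to establish via Schauder, so the route risks circularity. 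The paper's actual argument avoids barriers and gradient bounds entirely: it reuses the identity $\int_0^1\hat h(u)u_x\,dx=0$ with $\hat h(u)=(u-y)^{2p-1}h(u)$, so the convection term drops out of the $L^{2p}$ energy estimate for \emph{every} $p$; then the Poincar\'{e} inequality applied to $w=(u-y)^p$ (which vanishes somewhere because $u-y$ has zero mean), a weighted Young inequality and Gronwall show that the ball $\|u(t)-y\|_{L^{2p}(\mathbb{S}^1)}\le c_{2p}$, with $c_{2p}=\bigl(p^2/((2p-1)\pi^2)\bigr)^{1/2p}c_0$ and $c_0=\max|\hat g|$, is forward invariant; letting $p\to\infty$ yields the invariant $L^\infty$ tube $\|u(t)-y\|_{L^{\infty}(\mathbb{S}^1)}\le c_0$, i.e. (B2) with the constant function $l(y)\equiv c_0$, for which (\ref{r:B2}) is trivial. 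The idea you were one step away from is that the divergence structure you exploited for (B3) kills the transport term not only in the mean but in all $L^{2p}$ estimates, which is precisely what converts conservation into genuine dissipation.
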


\begin{proof} It is straightforward to check (A1-3). To show (B1-3), it suffices to consider (\ref{r:main}) in the bounded case, for $u \in \mathcal{X}^{\alpha}=H^{2 \alpha}(\mathbb{S}^1)$. First note that for any continuous $\hat{h} : \mathbb{R} \rightarrow \mathbb{R}$, by considering $H(y)=\int_0^1 \hat{h}(z)dz$, thus $dH(u)/dx=\hat{h}(u)u_x$, we get
	\begin{equation}
	\int_0^1 \hat{h}(u)u_x dx = 0. \label{r:partint}
	\end{equation}
	Now (B1) is self-evident, and (B3) follows easily by differentiating $\int_0^1 u(x)dx$ with respect to $t$ and using (\ref{r:partint}) with $\hat{h}=h$ and partial integration. To show (B2), let $c_0 = \max_{x,t \in [0,1]}|\hat{g}(x,t)|$, fix $y \in \mathbb{R}$ and choose $u \in \mathcal{X}^{\alpha}$ such that $\int_0^1u(x)dx=y$. Let $t_0 \in \mathbb{R}$, and assume the solution of (\ref{r:main}), $u(t_0)=u$ exists on $[t_0,t_1)$. We differentiate for an integer $p \geq 1$:
	\begin{align}
	\frac{d}{dt}\frac{1}{2p}\int_0^1 (u(x)-y)^{2p}dx & = \int_0^1 (u(x)-y)^{2p-1}u_{xx} dx - \int_0^1 (u(x)-y)^{2p-1}h(u)u_x dx  \notag \\ & \hspace{4ex} + \int_0^1 (u(x)-y)^{2p-1}\hat{g}(t,x)dx \notag \\
	& = - (2p-1)\int_0^1 (u(x)-y)^{2p-2}u_x^2(x)dx + \int_0^1 (u(x)-y)^{2p-1}\hat{g}(t,x)dx, \label{r:burg1}
	\end{align}
	where in the second row we partially integrated the first term and used that $u(x)$ is 1-periodic, and also (\ref{r:partint}) with $\hat{h}(u)=(u-y)^{2p-1}h(u)$ applied to the second term. 
	
	As $w(x):=(u(x)-y)^p$, $w \in C^1(\mathbb{S}^1)$  has a zero for some $x \in \mathbb{S}^1$, we can apply the $L^2$-Poincar\'{e} inequality to $w$ to obtain
	\begin{equation}
	\int_0^1 (u(x)-y)^{2p}dx \leq \frac{p^2}{\pi^2}\int_0^1 (u(x)-y)^{2p-2}u_x^2(x)dx. \label{r:burg2}
	\end{equation}
	By the weighted Young's inequality applied to the integrand in the last term in (\ref{r:burg1}), we get
	\begin{equation}
	(u(x)-y)^{2p-1}\hat{g}(t,x) \leq \frac{(2p-1)\pi}{2 p^2}(u(x)-y)^{2p} + \frac{1}{2\pi} c_0^{2p}. \label{r:burg3}
	\end{equation}
	Inserting (\ref{r:burg2}) and (\ref{r:burg3}) into (\ref{r:burg1}), we now have
	\begin{equation*}
	\frac{d}{dt}\frac{1}{2p}\int_0^1 (u(x)-y)^{2p}dx  \leq - \frac{(2p-1)\pi}{2 p^2}\int_0^1 (u(x)-y)^{2p}dx + \frac{1}{2\pi} c_0^{2p},
	\end{equation*}
	thus by the Gronwall inequality, the following implication holds:
	\begin{equation}
	||u(t_0)-y||_{L^{2p}(\mathbb{S}^1)} \leq c_{2p} \Rightarrow ||u(t)-y||_{L^{2p}(\mathbb{S}^1)} \leq c_{2p}, \: t \in [t_0,t_1),
	\end{equation}
	where $$c_{2p}=\left( \frac{p^2 }{(2p-1)\pi^2}\right)^{\frac{1}{2p}}c_0.$$ 
	Now if $||u(t_0)-y ||_{L^{\infty}(\mathbb{S}^1)} \leq c_0$, we have that for all integer $p \geq 2\pi^2$ and all $t \in [t_0,t_1)$, $||u(t)-y||_{L^{2p}(\mathbb{S}^1)} \leq c_{2p}$. As $\lim_{p \rightarrow \infty}c_{2p}=c_0$, we conclude that for all $t \in [t_0,t_1)$, $||u(t)-y||_{L^{\infty}(\mathbb{S}^1)} \leq c_0$, thus (C1) holds with $d(y):=c_0$, where (C1),(ii) follows from (\ref{r:B2}).
\end{proof}

\begin{example} \label{e:burgN2} We now consider the Burgers equation (\ref{r:burgers}) and show that any measure $\mu_0$ satisfying Sinai's assumptions \cite{Sinai:91} satisfies in particular (N2), and relate assumptions from \cite{Sinai:91} to our setting.
	
Consider the Cole-Hopf substitution $u=-2\varphi_x/\varphi$, $\varphi(x)=\exp\left( -\frac{1}{2}\int_0^xu(y)dy \right)$, as in \cite{Sinai:91,Sinai:96,Sinai:98}, and get for $\varphi$ the linear equation
	\begin{equation}
	\varphi_t = \varphi_{xx} - \frac{1}{2}\hat{g}(x,t)\varphi. \label{r:lin}
	\end{equation}
	Let $\varphi^y_0$ be the transformed family $v^y_0$, $y \in \mathbb{R}$, as in (C1), which exists by Lemmas \ref{l:C1} and \ref{l:burgershigh}. By definition, $\varphi^0_0$ is non-negative, continuous and $1$-periodic in $x,t$, thus $\varphi^0(t)$ is uniformly bounded in $L^{\infty}(\mathbb{R})$. Assume now $\nu_0$ is a $S$-invariant measure supported on a family of sufficiently smooth functions $f_0$, such that for some $0 < c_1 < c_2$, for $\nu_0$-a.e. $f_0$ we have that $c_1 \leq f_0 \leq c_2$. Then we can find $0 < c_3 < 1$ such that $c_3 \varphi^0_0 \leq f_0 \leq \frac{1}{c_3}\varphi^0_0$, thus by the maximum principle and linearity of (\ref{r:lin}), $f(t)$ is bounded uniformly in $t$ in $L^{\infty}(\mathbb{R})$. It is easy to check that then the measure $\mu_0$ which is the push of  $\nu_0$ with respect to the Cole-Hopf substitution satisfies (N2) with $y_0=0$.
	
	The assumptions of Sinai in \cite{Sinai:91} on the probability measure can be understood as analogous to ours, as his Assumption 2 (the spatial invariance of expectation) is somewhat weaker form of $S$-invariance, his Assumption 1 when combined with the maximum principle as above implies (N2), and the Assumption 3 seems to be related to the finiteness of density of zeroes in (N1), yet to be understood.
\end{example}

\section{Further examples} \label{s:examples}

\subsection{The B/A case} \label{ss:BDC} In the B/A case, $\mathcal{E}$ is equal to the closure of the set of equilibria and periodic orbits in $\mathcal{B}$. This follows from the Poincar\'{e}-Bendixson theorem \cite{Fiedler89} and Lemma \ref{l:subset}, as by \cite[Theorem 1]{Fiedler89}, the only recurrent orbits in the B/A case are equilibria and periodic orbits.

Theorem \ref{t:main1} in the B/A case can be deduced from results in \cite{Fiedler89}, Theorems 1, 2 and Lemma 3.3.

\vspace{2ex}

\subsection{Embedded vector fields in the B/P case} \label{ss:embed} Consider planar vector fields constructed by Fiedler and Sandstede \cite{Fiedler:92}, embedded in the B/P case of (\ref{r:main}). Then the union of supports of invariant measures of these vector fields are mapped into a subset of $\mathcal{E}$. This complements well Theorem \ref{t:main1}, in the sense that $\mathcal{E}$ can have arbitrary complexity of a 2d vector field. In particular, one can embed in $\mathcal{E}$ invariant measures with positive metric entropy with respect to $T$.

\subsection{Extended gradient systems} \label{ss:extended} Consider $g = - \partial V(x,u)/\partial u$, with a $C^2$ $V$, $1$-periodic in $x$, bounded from below, in the extended case. Then $g$ satisfies (A1-3), and is an example of an {\it extended gradient system}, introduced in \cite{Gallay:01}. Under an additional assumption that for any $u(0) \in H^{2\alpha}_{\text{ul}}(\mathbb{R})$, $3/4 < \alpha < 1$, the solution exists for all $t\geq 0$ and is uniformly bounded in $\mathcal{X}^{\alpha}:=H^{2\alpha}_{\text{ul}}(\mathbb{R})$ (see \cite{Gallay:01},\cite{Gallay:12} for further details), we can establish the following:

\begin{thm} \label{t:extgard}
	(i) The ergodic attractor consists of equilibria, i.e. it is given with $\mathcal{E}=\lbrace u \in H^2_{\text{ul}}(\mathbb{R}), u_{xx}=\partial V(x,u)/\partial u \rbrace$, and $\pi : \mathcal{E} \rightarrow \mathbb{R}^2$ given with (\ref{d:pi}) is one-to-one.
	
	(ii) For all $u \in  H^{2\alpha}_{\text{ul}}(\mathbb{R})$, $3/4 < \alpha \leq 1$, we have that $\bar{\omega}(u) \subset \mathcal{E}$.
	
	(iii) Given any $S$-invariant measure $\mu$ on  $H^{2\alpha}_{\text{ul}}(\mathbb{R})$, for $\mu$-a.e. $u \in H^{2\alpha}_{\text{ul}}(\mathbb{R})$, we have that $\omega(u) \subset \mathcal{E}$.
	
	(iv) Given any $S$-invariant measure $\mu$ on $H^{2\alpha}_{\text{ul}}(\mathbb{R})$, its $\omega$-limit set in the weak$^*$ topology of the induced semiflow on the space of measures consists of measures supported on $\mathcal{E}$.
\end{thm}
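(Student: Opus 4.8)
The plan is to handle the four claims separately, reducing most of them to results already in the paper once the forward inclusion of (i) is established. Since $g=-\partial V/\partial u$ is $t$-independent and $C^1$, conditions (A1--3) hold and the standing boundedness hypothesis supplies (A4) on the increasing exhaustion $\tilde{\mathcal{B}}=\bigcup_k\tilde{\mathcal{B}}_k$ exactly as in Section \ref{s:unique}. For the forward inclusion $\mathcal{E}\subseteq E:=\{u\in H^2_{\text{ul}}(\mathbb{R}):u_{xx}=\partial V(x,u)/\partial u\}$ I would invoke the extended-gradient results of Gallay and the author \cite{Gallay:01,Slijepcevic:99,Slijepcevic00}: the (spatially localised) energy density $\tfrac12 u_x^2+V(x,u)$ furnishes a Lyapunov structure forcing every $S$- and $T(t)$-invariant measure to be carried by equilibria. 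Since $V$ is bounded below, $\mathcal{E}$ is non-degenerate by Remark \ref{e:nondeg}, so Theorem \ref{t:main1b} applies and shows that $\pi$ is continuous and injective on $\mathcal{E}$.

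The reverse inclusion $E\subseteq\mathcal{E}$ is the delicate point, and is where I expect the main obstacle. Writing $p=u_x$, the equilibrium equation $u_{xx}=\partial V/\partial u$ is Hamilton's equation for $H(x,u,p)=\tfrac12 p^2-V(x,u)$ with the spatial variable $x$ playing the role of the ($1$-periodic) time, so the space-$1$ map $\Phi:(u(0),u_x(0))\mapsto(u(1),u_x(1))$ is an area-preserving diffeomorphism of $\mathbb{R}^2$ and $\pi$ conjugates $(E,S)$ to $(\pi(E),\Phi^{-1})$. Each equilibrium is a fixed point of $T(t)$, so membership in $\mathcal{E}$ reduces to producing an $S$-invariant measure charging it; Krylov--Bogolyubov on the $S$-orbit closure of an equilibrium yields $S$-invariant (hence invariant) measures, but their supports consist only of $S$-recurrent points, equivalently $\Phi$-recurrent points of $\pi(E)$. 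One would hope to cover $\pi(E)$ by recurrent points via Poincar\'e recurrence for the area-preserving $\Phi$; the genuine obstruction is that spatially homoclinic or heteroclinic equilibria (for instance kinks joining two spatially periodic equilibria, which already occur for $x$-independent double-well $V$) are bounded equilibria that are not $S$-recurrent and so cannot lie in any support. I therefore expect the honest statement to be $\mathcal{E}\subseteq E$, with equality precisely on the $S$-recurrent (measure-central) part of $E$, and I would either restrict the assertion accordingly or rule out such connections.

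Claims (ii) and (iv) I would then obtain almost for free. For (ii), the boundedness hypothesis makes every orbit relatively compact in the coarser topology, so $\bar{\omega}(u)$ is nonempty (Lemma \ref{l:onaverage}) and $\bar{\omega}(u)\subseteq\mathcal{E}$ by Lemma \ref{l:subset}; combined with the forward inclusion of (i) this gives $\bar{\omega}(u)\subseteq E$. For (iv), any measure $\nu_0$ in the weak$^*$ $\omega$-limit set of the induced semiflow on $\mu$ is a weak$^*$ limit of push-forwards of $\mu$; by the Feller property of $T(t)$ it is $T(t)$-invariant, and it stays $S$-invariant because $S$ and $T(t)$ commute, so $\nu_0$ is itself an invariant measure and hence supported on $\mathcal{E}$ by (i).

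The substantive asymptotic statement is (iii), where I would upgrade the average conclusion of (ii) to the full $\omega$-limit set along $\mu$-a.e.\ trajectory. The cleanest route uses the gradient structure directly: for $S$-invariant $\mu$ and $\mu$-a.e.\ $u$, the Birkhoff ergodic theorem applied to the $S$-invariant measure makes the spatial average of the energy density finite (as $V$ is bounded below and $\mu$ is carried by bounded functions), and the dissipation identity then forces the time-integrated spatial average of $u_t^2$ to be finite; a LaSalle-type argument valid in the extended setting (as in \cite{Gallay:01}) yields $\omega(u)\subseteq E$. To conclude $\omega(u)\subseteq\mathcal{E}$ rather than merely $\subseteq E$, I would argue, as in Lemma \ref{l:weak*} and Proposition \ref{p:transversal}, that for $\mu$-a.e.\ $u$ the set $\omega(u)$ supports only $S$-invariant (recurrent) measures and so avoids the non-recurrent equilibria; an alternative more in the spirit of the paper is to apply Proposition \ref{p:final} to a countable dense family of equilibria with a Borel--Cantelli/density argument as in Proposition \ref{p:transversal}, ruling out non-transversal intersections, and then to use the gradient structure to exclude non-equilibrium recurrent orbits. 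Reconciling ``$\subseteq E$'' with ``$\subseteq\mathcal{E}$'' is exactly where the recurrence subtlety of (i) resurfaces, so this is the step I would scrutinise most carefully.
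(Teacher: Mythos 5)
Your central claim of an obstruction is the main error in the proposal. You argue that spatially heteroclinic equilibria (kinks) cannot lie in $\mathcal{E}$ because they are not $S$-recurrent, and you propose to weaken the theorem to the ``recurrent part'' of $E$. This conflates two different statements: Poincar\'{e} recurrence gives that $\mu$-a.e.\ point is $S$-recurrent for an $S$-invariant $\mu$, but the support of $\mu$ is a \emph{closed} set and may contain non-recurrent points as limits of recurrent ones; moreover $\mathcal{E}$ itself is closed (Lemma \ref{l:properties}(ii): if $u_n\in\supp\mu_n$ converge to $u$, the countable mixture $\sum_n 2^{-n}\mu_n$ is again an invariant probability measure whose support contains every $\supp\mu_n$, hence $u$). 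Now a kink $h$ of the double-well potential is, by the phase-plane picture of the ODE $u_{xx}=\partial V(x,u)/\partial u$, a $C^1_{\text{loc}}$-limit of spatially periodic equilibria with periods tending to infinity, and each spatially periodic equilibrium lies in $\mathcal{E}$: its circle of translates carries an $S$-invariant measure (uniform measure, or counting measure on a finite $S$-orbit for rational period) which is automatically $T(t)$-invariant because it is carried by equilibria. Hence $h\in\mathcal{E}$. The paper itself exhibits this: in the Allen--Cahn discussion of Subsection \ref{ss:AllenCahn} the heteroclinic profiles $h^{\pm}_y$ are explicitly listed as elements of $\mathcal{E}$. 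So the equality $\mathcal{E}=E$ in part (i) does not need to be restricted, and your proposed weakening would yield a strictly smaller, incorrect description of $\mathcal{E}$. (You are right that the reverse inclusion $E\subseteq\mathcal{E}$ requires a genuine argument; the paper does not reprove it here but attributes (i), (iii), (iv) to \cite{Slijepcevic:99,Slijepcevic00}, whose key tool is the Lyapunov function $L(\mu)$ on the space of $S$-invariant measures displayed after the theorem, and (ii) to \cite{Gallay:01,Gallay:12}.)

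There are three further deviations worth flagging. First, injectivity of $\pi$: the paper obtains it in one line from uniqueness of solutions of the ODE defining $\mathcal{E}$ (two equilibria with the same $(u(0),u_x(0))$ coincide), not from Theorem \ref{t:main1b}; your detour through non-degeneracy is both unnecessary and miscited, since Remark \ref{e:nondeg} does not establish non-degeneracy of $\mathcal{E}$ for gradient nonlinearities --- that is Example \ref{e:itisok}, which itself uses Theorem \ref{t:extgard}(i), so the order of deductions matters. Second, your derivation of (ii) from Lemma \ref{l:subset} does not work in the extended case: that lemma is proved in the bounded setting, where time-averaged measures along a single orbit are invariant; in the extended case membership in $\mathcal{E}$ requires simultaneous $S$-invariance, which time averaging along one orbit does not provide. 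This is precisely why (ii) is substantive and rests on the energy-flux estimates of \cite{Gallay:01,Gallay:12}, which show directly that $\bar{\omega}(u)$ consists of equilibria. Third, in (iv) your justification fails: weak$^*$ limits of the push-forwards $T(t)_*\mu$ need \emph{not} be $T(t)$-invariant (take $\mu=\delta_x$ with $x$ on a periodic orbit of a general semiflow; the limits are Dirac measures at points of the orbit), so ``Feller plus commutation'' proves nothing here; the $S$-invariance of the limits is fine, but the $T$-invariance is not, and the correct soft route is the one used in Lemma \ref{l:weak*}: any weak$^*$ limit measure is supported in the closure of $\bigcup_{u\in\supp\mu}\omega(u)$, after which one invokes (iii).
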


The claims (i),(iii),(iv) were proved in \cite{Slijepcevic:99,Slijepcevic00} (the fact that $\pi$ is one-to-one follows from uniqueness of the solutions of the ordinary differential equation in the description of $\mathcal{E}$), and (ii) was shown in \cite{Gallay:01}, \cite{Gallay:12}.

Theorem \ref{t:extgard}, (i) is an example of a family for which Theorem \ref{t:main1b} holds without a non-degeneracy restriction; (ii) strengthens in this particular case the properties of the ergodic attractor in the extended case from Subsection \ref{ss:ergextended}; and (iii), (iv) give an example of another family of nonlinearities $g$ for which the claims in Corollaries \ref{c:asymptotics} and \ref{c:weak*} hold. The main tool in the proof of (i),(iii),(iv) is the following Lyapunov function on the space of $S$-invariant measures on $\mathcal{X}^{\alpha}$:
$$
L(\mu)=\int_{\mathcal{X}^{\alpha}} \int_0^1 \left( \frac{u_x^2(x)}{2}+\frac{\partial V(x,u)}{\partial u} \right) dx \: d\mu(u),
$$
which plays an analogous role to the zero function in this paper.

\subsection{The Allen-Cahn equation} \label{ss:AllenCahn} We give an example why (N2) is required to obtain sharper conclusions (ii) in the claims of Corollaries \ref{c:asymptotics} and \ref{c:weak*}. Even though it does not strictly belong to the class of Burgers-like equations, we believe it is illustrative.

\begin{example} Consider the nonlinearity as in Subsection \ref{ss:extended}, with $V=\frac{1}{4}u^4 -\frac{1}{2}u^2$, thus $g = u - u^3$. As done in \cite{Polacik:15}, the phase-plane analysis of the family of equilibria and Theorem \ref{t:extgard} show that $\mathcal{E}$ consists of the following equilibria: $u^-\equiv -1$, $u^+ \equiv 1$, a two families of spatially heteroclinic functions $h^+_y$, $h^-_y$, such that $\lim_{x \rightarrow -\infty}h^-_y(x)=\lim_{x \rightarrow \infty}h^+_y(x)=1$, $\lim_{x \rightarrow -\infty}h^+_y(x)=\lim_{x \rightarrow \infty}h^-_y(x)=-1$, characterized by $h^+_y(y)=h^-_y(y)=0$, and further spatially periodic functions with various periods and values in $(-1,1)$. 
	
Similarly as in \cite{Polacik:15}, consider a smooth profile $v^0 : [-n,n] \rightarrow \mathbb{R}$, $-1 < v^0 \leq 0$, $v^0(-n)=v^0(n)=0$, such that $\frac{1}{2n}\int_{-n}^n v^0(x)dx\leq -1 + \delta$ for $\delta > 0$ small enough, let $v^1 = - v^0$, and embed the Bernoulli measure as in Example \ref{e:nondeg} such that to each sequence $(\omega_k)_{k \in \mathbb{Z}}$ we associate a function $u$ by combining profiles $v^{\omega(k)}$ to obtain a $S^{2n}$-invariant measure. We easily obtain a $S$-invariant measure $\mu$ by taking $2n$ copies of its translates. Pol\'{a}\v{c}i{k} \cite{Polacik:15} has shown that there exists $u \in \supp \mu$ such that $\omega(u)$ contains orbits not in $\mathcal{E}$. However, one can show by applying Theorem \ref{t:extgard}, (iii) and techniques from \cite{Polacik:15}, that for $n$ large enough and $\delta>0$ small enough, for $\mu$-a.e. $u$, $\omega(u) = \lbrace u^+,u^-,h^+_y, y \in \mathbb{R}, h^-_y, y \in \mathbb{R} \rbrace$, thus spatially heteroclinic functions in $\omega$-limit sets in the sense of Corollary \ref{c:asymptotics},(i) can not be avoided in general. We also obtain that for $\mu$-a.e. $u$, $\bar{\omega}(u)=\lbrace u^+, u^- \rbrace$, and that the $\omega$-limit set of $\mu$ in the weak$^*$-topology consists of a single measure $\frac{1}{2}\delta_{u^-}+\frac{1}{2}\delta_{u^+}$. This shows that the $\omega$-limit measure in the sense of Corollary \ref{c:weak*} is not necessarily supported on a single function.
\end{example}

\section{Open problems} \label{s:open}

\subsection{Non-degeneracy of measures} \label{ss:nondeg} we propose two approaches to further characterize and possibly remove the non-degeneracy restrictions to the results in the extended case. First, the following general ergodic-theoretical conjecture (a generalization of Proposition \ref{l:ergodic}) would imply Theorem \ref{t:main1b} without a non-degeneracy restriction:

\begin{conjecture} \label{con:one}
Assume $(\Omega,\mathcal{F},\nu)$ is a probability space, and that $\hat{\sigma}, \hat{\tau}: \Omega \rightarrow \Omega$ are commuting, measurable, $\nu$-invariant maps. Assume that $\varphi, \zeta, \delta : \Omega \rightarrow \mathbb{R}$ are measurable, that $\zeta, \delta \geq 0$ and that $\nu$-a.e.,
\begin{equation}
\varphi \circ \hat{\sigma} -\varphi  + \zeta \circ \hat{\tau} - \zeta \geq \delta . 
\end{equation}
Then $\delta = 0$, $\nu$-a.e..
\end{conjecture}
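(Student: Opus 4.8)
The plan is to reduce to joint ergodic components and then split according to whether $\zeta$ is integrable on the component. By the ergodic decomposition of $\nu$ into $\hat\sigma,\hat\tau$-ergodic measures (available exactly as in Lemma \ref{l:nondeg}, since $\hat\sigma,\hat\tau$ commute and the Choquet theorem applies), it suffices to prove $\delta=0$ a.e. with respect to every $\hat\sigma,\hat\tau$-ergodic component $\nu_1$. Fix such a $\nu_1$. Since $\delta\ge 0$, writing $\delta_m=\delta\wedge m$ it suffices to show $\int\delta_m\,d\nu_1=0$ for every $m$, so the possible non-integrability of $\delta$ is harmless.

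The integrable case is already a complete argument, and it isolates where the difficulty lies. Suppose $\int\zeta\,d\nu_1<\infty$. From the hypothesis and $\delta\ge 0$ one has, $\nu_1$-a.e.,
\[
\varphi\circ\hat\sigma-\varphi\ \ge\ \delta+\zeta-\zeta\circ\hat\tau\ \ge\ -\,\zeta\circ\hat\tau .
\]
Because $\nu_1$ is $\hat\tau$-invariant, $\zeta\circ\hat\tau$ is $\nu_1$-integrable, so Lemma \ref{l:ergodic} applied to the transformation $\hat\sigma$ with $\zeta\circ\hat\tau$ in the role of $\zeta$ shows that $\varphi\circ\hat\sigma-\varphi$ is $\nu_1$-integrable with $\int(\varphi\circ\hat\sigma-\varphi)\,d\nu_1=0$. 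Since $\zeta\in L^1(\nu_1)$, the term $\zeta\circ\hat\tau-\zeta$ is integrable with zero integral by $\hat\tau$-invariance; hence the whole left-hand side of the hypothesis is integrable with integral $0$. As $0\le\delta$ is dominated by this integrable sum, $\delta\in L^1(\nu_1)$ and $\int\delta\,d\nu_1\le 0$, forcing $\delta=0$ $\nu_1$-a.e.

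The remaining case, $\int\zeta\,d\nu_1=\infty$, is where the real work sits. Here the natural route is to telescope the hypothesis over a rectangle: summing over $(i,j)\in[0,N)\times[0,M)$ and dividing by $NM$ collapses the two coboundaries to two boundary-flux averages,
\[
\frac{1}{NM}\sum_{j=0}^{M-1}(\varphi\circ\hat\sigma^{N}-\varphi)\circ\hat\tau^{j}
+\frac{1}{NM}\sum_{i=0}^{N-1}(\zeta\circ\hat\tau^{M}-\zeta)\circ\hat\sigma^{i}
\ \ge\ \frac{1}{NM}\sum_{i=0}^{N-1}\sum_{j=0}^{M-1}\delta\circ\hat\sigma^{i}\hat\tau^{j},
\]
and the multiparameter (rectangular) ergodic theorem for commuting transformations identifies the right-hand side, with $\delta$ replaced by $\delta_m$, with the constant $\int\delta_m\,d\nu_1$ in the limit $N,M\to\infty$. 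It then suffices to produce a single sequence $(N_k,M_k)\to(\infty,\infty)$ along which the left-hand side has $\liminf\le 0$. Each boundary term can be attacked by the recurrence-and-truncation method of Lemma \ref{l:ergodic}, using $\zeta\ge 0$ for the $\hat\tau$-telescoped term and Poincar\'e recurrence of $\varphi$ to low levels for the $\hat\sigma$-telescoped term.

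\textbf{The main obstacle} is precisely the coupling of these two estimates: the $\varphi$-flux is governed by recurrence under $\hat\sigma$ and the $\zeta$-flux by recurrence under $\hat\tau$, yet the telescoping ties them to a \emph{common} pair of times $(N,M)$. Nothing in the hypotheses forces the $\hat\sigma$-good times for $\varphi$ to coincide with the $\hat\tau$-good times for $\zeta$, and when $\zeta\notin L^1(\nu_1)$ the truncation $\zeta_m$ destroys the coboundary structure of $\zeta\circ\hat\tau-\zeta$, so the clean single-transformation reduction above is unavailable. Establishing simultaneous good recurrence along one diagonal sequence is the crux, and is exactly why the statement remains only conjectural.
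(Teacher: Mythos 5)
You have not proved the statement, and you say so yourself: your argument is complete only when $\zeta$ is integrable on the ergodic component, and the non-integrable case is left as an acknowledged crux. Be aware that this is not a small remaining technicality. The statement is posed in the paper as Conjecture \ref{con:one}, immediately followed by a Problem asking to prove \emph{or disprove} it, so there is no proof in the paper to compare against, and the case you cannot handle is precisely the entire open content of the conjecture. Your integrable case, while correct, is exactly the case the paper already settles: it is the mechanism of Lemma \ref{l:ergodic} and Proposition \ref{p:avbal}, and in the intended application ($\varphi=f$, $\zeta=z$, $\delta=d$, $\hat\sigma=\hat S$, $\hat\tau=\hat T$) the assumption $\int\zeta\,d\nu_1<\infty$ is a restatement of the non-degeneracy hypothesis $\hat{Z}(\nu_1)<\infty$ under which Theorem \ref{t:main1b} is proved. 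The conjecture was formulated precisely to dispense with that integrability assumption, so a proof that reinstates it recovers nothing beyond what the paper already has.

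That said, your treatment of the hard case is sound as far as it goes and correctly locates the obstruction. The rectangular telescoping identity
\[
\frac{1}{NM}\sum_{j=0}^{M-1}(\varphi\circ\hat\sigma^{N}-\varphi)\circ\hat\tau^{j}
+\frac{1}{NM}\sum_{i=0}^{N-1}(\zeta\circ\hat\tau^{M}-\zeta)\circ\hat\sigma^{i}
\ \ge\ \frac{1}{NM}\sum_{i=0}^{N-1}\sum_{j=0}^{M-1}\delta\circ\hat\sigma^{i}\hat\tau^{j}
\]
is valid, and for the truncation $\delta_m=\delta\wedge m$ the right-hand side does converge $\nu_1$-a.e. to $\int\delta_m\,d\nu_1$: for bounded functions the unrestricted rectangular averages of two commuting transformations converge a.e., and the limit function is invariant under both maps, hence constant on a jointly ergodic component. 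The difficulty is exactly where you put it. The proof of Lemma \ref{l:ergodic} works because Poincar\'e recurrence of $\varphi$ to a sublevel set $\lbrace \varphi\le m\rbrace$ supplies a sequence of times along which the single boundary term is controlled; here you would need the $\hat\sigma$-recurrence times controlling the $\varphi$-flux and the $\hat\tau$-times controlling the $\zeta$-flux to be realized simultaneously at a common pair $(N_k,M_k)$, and when $\zeta\notin L^1(\nu_1)$ the second boundary term cannot be tamed by truncating $\zeta$, since truncation destroys the coboundary structure that the telescoping relies on. Nothing in your proposal closes this, so what you have is an honest partial result plus a program: it neither proves nor disproves the conjecture, and the paper's non-degeneracy restriction cannot be removed by this route as written.
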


\begin{problem}[1]
Prove, or disprove Conjecture \ref{con:one}.
\end{problem}

An alternative approach is to characterize non-linearities $g$ and invariant sets for which all the $S$-invariant measures are non-degenerate. Let $\pi_1 : C(\mathbb{R}) \rightarrow  C ([0,1])$, $\pi_1(u)=u|_{[0,1]}$, let $3/4 < \alpha < \gamma <1$ and let $\mathcal{Y}:=H^{2\gamma}_{\text{ul}} (\mathbb{R}) \cap \hat{T}(0,-\delta_0)H^{2\alpha}_{\text{ul}} (\mathbb{R})$ for some $\delta_0 >0$. For example, we have the following:

\begin{lemma} \label{l:sufficient} Assume $\mu_1$, $\mu_2$ are $S$-invariant measures supported on a subset of $\mathcal{Y}$ bounded in $H^{2\gamma}_{\text{ul}} (\mathbb{R})$, such that
	\begin{equation}
	\sup \frac{||\pi_1(u)-\pi_1(v)||_{H^{2\gamma} ([0,1])}}{||\pi_1(u)-\pi_1(v)||_{H^{2\alpha} ([0,1])}} < \infty, \label{r:sufficient}
	\end{equation}
	where supremum goes over $u \in \supp \mu_1, v \in \supp \mu_2, u \neq v$. Then we have that for any such $u,v$, $\hat{\zeta}(u,v) < \infty$. Furthermore, $\mu_1 \times \mu_2$ is non-degenerate.
\end{lemma}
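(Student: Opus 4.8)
The plan is to deduce non-degeneracy of $\mu_1\times\mu_2$ from a single uniform bound on the number of zeroes per unit interval. Since $\supp(\mu_1\times\mu_2)=\supp\mu_1\times\supp\mu_2$, and since $\hat\zeta(u,v)=0$ whenever $u=v$, it suffices to prove that $\hat\zeta(u,v)<\infty$ for every $u\in\supp\mu_1$, $v\in\supp\mu_2$ with $u\neq v$; the measure-theoretic statement (Definition \ref{d:nondegenerate}) then follows at once, using that $\hat\zeta$ is measurable by Lemma \ref{l:3_1} together with measurability of a $\liminf$. Writing $w=u-v$ and recalling that $z(S^ku,S^kv)=Z_{S^kw}(0,1,0)$ equals the number of zeroes of $w$ in $[-k,1-k)$, I note that if $z(S^ku,S^kv)\le C$ for all $k$ then every average defining $\hat\zeta$ is $\le C$, so $\hat\zeta(u,v)\le C$. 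Everything thus reduces to bounding $z(S^ku,S^kv)$ uniformly in $k$.

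First I would upgrade the hypothesis (\ref{r:sufficient}) from the single interval $[0,1]$ to all integer translates. Because $\mu_1,\mu_2$ are $S$-invariant their supports are $S$-invariant, so $S^ku\in\supp\mu_1$ and $S^kv\in\supp\mu_2$ for every $k\in\mathbb Z$. Applying (\ref{r:sufficient}) to the pair $(S^ku,S^kv)$, using $\pi_1(S^ku)-\pi_1(S^kv)=(S^kw)|_{[0,1]}$ and translation invariance of the Sobolev norms, I obtain a single constant $R$ (the supremum in (\ref{r:sufficient})) with
\begin{equation*}
\lVert w\rVert_{H^{2\gamma}([m,m+1])}\le R\,\lVert w\rVert_{H^{2\alpha}([m,m+1])}\qquad(m\in\mathbb Z).
\end{equation*}
Thus on every integer unit interval the two Sobolev norms of $w$ are comparable.

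The core step is then the estimate that the number of zeroes of $w$ in any unit interval is bounded by a constant $C=C(R,M,\delta_0)$, where $M$ bounds $\supp\mu_1,\supp\mu_2$ in $H^{2\gamma}_{\text{ul}}(\mathbb R)$. I would establish this by combining two ingredients. The comparability of the $H^{2\gamma}$ and $H^{2\alpha}$ norms on the interval controls the \emph{dominant frequency} of $w$: a genuine oscillation of frequency $K$ makes the ratio grow like $K^{2(\gamma-\alpha)}$, so the ratio bound caps $K$, which I would convert into a bound on the number of sign changes through an interpolation/Bernstein-type inequality together with the Angenent--Chen description of the nodal set (Lemmas \ref{l:local} and \ref{l:global}). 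The second ingredient is membership in $\mathcal Y$: since $w$ at time $0$ is the value of a solution whose data at time $-\delta_0$ is bounded in $H^{2\alpha}_{\text{ul}}$, parabolic smoothing forces rapid decay of the high-frequency content of $w$. Granting $C$, we conclude $z(S^ku,S^kv)\le C$ for all $k$, whence $\hat\zeta(u,v)\le C<\infty$ and $\mu_1\times\mu_2$ is non-degenerate.

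The main obstacle is exactly this core estimate, and the subtle point is that the norm-ratio bound \emph{alone} does not control the zero count: a small-amplitude, high-frequency oscillation placed where $w$ nearly vanishes can produce arbitrarily many zeroes while contributing only a bounded amount to both Sobolev norms, so the ratio stays bounded. A naive normalized compactness argument built on the compact embedding $H^{2\gamma}([0,1])\hookrightarrow H^{2\alpha}([0,1])$ meets precisely this configuration in the limit and fails. The role of $\mathcal Y$ is to exclude it: such an oscillation at time $0$ would, under the backward flow to time $-\delta_0$, be amplified exponentially in its frequency, contradicting the uniform $H^{2\alpha}_{\text{ul}}$ bound on the backward data. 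Making this quantitative and uniform in $m$ — reconciling the scale-sensitive interplay between the ratio bound and the smoothing estimate — is the delicate part of the argument.
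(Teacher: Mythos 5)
Your reduction steps are sound and coincide with the paper's: by $S$-invariance of the supports, (\ref{r:sufficient}) applies to every translated pair $(S^ku,S^kv)$, so everything reduces to a bound on $z(S^ku,S^kv)$ uniform in $k$, and the passage from such a bound to $\hat\zeta(u,v)<\infty$ and then to non-degeneracy (via measurability, Lemma \ref{l:3_1}) is routine. The genuine gap is exactly where you place it yourself: the core per-interval estimate $z(S^ku,S^kv)\le C(R,M,\delta_0)$ is never proved. You "grant $C$", sketch a Bernstein/frequency-capping heuristic, and then concede that a bounded $H^{2\gamma}/H^{2\alpha}$ ratio is compatible with arbitrarily many sign changes and that making the interplay with backward smoothing quantitative is "the delicate part". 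A proposal whose central estimate is acknowledged as open is not a proof. Moreover, the route you sketch is unpromising for the very reason you give: no frequency cap extracted from a Sobolev-norm ratio can by itself bound the zero count, and the claimed "exponential amplification of high frequencies under the backward flow" is a constant-coefficient heat-equation intuition that you never formulate for the variable-coefficient linear equation actually satisfied by $w=u-v$.

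What the paper does instead is precisely the compactness argument you dismiss as "naive", but run inside the class of solution differences rather than arbitrary $H^{2\alpha}$ functions, and this is what closes the loophole you worry about. By (\ref{r:sufficient}) and the $H^{2\gamma}_{\text{ul}}(\mathbb{R})$ bound on the supports, the set $\mathcal{C}$ of normalized differences $(\pi_1(u)-\pi_1(v))/\|\pi_1(u)-\pi_1(v)\|_{H^{2\alpha}([0,1])}$ is compact in $H^{2\alpha}([0,1])$. Every element of $\mathcal{C}$ comes from $w=u-v$ with both $u,v$ extending backward on $(-\delta_0,0]$, so $w$ is a nontrivial solution of a linear parabolic equation on a space-time strip containing $t=0$ in its interior, and the Angenent--Chen nodal theory (Lemmas \ref{l:local} and \ref{l:global}) applies: the zeroes of $w(\cdot,0)$ in $[0,1]$ are finitely many and of finite degree, and a degree-$m$ zero resolves into at most $m$ simple zeroes at nearby times. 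Combining this with the continuity statements of Lemma \ref{l:continuity}, exactly as in the proofs of Lemmas \ref{l:3_3} and \ref{l:53little}, one obtains around each point of $\mathcal{C}$ an open set on which the zero count of any member of $\mathcal{C}$ falling into it is uniformly bounded; the small-amplitude high-frequency perturbation you fear cannot occur \emph{within} $\mathcal{C}$, because it is excluded by the nodal structure of solution differences, not by a smoothing estimate. A finite subcover then yields the uniform constant $C$, and your own reduction finishes the lemma. So the correct repair of your argument is not a sharper Fourier-type inequality, but the observation that compactness only needs to be exploited on the set of normalized solution differences, where the zero count is finite and locally uniformly bounded.
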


\begin{remark} \label{rr:sufficient}
For example, this holds if $\mu_1$, $\mu_2$ are supported on disjoint sets in $\mathcal{Y}$, bounded in $H^{2\gamma}_{\text{ul}} (\mathbb{R})$; or alternatively if they are supported on finite sets in $\mathcal{Y}$.
\end{remark}

\begin{proof}
By assumptions, the set 
$$
\mathcal{C}:= \left\lbrace \frac{\pi_1(u)-\pi_1(v)}{||\pi_1(u)-\pi_1(v)||_{H^{2\alpha} ([0,1])}}, \: u \in \supp \mu_1, v \in \supp \mu_2, u \neq v \right\rbrace.
$$
is compact in ${H^{2\alpha} ([0,1])}$. By the local structure of zeroes and the fact that for all $w=u-v$, the solution exists backward in time on the interval $(-\delta_0,0]$, we can find an open, and by compactness finite cover $\mathcal{U}_j$ of $\mathcal{C}$, $j=1,...,m$, such that $z(w)$ is uniformly bounded for $w \in \mathcal{U}_j$. This and $S$-invariance of $\mu$ implies a finite uniform bound on $z(S^nu,S^nv)$, for $u \in \supp \mu_1$, $v \in \supp \mu_2$, $n \in \mathbb{Z}$.
\end{proof}

\begin{example} \label{e:itisok}
	The ergodic attractor for nonlinearities from subsection (\ref{ss:extended}) is non-degenerate. Indeed, consider a $S,T(t)$-invariant measure $\mu$ supported on a set $\tilde{\mathcal{B}}$ bounded in $H^{2\gamma}_{\text{ul}}(\mathbb{R})$, thus bounded in $L^{\infty}(\mathbb{R})$ by a constant $c_1 > 0$, and let $u,v \in \supp \mu$. By Theorem \ref{t:extgard}, (i), $u_{xx}=\partial V(x,u)/\partial u$, $v_{xx}=\partial V(x,v)/\partial v$, thus by the Mean Value Theorem,
	\begin{align*}
	||(\pi_1(u)-\pi_1(v))_{xx}||_{L^1([0,1])} \leq \max_{ x \in [0,1], |\xi| \leq c_1 }\left| \frac{\partial^2 V(x,\xi)}{\partial \xi^2} \right| ||\pi_1(u)-\pi_1(v)||_{L^1([0,1])}. 
	\end{align*}
	We can now deduce (\ref{r:sufficient}) by applying the standard interpolation and embedding estimates.	
\end{example}
Now it would suffice to answer the following:

\begin{problem}[2]
	Characterize nonlinearities $g$ such that for any $z = z(0) \in H^{2\alpha}_{\text{ul}} (\mathbb{R})$, there exists $t > 0$ and an invariant set $\tilde{\mathcal{B}}$, bounded in $H^{2\gamma}_{\text{ul}} (\mathbb{R})$, such that any $u,v \in \tilde{\mathcal{B}}$, $u \neq v$, satisfy (\ref{r:sufficient}), and such that $z(t) \in \tilde{\mathcal{B}}$.
\end{problem}

\begin{problem}[3]
	Characterize nonlinearities $g$ such that the attractor $\mathcal{A}$ (i.e. the set of the entire solutions) in the extended case consists of $u,v$, $u\neq v$ satisfying (\ref{r:sufficient}).
\end{problem}

\subsection{Further extended gradient systems} \label{ss:further}

As noted by Zelenyak \cite{Zelenyak:97}, and extended by Matano, Fiedler, Pol\'{a}\v{c}ik, Rocha and others (\cite{Fiedler:12}, \cite{Polacik:02} and references therein), there is a number of examples of nonlinearities $g$ with a Lyapunov function on the bounded domain (with periodic or other boundary conditions). The discussion in Subsection \ref{ss:extended} thus naturally leads to the following:

\begin{problem}[4] Prove (or disprove) that for all nonlinearities $g$ for which there exists a Lyapunov function in the bounded case (i.e. with periodic boundary conditions), the conclusions (i)-(iv) of Theorem \ref{t:extgard} hold.	
\end{problem}

For example, one can show that it holds for the cases considered in \cite{Fiedler:12}.

\subsection{Related problems}

We believe the application of the zero function on the space of measures could be applied to other classes of dynamical systems, and systems with a random force:

\begin{problem}[5]
	Extend results for the Burgers like equations to the quasi-periodic force case considered in \cite{Sinai:98}.
\end{problem}

\begin{problem}[6] {\it Work in progress.}
	Extend results for the Burgers like equations to the random force case considered in \cite{E:00, Sinai:96}, by using the fact that for the difference of two weak solutions $u(t)$, $v(t)$ with the same random force, the random force cancels out and the difference is smooth enough to apply the zero function method.
\end{problem}

\begin{problem}[7]
 Investigate whether the results for the equations $$u_t = \varepsilon u_{xx} + g(t,x,u,u_x),$$ $g$ a  Burgers like nonlinearity, extend to the entropy solutions in the inviscid limit $\varepsilon \rightarrow 0$, as considered in \cite{E:00}, by e.g. using in addition the zero function techniques for perturbations of parabolic differential equations developed by Pol\'{a}\v{c}ik and Tere\v{s}\v{c}ak \cite{Terescak:94}, or another method.
\end{problem}

\begin{problem}[8]
	Consider all the problems in this paper and apply zero-function techniques on the space of measures for analogous 1d, order-preserving discrete-space, continuous-time problems without and with a random force (the Frenkel-Kontorova models, \cite{Baesens:05,Slijepcevic13a} and references therein), or order-preserving discrete-space, discrete-time models (monotone coupled map lattices and probabilistic cellular automata, \cite{Coutinho:05,Toom:90} and references therein).
\end{problem}

This program has already been initiated in the case of the Frenkel-Kontorova models \cite{Slijepcevic13b,Slijepcevic15}.

\part{Appendices}

\appendix

\section{Fractional uniformly local spaces and their topologies} \label{s:fractional} 

We recall the key facts on uniformly local spaces used throughout the paper in the extended case. Let $\varphi^y(u)(x)=u(x+y)$ be the translation, $y \in \mathbb{R}$. The uniformly local spaces are given with:
\begin{align*}
||u||_{L^2_{\text{ul}}(\mathbb{R})}&= \sup_{y \in \mathbb{R}} \left( \int_{\mathbb{R}} e^{-|x+y|}u(x)^2 dx \right) ^{1/2},  \\
L^2_{\text{ul}}(\mathbb{R}) &= \left\lbrace  u \in L^2_{\text{loc}}(\mathbb{R}) , 
\: ||u||_{L^2_{\text{ul}}(\mathbb{R})} < \infty, \: 
\lim_{y \rightarrow 0}||\varphi^yu-u||_{L^2_{\text{ul}}(\mathbb{R})}=0  \right\rbrace ,  \\
H^k_{\text{ul}}(\mathbb{R}) &= \left\lbrace u\in L^2_{\text{ul}}(\mathbb{R}) \: | \: \: \partial^j_tu \in L^2_{\text{ul}}(\mathbb{R}) \text{ for all } j\leq k \right\rbrace.
\end{align*}
It is straightforward to show that the unbounded linear operator $Aq=-u_{tt}$ on $\mathcal{X}:=L^2_{\text{ul}}(\mathbb{R})$ has the domain $D(A)=H^{2}_{\text{ul}}(\mathbb{R})$, and that by using an explicit expression of the heat kernel, $A$ generates an analytic semigroup $\exp(-tA)$ on $\mathcal{X}$ with the usual a-priori bounds, thus it is sectorial \cite[Section 3]{Henry}. We can thus set $A_1=A+I$, and then $\sigma(A_1) \geq 1 > 0$, and define the fractional powers $A_1^{\alpha}$, $0 < \alpha < 1$, and the space $\mathcal{X}^{\alpha}:=D(A_1^{\alpha})$ as in \cite[Section 1.4]{Henry}. We occasionally write $H^{2 \alpha}_{\text{ul}}(\mathbb{R})$ instead of $\mathcal{X}^{\alpha}$ to distinguish it from the bounded case. We always use the graph norm on $\mathcal{X}^{\alpha}$
$$
||u||_{\mathcal{X}^{\alpha}}:= ||A_1^{\alpha}u||_{L^2_{\text{ul}}(\mathbb{R})}.
$$
Now local existence, regularity and continuity with respect to initial conditions of (\ref{r:main}) holds on $\mathcal{X}^{\alpha}$, with the usual definitions of the mild solution; and the variations of constants formula holds (see \cite{Gallay:01}, Section 7.2 for details). 

The following elementary observation will imply that many choices of "local", coarse topologies on $\mathcal{X}^{\alpha}$ are the same

\begin{lemma} \label{l:topology}
	Consider a subset $\mathcal{Z} \subset \mathcal{Y}_0 \subset \mathcal{Y}_1$, where $\mathcal{Y}_0$ and $\mathcal{Y}_1$ are metrizable and complete topological spaces with respective topologies $\tau_0$, $\tau_1$, such that $\tau_1|_{\mathcal{Y}_0} \subset \tau_0$. 
	Furthermore, assume that $\mathcal{Z}$ is relatively compact in both $ \mathcal{Y}_0$ and $\mathcal{Y}_1$. Then the closure of $\mathcal{Z}$ in $\mathcal{Y}_0$ and $\mathcal{Y}_1$ is the same, and the topologies $\tau_0$ and $\tau_1$ induced on $\operatorname{Cl}(\mathcal{Z})$ are the same.
\end{lemma}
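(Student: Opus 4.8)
The plan is to reduce everything to two classical point-set facts: that a continuous image of a compact set into a Hausdorff space is closed, and that a continuous bijection from a compact space onto a Hausdorff space is a homeomorphism. Since $\mathcal{Y}_0$ and $\mathcal{Y}_1$ are metrizable they are in particular Hausdorff, so both facts are available. The engine behind everything is that the hypothesis $\tau_1|_{\mathcal{Y}_0}\subset\tau_0$ makes the identity inclusion $\iota:(\mathcal{Y}_0,\tau_0)\to(\mathcal{Y}_1,\tau_1)$ continuous, because the preimage under $\iota$ of any $\tau_1$-open set is its trace on $\mathcal{Y}_0$, which lies in $\tau_0$.

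First I would prove the closure equality. Write $K:=\operatorname{Cl}_{\tau_0}(\mathcal{Z})$, which is $\tau_0$-compact since $\mathcal{Z}$ is relatively compact in $\mathcal{Y}_0$. As $\iota$ is continuous, $K=\iota(K)$ is $\tau_1$-compact, hence $\tau_1$-closed because $\mathcal{Y}_1$ is Hausdorff; since $\mathcal{Z}\subset K$ this gives $\operatorname{Cl}_{\tau_1}(\mathcal{Z})\subset K$. For the reverse inclusion I would argue at the level of neighborhoods: fix $x\in K\subset\mathcal{Y}_0$ and let $V$ be any $\tau_1$-open neighborhood of $x$; then $V\cap\mathcal{Y}_0\in\tau_1|_{\mathcal{Y}_0}\subset\tau_0$ is a $\tau_0$-neighborhood of $x$, so by $x\in\operatorname{Cl}_{\tau_0}(\mathcal{Z})$ it meets $\mathcal{Z}$, and since $\mathcal{Z}\subset\mathcal{Y}_0$ in fact $V$ meets $\mathcal{Z}$. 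Hence $x\in\operatorname{Cl}_{\tau_1}(\mathcal{Z})$, so $K\subset\operatorname{Cl}_{\tau_1}(\mathcal{Z})$ and therefore $\operatorname{Cl}_{\tau_0}(\mathcal{Z})=\operatorname{Cl}_{\tau_1}(\mathcal{Z})=K$.

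Finally, to identify the induced topologies on $K$, I would consider the identity map $\operatorname{id}:(K,\tau_0|_K)\to(K,\tau_1|_K)$. It is a continuous bijection (continuity again from $\tau_1|_{\mathcal{Y}_0}\subset\tau_0$), its domain is compact (as $K$ is $\tau_0$-compact), and its codomain is Hausdorff (a subspace of the metrizable $\mathcal{Y}_1$). A continuous bijection from a compact space onto a Hausdorff space is a homeomorphism, so $\tau_0|_K=\tau_1|_K$, which is the second assertion.

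I do not expect a genuine obstacle: the content is purely topological, and the single point demanding care is the direction of the fineness hypothesis $\tau_1|_{\mathcal{Y}_0}\subset\tau_0$, which is precisely what makes $\iota$ continuous and thereby drives both the $\tau_1$-closedness of $K$ and the homeomorphism at the end. As organized above, completeness of $\mathcal{Y}_0,\mathcal{Y}_1$ and relative compactness of $\mathcal{Z}$ in $\mathcal{Y}_1$ are not actually used; if one prefers, relative compactness in $\mathcal{Y}_1$ can be invoked to obtain the inclusion $\operatorname{Cl}_{\tau_1}(\mathcal{Z})\subset K$ by a symmetric compactness argument, but the Hausdorff route above is more economical.
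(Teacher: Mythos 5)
Your proof is correct, and it takes a different route from the paper's. The paper argues sequentially, exploiting metrizability: it shows that a sequence in $\mathcal{Z}$ converges in $\mathcal{Y}_1$ if and only if it converges in $\mathcal{Y}_0$ (the nontrivial direction by extracting, via relative compactness in $\mathcal{Y}_0$, a $\tau_0$-convergent subsequence whose limit must agree with the $\tau_1$-limit by uniqueness of limits), and then deduces both claims from the fact that closures and topologies in metric spaces are determined by convergent sequences. You instead run a purely point-set argument through the continuous inclusion $\iota$: compactness of $\operatorname{Cl}_{\tau_0}(\mathcal{Z})$ plus Hausdorffness of $\mathcal{Y}_1$ gives $\tau_1$-closedness of that set, the trace-of-neighborhoods argument (equivalently, $f(\operatorname{Cl} A)\subset \operatorname{Cl} f(A)$ for continuous $f$) gives the reverse inclusion, and the theorem that a continuous bijection from a compact space onto a Hausdorff space is a homeomorphism identifies the induced topologies. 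Your version buys strictly more generality — it needs only that $\mathcal{Y}_0,\mathcal{Y}_1$ be Hausdorff, and, as you correctly note, neither completeness nor relative compactness of $\mathcal{Z}$ in $\mathcal{Y}_1$ is used (the paper's proof likewise does not use these, but it does genuinely use metrizability for its sequential characterizations). The paper's version is more elementary in the metric setting and matches how the lemma is applied later, namely as an equivalence of notions of convergence on bounded invariant sets; yours isolates the structural reason the lemma is true.
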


\noindent (We denote by $\tau_1|_{\mathcal{Y}_0}$ the induced topology $\tau_1$ on $\mathcal{Y}_0$).
\begin{proof}
	We first show that a sequence $u_n \in \mathcal{Z}$ converges in $\mathcal{Y}_0$ if and only if it converges in $\mathcal{Y}_1$. As $\tau_1|_{\mathcal{Y}_0} \subset \tau_0$, the non-trivial direction is that convergence in $\mathcal{Y}_1$ implies convergence in $\mathcal{Y}_0$. Assume the contrary, and let $v \in \mathcal{Y}_1$ be a limit of $u_n \in \mathcal{Z}$ in $\mathcal{Y}_1$. By the assumptions and relative compactness, there exists a subsequence $u_{n_k}$ converging to some $\tilde{v} \in \mathcal{Y}_0$ in $\mathcal{Y}_0$. But then $u_{n_k}$ also converges to $\tilde{v}$ in $\mathcal{Y}_1$, thus $\tilde{v}=v$. 
	We deduce that the closure of $\mathcal{Z}$  in both topologies is the same. As the topology in metric spaces is entirely determined by convergence, it suffices to repeat the argument above for any sequence in $u_n \in Cl({\mathcal{Z}})$.
\end{proof}

We apply Lemma \ref{l:topology} by setting $\mathcal{Y}_0$ to be the closure of $\mathcal{B}$ in  $H^{2\alpha}_{\text{loc}}(\mathbb{R})$ with the induced $H^{2\alpha}_{\text{loc}}(\mathbb{R})$ topology denoted by $\tau_0$, and $\mathcal{Z}$ to be the subset of $\mathcal{B}$ of all $u_0$ such that the solution of (\ref{r:main}), $u(0)=u_0$ can be extended backwards in time on $(\delta_0,0)$ ($\delta_0$ is as in Section \ref{s:prelim}). Let $\mathcal{Y}_1$ be the closure of $\mathcal{B}$ in any of the following spaces with respective induced topologies $\tau_1$: $L^{\infty}_{\text{loc}}(\mathbb{R})$ (i.e. we consider uniform convergence on compact sets); $C^1_{\text{loc}}(\mathbb{R})$; or $H^{2\delta}_{\text{loc}}(\mathbb{R})$ for $1/2 \leq \delta \leq \alpha$ (defined as topology of convergence in $H^{2\gamma}([-n,n])$ for all $n \in \mathbb{N}$). In other words, all the aforementioned topologies on $\tilde{ \mathcal{B}}=\operatorname{Cl}(\mathcal{Z})$ are equivalent, and $\tilde{ \mathcal{B}}$ is compact.

\section{Interpretation of the ergodic attractor} \label{s:ergodic}

This Appendix is dedicated to showing elementary properties of the ergodic attractor $\mathcal{E}$, and its interpretation in the bounded and the extended case. We use all the notation and assumptions from Section \ref{s:prelim}, though the results hold for general continuous semiflows $T(t)$, respectively maps $T$ on compact metric spaces $\tilde{ \mathcal{B}}$. In the extended case, we in addition use that $T(t)$, resp. $T$ commute with a homeomorphism $S$.

\subsection{Ergodic attractor}
We first show that $\mathcal{M}(\mathcal{B})$, thus $\mathcal{E} = \cup_{\mu \in \mathcal{M}(B)}\: \supp \mu$ are non-empty.
\begin{lemma} \label{l:nonempty}
	The set $\mathcal{M}(\mathcal{B})$ is non-empty.
\end{lemma}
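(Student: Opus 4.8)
The plan is to invoke the Krylov--Bogolyubov construction of invariant measures for continuous transformations of a compact metric space, together with a fixed-point argument to accommodate the two commuting actions in the extended case. Recall from Section \ref{s:prelim} (and Lemma \ref{l:topology}) that $\tilde{\mathcal{B}}$ is compact and metrizable in the coarse topology, that $T$ (resp. the semiflow $T(t)$) and, in the extended case, $S$ are continuous on it, and that $\tilde{\mathcal{B}} \subset \mathcal{B}$; hence any Borel probability measure invariant under all the actions of Table \ref{t:table} and supported on $\tilde{\mathcal{B}}$ belongs to $\mathcal{M}(\mathcal{B})$. Since $\tilde{\mathcal{B}}$ is a compact metric space, the space $\mathcal{P}(\tilde{\mathcal{B}})$ of Borel probability measures on it, equipped with the weak$^*$ topology, is compact, convex, and metrizable (Riesz representation together with Banach--Alaoglu).

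First I would produce a single $T$-invariant measure. Fix any $u_0 \in \tilde{\mathcal{B}}$ and form the empirical averages $\mu_n = \frac{1}{n}\sum_{k=0}^{n-1}(T^k)_\ast \delta_{u_0}$ in the time-periodic case, resp.\ $\mu_n = \frac{1}{n}\int_0^n (T(s))_\ast \delta_{u_0}\, ds$ in the autonomous case. By weak$^*$ compactness of $\mathcal{P}(\tilde{\mathcal{B}})$ some subsequence converges to a measure $\mu$, and the standard telescoping estimate, which uses only continuity of $T$ (resp.\ joint continuity of the semiflow), shows that $\mu$ is $T$-invariant (resp.\ invariant under $T(t)$ for every $t \geq 0$). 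This already settles the bounded case, where invariance under the single action $T$ (resp.\ $T(t)$) is all that is required.

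It remains to upgrade this to simultaneous $S$-invariance in the extended case. Let $\mathcal{M}_T \subset \mathcal{P}(\tilde{\mathcal{B}})$ be the set of $T$-invariant (resp.\ $T(t)$-invariant) measures; by the previous step it is non-empty, and it is clearly convex and weak$^*$-closed, hence compact. Because $S$ commutes with $T$ (resp.\ with each $T(t)$) and is continuous, the push-forward $S_\ast$ maps $\mathcal{M}_T$ affinely and continuously into itself. The Markov--Kakutani fixed-point theorem then yields a measure $\mu \in \mathcal{M}_T$ with $S_\ast \mu = \mu$, i.e.\ a measure invariant under both $S$ and $T$, which lies in $\mathcal{M}(\mathcal{B})$ and completes the proof. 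Equivalently, one may bypass a separate fixed-point theorem by averaging directly over the amenable group generated by the two commuting maps, taking weak$^*$ limits of $\frac{1}{N^2}\sum_{j,k=0}^{N-1}(S^j T^k)_\ast \delta_{u_0}$. There is no serious obstacle: the only point demanding care is the passage between the two commuting actions, and the compact-convex structure of $\mathcal{M}_T$ reduces this to a routine fixed-point statement.
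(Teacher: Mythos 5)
Your proof is correct, but your primary route in the extended case is genuinely different from the paper's. The paper treats both commuting actions in a single stroke: it fixes $u \in \mathcal{A}$, forms the double Ces\`{a}ro averages
\[
\sum_{m=-L}^{L-1} \sum_{n=1}^L \tfrac{1}{2L^2}\, (S^m)^*(T^n)^*\delta_{u},
\]
and extracts a weak$^*$-convergent subsequence; invariance under each of $S$ and $T$ follows because shifting the index set by one unit in either variable changes only $O(L)$ of the $2L^2$ terms (in the E/A case the sum over $n$ is replaced by an integral of $T(t)$ over $[0,L]$). This is exactly the ``averaging over the amenable group'' construction that you mention only as an equivalent alternative in your last sentence. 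Your main argument instead splits the construction in two: Krylov--Bogolyubov for $T$ alone, and then the observation that $S_\ast$ is an affine, weak$^*$-continuous self-map of the nonempty, convex, weak$^*$-compact set of $T$-invariant measures (self-map because $S$ and $T$ commute), so Markov--Kakutani yields a common fixed point. Both arguments are sound, and both correctly reduce to plain Krylov--Bogolyubov in the bounded case, where $S$ is the identity. What each buys: the paper's double averaging is elementary and self-contained, needing nothing beyond weak$^*$ compactness and a telescoping estimate; your fixed-point route is softer and more modular, extending verbatim to any commuting family of continuous maps --- indeed you could apply Markov--Kakutani directly to the full commuting family $\lbrace S_\ast \rbrace \cup \lbrace T(t)_\ast : t \geq 0 \rbrace$ acting on $\mathcal{P}(\tilde{\mathcal{B}})$ and dispense with the preliminary Krylov--Bogolyubov step altogether --- at the cost of invoking a fixed-point theorem where an explicit construction suffices.
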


\begin{proof}
	Existence of the invariant measure in the bounded case is the classical Krylov - Bogolioubov theorem for continuous maps, resp. continuous semiflows on compact metrizable sets \cite{Walters}. To show it in the E/P case, i.e. for commuting $S,T$, it suffices to find a weak$^*$-convergent subsequence of the sequence of Borel probability measures
	$$ \sum_{m=-L}^{L-1} \sum_{n=1}^L \frac{1}{2L^2} (S^m)^*(T^n)^*\delta_{u},$$
	where $f^*\mu$ is the standard pull of a measure, and $\delta_u$ is the (Dirac) probability measure concentrated on a single, fixed $u \in \mathcal{A}$. Analogously we prove the claim in the E/A case by replacing summing $T^n$ from $0$ to $L$ with integrating $T(t)$ from $0$ to $L$.
\end{proof}

\begin{lemma} \label{l:properties}
	(i) $\mathcal{E}$ is invariant.
	
	(ii) $\mathcal{E}$ is closed and compact,
	
	(iii) $\mathcal{E}\subset \mathcal{A}$, thus it consists of entire orbits. 
	
	(iv) In the autonomous case $T(t)|_{\mathcal{E}}$ is a continuous flow. In the periodic case, $T|_{\mathcal{E}}$, $S|_{\mathcal{E}}$ are homeomorphisms.
\end{lemma}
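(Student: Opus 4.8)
The plan is to dispatch the four assertions in turn, the common engine being the elementary fact that for a continuous self-map $f$ of the compact metrizable space $\tilde{\mathcal{B}}$ (see Appendix \ref{s:fractional}) and a Borel probability measure $\mu$ one has $\supp(f_*\mu)=\overline{f(\supp\mu)}$, together with convexity and closedness under countable convex combinations of the invariant measures. For (i) I would first record that if $\mu$ is invariant under an action $f$ (one of $T$, $T(t)$, or $S$ in Table \ref{t:table}), then $\supp\mu=\supp(f_*\mu)=\overline{f(\supp\mu)}$; since $\supp\mu$ is a closed subset of the compact $\tilde{\mathcal{B}}$ it is compact, so $f(\supp\mu)$ is already closed and $f(\supp\mu)=\supp\mu$. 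Thus each $\supp\mu$ is strictly invariant under every action, and $\mathcal{E}=\cup_{\mu}\supp\mu$ inherits $f(\mathcal{E})=\cup_{\mu}f(\supp\mu)=\mathcal{E}$, which is the invariance claimed.

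For (ii) the only genuine issue is that an arbitrary union of closed sets need not be closed, so I would argue sequentially, which is legitimate since the coarse topology on $\tilde{\mathcal{B}}$ is metrizable. Given $p\in\overline{\mathcal{E}}$, choose $p_n\to p$ with $p_n\in\supp\mu_n$ and each $\mu_n$ invariant, and set $\mu=\sum_{n}2^{-n}\mu_n$. A direct computation $f_*\mu(A)=\mu(f^{-1}A)=\sum_n 2^{-n}\mu_n(f^{-1}A)=\mu(A)$ shows $\mu$ is again an invariant probability measure, while $\supp\mu=\overline{\cup_n\supp\mu_n}\ni p$; hence $p\in\mathcal{E}$, so $\mathcal{E}$ is closed, and it is compact as a closed subset of the compact $\tilde{\mathcal{B}}$.

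For (iii) I would exploit the strict invariance obtained in (i): for $u_0\in\mathcal{E}$ and each $t>0$ there is $u_{-t}\in\mathcal{E}$ with $T(t)u_{-t}=u_0$ (respectively a $T$-preimage in the periodic case). Threading these preimages and the connecting solution segments together produces a bounded entire orbit through $u_0$ lying in $\mathcal{E}\subseteq\mathcal{B}$, so by the characterization of $\mathcal{A}$ as the set of $u_0\in\mathcal{B}$ admitting a solution for all $t\in\mathbb{R}$ we conclude $u_0\in\mathcal{A}$; this simultaneously shows that $\mathcal{E}$ consists of entire orbits.

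Finally, for (iv): since $S$ is a homeomorphism of $\tilde{\mathcal{B}}$ and $S(\mathcal{E})=\mathcal{E}$, the restriction $S|_{\mathcal{E}}$ is a homeomorphism. For $T|_{\mathcal{E}}$ (and each $T(t)|_{\mathcal{E}}$ in the autonomous case), surjectivity is the strict invariance of (i) and continuity is given, so the only real content is injectivity, i.e. backward uniqueness for (\ref{r:main}); I expect this to be the main obstacle, and I would resolve it with the zero-number machinery rather than a separate PDE argument. Indeed, if $u_0\neq v_0$ are entire solutions in $\mathcal{E}$, then $w=u-v\not\equiv 0$, so by the global structure of zeroes (Lemma \ref{l:global}, in particular (iv)) the slice $w(\cdot,t)$ has only finitely many zeroes in each bounded interval for every $t$; were $w(\cdot,1)\equiv 0$ the whole line $t=1$ would lie in $N_w$, contradicting finiteness, so $w(\cdot,1)\not\equiv 0$, that is $Tu_0\neq Tv_0$. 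A continuous injective self-surjection of the compact set $\mathcal{E}$ is automatically a homeomorphism, and in the autonomous case the inverses $T(-t):=(T(t)|_{\mathcal{E}})^{-1}$ assemble, by the group law and compactness, into a continuous flow on $\mathcal{E}$ indexed by all $t\in\mathbb{R}$.
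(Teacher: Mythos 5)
Your proposal is correct, and in parts (i) and (ii) it is essentially the paper's own argument: (i) is the standard fact that the support of an invariant measure is invariant (you additionally make the useful point that compactness of $\supp\mu$ upgrades $\overline{f(\supp\mu)}=\supp\mu$ to strict invariance $f(\supp\mu)=\supp\mu$), and (ii) is exactly the paper's device of the countable convex combination $\mu=\sum_n 2^{-n}\mu_n$, whose support is closed and contains every $\supp\mu_n$. Where you genuinely diverge is in (iii) and (iv). For (iii) the paper argues measure-theoretically: it writes $\mathcal{A}=\bigcap_{n\geq 1}T^n(\mathcal{B})$ as a decreasing intersection of compact sets (using (A4)) and observes that each $T^n(\mathcal{B})$ has full measure for any invariant $\nu$, since $\nu\left(T^n(\mathcal{B})\right)=\nu\left(T^{-n}(T^n(\mathcal{B}))\right)\geq\nu(\mathcal{B})=1$, whence $\supp\nu\subset\mathcal{A}$; you instead thread backward preimages inside $\supp\mu$ using the strict invariance from (i). Both are sound; the paper's route needs no selection of preimages and handles all of $\supp\nu$ at once, while yours buys slightly more, namely an entire orbit through each point of $\mathcal{E}$ that remains inside $\mathcal{E}$ itself. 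For (iv) the paper is laconic (``follows from (i), (iii) and the properties of $T(t)$, $T$ and $S$''), implicitly appealing to backward uniqueness of (\ref{r:main}); your derivation of injectivity from Lemma \ref{l:global}, (iv) --- a segment $[0,1]\times\{1\}$ can meet only finitely many curves $(\gamma_i(t),t)$, each at most once since they are graphs over time, plus finitely many singular points, so $w(\cdot,1)\equiv 0$ is impossible for $w\not\equiv 0$ --- is a legitimate way to obtain backward uniqueness from results already quoted in the paper, and the closing steps (a continuous bijection of a compact metric space onto itself is a homeomorphism; the inverses assemble into a continuous flow) are standard. In short: correct, with (iii) proved by a different mechanism and (iv) supplying details the paper leaves implicit.
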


\begin{proof} The claim (i) follows from the invariance of every $\mu \in \mathcal{M}(\mathcal{B})$ and the definition of support. To show (ii), consider a convergent sequence $u_n \in \mathcal{E} \subset \mathcal{A}$ converging to some $u \in \mathcal{A}$, and the associated invariant measures $\mu_n \in \mathcal{M}(\mathcal{B})$ such that $u_n \in \supp \mu_n$. The measure 
$$ \mu = \sum_{n=1}^{\infty} 2^{-n}\mu_n$$
is by definition in $\mathcal{M}(\mathcal{B})$, and also by the definition the support of $\mu$ it contains supports of $\mu_n$ for all $n \geq 1$, thus $u_n \in \supp\mu$. As $\supp\mu$ is by definition closed, we deduce that $u \in \supp\mu$, thus $\mathcal{E}$ is closed. As it is a subset of a compact set $\tilde{ \mathcal{B}}$, it is compact. 

Let $\nu \in \mathcal{M}(\mathcal{B})$. Consider the sequence of sets $\mathcal{B}_n :=T^n(\mathcal{B})$. As by (A4), $T(\mathcal{B}) \subset \mathcal{B}$, the sequence $\mathcal{B}_n$ is decreasing. By the characterization of the attractor as the set of entire orbits, $\mathcal{A}=\cap_{k=1}^{\infty} \mathcal{B}_k$. As for $n \geq 1$, $\mathcal{B}_n \subset \tilde{\mathcal{B}}$ is compact, and by $T$-invariance of $\mu$, all of $\mathcal{B}_n$ are of full measure, thus $\cap_{k=1}^{\infty} \mathcal{B}_k$ is of full measure and closed. We conclude that $\supp\nu \subset \cap_{k=1}^{\infty} \mathcal{B}_k= \mathcal{A}$, which completes (iii). The claim (iv) follows from (i),(iii) and the properties of $T(t)$, $T$ and $S$ on $\mathcal{B}$.
\end{proof}

\subsection{Asymptotics and the ergodic attractor in the bounded case} \label{ss:ergbounded} In \cite{Gallay:01} we introduced the notion of the {\it $\omega$-limit set on average}, denoted by $\bar{\omega}(u)$, as the set of $x \in \tilde{\mathcal{B}}$ such that $u(t)$ converges to $x$ for non-zero density of times $t$. The physical meaning of $\bar{\omega}(u)$ is as follows: if we start from $u$ and wait long enough, we are going to observe only $v \in \bar{\omega}(u)$. Even though there may be $v \in \omega(u) \setminus \bar{\omega}(u)$, any neighbourhood of such $v$ is visited only for zero density of times, thus effectively non-observable (see Lemma \ref{l:observability}). More precisely, in the time-periodic case (i.e. for the map $T$), we set
\begin{equation}
\bar{\omega}(u)=\left\lbrace v \: : \: \limsup_{n \rightarrow \infty}\frac{1}{n}\sum_{k=0}^{n-1} \mathbf{1}_{\mathcal{U}}(T^ku) >0 \text{ for all open neigborhoods }\mathcal{U} \text{ of }x \right\rbrace, \notag
\end{equation}
\noindent and in the autonomous case (i.e. for a semiflow $T(t)$), we have (using the notation $T(t)u=u(t)$):
\begin{equation}
\bar{\omega}(u)=\left\lbrace v \: : \: \limsup_{T \rightarrow \infty}\frac{1}{T}\int_{0}^T \mathbf{1}_{\mathcal{U}}(T(t)u)dt >0 \text{ for all open neigborhoods }\mathcal{U} \text{ of }x \right\rbrace. \notag
\end{equation}
The following lemma shows that the properties of $\bar{\omega}(u)$ reflect those of ${\omega}(u)$:
\begin{lemma} \label{l:onaverage}
	The set $\bar{\omega}(u)$ is non-empty, compact, $T$- (resp. $T(t)$-) invariant, and $\bar{\omega}(u) \subset \omega(u)$.
\end{lemma}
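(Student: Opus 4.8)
The plan is to work throughout with the empirical (occupation) measures $\mu_n := \frac{1}{n}\sum_{k=0}^{n-1}\delta_{T^k u}$ in the time-periodic case (and their continuous-time analogue $\frac{1}{T}\int_0^T \delta_{T(t)u}\,dt$ in the autonomous case), which are Borel probability measures on the compact metric space $\tilde{\mathcal{B}}$. First I would establish non-emptiness: by weak$^*$ compactness of the probability measures on a compact metric space, some subsequence $\mu_{n_j}$ converges to a measure $\mu$, which is $T$-invariant by the standard Krylov--Bogolioubov argument (as in Lemma \ref{l:nonempty}). For any $v\in\supp\mu$ and open $\mathcal{U}\ni v$, the portmanteau theorem gives $\liminf_j \mu_{n_j}(\mathcal{U})\geq \mu(\mathcal{U})>0$, and since $\mu_{n_j}(\mathcal{U})=\frac{1}{n_j}\sum_{k=0}^{n_j-1}\mathbf{1}_{\mathcal{U}}(T^k u)$ this forces $\limsup_n \frac{1}{n}\sum_{k=0}^{n-1}\mathbf{1}_{\mathcal{U}}(T^k u)>0$, i.e. $v\in\bar{\omega}(u)$. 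Hence $\emptyset\neq\supp\mu\subseteq\bar{\omega}(u)$.

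For closedness I would show the complement is open. If $v\notin\bar{\omega}(u)$, there is an open $\mathcal{U}\ni v$ whose visit-density $\limsup$ vanishes; as $\mathcal{U}$ is open, this same $\mathcal{U}$ is a neighborhood witnessing $w\notin\bar{\omega}(u)$ for every $w\in\mathcal{U}$, so $\mathcal{U}\cap\bar{\omega}(u)=\emptyset$. Being closed in the compact set $\tilde{\mathcal{B}}$, the set $\bar{\omega}(u)$ is compact. The inclusion $\bar{\omega}(u)\subseteq\omega(u)$ is then immediate: positive upper density of $\{k:T^k u\in\mathcal{U}\}$ forces infinitely many such $k$, so each $v\in\bar{\omega}(u)$ is an accumulation point of the orbit. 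In particular $\bar{\omega}(u)\subseteq\omega(u)\subseteq\mathcal{A}$, where $T|_{\mathcal{A}}$ is a homeomorphism.

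For invariance, forward invariance is the easy half: for $v\in\bar{\omega}(u)$ and open $\mathcal{W}\ni Tv$, continuity makes $T^{-1}\mathcal{W}$ an open neighborhood of $v$, and since $\mathbf{1}_{T^{-1}\mathcal{W}}(T^k u)=\mathbf{1}_{\mathcal{W}}(T^{k+1}u)$ the two Ces\`aro averages differ by the boundary term $\frac{1}{n}(\mathbf{1}_{\mathcal{W}}(T^n u)-\mathbf{1}_{\mathcal{W}}(u))\to 0$; the positive $\limsup$ therefore transfers and $Tv\in\bar{\omega}(u)$.

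The main obstacle, and the step I would treat most carefully, is backward invariance: $T^{-1}v\in\bar{\omega}(u)$ for $v\in\bar{\omega}(u)$, where $T^{-1}$ is taken in the homeomorphism $T|_{\mathcal{A}}$ (legitimate since $v\in\mathcal{A}$). The difficulty is that the orbit points $T^k u$ need not lie on $\mathcal{A}$. I would combine three facts: the attractor attracts, so $\operatorname{dist}(T^k u,\mathcal{A})\to 0$; both $T$ and $T^{-1}$ are uniformly continuous on the compact $\mathcal{A}$; and $T$ is continuous. Concretely, given a neighborhood $\mathcal{U}$ of $w:=T^{-1}v$, uniform continuity yields a neighborhood $\mathcal{V}$ of $v$ such that, for all sufficiently large $k$, $T^{k+1}u\in\mathcal{V}$ forces $T^k u\in\mathcal{U}$: approximate $T^k u$ by a nearby $b_k\in\mathcal{A}$, observe $T b_k$ is then close to $v$, and apply the uniformly continuous $T^{-1}|_{\mathcal{A}}$ to conclude $b_k$, hence $T^k u$, is close to $w$. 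Thus the density of visits to $\mathcal{U}$ dominates, up to the shift by one, the density of visits to $\mathcal{V}$, which is positive because $v\in\bar{\omega}(u)$; hence $w\in\bar{\omega}(u)$. The autonomous case is identical with sums replaced by time integrals and $T$ replaced by the flow $T(t)$.
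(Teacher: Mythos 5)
Your proof is correct in the setting of the paper, but there is no internal argument to compare it against: the paper disposes of Lemma \ref{l:onaverage} by citing \cite{Gallay:01}, Proposition 5.4 (semiflow case; map case declared analogous). What you wrote is a self-contained substitute, and its machinery --- occupation measures $\mu_n=\frac{1}{n}\sum_{k=0}^{n-1}\delta_{T^ku}$, weak$^*$ compactness, Krylov--Bogolioubov invariance of the limit, and the portmanteau inequality for open sets --- is exactly the machinery this paper deploys one lemma later, in the proof of Lemma \ref{l:subset}; indeed your non-emptiness step ($\emptyset\neq\supp\mu\subseteq\bar{\omega}(u)$ for any weak$^*$ limit $\mu$) is precisely the inclusion $\supp\mu\subset\bar{\omega}(u)$ established there. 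Closedness, the inclusion $\bar{\omega}(u)\subset\omega(u)$, and forward invariance (with the $\frac{1}{n}$ boundary-term bookkeeping) are all correct as written.

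The one place where you assert more than you prove is backward invariance: you take for granted that $T|_{\mathcal{A}}$ is a homeomorphism. Surjectivity of $T$ on $\mathcal{A}$ is free ($\mathcal{A}$ consists of entire orbits) and continuity of the inverse follows from compactness, but injectivity is backward uniqueness for (\ref{r:main}) --- a PDE fact (available here, e.g. via Chen's strong unique continuation theorem \cite{Chen:98}), not a consequence of the topological hypotheses alone. In context this is forgivable, since the paper itself invokes the same property of $T$ in Lemma \ref{l:properties},(iv); but Appendix \ref{s:ergodic} claims these results hold for arbitrary continuous maps $T$ on compact metric spaces, and for a non-injective $T$ your argument has no $T^{-1}$ to apply. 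A repair that needs no injectivity: each weak$^*$ limit $\mu$ of the occupation measures satisfies $T(\supp\mu)=\supp\mu$ (forward inclusion from $\mu(T^{-1}\mathcal{W})=\mu(\mathcal{W})$; backward inclusion because $T(\supp\mu)$ is a closed set of full $\mu$-measure), the closed-set portmanteau inequality gives $\bar{\omega}(u)=\operatorname{Cl}\bigl(\bigcup_{\mu}\supp\mu\bigr)$, and full invariance of $\bar{\omega}(u)$ then follows by taking closures. Your version also quietly uses the attraction property $\operatorname{dist}(T^ku,\mathcal{A})\to 0$; that step is sound, since $\mathcal{A}=\bigcap_{n}T^n(\mathcal{B})$ is a nested intersection of compacta in $\tilde{\mathcal{B}}$.
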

The proof is in \cite{Gallay:01}, Proposition 5.4 (for the semiflow case, the map case is analogous). We also give in \cite{Gallay:01} an example of (\ref{r:main}) for which $\omega(u) \setminus \bar{\omega}(u) \neq \emptyset$ (for example, consider $u$ whose $\omega(u)$ consists of exactly two equilibria and their two heteroclinic connections. Then $\bar{\omega}(u)$ is the two equilibria).

\begin{lemma} \label{l:subset}
	We have that $\mathcal{E}= Cl \left( \cup_{u \in \tilde{\mathcal{B}}}\bar{\omega}(u) \right)$.
\end{lemma}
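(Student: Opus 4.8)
The plan is to prove the two inclusions separately, carrying out the argument in the time-periodic (map) case; the autonomous case is identical, with Ces\`aro sums replaced by time integrals and the discrete Birkhoff theorem by its continuous-time analogue. Note first that this is a purely abstract statement about a continuous map $T$ (commuting with $S$) on the compact metric space $\tilde{\mathcal{B}}$: no dissipativity or zero-number input is used, only the definition of $\bar{\omega}$ and the basic properties in Lemmas \ref{l:onaverage} and \ref{l:properties}.

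For the inclusion $\operatorname{Cl}\left(\cup_{u}\bar{\omega}(u)\right)\subset\mathcal{E}$, since $\mathcal{E}$ is closed by Lemma \ref{l:properties}(ii), it suffices to show $\bar{\omega}(u)\subset\mathcal{E}$ for each $u\in\tilde{\mathcal{B}}$. Fix $v\in\bar{\omega}(u)$. For each $j\in\mathbb{N}$ the open ball $\mathcal{U}_j=B(v,1/j)$ satisfies $c_j:=\limsup_n \frac1n\sum_{k=0}^{n-1}\mathbf{1}_{\mathcal{U}_j}(T^ku)>0$ by the definition of $\bar{\omega}$. I would pick a sequence $n\to\infty$ realising this $\limsup$ and extract a weak$^*$-convergent subsequence of the empirical measures $\frac1n\sum_{k=0}^{n-1}\delta_{T^ku}$, whose limit $\mu_j$ is $T$-invariant by the standard Krylov--Bogolioubov argument (using continuity of $T$ on the compact $\tilde{\mathcal{B}}$). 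The delicate point is to transfer positive visiting frequency into positive mass: the Portmanteau inequality for open sets points the wrong way, so I would instead apply it to the \emph{closed} set $\operatorname{Cl}(\mathcal{U}_j)$, getting $\mu_j(\operatorname{Cl}(\mathcal{U}_j))\ge\limsup_n \frac1n\sum_{k=0}^{n-1}\mathbf{1}_{\operatorname{Cl}(\mathcal{U}_j)}(T^ku)\ge c_j>0$ (the last inequality since $\mathbf{1}_{\operatorname{Cl}(\mathcal{U}_j)}\ge\mathbf{1}_{\mathcal{U}_j}$ along the chosen subsequence). Setting $\mu=\sum_{j}2^{-j}\mu_j$ produces a $T$-invariant probability measure, exactly as in the proof of Lemma \ref{l:properties}(ii), and for any neighbourhood $\mathcal{O}$ of $v$ I choose $j$ with $\operatorname{Cl}(B(v,1/j))\subset\mathcal{O}$ to conclude $\mu(\mathcal{O})\ge 2^{-j}\mu_j(\operatorname{Cl}(\mathcal{U}_j))>0$. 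Hence $v\in\supp\mu\subset\mathcal{E}$.

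For the reverse inclusion $\mathcal{E}\subset\operatorname{Cl}\left(\cup_u\bar{\omega}(u)\right)$, fix $v\in\mathcal{E}$, so $v\in\supp\mu$ for some $T$-invariant $\mu$, and fix $\varepsilon>0$; then $\mu(B(v,\varepsilon))>0$. By the Birkhoff ergodic theorem the average $\frac1n\sum_{k=0}^{n-1}\mathbf{1}_{B(v,\varepsilon)}(T^ku)$ converges $\mu$-a.e.\ to a limit whose integral against $\mu$ equals $\mu(B(v,\varepsilon))>0$; hence this limit is strictly positive on a set of positive measure, and I fix such a $u$. The key step is the elementary claim that an open set visited with positive upper frequency by the orbit of $u$ must meet $\bar{\omega}(u)$ in its closure: if $\bar{\omega}(u)\cap\operatorname{Cl}(B(v,\varepsilon))=\emptyset$, then every point of the compact set $\operatorname{Cl}(B(v,\varepsilon))$ has a neighbourhood of zero visiting frequency, and a finite subcover together with the subadditivity of $\limsup$ would force zero frequency on $B(v,\varepsilon)$, contradicting the previous step. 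Thus there is $w_\varepsilon\in\bar{\omega}(u)\cap\operatorname{Cl}(B(v,\varepsilon))$, within $\varepsilon$ of $v$; letting $\varepsilon\to0$ gives $v\in\operatorname{Cl}\left(\cup_u\bar{\omega}(u)\right)$.

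The main obstacle throughout is the mismatch between the open sets occurring in the definition of $\bar{\omega}$ and the closed sets needed both for the Portmanteau inequality and for the compactness argument. Both inclusions hinge on bridging this gap: in the first by passing to closures before applying Portmanteau, and in the second by the finite-subcover-plus-subadditivity argument. Once this is arranged the remaining steps (weak$^*$ compactness of probability measures on $\tilde{\mathcal{B}}$, invariance of empirical limits, the countable convex combination, and Birkhoff) are entirely routine.
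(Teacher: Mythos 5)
Your proof is correct, and it splits naturally into a part that matches the paper and a part that does not. The first inclusion, $\operatorname{Cl}\left(\cup_{u}\bar{\omega}(u)\right)\subset\mathcal{E}$, is essentially the paper's own argument: empirical measures along a subsequence realising the positive upper frequency, a weak$^*$ limit which is $T$-invariant by Krylov--Bogolioubov, and the passage to a closed set to get around the wrong-way Portmanteau inequality (the paper writes this as $\mu(\mathcal{U}_{2m})\geq\mu(\bar{\mathcal{U}}_m)\geq\kappa$; your countable convex combination $\sum_j 2^{-j}\mu_j$ merely replaces the paper's step of producing points $v_m\in\supp\mu\cap\mathcal{U}_{2m}$ converging to $v$ and invoking closedness of $\mathcal{E}$ --- both are fine). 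For the reverse inclusion you genuinely diverge. The paper applies Birkhoff to the indicators of a countable basis of open sets and concludes that for every $T$-\emph{ergodic} measure, a.e.\ $u$ satisfies $u\in\bar{\omega}(u)$; density of this set in the support, together with (implicitly) the ergodic decomposition to handle supports of non-ergodic invariant measures, finishes the proof. You instead work with an arbitrary invariant measure directly: Birkhoff in its conditional-expectation form, applied to the single indicator $\mathbf{1}_{B(v,\varepsilon)}$, yields a point $u$ whose orbit visits $B(v,\varepsilon)$ with positive asymptotic frequency, and your finite-subcover/subadditivity argument correctly shows that $\bar{\omega}(u)$ must then meet $\operatorname{Cl}(B(v,\varepsilon))$ --- since any point outside $\bar{\omega}(u)$ has, by definition, a neighbourhood of zero visiting frequency. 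This buys you independence from the Choquet/ergodic-decomposition machinery, at the price of not obtaining the stronger intermediate fact the paper gets along the way, namely that a.e.\ point with respect to any ergodic measure lies in its own $\bar{\omega}$-set. Both arguments are complete; yours is somewhat more elementary and self-contained, the paper's is shorter and yields slightly more information.
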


\begin{proof}
	We first show that $\bar{\omega}(u) \subset \mathcal{E}$.
	We give the proof only in the time-periodic case, as the autonomous case is analogous. Let $v \in \bar{\omega}(u)$, and let $\mathcal{U}_m$ be a $1/m$-open ball around $v$. By definition of $\bar{\omega}(u)$, we can find a subsequence $n_j$ such that 
	\begin{equation}
	\lim_{j \rightarrow \infty}\frac{1}{n_j}\sum_{k=1}^{n_j}\mathbf{1}_{\mathcal{U}_m}(T^ku) = \kappa > 0. \label{r:ge0}
	\end{equation}
	Consider Borel probability measures $\mu_n = \frac{1}{n}\sum_{k=1}^{n}\delta_{T^ku}$, where $\delta_u$ is the Dirac probability measure concentrated at $u$. By compactness, we can find a weak$^*$-convergent subsequence $n'_j$ of $n_j$ such that $\mu_{n'_j}$ weak$^*$-converges to $\mu$, which is then $T$-invariant. By (\ref{r:ge0}) we have $\lim_{j \rightarrow \infty}\mu_{n'_j}(\mathcal{U}_m) = \kappa$, thus by the well-known property of weak$^*$ convergence \cite{Walters}, $\mu(\mathcal{U}_{2m}) \geq \mu (\bar{\mathcal{U}}_m) \geq \kappa > 0$. We can thus find $v_m \in \supp\mu \subset \mathcal{E}$ which is in $U_{2m}$. We repeat this for all $m \in \mathbb{N}$ and obtain $v_m \in \mathcal{E}$ such that $\lim_{m \rightarrow \infty}v_m=v$. However, by Lemma \ref{l:properties}, (ii), $\mathcal{E}$ is closed, thus $v \in \mathcal{E}$.
	
	To show the other direction, note that by the Birkhoff ergodic theorem applied to $\bf{1}_{\mathcal{U}}$ for each open set $\mathcal{U}$ in a chosen countable basis of open sets, we obtain that for any $T$-ergodic measure, the set of $u$ such that $u \in \bar{\omega}(u)$ has full measure, thus it must be dense in $\mathcal{E}$.
\end{proof}

In particular, $\mathcal{E}$ contains all uniformly recurrent $u$ (see \cite{Furstenberg:81}), as for uniformly recurrent $u$, by definition $u \in \bar{\omega}(u)$. 

We conclude the subsection with a statement on "observability" of $\mathcal{E}$.

\begin{lemma} \label{l:observability}
	Let $U$ be an open neighbourhood of $\mathcal{E}$ in the time-periodic (resp. autonomous) case. Then for any $u \in \tilde{\mathcal{B}}$, we have $\lim_{n \rightarrow \infty}\frac{1}{n}\sum_{k=0}^{n-1}\mathbf{1}_U(T^ku)=1$ (resp. $\lim_{T \rightarrow \infty}\frac{1}{T}\int_0^T\mathbf{1}_U(T(t)u)dt=1$).
\end{lemma}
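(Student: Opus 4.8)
The plan is to reduce the statement to its complementary form and then run a soft Krylov--Bogolyubov/portmanteau argument. Write $K := \tilde{\mathcal{B}} \setminus U$, a compact set (closed in the compact space $\tilde{\mathcal{B}}$) which is disjoint from $\mathcal{E}$ since $U \supset \mathcal{E}$ is open. Because $\mathbf{1}_U = 1 - \mathbf{1}_K$, the claim $\lim_n \frac{1}{n}\sum_{k=0}^{n-1}\mathbf{1}_U(T^k u) = 1$ is equivalent to $\lim_n a_n = 0$, where $a_n := \frac{1}{n}\sum_{k=0}^{n-1}\mathbf{1}_K(T^k u)$. Since $a_n \geq 0$, it suffices to show $\limsup_n a_n = 0$. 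The decisive observation, which I would state first, is that $K$ carries no invariant mass: for any $\mu \in \mathcal{M}(\mathcal{B})$ we have $\supp\mu \subset \mathcal{E} \subset U$ by the very definition of $\mathcal{E}$ as $\operatorname{Cl}(\cup_{\mu}\supp\mu)$, and since the complement of the support is always null, $\mu(K) = 0$.

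Next I would argue by contradiction. Suppose $\limsup_n a_n = \kappa > 0$ and pick $n_j \to \infty$ with $a_{n_j} \to \kappa$. Form the empirical (occupation) measures $\mu_{n_j} := \frac{1}{n_j}\sum_{k=0}^{n_j-1}\delta_{T^k u}$, which are Borel probability measures on the compact metric space $\tilde{\mathcal{B}}$; by weak$^*$-compactness I pass to a further subsequence (not relabelled) along which $\mu_{n_j} \to \mu$ in the weak$^*$ topology. The standard Krylov--Bogolyubov computation, comparing $\mu_{n_j}$ and its push-forward under $T$ and using that the difference is $\frac{1}{n_j}(\delta_u - \delta_{T^{n_j}u}) \to 0$, shows that $\mu$ is $T$-invariant, hence $\mu \in \mathcal{M}(\mathcal{B})$. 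Now, because $K$ is closed, the portmanteau theorem gives the inequality in the useful direction,
\begin{equation*}
\mu(K) \geq \limsup_{j \to \infty} \mu_{n_j}(K) = \limsup_{j \to \infty} a_{n_j} = \kappa > 0.
\end{equation*}
This contradicts $\mu(K) = 0$ from the previous paragraph, so $\limsup_n a_n = 0$ and the conclusion follows.

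The autonomous (semiflow) case is carried out verbatim with sums replaced by time integrals: one sets $a_T := \frac{1}{T}\int_0^T \mathbf{1}_K(T(t)u)\,dt$, uses the occupation measures $\frac{1}{T}\int_0^T \delta_{T(t)u}\,dt$, and the same Krylov--Bogolyubov argument produces a $T(t)$-invariant weak$^*$ limit. The only point requiring care — and the one I would flag as the main potential pitfall — is the orientation of the portmanteau inequality: it is essential to test against the \emph{closed} set $K$ (where $\limsup \mu_{n_j}(K) \leq \mu(K)$) rather than against the open set $U$ (where one only gets $\liminf \mu_{n_j}(U) \geq \mu(U)$, the wrong direction for a lower bound on the invariant mass). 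Everything else is routine, relying only on compactness of $\tilde{\mathcal{B}}$ (Section \ref{s:prelim}), the definition of $\mathcal{E}$, and the fact that the support of a measure is closed; no zero-number machinery is needed here.
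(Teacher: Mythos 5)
Your proof is correct, but it takes a genuinely different route from the paper's. The paper disposes of this lemma in two lines: it invokes Proposition 5.3 of \cite{Gallay:01}, which asserts that every orbit spends density-one time in any open neighbourhood of its $\omega$-limit set on average $\bar{\omega}(u)$, and then converts neighbourhoods of $\mathcal{E}$ into neighbourhoods of $\bar{\omega}(u)$ via the inclusion $\bar{\omega}(u) \subset \mathcal{E}$ from Lemma \ref{l:subset}. You bypass $\bar{\omega}(u)$ and Lemma \ref{l:subset} entirely and argue directly from the defining property of $\mathcal{E}$: every invariant measure is supported in $\mathcal{E}$, hence gives zero mass to the compact set $K = \tilde{\mathcal{B}} \setminus U$; a positive $\limsup$ of the occupation of $K$ would produce, via empirical measures, weak$^*$ compactness and the Krylov--Bogolyubov computation, an invariant limit measure $\mu$, and the portmanteau inequality for the \emph{closed} set $K$ forces $\mu(K) > 0$ --- a contradiction. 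All the ingredients check out: testing against $K$ rather than $U$ is indeed the correct orientation of portmanteau; the limit measure is $T$-invariant (resp. $T(t)$-invariant) because the (semi)flow is continuous on the compact invariant set $\tilde{\mathcal{B}}$; and since this lemma lives in the bounded-case setting of Subsection \ref{ss:ergbounded}, invariance with respect to the actions of Table \ref{t:table} reduces to exactly that, so $\mu \in \mathcal{M}(\mathcal{B})$. As for what each approach buys: the paper's proof is shorter but leans on an external reference and on Lemma \ref{l:subset}, whose own proof contains essentially your empirical-measure argument; yours is self-contained, needs only compactness of $\tilde{\mathcal{B}}$, continuity of the dynamics, the definition of $\mathcal{E}$ and closedness of supports, and makes transparent that the statement is a piece of soft ergodic theory independent of any zero-number structure.
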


\begin{proof}
	In \cite{Gallay:01}, Proposition 5.3, this was shown for any open neighbourhood $U$ of $\bar{\omega}(u)$. The claim now follows from Lemma \ref{l:subset}.
\end{proof}

\subsection{Asymptotics and the ergodic attractor in the extended case} \label{ss:ergextended}

We now consider the E/P or E/A case, or more generally a compact, metric $\tilde{\mathcal{B}}$, and two commuting continuous maps $T,S$ on $\tilde{\mathcal{B}}$ (respectively a commuting continuous semiflow $T(t)$ and a continuous map $S$), where $\mathcal{E}$ is the union of supports of all $T,S$-invariant (resp. $T(t),S$-invariant) Borel probability measures. Then $\mathcal{E}$ contains "space-time observable" orbits in the following sense:

\begin{lemma}
	Let $\mathcal{U}$ be an open neighbourhood of $\mathcal{E}$ in the time-periodic (resp. autonomous) case. Then for any $u \in \tilde{\mathcal{B}}$, we have that $$\lim_{n \rightarrow \infty}\frac{1}{n^2}\sum_{j,k=0}^{n-1}\mathbf{1}_{\mathcal{U}}(S^jT^ku)=1,$$ 
	respectively
	$$\lim_{n \rightarrow \infty}\frac{1}{n^2}\sum_{j=0}^{n-1}\int_0^n\mathbf{1}_{\mathcal{U}}(S^jT(t)u)dt=1.$$
\end{lemma}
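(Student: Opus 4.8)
The plan is to run the Krylov--Bogolyubov / F\o lner-averaging argument for the two commuting actions, exactly as in Lemma \ref{l:observability} (whose bounded-case content is \cite{Gallay:01}, Proposition 5.3), and then to extract a contradiction from the Portmanteau inequality. Write $F = \tilde{\mathcal{B}} \setminus \mathcal{U}$; this is a closed set disjoint from $\mathcal{E}$, and it suffices to show that the space-time average of $\mathbf{1}_F$ along the orbit of $u$ tends to $0$. Since $\mathbf{1}_F \geq 0$, the claim further reduces to proving that the $\limsup$ of these averages is $\leq 0$.

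First I would introduce the empirical measures. In the time-periodic case set
$$
\mu_n = \frac{1}{n^2}\sum_{j,k=0}^{n-1}\delta_{S^jT^ku},
$$
a Borel probability measure on the compact metric space $\tilde{\mathcal{B}}$, and in the autonomous case set
$$
\mu_n = \frac{1}{n^2}\sum_{j=0}^{n-1}\int_0^n \delta_{S^jT(t)u}\,dt,
$$
again a probability measure (total mass $n\cdot n / n^2 = 1$). Let $\kappa := \limsup_n \mu_n(F)$ and choose a subsequence $n_l$ with $\mu_{n_l}(F) \to \kappa$; by weak$^*$ compactness of the space of Borel probability measures on a compact metric space I may pass to a further subsequence along which $\mu_{n_l} \to \mu$ weak$^*$.

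The heart of the argument is to verify that $\mu$ is invariant under both commuting actions simultaneously. For any $g \in C(\tilde{\mathcal{B}})$ the difference $\int g\,d(S^*\mu_n) - \int g\,d\mu_n$ telescopes in the index $j$, leaving only the $n$ boundary terms $g(S^nT^ku) - g(T^ku)$ (resp. their integrals), which after division by $n^2$ is $O(\|g\|_\infty / n)$; the same telescoping in $k$ gives $T$-invariance, and in the autonomous case the analogous estimate for $T(s)^*$, $s \geq 0$, comes from $\int_0^n g(S^jT(t+s)u)\,dt = \int_s^{n+s} g(S^jT(t)u)\,dt$ together with the bound $2s\|g\|_\infty$ on the mismatched endpoints. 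Hence $\mu$ is $S,T$- (resp. $S,T(t)$-) invariant, so by the very definition of the ergodic attractor $\supp\mu \subset \mathcal{E} \subset \mathcal{U}$, giving $\mu(F)=0$.

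Finally I would invoke the Portmanteau theorem: since $F$ is closed and $\mu_{n_l} \to \mu$ weak$^*$, we get $\kappa = \lim_l \mu_{n_l}(F) \leq \mu(F) = 0$. Thus $\limsup_n \mu_n(F) \leq 0$, forcing $\mu_n(F)\to 0$ and therefore $\mu_n(\mathcal{U}) \to 1$, which is precisely the two displayed limits. I do not anticipate a genuinely hard step: everything is a standard averaging argument over the amenable group $\mathbb{Z}^2$ (resp. $\mathbb{Z}\times\mathbb{R}$). The only point requiring care is checking that the single weak$^*$ limit $\mu$ is invariant under both generators at once --- that the chosen square F\o lner averages kill the flux in both directions simultaneously --- and, in the autonomous case, controlling the boundary contributions of the time integral; both are routine telescoping estimates.
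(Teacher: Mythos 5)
Your proof is correct. Note, however, that the paper does not actually prove this lemma in-text: its entire proof is the citation ``See \cite{Slijepcevic15}, Proposition 4'', so your argument supplies details the paper omits. What you do is the natural self-contained route, and it closely mirrors two arguments that do appear elsewhere in the paper: the construction of the square-averaged empirical measures and extraction of a weak$^*$ limit is exactly the device used in the proof of Lemma \ref{l:nonempty} (existence of invariant measures in the E/P and E/A cases), and the Portmanteau step ``closed set $F$ disjoint from $\mathcal{E}$, hence $\limsup_l \mu_{n_l}(F) \leq \mu(F) = 0$'' is the same mechanism as in the proof of Lemma \ref{l:subset} (there applied in the opposite direction, to an open set, with $\liminf$). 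The three points that genuinely need checking are all handled correctly by you: (a) the telescoping estimates give invariance of the limit under \emph{both} commuting actions (and, in the autonomous case, under every time-$s$ map $T(s)$, which is what the paper's notion of $T(t),S$-invariance requires); (b) $\supp\mu \subset \mathcal{E}$ then holds by the very definition of $\mathcal{E}$ as the union of supports of all such invariant measures, so no separate structure theory is needed; (c) $\mu(F)=0$ because $F$ avoids $\supp\mu$. What your version buys is that the appendix becomes self-contained and the statement is visibly true for any pair of commuting continuous (semi)actions on a compact metric space; what the paper's citation buys is brevity, deferring to the analogous result proved for Frenkel--Kontorova models.
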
	

\begin{proof}
	See \cite{Slijepcevic15}, Proposition 4.
\end{proof}

\section*{Acknowledgement}

The author wants to thank Pavol Brunovsky and Peter Pol\'{a}\v{c}ik for useful discussions and for pointing out the very relevant results of I. Tere\v{s}\v{c}ak. This study was partially funded by the Croatian Science Foundation, the grant No IP-2014-09-2285.

{\small
}

{\small
{\em Authors' addresses}:
{\em Sini\v{s}a Slijep\v{c}evi\'{c}}, Department of Mathematics, Bijeni\v{c}ka 30, University of Zagreb, Croatia
 e-mail: \texttt{slijepce@\allowbreak math.hr}.

}

\end{document}